\documentclass{amsart}

\usepackage{amsmath, amssymb, amsthm, amsfonts, marginnote, stmaryrd, comment,tikz,tikz-cd}

\usepackage{xcolor}
\usepackage{mathrsfs}
\usepackage{calligra}
\usepackage{hyperref}
\usepackage{mathtools}
\usepackage{pifont}
\usepackage{enumitem}
\usepackage{quiver}
\usepackage{array}
\usepackage{multirow}

\numberwithin{equation}{subsection}

\theoremstyle{plain}
\newtheorem{theorem}{Theorem}[subsection]
\newtheorem{lemma}{Lemma}[subsection]

\newtheorem{proposition}{Proposition}[subsection]
\newtheorem{corollary}{Corollary}[subsection]
\newtheorem{conjecture}{Conjecture}[subsection]

\theoremstyle{definition}
\newtheorem{definition}{Definition}[subsection]

\newtheorem{remark}{Remark}[subsection]
\newtheorem{example}{Example}[subsection]

\newcommand{\C}{\mathbb{C}}
\newcommand{\Q}{\mathbb{Q}}
\newcommand{\Z}{\mathbb{Z}}
\newcommand{\A}{\mathbb{A}}
\newcommand{\wt}[1]{\widetilde{#1}}
\newcommand{\wh}[1]{\widehat{#1}}
\newcommand{\cA}{\mathcal{A}}

\newcommand{\g}{\mathfrak{g}}

\let\O\relax
\newcommand{\O}{\mathcal{O}}
\let\K\relax
\newcommand{\K}{\mathcal{K}}
\let\mod\relax
\newcommand{\mod}{\mathrm{mod}}

\newcommand{\cL}{\mathcal{L}}
\newcommand{\cD}{\mathcal{D}}
\newcommand{\rD}{\mathrm{D}}
\newcommand{\bD}{\mathbf{D}}

\newcommand{\cM}{\mathcal{M}}
\newcommand{\cR}{\mathcal{R}}
\newcommand{\cT}{\mathcal{T}}
\newcommand{\cS}{\mathcal{S}}
\newcommand{\cE}{\mathcal{E}}
\newcommand{\bH}{\mathbb{H}}

\newcommand{\bW}{\overline{\mathcal{W}}}
\newcommand{\cH}{\mathcal{H}}

\newcommand{\Gr}{\mathrm{Gr}}
\newcommand{\ttimes}{\mathbin{\widetilde{\times}}}
\newcommand{\hGO}{{\widehat{G}_{\O}}}
\newcommand{\hGOX}{{\widehat{G}_{\O,{{X^I}}}}}
\newcommand{\hGOfac}[1]{{\widehat{G}_{\O,#1}}}
\newcommand{\hGK}{\widehat{G}_{\K}}

\newcommand{\rot}{\mathrm{rot}}
\newcommand{\dil}{\mathrm{dil}}
\newcommand{\Gm}{\mathbb{G}_{\mathrm{m}}}
\newcommand{\Gmrot}{\Gm^{\rot}}
\newcommand{\Gmdil}{\Gm^{\dil}}
\newcommand{\cGmrot}[1]{{\cG_{\mathrm{m},#1}^{\rot}}}
\newcommand{\QCoh}{\mathsf{QCoh}}
\newcommand{\Coh}{\mathsf{Coh}}
\newcommand{\IndCoh}{\mathsf{IndCoh}}

\newcommand{\cl}{\mathrm{cl}}
\newcommand{\cF}{\mathcal{F}}
\newcommand{\cG}{\mathcal{G}}
\newcommand{\cCAlg}{\mathrm{CAlg}^{\cl}}
\newcommand{\CAlg}{\mathsf{CAlg}}

\newcommand{\Mod}{\mathsf{Mod}}

\newcommand{\VB}{\mathrm{VB}}
\newcommand{\rmat}[1]{\mathbf{r}_{#1}}

\newcommand{\D}{\mathbb{D}}
\newcommand{\id}{\mathrm{id}}
\newcommand{\Set}{\mathrm{Set}}
\newcommand{\hPla}{\widehat{P}_{\lav}}
\newcommand{\Pla}{{P}_{\lav}}

\newcommand{\pla}{{\mathfrak{p}}_{\lav}}

\newcommand{\Pcoh}{\mathrm{P}_{\mathrm{coh}}}
\newcommand{\KPcoh}{\mathrm{KP}_{\mathrm{coh}}}
\newcommand{\KP}{\mathrm{KP}}
\newcommand{\KPqcoh}{\mathrm{KP}_{\mathrm{qcoh}}}
\newcommand{\KPindcoh}{\mathrm{KP}_{\mathrm{indcoh}}}

\newcommand{\Olamu}{\mathcal{O}_{{\lav},\mu}}

\newcommand{\clamunu}{c_{\lav,\muv}^{\nuv}}
\newcommand{\td}{\widetilde{d}}
\newcommand{\tm}{\widetilde{m}}

\newcommand{\Rat}{\mathrm{Rat}}

\newcommand{\rmIC}{\mathrm{IC}}
\newcommand{\ICHg}{\mathbf{IC}^{\mathrm{Hg}}}

\newcommand{\cIC}{\mathcal{IC}}
\newcommand{\Vect}{\mathrm{Vect}}

\newcommand{\HK}{\mathcal{H}_{{K}}}
\newcommand{\lav}{{{\check{\lambda}}}}
\newcommand{\muv}{{\check{\mu}}}
\newcommand{\nuv}{{\check{\nu}}}

\newcommand{\pt}{\mathrm{pt}}

\newcommand{\Rep}[1]{\mathrm{Rep}(#1)}
\newcommand{\Kp}{\mathrm{K}p}
\newcommand{\tbox}{\mathbin{\widetilde{\boxtimes}}}
\newcommand{\rred}{\mathrm{red}}

\newcommand{\conv}{{\mathbin{\scalebox{1.1}{$\mspace{1.5mu}*\mspace{1.5mu}$}}}}

\newcommand{\Tate}{\mathrm{Tate}}

\newcommand{\cC}{\mathcal{C}}

\newcommand{\lr}[1]{\langle#1\rangle}
\newcommand{\op}{\mathrm{op}}

\newcommand{\sigi}{\sigma_1^*{i_1}_*}
\newcommand{\sjgj}{\sigma_2^*{i_2}_*}
\newcommand{\HM}{\mathrm{HM}}

\newcommand{\Stk}{\mathsf{Stk}}
\newcommand{\GStk}{\mathsf{GStk}}
\newcommand{\indGStk}{\mathsf{indGStk}}
\newcommand{\Hg}{\mathrm{Hg}}

\newcommand{\Pv}{\check{P}}

\newcommand{\Grpd}{\mathsf{Grpd}}
\newcommand{\Sch}{\mathsf{Sch}}
\newcommand{\cSch}{\mathrm{Sch}^{\cl}}
\newcommand{\Cat}{\mathsf{Cat}}
\newcommand{\dom}{\mathrm{dom}}
\newcommand{\swap}{\mathrm{swap}}
\newcommand{\sw}{\mathrm{sw}}

\newcommand{\rp}{\mathrm{p}}
\renewcommand{\rq}{\mathrm{q}}
\newcommand{\rpc}{{\mathrm{p}}_{\circ}}
\newcommand{\rqc}{{\mathrm{q}}_{\circ}}

\newcommand{\rank}{\operatorname{rank}}

\newcommand{\Xo}{{X^{\circ}}}

\newcommand{\Yo}{{Y^{\circ}}}

\newcommand{\onetwo}{1\! / 2}

\newcommand{\mbf}[1]{\mathbf{#1}}
\newcommand{\FM}{\mathrm{FM}}
\newcommand{\rT}{\mathrm{T}}
\newcommand{\MHM}{\mathrm{MHM}}
\newcommand{\strong}{\text{-strong}}
\newcommand{\weak}{\text{-weak}}
\newcommand{\pol}{\mathrm{pol}}

\DeclareMathOperator*{\colim}{colim}
\DeclareMathOperator{\Hom}{Hom}

\DeclareMathOperator{\cHom}{\mathscr{H}\text{\kern -3pt {\calligra\large om}}\,}

\DeclareMathOperator{\DR}{DR}
\DeclareMathOperator{\gr}{gr}
\DeclareMathOperator{\Spec}{Spec}
\DeclareMathOperator{\Irr}{Irr}
\DeclareMathOperator{\Perv}{Perv}
\DeclareMathOperator{\codim}{codim}
\DeclareMathOperator{\Supp}{Supp}
\newcommand{\For}{\mathrm{For}}

\setcounter{tocdepth}{2}


\setlength{\textwidth}{460pt}
\setlength{\oddsidemargin}{0pt}
\setlength{\evensidemargin}{0pt}
\setlength{\topmargin}{0pt}
\setlength{\textheight}{620pt}

\usepackage[mathscr]{euscript}

\title{A Coherent Version of Geometric Satake Equivalence for type A}

\author{Shiyixin Liang}
\address{Qiuzhen College\\Tsinghua University\\Beijing, China}
\email{liangsyx21@mails.tsinghua.edu.cn}

\subjclass[]{}

\dedicatory{}	
\date{\today}
\keywords{}


\begin{document}

\begin{abstract}
	In this paper we prove a coherent version of geometric Satake equivalence proposed in \cite[Conj. 1.7]{CW23} for type A. In Cautis-Williams' work \cite{CW23}, they studied an abelian version of the classical limit Satake category, namely, the Koszul perverse heart of the categorified Coulomb branch for adjoint representations. In this paper we study a subcategory generated by a collection of simple objects. We endow this subcategory with a neutral Tannakian structure and identify it with the finite dimensional representation category $\Rep{{\check{G}}}$ for the Langlands dual group ${\check{G}}$. Our method uses tools in Cautis-Williams theory \cite{CW19,CW23,CW24stk,CW24mor} and a Hodge module description of the coherent IC extensions of differential sheaves in \cite{Xin25}.
\end{abstract}

\maketitle
\tableofcontents

\section{Introduction}

\subsection{Overview of the classical limit problems}

Let $G$ be a connected reductive algebraic group over  $\C$ and ${\check{G}}$ be its Langlands dual group over $\C$; let $\g$ and $\check{\g}$ be their Lie algebras respectively. Let $\O:=\C[[t]]$, $\K=\C((t))$. Write $G_{\K}$ and $G_{\O}$ for the loop group and positive loop group respectively. The celebrated geometric Satake equivalence states that there is an equivalence of monoidal abelian categories:
\[\Perv(G_{\O}\backslash G_{\K} /G_{\O},\C)\cong \Rep{{\check{G}}}.\]
Here $\Perv(G_{\O}\backslash G_{\K} /G_{\O},\C)$ is called the Satake category, which is the abelian category of ($\C$-coefficient) perverse (constructible) sheaves on the quotient stack $[G_{\O}\backslash G_{\K} /G_{\O}]$, equipped with a monoidal structure given by the convolution product; $\Rep{{\check{G}}}$ denotes the abelian category of finite dimensional representations of ${\check{G}}$, equipped with a monoidal structure given by the tensor product. 

Using the Riemann-Hilbert correspondence, one can pass from perverse sheaves to $\cD$-modules, and view the Satake category as the abelian category of (regular) holonomic $\cD$-modules on $[G_{\O}\backslash G_{\K} /G_{\O}]$, denoted by $\cD_{\mathrm{hol}}(G_{\O}\backslash G_{\K} /G_{\O})$. The quantization principle suggests that it's interesting to study the category of (quasi-/ind-) coherent sheaves on the cotangent stack
\[\mathrm{T}^*[G_{\O}\backslash G_{\K} /G_{\O}],\]
and compare it with the Satake category. 

Here, the definition of $\mathrm{T}^*[G_{\O}\backslash G_{\K} /G_{\O}]$ is subtle because of the infinite nature of the loop group: $G_{\O}$ is a scheme of infinite type over $\C$, and $G_{\K}$ is an ind-scheme of ind-infinite type. So we adopt procedure below as the definition of this cotangent stack, following \cite[Sec. 7]{BFM05}; this construction is an affine analogue of the finite Steinberg stack (see e.g. \cite[Ch. 3]{CG}). Write $\cT_{G,\g}:=G_{\K}\times^{G_{\O}}\g_{\O}$, and view it as the definition of the cotangent bundle of the affine grassmannian $\Gr_G:=G_{\K}/G_{\O}$. Now we perform the Hamiltonian reduction with respect to the left $G_{\O}$-action on $\Gr_G$. Define $\cR_{G,\g}$ as the \textbf{derived} fiber product
\begin{equation}\label{eq:Hamiltonian reduction}
	\begin{tikzcd}
		{\cR_{G,\g}} && 0 \\
		{\cT_{G,\g}} && {\g_{\K}/\g_{\O};}
		\arrow[from=1-1, to=1-3]
		\arrow[from=1-1, to=2-1]
		\arrow[from=1-3, to=2-3]
		\arrow["{[g,x]\mapsto [\mathrm{Ad}_g(x)]}", from=2-1, to=2-3]
	\end{tikzcd}
\end{equation}
see section \ref{subsec:R} for more details. Then we refer $[G_{\O}\backslash \cR_{G,\g}]$ as our definition for $\mathrm{T}^*[G_{\O}\backslash G_{\K} /G_{\O}]$.

In a series of work \cite{CW23,CW24stk,CW24mor} by Cautis and Williams, they systematically studied the geometry of $[G_{\O}\backslash \cR_{G,\g}]$, and developped a coherent sheaf theory on it; namely, they defined the stable $\infty$-category $\Coh([G_{\O}\backslash \cR_{G,\g}])$, which serves as a categorified Coulomb branch (in the sense of \cite{BFNII}, for the specific case $(G,N)=(G,\g)$). We write it as $\Coh^{G_{\O}}(\cR_{G,\g})$. It admits a monoidal structure given by convolution product. Its $K$-group $K^{G_{\O}}(\cR_{G,\g})$ is computed in \cite[Thm. 7.3]{BFM05}: we fix a maximal torus $T\subset G$, and let ${\check{T}}\subset {\check{G}}$ be its dual torus, then $K^{G_{\O}}(\cR_{G,\g})$ can be identified with $\C[T\times {\check{T}}]^W$. Thus the evident symmetry $\C[T\times {\check{T}}]^W\cong \C[{\check{T}}\times T]^W$ gives isomorphism
\begin{equation}\label{eq:BFM}
	K^{G_{\O}}(\cR_{G,\g})\cong K^{{\check{G}}_{\O}}(\cR_{{\check{G}},\check{\g}}),
\end{equation}
which leads to the following conjecture in \cite{BFM05}:
\begin{conjecture}[{\cite[Conj. 7.9]{BFM05}}]\label{conj:BFMconj}
	The isomorphism \eqref{eq:BFM} can be lifted to an equivalence of monoidal (stable $\infty$-) categories \footnote{
	The original conjecture in \cite{BFM05} considered a slightly different category with the same $K$-group. Namely, they considered the full (triangulated) subcategory of the derived category of $G_{\O}$-equivariant coherent sheaves on $\cT_{G,\g}$, consisting of objects whose supports are contained in the classical locus of $\cR_{G,\g}$; the ${\check{G}}$ side uses the similar category.}
	\[\Coh^{G_{\O}}(\cR_{G,\g})\cong \Coh^{{\check{G}}_{\O}}(\cR_{{\check{G}},\check{\g}}).\]

\end{conjecture}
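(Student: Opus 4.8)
The strategy is to deduce the equivalence from Tannakian reconstruction, transporting the proof of classical geometric Satake to the coherent world. Inside $\Coh^{G_{\O}}(\cR_{G,\g})$ I would first single out the full abelian subcategory $\cA_G$ generated under subquotients, extensions and the convolution product $\conv$ by the simple objects of the Koszul perverse heart of \cite{CW23} supported on the closures of the $G_{\O}$-orbits of $\Gr_G$ — equivalently, the coherent $\rmIC$-extensions of \cite{Xin25} attached to those orbits, described there through Hodge modules. These simples are indexed by dominant coweights $\lav$ of $G$, already matching the highest-weight labels of $\Rep{{\check{G}}}$. The first block of work is to check that $\cA_G$ is stable under $\conv$ and that $(\cA_G,\conv)$ is a rigid \emph{symmetric} monoidal abelian category: the commutativity constraint is produced, exactly as in the constructible case, from the Beilinson--Drinfeld/Ran-space factorization incarnation of $\cR_{G,\g}$ supplied by \cite{CW24stk,CW24mor}, while the dualities come from the inversion involution of $G_{\K}$ together with coherent Serre--Grothendieck duality on $\cR_{G,\g}$.

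Next I would construct an exact, faithful, symmetric monoidal fiber functor $F\colon\cA_G\to\Vect$ — the coherent incarnation of the total-cohomology (hypercohomology) functor of classical Satake, realized as a suitably shifted and renormalized global-sections functor on $[G_{\O}\backslash\cR_{G,\g}]$. Exactness of $F$ is the heart of the argument and is the coherent analogue of the decomposition-theorem step in the proof of classical geometric Satake (Lusztig, Ginzburg, Mirkovi\'c--Vilonen): one uses the Hodge-module description of \cite{Xin25} to equip each object of $\cA_G$ with a weight filtration and to run a purity/parity argument showing that the coherent $\rmIC$-sheaves are pointwise pure, so that $F$ concentrates in a single cohomological degree; faithfulness and monoidality then reduce to a factorization/K\"unneth computation against the generating simples. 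Deligne--Milne reconstruction then yields an affine group scheme $H_G$ with $\cA_G\cong\Rep{H_G}$, together with a canonical maximal torus ${\check{T}}\subset H_G$ read off from the weight-functor decomposition $F=\bigoplus_{\muv}F_{\muv}$ indexed by coweights $\muv$ of $G$ (the coherent avatar of the Mirkovi\'c--Vilonen filtration).

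To identify $H_G$ with ${\check{G}}$ it then remains to recover its root datum. I would compute $\conv$ on the generating simples and compare the resulting structure constants with the Littlewood--Richardson/Satake tensor rule; this pins down the coroots and coweights of $H_G$ as the roots and weights of $G$, hence $H_G\cong{\check{G}}$, in accordance with the appearance of ${\check{T}}$ in the identification $K^{G_{\O}}(\cR_{G,\g})\cong\C[T\times{\check{T}}]^W$ of \cite[Thm. 7.3]{BFM05}. This is precisely the step that uses the type-$A$ hypothesis: there the combinatorics becomes that of $\mathrm{GL}_n$ tilting and Weyl modules, for which the relevant multiplicity computations are available. At this point one has the abelian equivalence $\cA_G\cong\Rep{{\check{G}}}$ — the type-$A$ case of \cite[Conj. 1.7]{CW23} — and, symmetrically, $\cA_{{\check{G}}}\cong\Rep{G}$.

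Finally, to promote this to the stated equivalence of stable $\infty$-categories, I would show that $\Coh^{G_{\O}}(\cR_{G,\g})$ is generated by $\cA_G$ and identify it with a derived category of dg-modules over an explicit dg-algebra assembled functorially from $\cA_G\cong\Rep{{\check{G}}}$ (a coherent ``classical-limit derived Satake'' presentation), and likewise $\Coh^{{\check{G}}_{\O}}(\cR_{{\check{G}},\check{\g}})$ from $\cA_{{\check{G}}}\cong\Rep{G}$; the two presentations are then matched by upgrading the BFM symmetry $\C[T\times{\check{T}}]^W\cong\C[{\check{T}}\times T]^W$ of \eqref{eq:BFM} from $K$-theory to the level of these dg-algebras. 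I expect the genuine obstacle to be the exactness and monoidality of the fiber functor $F$ in the coherent setting: there is no formal coherent decomposition theorem, so one must really extract purity from the Hodge-module technology of \cite{Xin25} together with the fine geometry of $\cR_{G,\g}$ developed in \cite{CW19,CW23,CW24stk,CW24mor}. Controlling the generation and (non-)formality in the last, derived step is the secondary difficulty.
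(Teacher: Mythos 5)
The statement you are addressing is an open conjecture (\cite[Conj. 7.9]{BFM05}); the paper does not prove it and only establishes a much weaker consequence, namely the type-A case of \cite[Conj. 1.7]{CW23}: the subcategory $\KP_0\subset\KPcoh^{\hGO}(\cR_{G,\g})$ generated by the simples $\cL_{\lav,0}$ is neutral Tannakian and equivalent to $\Rep{{\check{G}}}$. Your first three paragraphs essentially reconstruct that argument (factorization commutativity constraints via renormalized $r$-matrices, a hypercohomology fiber functor whose exactness is extracted from Hodge-module purity via \cite{Xin25}, identification of the Tannakian group from the $K$-theoretic structure constants), though with two misattributions: the type-A hypothesis enters because the slices $\bW^{\muv}_{\lav}$ admit symplectic resolutions only in type A, which is what makes Xin's support estimates — hence the Hodge-module description of the coherent $\cIC$-extensions — available; it is not about $\mathrm{GL}_n$ tilting combinatorics. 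And the group is pinned down by the Kazhdan--Larsen--Varshavsky reconstruction theorem from the representation semiring, not by reading a root datum off an MV-type weight decomposition of the fiber functor (no such decomposition is constructed).

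The genuine gap is your final paragraph. First, $\Coh^{G_{\O}}(\cR_{G,\g})$ is not generated by your $\cA_G$: the simple objects of the Koszul perverse heart are indexed by $(\Pv\times P)/W$, and $\cA_G=\KP_0$ only contains those labeled $(\lav,0)$; every $\cL_{\lav,\mu}$ with $\mu\neq 0$ is missed, so no dg-algebra presentation built from $\cA_G$ alone can recover the whole category. Second, even granting generation, ``match the two presentations by upgrading the BFM symmetry from $K$-theory to dg-algebras'' is a restatement of the conjecture rather than an argument — that matching is exactly the open content, and the paper itself goes no further than formulating an intermediate conjecture (that the extension-closure $\wt{\KP}_0$ should be $\Rep{\check{G}_{\O}}$) as a next step. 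Note finally that the conjecture concerns $G_{\O}$-equivariant sheaves, whereas the Koszul perverse t-structure you rely on requires the additional $\Gmdil$-equivariance, so even the abelian input would have to be transported across that discrepancy.
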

As explained in \textit{ibid.}, this conjecture should be viewed as a classical, local manifestation of the geometric Langlands correspondence.

\subsection{An abelian version by Cautis-Williams}
Next, we explain more on the Cautis-Williams theory. Define
\[\hGO:=(G_{\O}\rtimes \Gmrot)\times \Gmdil,\]
which acts on the spaces involved in the diagram \eqref{eq:Hamiltonian reduction}.
Here the $\Gmrot$ is the $\Gm$ acting by rotating the loop variable $t\in \C((t))$, and $\Gmdil$ is the $\Gm$ acting by scaling on $\g$. They play important roles in our paper, see Remark \ref{rmk:Gm_importance}. \cite{CW23} considered $\Coh^{\hGO}(\cR_{G,\g}):=\Coh([\hGO\backslash \cR_{G,\g}])$, and defined a finite length t-structure on $\Coh^{\hGO}(\cR_{G,\g})$, called the Koszul perverse t-structure. This t-structure is a combination of the (middle) perverse coherent t-structure (\cite{AB10}) on $\Coh^{G_{\O}}(\Gr_G)$ and the Koszul t-structure on each fiber of $\cR_{G,\g}\to \Gr_G$. We denote the heart of this t-structure by $\KPcoh^{\hGO}(\cR_{G,\g})$\footnote{This category is denoted by $\mathcal{KP}_{G,\g}$ in \cite{CW23}}. This is a finite length rigid monoidal abelian category. Let $W$ be the Weyl group, and $P$ (resp. ${\Pv}$) be the character (resp. cocharacter) lattice of $T$; we fix a Borel subgroup $B\subset G$, and write $P_+$ (resp. $\Pv_+$) for the dominant part of $P$ (resp. $\Pv$). Then the simple objects of $\KPcoh^{\hGO}(\cR_{G,\g})$ (up to grading shifts) are in bijection with $({\Pv}\times P)/W$. We will fix an appropriate grading shift normalization (with details in Section \ref{subsubsec:Koszul_perverse}), and write the simple object labeled by $(\lav,\mu)$ as $\cL_{\lav,\mu}$.

\cite{CW23} proposed an abelian version of the Conjecture \ref{conj:BFMconj}.
\begin{conjecture}[{\cite[Conj. 1.6]{CW23}}]\label{conj:abelian_version}
	There is a monoidal equivalence $\KPcoh^{\hGO}(\cR_{G,\g})\xrightarrow{\sim}\KPcoh^{\wh{{\check{G}}}_{\O}}(\cR_{{\check{G}},\check{\g}})$. The induced bijection between the simple objects on each side is compatible with their labeling by $({\Pv}\times P)/W$.
\end{conjecture}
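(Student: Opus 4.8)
The plan is to prove the type $A$ case of the conjecture by Tannakian reconstruction, reducing the conjectured symmetry to a ``coherent geometric Satake'' identification. First I would isolate the full monoidal subcategory $\cC_{\Gr}\subseteq\KPcoh^{\hGO}(\cR_{G,\g})$ generated under subquotients, extensions and convolution by the simple objects $\cL_{\lav,0}$ with $\lav\in\Pv_+$ --- the ``affine Grassmannian direction'' of the Koszul perverse heart, which morally plays the role of a coherent spherical Hecke category of $\Gr_G$. Using the finite length rigid monoidal structure of \cite{CW23}, one checks that $\cC_{\Gr}$ is an essentially small $\C$-linear rigid abelian category with $\mathbf{1}$ simple and $\End(\mathbf{1})=\C$; after equipping the convolution with a commutativity constraint it has the shape of a neutral Tannakian category, so producing a fiber functor $\omega$ and applying Tannakian duality yields $\cC_{\Gr}\simeq\Rep{\cG}$ for an affine group scheme $\cG$. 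The analogous construction applied to the ``fiber direction'' subcategory generated by the $\cL_{0,\mu}$, $\mu\in P_+$, should identify it with $\Rep{G}$ by a similar but more elementary argument; since for the dual group $\check{G}$ (again of type $A$) the two lattice factors in the $({\Pv}\times P)/W$-labeling trade roles, the $\Gr$-direction subcategory of $\KPcoh^{\hGO}(\cR_{G,\g})$ and the fiber-direction subcategory of $\KPcoh^{\wh{{\check{G}}}_\O}(\cR_{{\check{G}},\check{\g}})$ are both copies of $\Rep{\check{G}}$, and this is the source of the conjectured equivalence, at least between these distinguished subcategories, compatibly with the labelings.

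The crux is the construction of the fiber functor $\omega\colon\cC_{\Gr}\to\hVect$ and the identification $\cG\simeq\check{G}$. For $\omega$ I would use the Hodge module description of coherent IC extensions from \cite{Xin25}: each simple $\cL_{\lav,0}$ is realized as an IC-type mixed Hodge module on the relevant stratified space, whose weight filtration supplies a canonical grading, and totalizing the graded pieces defines $\omega$. Exactness of $\omega$ on $\cC_{\Gr}$, the commutativity constraint used above, and the K\"unneth compatibility $\omega(A\conv B)\simeq\omega(A)\otimes\omega(B)$ should all be obtained by spreading $\cR_{G,\g}$ and its convolution diagram over a curve (or over the Ran space) and exploiting the factorization structure of the Cautis-Williams category from \cite{CW24stk,CW24mor}, so that the convolution of two IC objects is controlled by a fusion product of pure Hodge modules --- the coherent analogue of the fusion proof of ordinary geometric Satake, with \cite{Xin25} providing the purity and decomposition statements. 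Given $\omega$, the group $\cG=\Aut^{\otimes}(\omega)$ is reductive; its maximal torus is pinned down by the computation $K^{G_\O}(\cR_{G,\g})\cong\C[T\times\check{T}]^W$ of \cite[Thm.~7.3]{BFM05} (restricted to the $\Gr$-direction this recovers $\C[\check{T}]^W$, the complexified representation ring of $\check{G}$), and matching the convolution multiplicities of the $\cL_{\lav,0}$ with the tensor product multiplicities in $\Rep{\check{G}}$ identifies the root datum of $\cG$ with that of $\check{G}$; hence $\cC_{\Gr}\simeq\Rep{\check{G}}$ with the asserted highest weight labeling.

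The main obstacle is exactly the existence and exactness of $\omega$ together with the behavior of coherent IC objects under convolution: on the coherent side there is no ready-made $t$-exact ``global sections'' functor, the perverse coherent $t$-structure of \cite{AB10} interacts delicately with the Koszul direction, and one must show that the convolution of two coherent IC sheaves decomposes as a direct sum of (shifted) coherent IC sheaves with the multiplicities and graded dimensions predicted by the tensor product in $\Rep{\check{G}}$ --- this is where the Hodge-theoretic input of \cite{Xin25} is indispensable, and where the type $A$ hypothesis (under which that description is available) genuinely enters. A further step, which I expect to require work beyond the subcategory statement and hence to be the remaining obstacle to the conjecture in full generality, is to reconstruct the whole heart $\KPcoh^{\hGO}(\cR_{G,\g})$ from its two distinguished subcategories $\Rep{\check{G}}$ and $\Rep{G}$ --- presumably as a suitable category of modules over a commutative algebra object supported on $[(T\times\check{T})/W]$ --- and to check that this gluing datum is itself exchanged under the swap of the two lattice factors, which would upgrade the equivalence from the distinguished subcategories to all of $\KPcoh^{\hGO}(\cR_{G,\g})$ and so establish Conjecture \ref{conj:abelian_version} in type $A$.
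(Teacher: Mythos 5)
There is a genuine gap: the statement you are asked to prove is Conjecture \ref{conj:abelian_version}, which remains open and is \emph{not} proved in this paper even for type A. What the paper actually establishes is the strictly weaker Conjecture \ref{conj:Rep_embed}: the subcategory $\KP_0$ generated by the $\cL_{\lav,0}$ under direct sums, convolutions, subquotients and duals is a neutral Tannakian category equivalent to $\Rep{{\check{G}}}$. Your first two paragraphs are essentially a sketch of that argument (with some deviations noted below), and your final paragraph correctly identifies that passing from the two distinguished subcategories to an equivalence of the full hearts requires reconstructing $\KPcoh^{\hGO}(\cR_{G,\g})$ from them — but you offer no argument for this step, and it is precisely the part that is open. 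Two further unsubstantiated claims compound the problem. First, you close $\cC_{\Gr}$ under \emph{extensions}; the paper's $\KP_0$ is deliberately not closed under extensions, and its extension-closure $\wt{\KP}_0$ is conjectured to be $\Rep{\check{G}_{\O}}$, which is not semisimple — so your $\cC_{\Gr}$ cannot be $\Rep{{\check{G}}}$ as stated unless you first prove there are no nontrivial self-extensions among the $\cL_{\lav,0}$, which is not known. Second, the identification of the ``fiber direction'' subcategory generated by the $\cL_{0,\mu}$ with $\Rep{G}$ is asserted as ``more elementary'' but appears nowhere in the paper and is not obviously accessible by the same Hodge-theoretic tools (the $\cL_{0,\mu}$ live over the point stratum $\Gr_0$, where the relevant geometry is the fiber $\g_{\O}$ rather than a Schubert variety).

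On the parts that do track the paper, your route differs in two ways worth flagging. The fiber functor in the paper is not obtained by totalizing weight-graded pieces of a Hodge module; it is $\mathbf{F}=\For\circ\cH^0\circ\pi_*\circ\Xi$ with $\Xi=\sigma_1^*i_{1*}$, and Hodge theory enters only to identify $\Xi(\cL_{\lav,0})$ with $\bigoplus_k\gr_{-k}\DR^{\hGO}(\ICHg_{\lav})$ (via Xin's support estimates, which is where type A and the existence of symplectic resolutions of slices are used) and then to compute $\pi_*$ by the decomposition theorem for pure Hodge modules. Also, the commutativity constraint is not free: the renormalized $r$-matrices give a braiding only after one proves $\Lambda(\cF,\cG)=0$, and that proof \emph{requires} the $K$-group computation $[\cL_{\lav}]\conv[\cL_{\muv}]=\sum\clamunu[\cL_{\nuv}]$ as input, so the logical order is $K$-theory first, then symmetry, then the fiber functor. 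Finally, the paper identifies the Tannakian group with ${\check{G}}$ via the semiring rigidity theorem of \cite{KLV} rather than by matching root data directly; your torus argument via \cite[Thm.~7.3]{BFM05} would still leave the root datum to be pinned down.
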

This conjecture implies in particular the following coherent version of the geometric Satake equivalence. 
\begin{conjecture}[{\cite[Conj. 1.7]{CW23}}]\label{conj:Rep_embed}
	There is a monoidal functor $\Rep{{\check{G}}}\to \KPcoh
	^{\hGO}{(\cR_{G,\g})}$ which takes the irreducible ${\check{G}}$-representation with highest weight $\lav$ to the simple object labeled by $(\lav,0)\in {\Pv}\times P$.
\end{conjecture}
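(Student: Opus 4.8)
The plan is to produce the functor of Conjecture~\ref{conj:Rep_embed} as a composition
\[
\Rep{{\check{G}}}\ \xrightarrow{\ \sim\ }\ \cC\ \hookrightarrow\ \KPcoh^{\hGO}(\cR_{G,\g}),
\]
where $\cC$ is the strictly full subcategory of $\KPcoh^{\hGO}(\cR_{G,\g})$ whose objects are the subquotients of finite $\conv$-products of the simple objects $\cL_{\lav,0}$, $\lav\in\Pv_+$, and where the first arrow is obtained by Tannakian reconstruction. Thus the argument splits into two halves: (i) equipping $\cC$ with the structure of a neutral Tannakian category over $\C$, and (ii) identifying the resulting Tannaka group with ${\check{G}}$ compatibly with the labelling $\lav\mapsto\cL_{\lav,0}$.

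For (i): since $\cC$ is closed under subquotients it is abelian; it is $\C$-linear and rigid (hence in particular closed under duals, $\cL_{\lav,0}^{\vee}=\cL_{-w_0\lav,0}$) because $\KPcoh^{\hGO}(\cR_{G,\g})$ is, and $\End(\mathbf{1})=\End(\cL_{0,0})=\C$. What must still be supplied is a symmetric braiding on $\conv|_{\cC}$ and an exact, faithful $\otimes$-functor $F\colon\cC\to\hVect_{\C}$. This is where \cite{Xin25} and the hypothesis of type $A$ enter: the Hodge-module description of the coherent IC extensions attaches to $\cL_{\lav,0}$ the IC $\cD$-module $\mathrm{IC}_{\lav}$ on $[\hGO\backslash\Gr_G]$ (with its Hodge filtration, via a Rees/associated-graded construction), and in type $A$ it intertwines the coherent convolution on $\cC$ with the fusion product on the Satake category — so that $\conv$-products of the $\cL_{\lav,0}$ have composition factors again of the form $\cL_{\nuv,0}$. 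Granting this, I transport structures: the commutativity constraint constructed in \cite{CW23} from the loop rotation $\Gmrot$ (cf.\ Remark~\ref{rmk:Gm_importance}) is carried to the standard fusion braiding of geometric Satake, and I take $F$ to be the Mirkovi\'{c}--Vilonen total-cohomology weight functor $\bigoplus_{\mu}H^{\bullet}_c(S_\mu,-)$ pulled back along the comparison; its $\otimes$-monoidality is then exactly the monoidality of the geometric Satake fiber functor (semismallness of convolution and purity). The same comparison yields semisimplicity of $\cC$ — needed since $\Rep{{\check{G}}}$ is semisimple in characteristic zero — as the vanishing of $\mathrm{Ext}^1$ among the $\cL_{\lav,0}$ is read off from the decomposition theorem on the constructible side.

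For (ii): by Tannakian duality $\cC\simeq\Rep{H}$ with $H=\underline{\Aut}^{\otimes}(F)$ an affine group scheme over $\C$, and $H$ is reductive because $\cC$ is semisimple. The weight decomposition of $F$ furnishes a maximal torus ${\check{T}}\subset H$ whose character group is $\Pv=X_{*}(T)$; the bijection of simple objects with $\Pv_+$, together with the perverse/$\Gmrot$-weight filtration on $\cL_{\lav,0}$, identifies the $H$-dominant characters with $\Pv_+$ and exhibits $\lav$ as the highest weight of the simple $H$-module $\cL_{\lav,0}$. It remains to pin down the root datum as that of ${\check{G}}$: for $\lav$ a (quasi-)minuscule coweight I would compute the composition series of $\cL_{\lav,0}\conv\cL_{\muv,0}$ and of $\cL_{\lav,0}\conv\cL_{\lav,0}^{\vee}$ — a rank-one verification, using that convolution is filtered in the dominance order with multiplicity-one highest term (from support bounds on $\cR_{G,\g}$) — and match the answer with the corresponding tensor products in $\Rep{{\check{G}}}$; since the characters of $F$ on the $\cL_{\piv_i,0}$ generate $\Pv$, the group $H$ is connected. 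Hence $H\cong{\check{G}}$ with $\cL_{\lav,0}\leftrightarrow V_{\lav}$, and composing with the inclusion $\cC\hookrightarrow\KPcoh^{\hGO}(\cR_{G,\g})$ gives the desired monoidal functor.

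The principal obstacle is the $\otimes$-compatibility underpinning (i): showing that, on the subcategory generated by the $\cL_{\lav,0}$, the coherent convolution really corresponds under the Hodge-module incarnation of \cite{Xin25} to the fusion product — equivalently, that the ``hard direction'' of geometric Satake (monoidality of the cohomology fiber functor, the fusion commutativity constraint) survives the Rees degeneration from $[\hGO\backslash\Gr_G]$ to the cotangent stack — and, closely tied to it, the semisimplicity of $\cC$ inside the non-semisimple category $\KPcoh^{\hGO}(\cR_{G,\g})$. The restriction to type $A$ is precisely what makes these accessible, since it is there that \cite{Xin25} supplies a workable Hodge-theoretic handle on the coherent IC extensions and the orbit-closure singularities are mild enough to run the purity arguments.
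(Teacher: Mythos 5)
Your high-level architecture coincides with the paper's: both isolate the subcategory generated by the $\cL_{\lav,0}$, put a neutral Tannakian structure on it, and identify the Tannaka group with ${\check{G}}$ (the paper's $\KP_0$ is your $\cC$ plus duals, which it then shows are automatic since $\cL_{\lav,0}^{R}\cong\cL_{\lav,0}^{L}\cong\cL_{-w_0(\lav),0}$, exactly as you assert). However, the way you propose to execute the two hard steps contains a genuine gap.

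The central problem is your appeal to a monoidal ``comparison'' under which the coherent convolution on $\cC$ ``really corresponds to the fusion product'' on the constructible Satake category, from which you would transport the braiding, the MV fiber functor, and semisimplicity. No such monoidal equivalence is constructed anywhere, and constructing one is essentially equivalent to the conjecture itself. What the Hodge-module input of \cite{Xin25} actually yields (in type A, via symplectic resolutions of the slices $\bW^{\muv}_{\lav}$) is much weaker and purely $K$-theoretic: one shows that $\Xi=\sigma_1^*{i_1}_*$ is a ring homomorphism on $K$-groups, that it is injective, that $\Xi(\cL_{\lav})\cong\bigoplus_k i_{\le\lav*}\cIC(\Omega^k_{\Gr_{\lav}})\lr{k-\tfrac12 d_{\lav}}[k]$, that these coherent IC's are graded de Rham complexes of $\ICHg_{\lav}$, and that pushforward along the convolution map commutes with $\gr\DR$; only then does Fedorov's Hodge-module Satake equivalence enter, and only to compute a class in $K^{\hGO}(\Gr)$. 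The conclusion is the identity $[\cL_{\lav}]\conv[\cL_{\muv}]=\sum_{\nuv}\clamunu[\cL_{\nuv}]$ in the $K$-group --- precisely the statement you assert in one clause (``composition factors again of the form $\cL_{\nuv,0}$'') without supplying a mechanism. Everything downstream depends on it: the vanishing $\Lambda(\cF,\cG)=0$ (hence the symmetry of the renormalized $r$-matrices, which is where the braiding actually comes from --- intrinsically, via the factorization structure on $\cR_{X^I}$, not by transport from the constructible side), the concentration of $\bH^{\bullet}\circ\Xi$ in degree $0$, and the multiplicativity of dimensions needed for the tensor structure on the fiber functor. The fiber functor itself is $\For\circ\bH^0\circ\Xi$, built on the coherent side; its tensor structure comes from exhibiting $\pi_*\Xi(C_{\cF,\cG})$ as a $\Gmrot$-equivariant vector bundle on $X^2$ and rigidifying it by equivariance, not from the monoidality of the constructible fiber functor. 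Semisimplicity of $\cC$ is likewise an output (from reductivity of the Tannaka group, proved by the unipotent-radical argument), not an input to be imported via the decomposition theorem.

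Your step (ii) also diverges: the paper identifies the group by feeding the $K$-group identity into the Kazhdan--Larsen--Varshavsky theorem that the Grothendieck semiring determines a connected reductive group, rather than by a minuscule-coweight root-datum analysis. Your route could in principle work, but the composition series of $\cL_{\lav,0}\conv\cL_{\muv,0}$ that it requires is again exactly the unproved $K$-group theorem; the ``filtered in dominance order with multiplicity-one top term'' structure you invoke is not enough to pin down the lower terms. In short: right skeleton, but the load-bearing step --- the $K$-group computation of convolution via $\Xi$ and the graded de Rham description of coherent IC sheaves --- is missing, and the proposed substitute (a monoidal degeneration of geometric Satake) is not available.
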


\subsection{Main theorem}
The main theorem of this paper proves the Conjecture \ref{conj:Rep_embed} for $G$ of type A. Let $\KP_0$ be the full subcategory of $\KPcoh^{\hGO}(\cR_{G,\g})$ generated by simple objects $\cL_{\lav,0}$ for all $\lav\in \Pv_+$ under direct sums, convolutions, subquotients and (right and left) duals (Definition \ref{def: def of KP_0}).
\begin{theorem}[Theorem \ref{main thm}]\label{main thm in intro}
	Assume $G$ is of type A. Then the category $\KP_0$ admits a neutral Tannakian structure (in the sense of \cite{DM82}), and is equivalent to $\Rep{{\check{G}}}$ as neutral Tannakian categories over $\C$.
\end{theorem}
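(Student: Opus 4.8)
The plan is to construct the Tannakian structure on $\KP_0$ by exhibiting a fiber functor, and then to apply the Tannakian reconstruction theorem of \cite{DM82} to identify $\KP_0$ with $\Rep{\check{H}}$ for some affine group scheme $\check{H}$, after which we pin down $\check{H} \cong \check{G}$ using the combinatorics of simple objects together with the $K$-theoretic identification \eqref{eq:BFM}. First I would verify the abstract hypotheses: $\KP_0$ is by construction a full abelian subcategory of the rigid monoidal abelian category $\KPcoh^{\hGO}(\cR_{G,\g})$ closed under subquotients, convolution, direct sums and duals, so it is itself a rigid monoidal abelian category; since $\KPcoh^{\hGO}(\cR_{G,\g})$ is finite length, every object of $\KP_0$ has finite length, and the unit object is the structure sheaf, which one checks has $\End$ equal to $\C$ (so $\KP_0$ is $\C$-linear with simple unit). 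The nontrivial structural point is that convolution on $\KP_0$ is \emph{symmetric}: I would deduce this either from the commutativity constraint already present on the relevant Satake-type subcategory in Cautis--Williams theory (the $\Gmrot \times \Gmdil$-equivariance provides the grading that rigidifies the hexagon, exactly as in the classical Satake story), or, in type A, by transporting the braiding through the Hodge-module description of \cite{Xin25}. This symmetry, plus rigidity and finite length over $\C$, is what makes $\KP_0$ a neutral Tannakian category \emph{once} a fiber functor is produced.

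Next I would construct the fiber functor $\omega\colon \KP_0 \to \Vect^{\heartsuit}_{\C}$. The natural candidate is a "weight/cohomology" functor: on a simple object $\cL_{\lav,0}$ it should return a vector space of dimension equal to $\dim V_{\lav}$ (the irreducible $\check{G}$-representation of highest weight $\lav$), realized as the cohomology of the corresponding coherent IC sheaf along the fiber over the base point of $\Gr_G$, or equivalently as the total weight space under the torus $\check{T}$ acting through the BFM identification $K^{\hGO}(\cR_{G,\g}) \cong \C[T \times \check{T}]^W$. Concretely, I would use the Hodge-module incarnation of the coherent IC extensions from \cite{Xin25} to define $\omega$ as (hyper)cohomology, inheriting exactness and a tensor (monoidal) structure from the Künneth formula for the convolution diagram, just as in the constructible Satake proof. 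Faithfulness follows because a nonzero object of $\KP_0$ has a nonzero simple subquotient $\cL_{\lav,0}$, and $\omega(\cL_{\lav,0}) \neq 0$ by the dimension count; exactness follows from purity/semisimplicity of the relevant Hodge modules (so there are no extension problems), which is precisely where the type A hypothesis and \cite{Xin25} do their work.

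Having a neutral Tannakian category, \cite{DM82} gives $\KP_0 \simeq \Rep{\check{H}}$ with $\check{H} = \underline{\Aut}^{\otimes}(\omega)$ an affine group scheme over $\C$. It remains to identify $\check{H}$ with $\check{G}$. I would argue in three steps. First, $\check{H}$ is reductive: $\KP_0$ is semisimple (each object is a direct sum of the $\cL_{\lav,0}$, since $\KP_0$ is generated by these under the listed operations and there are no self-extensions by the purity input above), and a group scheme with semisimple representation category is (pro)reductive. Second, the irreducible representations of $\check{H}$ are in bijection with $\Pv_+$ compatibly with tensor structure: the classes $[\cL_{\lav,0}]$ multiply in $K^{\hGO}(\cR_{G,\g})$ exactly by the structure constants of $\Rep{\check{G}}$, because under $K^{\hGO}(\cR_{G,\g}) \cong \C[T \times \check{T}]^W$ the subring generated by the $[\cL_{\lav,0}]$ is $\C[\check{T}]^W = K(\Rep{\check{G}})$ — this is the $K$-theoretic shadow of Conjecture \ref{conj:Rep_embed} and is available from \cite{BFM05,CW23}. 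Hence the character lattice and Weyl group of $\check{H}$ match those of $\check{G}$. Third, to upgrade "same root datum combinatorics" to an isomorphism of reductive groups, I would invoke that a connected reductive group over $\C$ is determined up to isomorphism by its root datum, and rule out the possibility that $\check{H}$ is a proper reductive subgroup or a nontrivial central extension/quotient by comparing dimensions: $\dim \omega(\cL_{\lav,0}) = \dim V_{\lav}^{\check{G}}$, which is exactly the Weyl dimension formula value and forces $\check{H}$ to have the representation of that dimension as \emph{irreducible}, pinning down $\check{H} = \check{G}$ on the nose (the connectedness of $\check{H}$ follows since the simple objects are labeled by the full cone $\Pv_+$, not a sublattice-indexed set). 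Finally I would check the functor $\Rep{\check{G}} \to \KP_0$ so obtained is monoidal and sends $V_{\lav}$ to $\cL_{\lav,0}$, which is built into the construction.

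\textbf{Main obstacle.} The crux is establishing exactness and monoidality of the fiber functor $\omega$ — equivalently, the semisimplicity of $\KP_0$ and the compatibility of $\omega$ with convolution — since a priori the Koszul perverse heart carries genuine extensions and the convolution product need not be t-exact on the nose. This is exactly the place where the Hodge-module description of the coherent IC extensions from \cite{Xin25} and the type A hypothesis are essential: they supply the purity (hence semisimplicity and degeneration of the relevant spectral sequences) that makes $\omega$ a tensor-exact faithful functor. A secondary technical point is verifying that the symmetric monoidal constraint on $\KP_0$ is the "right" one (i.e. matches the sign conventions making $\check{H}$ rather than a super-group scheme appear); this is handled, as in classical Satake, by a grading-shift renormalization of the constraint using the cohomological grading, and in type A can be cross-checked against \cite{Xin25}.
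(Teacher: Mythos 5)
Your proposal reproduces the broad architecture of the paper's proof (a cohomological fiber functor, Tannakian reconstruction, identification of the group via $K$-theory), but it has genuine gaps at exactly the points where the real work happens. The most serious one: you treat the multiplication rule $[\cL_{\lav,0}]\conv[\cL_{\muv,0}]=\sum_{\nuv}\clamunu[\cL_{\nuv,0}]$ in $K^{\hGO}(\cR_{G,\g})$ as ``available from \cite{BFM05,CW23}''. It is not; identifying which elements of $\C[T\times\check{T}]^W$ the simple classes $[\cL_{\lav,0}]$ correspond to is the central computation of the paper (Theorem \ref{thm: determine KP_0 on K-group}). It is proved by pushing everything to $\Gr_G$ via the injective ring homomorphism $\Xi=\sigma_1^*i_{1*}$, computing $\Xi(\cL_{\lav})$ as a sum of coherent IC extensions of the differential sheaves $\Omega^k_{\Gr_{\lav}}$, and then identifying these with $\gr_{-k}\DR$ of the IC Hodge module via \cite{Xin25} --- which is where the type A hypothesis (symplectic resolutions of the slices $\bW^{\muv}_{\lav}$) actually enters. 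Without this computation your later steps have no input. Relatedly, your construction of the commutativity constraint is underdetermined: the renormalized $r$-matrices of Cautis--Williams a priori land in $\cG\conv\cF\{\Lambda(\cF,\cG)\}$ with an unknown shift, and the only way the paper kills $\Lambda$ is by invoking the $K$-group theorem (Lemma \ref{lem: Lambda=0}); ``transporting the braiding through the Hodge-module description'' is not a construction.

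Two further points. First, your exactness argument for the fiber functor runs through semisimplicity of $\KP_0$ (``no self-extensions by purity''), which is unjustified --- the Koszul perverse heart has genuine extensions, and in the paper semisimplicity of $\KP_0$ is a \emph{consequence} of the Tannakian structure (Remark \ref{rmk:semisimple}), not an input. What actually makes $\mathbf{F}=\For\circ\bH^0\circ\Xi$ exact is the vanishing $\bH^i\circ\Xi(\cL_{\lav})=0$ for $i\neq 0$ (a purity statement for Hodge structures on a point, Proposition \ref{calculation of bH*}) combined with the fact, again from the $K$-group theorem, that every object of $\KP_0$ is an iterated extension of the $\cL_{\lav}$; the long exact sequence then collapses. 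Second, the tensor structure on $\mathbf{F}$ does not come from ``the K\"unneth formula for the convolution diagram'': a K\"unneth-type isomorphism for $m_*$ here is only noncanonical. The paper instead spreads $\cF\conv\cG$ over $X^2$ using the factorization structure, shows that $\pi_*\Xi(C_{\cF,\cG})$ is a vector bundle interpolating between $\mathbf{F}(\cF\conv\cG)$ and $\mathbf{F}(\cF)\otimes\mathbf{F}(\cG)$, and uses $\Gmrot$-equivariance to trivialize it canonically. Finally, your identification of the Tannakian group by ``matching root data'' is heuristic; the clean statement you need is that a connected reductive group over $\C$ is determined by its Grothendieck semiring (\cite{KLV}), which is what the paper invokes after first proving the Tannakian group is algebraic, connected and reductive.
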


The reason for assuming $G$ is of type A is discussed in Section \ref{subsec_intro:Hodge}.

\subsection{The strategy of proof}
\subsubsection{}
The proof of the main theorem \ref{main thm in intro} is roughly divided into two steps:
\begin{enumerate}
	\item We prove the desired result on the $K$-group level. 
	
	\item We endow $\KP_0$ with a neutral Tannakian structure. Then we identify the Tannakian group as ${\check{G}}$.
\end{enumerate}

The first step is summarized as the following theorem (which is the same as Proposition \ref{Prop for calculation of dual}+ Theorem \ref{thm: determine KP_0 on K-group}).
\begin{theorem}
	Assume $G$ is of type A.
	\begin{enumerate}
		\item The left dual and right dual of $\cL_{\lav,0}$ are both isomorphic to $\cL_{-w_0(\lav),0}$.
		\item In the $K$-group $K(\KP_0)\subset K^{\hGO}(\cR_{G,\g})$, we have the following identity:
		\begin{equation*}
			[\cL_{\lav,0}]\conv [\cL_{\muv,0}]=\sum_{\nuv\in \Pv_+}\clamunu [\cL_{\nuv,0}]. 
		\end{equation*}
		Here, $\clamunu$ is the structure coefficient of tensor products in $\Rep{{\check{G}}}$, see \ref{def of clamunu}.
	\end{enumerate}
	
	By definition of $\KP_0$, these two results imply that simple objects in $\KP_0$ are precisely those $\cL_{\lav,0}$ for $\lav\in \Pv_+$.
\end{theorem}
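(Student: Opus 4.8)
The plan is to establish the two assertions in the theorem --- the description of duals and the Clebsch--Gordan-type identity in $K(\KP_0)$ --- by transporting everything through the isomorphism $K^{\hGO}(\cR_{G,\g}) \cong \C[T \times {\check{T}}]^W$ of \cite{BFM05} and then identifying the classes $[\cL_{\lav,0}]$ explicitly inside that ring.

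The first step is to pin down the image of $[\cL_{\lav,0}]$ under the BFM isomorphism. The simple object $\cL_{\lav,0}$ is the Koszul-perverse IC extension of a differential sheaf supported on the closure of the $G_{\O}$-orbit through $t^{\lav}$, twisted by the structure sheaf of the zero section in the Koszul direction. The key input here is the Hodge module description of coherent IC extensions from \cite{Xin25}: this should let me compute the character of $\cL_{\lav,0}$ as a $\hGO$-equivariant sheaf, and hence its $K$-class, fiberwise along $\cR_{G,\g} \to \Gr_G$. I expect the answer to be that $[\cL_{\lav,0}]$ maps to (a normalization of) the Weyl character $\chi_{\lav}$ viewed in the $\C[{\check{T}}]$-factor, times the class of the relevant Koszul complex in the $\C[T]$-factor --- in other words, under the symmetry $\C[T \times {\check{T}}]^W \cong \C[{\check{T}} \times T]^W$, the $[\cL_{\lav,0}]$ should match the classes $[\mathcal{O}_{\Gr_{\check{G}}^{\lav}}]$-type elements on the dual side, which are known (from the classical/BFM picture) to span a subring isomorphic to $\Rep{{\check{G}}}$ on $K$-theory.

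With that identification in hand, both parts follow. For part (1), the left and right duals in $\KPcoh^{\hGO}(\cR_{G,\g})$ are computed from the rigid monoidal structure; on $K$-theory, duality acts on $\C[T \times {\check{T}}]^W$ by the inversion $(\check t, t) \mapsto (\check t^{-1}, t^{-1})$, which on the $\chi_{\lav}$-factor sends $\chi_{\lav}$ to $\chi_{\lav^*} = \chi_{-w_0(\lav)}$; since $\cL_{-w_0(\lav),0}$ is the unique simple object with that $K$-class and the duals are themselves simple (being duals of simples in a rigid category), they must coincide. One should double-check that the Koszul-direction factor is duality-invariant, which it is because the zero section is $W$-stable and the Koszul complex is self-dual up to the expected grading shift --- this is where the normalization fixed in Section \ref{subsubsec:Koszul_perverse} is used. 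For part (2), the convolution product on $\Coh^{\hGO}(\cR_{G,\g})$ induces the ring multiplication on $\C[T \times {\check{T}}]^W$, and multiplying the images of $[\cL_{\lav,0}]$ and $[\cL_{\muv,0}]$ reproduces exactly the multiplication of Weyl characters $\chi_{\lav} \chi_{\muv} = \sum_{\nuv} \clamunu \chi_{\nuv}$; re-expressing the right-hand side in the basis $\{[\cL_{\nuv,0}]\}$ gives the claimed formula, and one must note that no simple objects $\cL_{\nuv,\mu}$ with $\mu \neq 0$ appear --- this is guaranteed because the subring generated by the $[\cL_{\lav,0}]$ lies entirely in the image of $\Rep{{\check{G}}} = \C[{\check{T}}]^W \hookrightarrow \C[T\times{\check{T}}]^W$, which meets the span of the other simples trivially.

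The main obstacle I anticipate is the first step: computing the $K$-class of $\cL_{\lav,0}$ precisely enough, with the correct grading normalization, to see that it lands in the $\Rep{{\check{G}}}$-subring and equals $\chi_{\lav}$ there. The perverse coherent IC sheaves on $\Gr_G$ are not simply structure sheaves of orbit closures --- there are correction terms from lower strata (the coherent analogue of the Kazhdan--Lusztig phenomenon), and one needs the \cite{Xin25} Hodge-theoretic computation to control these, together with the Koszul resolution in the cotangent direction, to assemble the full class. This is precisely where the type-$A$ hypothesis enters, since the Hodge module description and the attendant purity/semisimplicity statements are only available in that generality; in other types the lower-stratum corrections are not known to be controlled. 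Once the $K$-classes are identified, everything else is bookkeeping inside the commutative ring $\C[T \times {\check{T}}]^W$.
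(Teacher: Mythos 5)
Your plan has the right general shape---reduce both claims to an identification of the classes $[\cL_{\lav,0}]$ with Weyl characters in a commutative $K$-ring---but the step you flag as "the main obstacle" is in fact the entire content of the theorem, and your proposal does not supply a mechanism for it. The paper does not work inside the BFM ring $\C[T\times{\check{T}}]^W$ at all. Instead it introduces the functor $\Xi=\sigma_1^*i_{1*}$ to $\Coh^{\hGO}(\Gr_G)$, proves that $\Xi$ is an \emph{injective ring homomorphism} on $K$-groups (Prop.~\ref{prop:sigi is ring hom}, Cor.~\ref{injectivity of sigi}), computes $\Xi(\cL_{\lav,0})\cong\bigoplus_k i_{\le\lav*}\cIC(\Omega^k_{\Gr_{\lav}})\lr{k-\tfrac12 d_{\lav}}[k]$ by a Koszul resolution, identifies each $\cIC(\Omega^k_{\Gr_{\lav}})$ with a graded de Rham piece of the IC Hodge module via Xin's support estimate (this is exactly where type A and the symplectic resolutions of the slices $\bW^{\muv}_{\lav}$ enter), and then computes the pushforward along the convolution map using the decomposition of $m_{*,\Hg}(\ICHg_{\lav,\muv})$ in Fedorov's Hodge-module Satake category, which is where the multiplicities $\clamunu$ actually come from. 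Your proposal invokes \cite{Xin25} to "control the lower-stratum corrections" but gives no route from that control to the statement that the product of two IC classes decomposes with Clebsch--Gordan multiplicities; without the decomposition theorem for the convolution morphism (in its Hodge-module form), the assertion that $[\cL_{\lav,0}]$ "lands in the $\Rep{{\check{G}}}$-subring and equals $\chi_{\lav}$ there" is precisely what remains to be proved. A second, independent defect: the BFM isomorphism describes only the $G_{\O}$-equivariant $K$-group, so after passing to $\C[T\times{\check{T}}]^W$ you have discarded the $\Gmrot\times\Gmdil$-gradings. Part (2) asserts that no shifted simples $\cL_{\nuv,0}\{m\}(n)$ with $(m,n)\neq(0,0)$, and no $\cL_{\nuv,\mu}$ with $\mu\neq 0$, occur in the product; the former cannot be detected in a ring where the shift functors act trivially, so your argument cannot rule them out.

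For part (1) there is a further gap. Knowing (hypothetically) that duality acts on $\C[T\times{\check{T}}]^W$ by inversion would not follow for free---one would have to compute the effect of $\D$ and $(-)^*$ on the BFM presentation, which is not established anywhere. The paper's actual proof of Prop.~\ref{Prop for calculation of dual} avoids $K$-theory entirely: it uses the formulas $\cL^R\cong\D(\cL)^*$, $\cL^L\cong\D(\cL^*)$, shows $(\cL_{\lav})^*\cong\cL_{-w_0(\lav),0}$ by tracking the inversion involution $\iota$ through the defining Cartesian squares, and shows $\D(\cL_{\lav})\cong\cL_{\lav}$ by an explicit normal-bundle computation (Eq.~\eqref{eq: normal bundle calculation}) in which the canonical bundle of $\Gr_{\lav}$ cancels against the determinant of $\g_{\O}/\pla$ up to the $\Gmdil$-twist $\lr{-d_{\lav}}$---a cancellation special to $N=\g$. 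In both cases the argument closes because $\D$ and $(-)^*$ send simples to simples and a simple object is determined by its restriction to the open stratum; this is the categorical substitute for your "unique simple with that $K$-class" step, and it requires no identification of $K$-classes at all. I would recommend either adopting the paper's two-step structure ($\Xi$ plus Hodge modules) or, if you want to keep the BFM picture, first proving the compatibility of $\Xi$ with the BFM isomorphism and the equivariant refinement of the latter---neither of which is in the literature you cite.
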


For the second step, we first use the renormalized $r$-matrices constructed in \cite[Sec. 7]{CW23} to give a system of commutativity constraints in $\KP_0$. Then we construct the fiber functor as follows. Consider the morphisms $i_1:\cR_{G,\g}\to \Gr_G\times \g_{\O},[g,x]\mapsto ([g],\mathrm{Ad}_g(x))$, and $\sigma_1:\Gr_G\to \Gr_G\times \g_{\O},[g]\mapsto ([g],0)$. Define
\[\Xi:=\sigma_1^*{i_1}_*:\Coh^{\hGO}(\cR_{G,\g})\to \Coh^{\hGO}(\Gr_G).\]
By the construction of Koszul perverse t-structure in \cite[Thm. 4.1]{CW23}, this functor is Koszul perverse t-exact. Thus it induces a functor
\[\Xi:\KPcoh^{\hGO}(\cR_{G,\g})\to \KPcoh^{\hGO}(\Gr_G).\]
\begin{theorem}[Corollary \ref{cor:fiber functor}, Theorem \ref{thm: fibre functor}]\label{thm:Tannakian}
	Assume $G$ is of type A. Consider the composition of functors
	\[\KP_0\subset \KPcoh^{\hGO}(\cR_{G,\g})\xrightarrow{\Xi} \KPcoh^{\hGO}(\Gr_G)\xrightarrow{\pi_*} \Coh^{\hGO}(\pt) \xrightarrow{\cH^i}\Rep{\hGO}\xrightarrow{\mathrm{For}}\Vect.\]
	Denote it by $\mathbf{F}^i$.
	\begin{enumerate}
		\item When $i\neq 0$, the functor $\mathbf{F}^i$ is $0$;
		\item When $i=0$, $\mathbf{F}^i$ can be equipped with a fiber functor structure, which gives a neutral Tannakian structure on $\KP_0$.
	\end{enumerate}
\end{theorem}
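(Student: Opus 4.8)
The plan is to establish part (1) first and deduce part (2) from it. For (1), recall that $\Xi$ is Koszul perverse t-exact by \cite[Thm.~4.1]{CW23}, that $\pi_*$ is the derived pushforward to $B\hGO$ (so for $\cF\in\KPcoh^{\hGO}(\Gr_G)$ supported on an orbit closure $\overline{\Gr}^{\lav}$ it computes the equivariant coherent cohomology of $\overline{\Gr}^{\lav}$), and that $\cH^i$ then extracts the ordinary coherent cohomology $H^i(\overline{\Gr}^{\lav},\cF)$ with its residual $\hGO$-action, so that $\{\mathbf F^i\}$ is a cohomological $\delta$-functor on $\KP_0$. Since every object of $\KP_0$ has a finite filtration with subquotients among the simples $\cL_{\lav,0}$, $\lav\in\Pv_+$, a d\'evissage along the long exact sequences reduces the vanishing of $\mathbf F^i$ for $i\neq 0$ to the single assertion that $\pi_*\Xi(\cL_{\lav,0})$ is concentrated in cohomological degree $0$ for each $\lav$.

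The first key input is an explicit description of $\Xi(\cL_{\lav,0})=\sigma_1^*(i_1)_*\cL_{\lav,0}$. Combining the description of the simple Koszul perverse sheaves in \cite{CW23} with the Hodge-module computation of coherent IC extensions in \cite{Xin25}, I would identify $\Xi(\cL_{\lav,0})$ with (a grading twist of) the middle perverse coherent IC sheaf of $\overline{\Gr}^{\lav}\subset\Gr_G$. This is where type A is essential: in type A the orbit closures $\overline{\Gr}^{\lav}$ are normal and Cohen--Macaulay with rational singularities, so this IC sheaf is simply $\mathcal{O}_{\overline{\Gr}^{\lav}}\otimes\cM_{\lav}$ for the natural (semi)ample line bundle $\cM_{\lav}$ of highest weight $\lav$. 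Part (1) then reduces to the vanishing $H^{>0}(\overline{\Gr}^{\lav},\mathcal{O}_{\overline{\Gr}^{\lav}}\otimes\cM_{\lav})=0$, which is the Kempf/Frobenius-splitting-type vanishing for affine Grassmannian orbit closures; at the same time this identifies $\mathbf F^0(\cL_{\lav,0})$ with $H^0(\overline{\Gr}^{\lav},\cM_{\lav})$, a nonzero vector space (of dimension $\dim V^{\lav}$).

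For (2): $\C$-linearity of $\mathbf F^0$ is clear; exactness is immediate from (1), since the long exact sequence of $\{\mathbf F^i\}$ collapses to short exact ones; faithfulness follows because an exact functor out of a finite-length abelian category is faithful once it is nonzero on every simple, which is the nonvanishing just recorded. It remains to equip $\mathbf F^0$ with a symmetric monoidal structure, which I would assemble from: (i) a natural isomorphism $\Xi(\cF)\conv\Xi(\cG)\xrightarrow{\sim}\Xi(\cF\conv\cG)$ on $\KP_0$, coming from base change along the convolution diagram and the compatibility of $\sigma_1^*(i_1)_*$ with convolution in the Cautis--Williams formalism, with the lax map shown to be an isomorphism on $\KP_0$ by a check on the IC sheaves using Cohen--Macaulayness; (ii) the K\"unneth isomorphism $\pi_*(\cF\conv\cG)\cong\pi_*\cF\otimes\pi_*\cG$, obtained by writing $\cF\conv\cG=m_*(\cF\ttimes\cG)$, using $\pi_*m_*=\pi'_*$ for the pushforward to $B\hGO$ from the twisted product $\Gr_G\ttimes\Gr_G$, and degenerating the Leray spectral sequence of the $\Gr_G$-bundle $\Gr_G\ttimes\Gr_G\to\Gr_G$, again forced by the Cohen--Macaulay/rational-singularity hypotheses in type A; and (iii) the evident monoidality of $\cH^0$ on complexes concentrated in degree $0$ (supplied by (1)) and of $\mathrm{For}$. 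For the symmetric structure one checks that this monoidal structure intertwines the commutativity constraints on $\KP_0$ built from the renormalized $r$-matrices of \cite[Sec.~7]{CW23} with the standard symmetry of convolution on $\Gr_G$ and of $\otimes$ on $\Vect$.

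Finally, $\KP_0$ is rigid (closed under left and right duals by construction, the duals of simples having been computed in the first step of the paper), abelian (closed under subquotients), and $\C$-linear symmetric monoidal with $\End(\mathbf 1)=\C$ (since $\mathbf 1=\cL_{0,0}$ is simple and $\C$ is algebraically closed); together with the exact faithful $\C$-linear symmetric monoidal functor $\mathbf F^0\colon\KP_0\to\Vect$ this is precisely the data of a neutral Tannakian category in the sense of \cite{DM82}, which gives (2). The main obstacle I anticipate is the combination of the explicit identification of $\Xi(\cL_{\lav,0})$ and part (ii) above: in the coherent, as opposed to constructible, setting one cannot invoke decomposition-theorem purity either to degenerate the Leray spectral sequence or to force the base-change maps to be isomorphisms, so these must be controlled directly, and it is exactly here that the Cohen--Macaulay/rational-singularity property available only in type A does the essential work; the cohomology vanishing in (1) is by contrast of a classical and well-understood type.
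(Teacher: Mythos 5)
Your part (1) rests on an identification of $\Xi(\cL_{\lav,0})$ that is not correct, and your part (2) rests on a monoidality of $\Xi$ that the paper only establishes (and only can establish) at the level of $K$-groups. These are genuine gaps, not just a different route.

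For (1): $\Xi(\cL_{\lav,0})$ is not $\O_{\overline{\Gr}_{\lav}}\otimes\cM_{\lav}$ for an ample line bundle. A Koszul-resolution computation (Proposition \ref{prop: sigi of L}, Corollary \ref{sigi of KPla}) gives
$\Xi(\cL_{\lav})\cong\bigoplus_{0\le k\le d_{\lav}} i_{\le\lav*}\cIC(\Omega^k_{\Gr_{\lav}})\lr{k-\tfrac12 d_{\lav}}[k]$,
a direct sum of coherent IC-extensions of \emph{all} exterior powers of the cotangent sheaf of the open cell, spread over many cohomological degrees. Consequently $\mathbf F^0(\cL_{\lav})$ is the full intersection cohomology $IH^*(\overline{\Gr}_{\lav})\cong V_{\lav}$, not $H^0$ of a line bundle (the latter computes an affine Demazure module, whose dimension is not $\dim V_{\lav}$), and the vanishing of $\mathbf F^{i\neq 0}$ is not a Kempf-type vanishing: it comes from identifying $\cIC(\Omega^k_{\Gr_{\lav}})$ with $\gr_{-k}\DR^{\hGO}(\ICHg_{\lav})[k-d_{\lav}]$ and then using that $\cH^n\pi_{*,\Hg}(\ICHg_{\lav})$ is a pure, Tate Hodge structure, so its Hodge filtration jumps at the single step $k=(n+d_{\lav})/2$, which exactly cancels the shift $[2k-d_{\lav}]$. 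You have also misplaced the role of type A: normality, Cohen--Macaulayness and rational singularities of $\overline{\Gr}_{\lav}$ hold in all types; what is special to type A is that the slices $\bW^{\muv}_{\lav}$ admit symplectic resolutions, which is the hypothesis of Xin's support estimate (Theorem \ref{thm: Xin's result}) needed to recognize $\gr_{-k}\DR(\ICHg_{\lav})$ as a coherent IC-extension at all.

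For (2): your step (i), a natural isomorphism $\Xi(\cF)\conv\Xi(\cG)\xrightarrow{\sim}\Xi(\cF\conv\cG)$, fails. The paper's Proposition \ref{prop:sigi is ring hom} shows only that $\Xi(\cF\conv\cG)$ is an \emph{iterated extension} of the objects $\cF_i\conv\Xi(\cG)$ over the composition factors $\wt{\cF_i}$ of $i_{2*}\cF$, so $\Xi$ is a ring homomorphism on $K$-groups but is not monoidal as a functor, and no base-change or Cohen--Macaulay argument will make the lax map an isomorphism. Likewise the Künneth isomorphism in your step (ii) exists only noncanonically. The actual construction of the tensor structure bypasses both: one forms the object $\cE_{\cF,\cG}=\pi_*\Xi(C_{\cF,\cG})$ on $X^2$ from the factorization structure, uses the $K$-group theorem plus the dimension equality $\dim IH^*(\overline{\Gr}_{\nuv})=\dim V_{\nuv}$ to show the fiberwise rank is constant (hence $\cE_{\cF,\cG}$ is a vector bundle), and then uses $\Gmrot$-equivariance and the triviality of the $\Gmrot$-action on the special fiber to rigidify the identification of the fiber over $\Delta$ (which is $\mathbf F(\cF\conv\cG)$) with the fiber over $U$ (which is $\mathbf F(\cF)\otimes\mathbf F(\cG)$); associativity and compatibility with the $r$-matrix commutativity constraints are then checked over $X^3$ and via $\overline{\swap}$. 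Your dévissage for exactness/faithfulness and your final Tannakian packaging are fine, but without a correct replacement for these two inputs the proof does not go through.
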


Finally, we identify the Tannakian group with ${\check{G}}$ using the $K$-group level result and the trick from \cite[Thm. 1.2]{KLV}.

\subsubsection{Hodge module theory}\label{subsec_intro:Hodge}
The Hodge module theory plays an important role in the proof. The reason is as follows. Using the charaterizations of $\cL_{\lav,0}$ ($\lav\in\Pv_+$), we can deduce that (see Corollary \ref{sigi of KPla}):
\[\Xi(\cL_{\lav,0})\cong \bigoplus_{0\le k\le d_{\lav}}\cIC(\Omega_{\Gr_{\lav}}^k)\lr{k-\tfrac12 d_{\lav}}[k].\]
Here $\cIC$ is the coherent IC-extension in Definition \ref{def:cIC}; $\Gr_{\lav}\subset \Gr_G$ is the spherical Schubert cell correspond to $\lav$, with dimension $d_{\lav}$, and $\Omega_{\Gr_{\lav}}^k$ is the $k$-th differential sheaf relative to $\Spec \C$; $\lr{-}$ is the grading shift due to $\Gmdil$-equivariance (Definition \ref{shift functors}), and $[-]$ is the cohomology grading shift.

The key point is that, we can use the Hodge module theory to give a description of $\cIC(\Omega_{\Gr_{\lav}}^k)$. In Saito's Hodge module theory, there is a (polarizable) pure Hodge module of weight $d_{\lav}$ on $\overline{\Gr}_{\lav}$, called the IC Hodge module $\ICHg_{\lav}$. Consider the graded de Rham complexes $\gr_p\DR(\ICHg_{\lav}),p\in \Z$, which is a complex of coherent sheaves; see Appendix \ref{subsec:grDR} for more details. In fact, we will need a (weak) equivariant version of graded de Rham functor constructed in Appendix \ref{subsec:equivariant}, and obtain an object $\gr_p\DR^{\hGO}(\ICHg_{\lav})\in \Coh^{\hGO}(\overline{\Gr}_{\lav})$.
\begin{proposition}[Corollary \ref{cor: Hodge-de Rham desciption for IC lav}]
	Asssume $G$ is of type A. There is an isomorphism in $\Coh^{\hGO}(\overline{\Gr}_{\lav})$:
	\begin{equation}\label{eq:intro_Hg}
		\cIC(\Omega_{\Gr_{\lav}}^k)\cong \gr_{-k}\DR^{\hGO}(\ICHg_{\lav})[k-d_{\lav}].
	\end{equation}
\end{proposition}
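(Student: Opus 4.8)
The plan is to deduce \eqref{eq:intro_Hg} from the general Hodge-module description of coherent IC extensions of differential sheaves established in \cite{Xin25}, specialized to the variety $X=\overline{\Gr}_{\lav}$ and its open dense smooth Schubert cell $\Gr_{\lav}$ (on which $\ICHg_{\lav}$ restricts to the trivial Tate Hodge module), and then to upgrade to the $\hGO$-equivariant setting. Under the hypotheses of \cite{Xin25}, for an open dense smooth $U\subset X$ one has $\cIC(\Omega^k_U)\cong \gr_{-k}\DR(\ICHg_X)[k-\dim X]$ in $\Coh(X)$, the point being that the right-hand side lies in the heart of the middle perverse coherent t-structure of \cite{AB10} and is identified with the minimal extension by its behaviour on $U$ together with the absence of subquotients supported on $X\setminus U$. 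The first task is to recall this statement and check the normalization: since $\ICHg_X|_U$ has underlying filtered $\mathcal{D}$-module $(\mathcal{O}_U,F)$ with $\gr^F$ concentrated in degree $0$, a direct computation of the graded de Rham complex gives $\gr_{-k}\DR(\ICHg_X)|_U\cong \Omega^k_U[\dim X-k]$, so after the shift $[k-\dim X]$ it restricts to $\Omega^k_U=\cIC(\Omega^k_U)|_U$; together with $d_{\lav}=\dim\overline{\Gr}_{\lav}$ being the weight of $\ICHg_{\lav}$, this matches \eqref{eq:intro_Hg}.

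The second task is to verify that $X=\overline{\Gr}_{\lav}$ meets the hypotheses of \cite{Xin25} when $G$ is of type A. Affine Grassmannian Schubert varieties are always normal, Cohen--Macaulay, and have rational singularities, but the Hodge-theoretic description demands finer control: essentially that the perverse coherent truncation of $\gr_{-k}\DR(\ICHg_{\lav})$ creates no spurious subquotient supported on the boundary, equivalently a Cohen--Macaulay/duality behaviour of the graded de Rham complex compatible with that truncation. This is exactly where type A is used: one invokes the explicit lattice/matrix models for $\overline{\Gr}_{\lav}$ in type A, together with the attendant resolutions (resp. the identification of transverse slices with nilpotent orbit closures in $\mathfrak{gl}_N$), to bound the singularities sharply enough. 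I expect this verification to be the main obstacle; outside type A the relevant bound can fail, which is the source of the standing assumption.

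The final task is to promote the isomorphism to $\Coh^{\hGO}(\overline{\Gr}_{\lav})$. The equivariant graded de Rham functor $\gr_p\DR^{\hGO}$ of Appendix \ref{subsec:equivariant} is constructed as a lift of $\gr_p\DR$ along the conservative forgetful functor $\Coh^{\hGO}(\overline{\Gr}_{\lav})\to\Coh(\overline{\Gr}_{\lav})$, and $\cIC$ likewise admits an equivariant refinement characterized by the same properties in the equivariant perverse coherent heart. Both sides of \eqref{eq:intro_Hg} are then equivariant minimal extensions of the same object, namely $\Omega^k_{\Gr_{\lav}}$ with its tautological $\hGO$-equivariant structure, and such extensions are unique; hence the non-equivariant identification forces the equivariant one. (Equivalently, the $\C^\times$-torsor of isomorphisms between the two objects carries a trivial $\hGO$-action, so the non-equivariant isomorphism is automatically equivariant.)
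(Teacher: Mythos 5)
Your overall strategy is the same as the paper's: reduce \eqref{eq:intro_Hg} to the characterization of coherent $\cIC$-extensions in Proposition \ref{prop:cIC_and_support}, i.e.\ show that $\gr_{-k}\DR(\ICHg_{\lav})[k-d_{\lav}]$ restricts to $\Omega^k_{\Gr_{\lav}}$ on the open cell and that it and its Grothendieck--Serre dual satisfy the support inequalities \eqref{eq: supp condition}, then note that the equivariant statement follows because these conditions only need to be checked after forgetting equivariance. The normalization check on the smooth locus in your first paragraph is correct and matches the paper.

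However, you defer exactly the step that carries the content, and your sketch of it would not go through as stated. Xin's Theorem \ref{thm: Xin's result} applies to a variety with symplectic singularities admitting a symplectic resolution, and $\overline{\Gr}_{\lav}$ is not such a variety (its smooth locus $\Gr_{\lav}$ carries no symplectic form in general), so the theorem cannot be invoked globally; the paper's Proposition \ref{2} instead verifies the inequality locally, covering $\overline{\Gr}_{\lav}$ by neighbourhoods of the form $\bW^{\nuv}_{\lav}\times\Gr_{\nuv}$ and applying Xin's theorem to the affine Grassmannian slices $\bW^{\nuv}_{\lav}$, which by \cite{KWWY} have symplectic singularities and --- precisely when $G$ is of type A --- admit symplectic resolutions. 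That, rather than lattice models or nilpotent orbit closures, is where type A enters. You also omit how the \emph{dual} support condition is obtained: the paper uses the self-duality $\bD(\ICHg_{\lav})\cong\ICHg_{\lav}(d_{\lav})$ together with $\D_Y\circ\gr_p\DR\cong\gr_{-p}\DR\circ\bD_Y$ to rewrite $\cD_Y\bigl(\gr_{-k}\DR(\ICHg_{\lav})[k-d_{\lav}]\bigr)$ as $\gr_{k-d_{\lav}}\DR(\ICHg_{\lav})[-k]$, so the same local inequality covers both conditions; without this identification the second half of \eqref{eq: supp condition} is unverified. Finally, your parenthetical torsor argument for equivariance is unreliable, since these objects need not have scalar endomorphism rings; the uniqueness of the equivariant minimal extension (equivalently, the equivariant form of Proposition \ref{prop:cIC_and_support}) is the correct mechanism, and is the one the paper uses.
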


This description uses the following result in \cite{Xin25}.
\begin{theorem}[{\cite[Thm. 5.4]{Xin25}}]\label{thm:Xin_intro}
	Let $X$ be a symplectic singularity of dimension $2n$, which admits a \textbf{symplectic resolution}. Then the following inequality holds
	\[\codim \Supp \cH^{i}\left(\gr_{-k}\DR(\ICHg_{X})[k- 2n]\right)\ge 2i+2 \text{ \ for any $i\ge 1$}.\]
	for all $i\ge 1$.
\end{theorem}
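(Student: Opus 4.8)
The plan is to transport the statement to a symplectic resolution $\pi\colon \wt X\to X$ and to argue by induction on $\dim X$ using the transverse slice structure of symplectic singularities. A crude estimate only gives $2i$ in place of $2i+2$, so the essential point is to isolate inside $R^i\pi_*\Omega^k_{\wt X}$ exactly the part that comes from $\ICHg_X$ itself.

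First I would invoke Saito's decomposition theorem for the projective birational morphism $\pi$: since $\Q^H_{\wt X}[2n]$ is pure, $\pi_*\Q^H_{\wt X}[2n]\cong\ICHg_X\oplus M'$ in $\MHM(X)$, with $\ICHg_X$ occurring with multiplicity one as the unique summand with strict support $X$ (over the open symplectic leaf $X^{\mathrm{sm}}$, over which $\pi$ is an isomorphism, one recovers $\Q^H_{X^{\mathrm{sm}}}[2n]$). Applying the graded de Rham functor — which is exact, additive, compatible with external products, satisfies $\gr_{-k}\DR(\Q^H_{\wt X}[2n])\cong\Omega^k_{\wt X}[2n-k]$ on the smooth variety $\wt X$, and, by Saito's strictness theorem for projective morphisms, satisfies $\gr_{-k}\DR(\pi_*(-))\cong R\pi_*\gr_{-k}\DR(-)$ — one gets that $\gr_{-k}\DR(\ICHg_X)$ is a direct summand of $R\pi_*\Omega^k_{\wt X}[2n-k]$, hence $\cH^i(\gr_{-k}\DR(\ICHg_X)[k-2n])$ is a direct summand of $R^i\pi_*\Omega^k_{\wt X}$. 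As this sheaf is supported on $\{x\colon\dim\pi^{-1}(x)\ge i\}$ and symplectic resolutions are semismall, any symplectic leaf $X_\alpha$ meeting the support of $\cH^i(\gr_{-k}\DR(\ICHg_X)[k-2n])$ satisfies $\tfrac12\codim_X X_\alpha\ge i$; this already yields the weak bound $\codim\ge 2i$, and what remains is to exclude leaves of codimension exactly $2i$.

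For this I would induct on $\dim X$ inside the class of symplectic singularities admitting a symplectic resolution, distinguishing whether the offending leaf $X_\alpha$ (with $\codim_X X_\alpha=2i$) is positive-dimensional or a point. If $\dim X_\alpha>0$, then working formally, resp.\ étale-locally, along its generic point, Kaledin's slice theorem presents $X$ as $X_\alpha\times Y_\alpha$ and $\wt X$ as $X_\alpha\times\wt Y_\alpha$, where $Y_\alpha$ is a conical symplectic singularity of dimension $2i<2n$ and $\wt Y_\alpha\to Y_\alpha$ is a symplectic resolution, and correspondingly $\ICHg_X\cong\Q^H_{X_\alpha}[\dim X_\alpha]\boxtimes\ICHg_{Y_\alpha}$; the Künneth formula for $\gr_{-k}\DR$ then expresses the germ of $\cH^i(\gr_{-k}\DR(\ICHg_X)[k-2n])$ along $X_\alpha$ through the sheaves $\cH^i(\gr_{-b}\DR(\ICHg_{Y_\alpha})[b-2i])$ for $0\le b\le k$, which vanish by the inductive hypothesis applied to $Y_\alpha$ (their support has codimension $\ge 2i+2>\dim Y_\alpha$). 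If instead $X_\alpha$ is a point $x$, then necessarily $i=n$ and, by Kaledin's theorem, $X$ is formally at $x$ a conical symplectic singularity of dimension $2n$; here I would argue separately, using that $\ICHg_X$ has the strict support (and, by Saito's self-duality $\bD_X\gr_{-k}\DR(\ICHg_X)\cong\gr_{-(2n-k)}\DR(\ICHg_X)$ up to shift, the strict cosupport) of an IC Hodge module, unlike the summands of $M'$, so that the top cohomology of $\gr_{-k}\DR(\ICHg_X)$ does not meet the deepest stratum. The base of the induction, $\dim X\le 2$, is a direct computation: a two-dimensional symplectic singularity admitting a symplectic resolution is a du Val singularity $\C^2/\Gamma$ with $\Gamma\subset SL_2(\C)$ finite, so $\ICHg_X\cong(q_*\Q^H_{\C^2}[2])^\Gamma$ for the quotient map $q$, and $\gr_{-k}\DR(\ICHg_X)\cong(q_*\Omega^k_{\C^2})^\Gamma[2-k]$ is concentrated in cohomological degree $k-2$, whence $\cH^i(\gr_{-k}\DR(\ICHg_X)[k-2n])=0$ for all $i\neq 0$.

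The main obstacle — and the source of the improvement from $2i$ to $2i+2$ — is precisely the need to separate $\ICHg_X$ from the auxiliary summands $M'$ and to control its top cohomology along the deepest stratum; making this rigorous requires Saito's duality for the graded de Rham functor together with its compatibility with proper pushforward and with external products, the precise form of Kaledin's slice theorem (including that a symplectic resolution splits as a product over the slice and that $\ICHg_X$ and $\gr_{-k}\DR$ behave well under formal/étale-local base change), and semismallness of symplectic resolutions along with the description of their exceptional loci as preimages of the singular locus. Granting these inputs, the two-step structure above closes the argument.
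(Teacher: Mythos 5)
First, a structural remark: the paper does not prove this statement. It is imported verbatim from \cite[Thm. 5.4]{Xin25} (here and again as Theorem \ref{thm: Xin's result}) and used as a black box, so there is no internal proof to compare yours against; I can only assess your argument on its own terms.

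The first half of your skeleton is correct and standard: by Saito's decomposition theorem $\pi_{*,\Hg}\Q^{H}_{\wt{X}}[2n]\cong \ICHg_X\oplus M'$ for a symplectic resolution $\pi$, by strictness $\gr_{-k}\DR$ commutes with projective pushforward, and $\gr_{-k}\DR(\Q^{H}_{\wt{X}}[2n])\cong\Omega^k_{\wt{X}}[2n-k]$, so $\cH^i(\gr_{-k}\DR(\ICHg_X)[k-2n])$ is a direct summand of $R^i\pi_*\Omega^k_{\wt{X}}$, and semismallness gives $\codim\Supp\ge 2i$. The genuine gap is the passage from $2i$ to $2i+2$, i.e.\ excluding the leaves of codimension exactly $2i$. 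Your reduction of the positive-dimensional such leaves to the transverse slice via Kaledin's theorem and induction on dimension is the right move, but it only relocates the problem: after the induction, everything rests on the claim that for a conical symplectic singularity of dimension $2m$ with symplectic resolution, $\cH^{m}(\gr_{-k}\DR(\ICHg)[k-2m])$ vanishes at the cone point. For this you offer only ``strict support (and strict cosupport) \dots so that the top cohomology does not meet the deepest stratum,'' which is a restatement of the claim rather than an argument: strict support is a condition on the underlying perverse sheaf/$\cD$-module and does not by itself constrain the supports of the coherent sheaves $\cH^i(\gr_{-k}\DR(\ICHg_X))$. Note that the bound $2i$ already holds for \emph{every} summand of $\pi_{*,\Hg}\Q^{H}_{\wt{X}}[2n]$, with or without strict support, so the improvement must come from a genuinely new input --- e.g.\ identifying precisely which summands absorb $R^i\pi_*\Omega^k_{\wt{X}}$ over the deepest stratum, or a Koll\'ar/Du Bois--type statement for the intermediate Hodge pieces. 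Already for $\overline{\mathcal{O}}_{\min}\subset\mathfrak{sl}_N$ resolved by $\rT^*\mathbb{P}^{N-1}$ this is a nontrivial verification that your outline does not perform; and the self-duality \eqref{eq:D(grDR)} merely exchanges the statement for $k$ with that for $2n-k$, proving neither.

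A secondary issue: Kaledin's product decomposition along a leaf is only formal, so your K\"unneth step needs the facts that $\ICHg_X$ \emph{together with its Hodge filtration} decomposes as an external product after completion, and that formation of $\gr_{-k}\DR$ and of $\Supp\cH^i$ commutes with completion. This is plausible (supports descend along the faithfully flat completion map, and the filtered $\cD$-module underlying $\ICHg$ is locally determined), but it must be justified, and one must also check that the slice inherits a symplectic resolution so that the inductive hypothesis applies to it. Notice that the present paper, in its own application (Proposition \ref{2}), sidesteps exactly this point because the affine Grassmannian slices give genuine Zariski-local product neighborhoods, to each factor of which Xin's theorem is applied as a black box.
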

This inequality (together with the inequality for its Grothendieck-Serre dual) gives the characterization of coherent IC-extension functor by \cite[Prop. 4.7]{Xin25}, see Proposition \ref{prop:cIC_and_support}. 

The singularity of (spherical) affine Schubert variety $\overline{\Gr}_{\lav}$ at $t^{\muv}\in \overline{\Gr}_{\lav}$ is captured by the affine grassmannian slice $\overline{\mathcal{W}}_{\lav}^{\muv}$ defined as \eqref{eq:slice}. It's known in \cite[Thm. 2.9]{KWWY} that this slice has symplectic singularity in the sense of \cite{Bea00}, and it admits a symplectic resolution \textbf{if $G$ is of type A}. Then we can use Xin's result (Theorem \ref{thm:Xin_intro}) to deduce the Hodge module theoretic description \eqref{eq:intro_Hg} of coherent IC-sheaves. 

For $G$ of other types, there are slices without symplectic resolutions using \cite[Thm. 2.9]{KWWY}. It will be interesting to explore whether the coherent $\cIC$-extensions could be characterized by Hodge modules, or we should modify the perverse coherent t-structure using other perversity functions. Once this issue is resolved, the method presented in this paper may be generalized​ to other types of​ reductive group $G$.

We finally mention another possible approach towards the classical limit of Satake equivalence. In Fedorov's work \cite{Fe21} on Satake equivalence for Hodge modules, they considered the $G_{\O}$-equivariant abelian category of (polarizable) pure Hodge modules on $\Gr_G$:
\[\HM^{G_{\O}}(\Gr_G).\]
It contains a monoidal abelian full subcategory generated by Tate twists of $\ICHg_{\lav}$:
\[\Tate^{G_{\O}}(\Gr_G).\]
They endowed this category with a Tannakian structure, and identified this category with the representation category of Deligne’s modified Langlands dual group ${\check{G}}_{\Q}\times^{\mu_2}\mathbb{G}_{\mathrm{m},\Q}$. We refer to \textit{ibid.} for more details.

In \textit{ibid.} the author says, in a subsequent work, they want to construct an associated graded functor
\[\gr:\HM^{G_{\O}}(\Gr_G)\to  \Coh(\rT^*(G_{\O}\backslash\Gr_G))^{\heartsuit},\]
and then get a functor
\[\mathrm{Rep}_{\Q}(\check{G}_{\Q}\times^{\mu_2}\mathbb{G}_{\mathrm{m,\mathbb{Q}}})\to \Coh(\rT^*(G_{\O}\backslash\Gr_G))^{\heartsuit}.\]
The functor $\gr$ above seems not easy to define because $\Gr_G$ is not a limit of smooth projective schemes. 

It would be interesting to compare this approach with ours.

\subsection{Further direction}
Define $\wt{\KP}_0$ to be the full subcategory of $\KPcoh^{\hGO}(\cR_{G,\g})$ generated by objects in $\KP_0$ under extensions. Assume $G$ is of type A. Applying the same method as in the Theorem \ref{thm:Tannakian}, $\wt{\KP}_0$ can be endowed with a neutral Tannakian structure, thus is equivalent to $\Rep{\wt{G}'}$ for some affine group scheme $\wt{G}'$ over $\C$. We propose the following conjecture, which serves as​ a further step towards Conjecture \ref{conj:abelian_version}.
\begin{conjecture}
	This Tannakian group scheme $\wt{G}'$ is isomorphic to the positive loop group $\check{G}_{\O}$. In other words, $\wt{\KP}_0\cong \Rep{\check{G}_{\O}}$ as neutral Tannakian categories over $\C$. 
\end{conjecture}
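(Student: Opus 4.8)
\medskip
\noindent\emph{Towards a proof.} The strategy would parallel that of Theorem~\ref{main thm in intro}, with $\check{G}$ replaced by a pro-algebraic group. As explained just above, the argument of Theorem~\ref{thm:Tannakian} equips $\wt{\KP}_0$ with a fiber functor --- the restriction of $\mathbf{F}^0$ --- so that $\wt{\KP}_0\simeq\Rep{{\check{G}}'}$, where I write ${\check{G}}'$ for $\wt{G}'$. The inclusion $\KP_0\hookrightarrow\wt{\KP}_0$ is exact, fully faithful, monoidal, and compatible with the fiber functors, and its essential image is stable under subquotients: the two categories have the same simple objects (each $\cL_{\lav,0}$, $\lav\in\Pv_+$, is simple in the finite-length ambient category $\KPcoh^{\hGO}(\cR_{G,\g})$, and every simple of $\wt{\KP}_0$ is a Jordan--H\"older factor of an object of $\KP_0$), and $\KP_0\simeq\Rep{{\check{G}}}$ is semisimple, so any subquotient in $\wt{\KP}_0$ of an object of $\KP_0$ is a finite direct sum of the $\cL_{\lav,0}$. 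By Tannakian duality \cite{DM82} this yields a faithfully flat $\wt{G}'\twoheadrightarrow{\check{G}}$; let $\check{N}'$ be its kernel. Then $\check{N}'$ is pro-unipotent: an object of $\wt{\KP}_0$ lies in $\KP_0$ exactly when $\check{N}'$ acts trivially on it, and every object of $\wt{\KP}_0$ has a finite filtration with subquotients in $\KP_0$. By the Levi decomposition in characteristic zero we may take $\wt{G}'\cong{\check{G}}\ltimes\check{N}'$, and, since $K(\wt{\KP}_0)=K(\KP_0)=R({\check{G}})=R(\check{G}_{\O})$ makes the $K$-theoretic statement automatic, the whole content reduces to identifying the ${\check{G}}$-equivariant pro-unipotent group $\check{N}'$ --- equivalently, its pro-nilpotent Lie algebra $\check{\mathfrak{n}}'$, a pro-object of $\Rep{{\check{G}}}$ with ${\check{G}}$-equivariant bracket --- with $\check{N}_{\O}:=\ker(\check{G}_{\O}\to{\check{G}})$, whose Lie algebra is $\check{\g}\otimes t\C[[t]]$ with the pointwise bracket. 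The $\Gmrot$-grading shift on $\wt{\KP}_0$, a monoidal auto-equivalence compatible with $\mathbf{F}^0$, corresponds to a $\Gm$-action on $\wt{G}'$ that should match loop rescaling on $\check{G}_{\O}$, and in particular grades $\check{\mathfrak{n}}'=\bigoplus_{n\ge 1}\check{\mathfrak{n}}'_n$ by ${\check{G}}$-submodules; the target becomes $\check{\mathfrak{n}}'_n\cong\check{\g}$ for all $n\ge 1$, compatibly with the bracket.

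This I would extract from Yoneda Ext-groups and products in $\wt{\KP}_0$. Since $\wt{\KP}_0$ is closed under extensions in $\KPcoh^{\hGO}(\cR_{G,\g})$, one has $\mathrm{Ext}^1_{\wt{\KP}_0}(\cL_{\lav,0},\cL_{\muv,0})=\Hom_{\Coh^{\hGO}(\cR_{G,\g})}(\cL_{\lav,0},\cL_{\muv,0}[1])$, computable from the explicit form of the $\cL_{\lav,0}$ in \cite{CW23} together with the Hodge-module identity $\Xi(\cL_{\lav,0})\cong\bigoplus_{0\le k\le d_{\lav}}\cIC(\Omega_{\Gr_{\lav}}^k)\lr{k-\tfrac12 d_{\lav}}[k]$ and the geometry of the affine Grassmannian slices $\overline{\mathcal{W}}_{\lav}^{\muv}$ (which admit symplectic resolutions in type A, \cite{KWWY}). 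On the representation side $\mathrm{Ext}^1_{\check{G}_{\O}}(V_{\lav},V_{\muv})=\Hom_{{\check{G}}}\!\big(V_{\lav},V_{\muv}\otimes H^1(\check{N}_{\O},\C)\big)$ recovers $\check{\mathfrak{n}}_{\O}/[\check{\mathfrak{n}}_{\O},\check{\mathfrak{n}}_{\O}]$ with its loop-weights; matching the two identifies the abelianization of $\check{\mathfrak{n}}'$ and fixes the weight normalization, while the higher pieces $\check{\mathfrak{n}}'_n$ and the bracket would be pinned down by the $\Gmrot$-graded $K$-theory of $\cR_{G,\g}$ (matching that of $\O(\check{N}_{\O})$) together with further Yoneda computations, giving $\check{\mathfrak{n}}'\cong\check{\g}\otimes t\C[[t]]$ and hence $\wt{G}'\cong\check{G}_{\O}$.

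The main obstacle is controlling the \emph{entire} pro-Lie algebra $\check{\mathfrak{n}}'$: ruling out unexpected ${\check{G}}$-isotypic components in higher $\Gmrot$-weight and relations beyond those defining $\check{\g}\otimes t\C[[t]]$ --- a vanishing/freeness statement for higher Ext-groups in $\wt{\KP}_0$, the analogue of the purity-and-semisimplicity input available on the constructible side, here to be wrung from the perverse-coherent formalism and \cite{Xin25}. A more geometric alternative would grant the relevant part of Conjecture~\ref{conj:abelian_version} and identify $\wt{\KP}_0$ with the full subcategory of $\KPcoh^{\wh{{\check{G}}}_{\O}}(\cR_{{\check{G}},\check{\g}})$ of objects supported over the base point of $\Gr_{{\check{G}}}$; the derived fibre of $\cR_{{\check{G}},\check{\g}}\to\Gr_{{\check{G}}}$ there is, $\check{G}_{\O}$-equivariantly, the (derived) affine space $\check{\g}_{\O}$, which is $\check{G}_{\O}$-equivariantly contractible, yielding $\Rep{\check{G}_{\O}}$ directly. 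Either way the content lies entirely in this extension/derived structure, beyond the reach of $K$-theory alone.
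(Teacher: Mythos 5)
The statement you are addressing is posed in the paper only as a conjecture in the ``Further direction'' subsection; the paper contains no proof of it, so there is nothing on the paper's side to compare against. Your text is, by your own admission, a reduction strategy rather than a proof. The formal reductions are sound: $\wt{\KP}_0$ and $\KP_0$ have the same simple objects, the essential image of $\KP_0$ is a tensor subcategory closed under subquotients, so \cite{DM82} gives a faithfully flat $\wt{G}'\twoheadrightarrow\check{G}$ whose kernel $\check{N}'$ acts unipotently on every object and is therefore pro-unipotent, and the conjecture reduces to identifying the pro-nilpotent Lie algebra $\check{\mathfrak{n}}'$ with $\check{\g}\otimes t\C[[t]]$ as a graded $\check{G}$-equivariant Lie algebra. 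That is the easy half. The substance --- the $\mathrm{Ext}$ computations between the $\cL_{\lav,0}$ in the Koszul perverse heart, and above all the freeness/vanishing statement controlling the \emph{whole} of $\check{\mathfrak{n}}'$ rather than its abelianization --- is left open, as you acknowledge; so this cannot be counted as a proof.

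Two specific points in the sketch need repair. First, the $\Gmrot$-shift $\{1\}$ is \emph{not} a monoidal autoequivalence of $\wt{\KP}_0$: by \eqref{simple in Koszul perverse heart} it sends the simple $\cL_{\lav,0}$ to the distinct simple $\cL_{\lav,0}\{1\}$, which lies outside $\wt{\KP}_0$ (its simples are exactly the unshifted $\cL_{\lav,0}$). So the loop grading on $\check{\mathfrak{n}}'$ does not come for free from an autoequivalence of $\wt{\KP}_0$; you would need, for instance, to pass to the larger extension-closed category generated by all $\{m\}$-shifts, identify it with $\Rep{\wt{G}'\rtimes\Gm}$, and extract the grading from that semidirect product. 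Second, in the ``geometric alternative,'' the derived fibre of $\cR_{\check{G},\check{\g}}\to\Gr_{\check{G}}$ over the base point is the \emph{derived self-intersection} of $\check{\g}_{\O}$ inside $\check{\g}_{\K}$, not the plain affine space $\check{\g}_{\O}$; its nontrivial Koszul structure is precisely what the Koszul perverse t-structure is designed to detect, so asserting equivariant contractibility would erase exactly the extension data that $\check{N}'$ is supposed to encode. In short: a reasonable and mostly correct reduction of the conjecture to a concrete Lie-algebra identification, but with the decisive input missing and two of the proposed mechanisms for supplying it stated incorrectly.
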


\subsection{Structure of the paper}
In section \ref{sec: AG background}, we review some algebro-geometric background in Cautis-Williams theory.

In section \ref{sec:CW_theory}, we review basic constructions on the categorified Coulomb branch, such as the convolution product and its rigidity, Koszul perverse t-structure, renormalized $r$-matrices, etc.

In section \ref{sec:KP_0}, we define the full subcategory $\KP_0\subset \KPcoh^{\hGO}(\cR_{G,\g})$, and compute the duals for objects in this subcategory.

In section \ref{sec:K_group}, we determine $\KP_0$ at the $K$-group level.

In section \ref{sec:Tannakian}, we endow $\KP_0$ with a neutral Tannakian structure, and identify its Tannakian group with ${\check{G}}$.

In appendix \ref{sec:appendix}, we review some basic facts on Hodge module theory and associated graded constructions used in the main context.

\subsection{Notations and conventions}

\begin{enumerate}
    \item Unless specified otherwise, all vector spaces, schemes and stacks will be defined over $\C$; we also write $\pt=\Spec \C$. Unlike the terminology in \cite{CW23,CW24stk,CW24mor}, a category refers to an ordinary category in the traditional way. We will point out $\infty$-category when we use it\footnote{We also deliberately used the $\mathsf{mathsf}$ font for $\infty$-categories and the $\mathrm{mathrm}$ font for ordinary categories.}. Similarly, a scheme means a classical scheme, and we will point out derived scheme when we use it. In section \ref{sec: AG background}, we will briefly review some algebro-geometric background in Cautis-Williams theory. 
    
    In this paper, a variety over $\C$ means an integral quasi-projective scheme over $\C$.

    \item An $\infty$-category $\cC$ is called stable if it has a zero object, every morphism has a fiber and a cofiber, and a triangle in $\cC$ is a fiber sequence if and only if it a cofiber sequence (c.f. \cite[Sec. 1.1.1]{Lur17HA}). The homotopy category $\mathrm{h}(\cC)$ is a triangulated category (c.f. \cite[Sec. 1.1.2]{Lur17HA}). A t-structure on $\cC$ is a t-structure on its homotopy category $\mathrm{h}(\cC)$ (c.f. \cite[Sec. 1.2.1]{Lur17HA}). This data contains information on two subcategories $(\cC^{\le0},\cC^{\ge0}) $, equipped with truncation functors $\tau^{\le n}:\cC \to \cC^{\le n} $, $\tau^{\ge n}:\cC \to \cC^{\ge n} $. The heart of the t-structure $\cC^{\heartsuit}:=\cC^{\le 0}\bigcap\cC^{\ge 0}$ is an abelian category. Write $\cH^n:=\tau^{\ge n}\tau^{\le n}[-n]:\cC\to \cC^{\heartsuit}$ for the cohomology functor of this t-structure. We also write $\cC^+:=\bigcup_{n\in \Z}\cC^{\ge n}$ (resp. $\cC^-:=\bigcup_{n\in \Z}\cC^{\le n}$) for the bounded below part (resp. bounded above part) of $\cC$, and $\cC^b:=\cC^+\bigcap \cC^-$. The t-structure is called bounded if $\cC=\cC^b$.
    
    The stable $\infty$-categories $\QCoh$, $\Coh$, $\IndCoh$ admit standard t-structures (we will introduce these sheaf theories in Section \ref{sec: AG background}), and we will write $(-)^{\heartsuit}$ for the heart of the standard t-structures; we also call the objects in $\QCoh(-)^{\heartsuit}$ (resp. $\Coh(-)^{\heartsuit}$) ordinary quasi-coherent (resp. coherent) sheaves. We will also consider the Koszul perverse t-structures (for specific spaces), and their hearts will be denoted by $\KPcoh$ $\KPqcoh$ $\KPindcoh$ respectively. The cohomology functor for these Koszul perverse t-structures are denoted by $\cH_K^n,n\in \Z$.
    
    \item For a ($\infty$-)category $\cC$, a grading shift functor on $\cC$ means an autoequivalence on $\cC$. When $\cC$ is a stable $\infty$-category, the cohomology grading shift functor on $\cC$ is denoted by $[1]$. There are two other grading shift functors $\{1\}$ and $\lr{1}$ playing important roles in this paper, which are given by some $\Gm$-equivariance. See Definition \ref{shift functors}.

    \item For an abelian category $\cA$, let $K_0(\cA)$ denote its $K_0$-group (which is also called the Grothendieck group); the notation $K_0$ will be abbreviated to $K$ throughout this paper. It is the free abelian group on the isomorphism classes of objects in $\cA$, modulo the relations $[A] - [B]+[C]=0$ associated to exact sequences $0\to A\to B\to C\to 0$.
    
    For a stable $\infty$-category $\cC$, let $K(\cC)$ denote its $K_0$-group. It is the free abelian group on the isomorphism classes of objects in $\cC$, modulo the relations $[A]-[B]+[C]=0$ associated to fiber-cofiber sequences $A\to B\to C$. Let $(\cC^{\le 0},\cC^{\ge 0})$ be a bounded t-structure on $\cC$, and write $\cC^{\heartsuit}$ for the heart of this t-structure. Then the embedding $\cC^{\heartsuit}\to \cC$ induces an isomorphism of $K$-groups, with the inverse given by $[C]\mapsto \sum_{i\in \Z}(-1)^i[\cH^i(C)]$.

    \item Let $X$ be a derived scheme with a classical affine group scheme $H$-action. We write $\QCoh^H(X):=\QCoh([H\backslash X])$ for the quasi-coherent sheaf category on the fpqc quotient $[H\backslash X]$; the quasi-coherent sheaves on $[H\backslash X]$ can also be described as equivariant sheaves, see Example \ref{exam:QCoh(X/G)}. Similar notations are employed for coherent sheaves and ind-coherent sheaves. We also write $K^H(X):=K(\Coh^{H}(X))$. 

    We often use the same name for morphisms before and after quotient. We will also abuse notations for some equivariant sheaves if there's no confusions. For example, we write $\O_{X}$ for the structure sheaf $\O_{[H\backslash X]}\in \QCoh^H(X)$, and write $\omega_X=\omega_{X/\pt}$ for the dualizing complex $\omega_{[H\backslash X]/[H\backslash \pt]}:=a^!\O_{[H\backslash \pt]}$, where $a:[H\backslash X]\to [H\backslash \pt]$ is the natural projection.
    
    Assume $X$ is a (classical) scheme. We write $\Omega_{X}:=\Omega_{X/\pt}$ for the (ordinary) K\"ahler differential sheaf relative to $\pt$. It admits a natural $H$-equivariant structure, thus we view $\Omega_{X}$ as object in $\QCoh^H(X)^{\heartsuit}$. We also write $\Omega_X^k:=\bigwedge^k\Omega_X\in \QCoh^H(X)^{\heartsuit}$ for the (ordinary) $k$-th differential sheaf; here $\bigwedge$ stands for the wedge product of ordinary quasi-coherent sheaves.

    Assume $X$ is a (classical) scheme. For $\cF\in \Coh^H(X)^{\heartsuit}$, we view $\cF$ as an $H$-equivariant ordinary coherent sheaf on $X$. Then we can define the support of $\cF$ as usual, which is denoted by $\Supp \cF$. It is an $H$-invariant closed subscheme of $X$. In general, for $\cF\in\Coh^H(X)$, we write $\Supp \cF$ for $\bigcup_k \Supp\cH^k(\cF)$.

\end{enumerate}

\begin{figure}
	\renewcommand{\arraystretch}{1.5}  
	\centering
\scalebox{0.7}{

\begin{tabular}
	{|
		>{\arraybackslash}m{3cm}
		|>{\arraybackslash}m{15.5cm}|>{\arraybackslash}m{2.7cm}|
	}
	\hline
	Notation & Description & Location defined \\
	\hline
	$Y_{\O},Y_{\K}$ & Positive loop space and loop space & \multirow[c]{4}{*}{Sec.\ref{subsec: def of spaces}} \\
	\cline{1-2} $(G,N)$ &
	{$G$: reductive group; $N$: finite dimensional representation of $G$; e.g. $N=\g$ the adjoint representation} &  \\
	\cline{1-2}
	$\hGO,\hGK$ & (Positive) loop groups extended by extra two $\Gm$, called $\Gmrot$ and $\Gmdil$ &  \\
	\cline{1-2}
	$\Gr_G,\cT_{G,N},\cR_{G,N}$ & 
	{Spaces used in defining the space of triple $\cR_{G,N}$; usually abbreviate as $\Gr,\cT,\cR$} & \\
	\hline 
	\shortstack{$Y_{\O,X^I},Y_{\K,X^I}$\\$\Gr_{X^I},\cT_{X^I},\cR_{X^I}$} & Global versions of these spaces, for a fixed algebraic curve $X=\A^1$ & \multirow{1}{1.5cm}{Sec.\ref{subsubsec:gobal}} \\
	\hline
	\shortstack{$i_1:\cR\hookrightarrow \Gr\times N_{\O}$\\ $i_2:\cR\hookrightarrow \cT$} & Closed embeddings in the definition of $\cR$ & \multirow{4}{1.5cm}{{Sec.\ref{subsec:R}; \\ \ \\ Global version: Sec.\ref{subsubsec:gobal}}} \\
	\shortstack{$\sigma_1:\Gr\hookrightarrow \Gr\times N_{\O}$\\ $\sigma_2:\Gr\hookrightarrow \cT$} & Zero section embeddings & \\
	\shortstack{$p_1:\Gr\times N_{\O}\to \Gr$\\ $p_2:\cT\to \Gr$\\ $p:\cR\to \Gr$} & Natural projections to $\Gr$ & \\
	\hline
	$\Xi:=\sigi$ & Composition functor $ \Coh^{\hGO}(\cR)\xrightarrow{i_{1*}}\Coh^{\hGO}(\Gr\times N_{\O}) \xrightarrow{\sigma_1^*} \Coh^{\hGO}(\Gr)$ & Sec. \ref{sec:K_group} \\
	\hline
	$X\ttimes Y$ & Twisted product for spaces &\multirow{2}{*}{Sec.\ref{subsubsec:twisted_prod}}\\
	$\cF\tbox \cG$ & Twisted (external) product for sheaves & \\
	\hline
	$\Pv$, $P$ & $\Pv=X_*(T)$ is the cocharacter lattice, $P=X^*(T)$ is the character lattice & Sec. \ref{sec:CW_theory} beginning
	\\
	$(\Pv\times P)_{\dom}$ & Index set (up to shift) of simple objects in $\Pcoh^{\hGO}(\Gr)$, $\KPcoh^{\hGO}(\Gr)$, $\KPcoh^{\hGO}(\cR)$ & After Thm. \ref{theorem:simple objects on Gr}
	\\
	\hline
	$\Gr_{\lav},\Gr_{\le\lav}$ & $\Gr_{\lav}\subset \Gr$ is a (spherical) Schubert cell, $\Gr_{\le \lav}\subset \Gr$ is the closure $\overline{\Gr}_{\lav}$ & {Sec. \ref{subsubsec:Gr_G}} \\
	$\Gr_{\lav,\muv}$ & $\Gr_{\lav,\muv}=\Gr_{\lav}\ttimes \Gr_{\muv}$ & {Sec. \ref{subsec: twisted prod is IC}} \\ 
	$\cR_{\lav},\cT_{\lav},\cR_{\le\lav},\cT_{\le\lav}$ & The base change of $\Gr_{\lav}$ and $\Gr_{\le\lav}$ to $\cR$ and $\cT$ respectively & Eq. \eqref{eq:RT} \\
	$i_{\le \lav},j_{\lav}$ & $i_{\le \lav}:\Gr_{\le\lav}\hookrightarrow \Gr$ is the closed embedding, $j_{\lav}:\Gr_{\lav}\hookrightarrow \Gr_{\le \lav}$ is the open embedding & Sec. \ref{subsubsec:Gr_G} \\ 
	\hline
	$j_{\lav,p!*}$ & $\Pcoh^{\hGO}(\Gr_{\lav})\to \Pcoh^{\hGO}(\overline{\Gr}_{\lav})$, intermediate extension for perverse sheaves & Eq. \eqref{intermediate extension for P}
	\\
	$j_{\lav,!*}$ & $\KPcoh^{\hGO}(\Gr_{\lav})\to \KPcoh^{\hGO}(\overline{\Gr}_{\lav})$, intermediate extension for Koszul perverse sheaves & Eq. \eqref{intermediate extension for KP}
	\\
	$\wt{\cIC}=\wt{\cIC}_{X}$ & $\mathrm{Refl}^H(X)\to \Coh^H(X)$,  $\wt{\cIC}$-functors & Eq. \eqref{eq:cIC for Refl}
	\\
	${\cIC}={\cIC}_{U\subset X}$ & $\mathrm{VB}^H(U)\to \Coh^H(X)$,  coherent $\cIC$-extension functors; $\cIC_{U\subset X}=\wt{\cIC}_X\circ \cH^0(j_*(-))$ & Eq. \eqref{eq:cIC for VB}
	\\
	relations between above functors & Let $\cE\in \VB^{\hGO}(\Gr_{\lav})$ with $\Gmdil$-weight $n$, then $j_{\lav,!*}(\cE[n+\tfrac12 d_{\lav}])=j_{\lav,p!*}(\cE[\tfrac12 d_{\lav}])[n]=\cIC(\cE)[n+\tfrac12 d_{\lav}]$ & Eq. \eqref{eq:!*relations}
	\\
	\hline
	$\rmIC^{\Q}$, $\rmIC^{\C}$ & Constructible IC complex with coefficient $\Q$ and $\C$ respectively & \cite{BBD} \\
	$\ICHg$ & IC Hodge module & Sec. \ref{subsec:Hodge} \\
	\hline
	$\overline{\cL}^p_{\lav,\mu}$ & $\overline{\cL}^p_{\lav,\mu}=i_{\le \lav *}j_{\lav,p!*}(\Olamu[\tfrac{1}{2}d_{\lav}])$, simple object in $\Pcoh^{\hGO}(\Gr)$ & Eq. \eqref{eq:cLp}\\
	$\overline{\cL}_{\lav,\mu}$ & $\overline{\cL}_{\lav,\mu}=i_{\le \lav *}j_{\lav,!*}(\Olamu\lr{-\tfrac{1}{2}d_{\lav}})$, simple object in $\KPcoh^{\hGO}(\Gr)$ & Eq. \eqref{eq:cLb} \\
	$\cL_{\lav,\mu}'$; $\cL_{\lav}'$ & simple object in $\KPcoh^{\hGO}(\cR_{\le \lav})$; $\cL_{\lav}':=\cL_{\lav,0}'$ & Eq. \eqref{def of KPlamu'} \\
	$\cL_{\lav,\mu}$; $\cL_{\lav}$ & $\cL_{\lav,\mu}=i_{\le \lav*}(\cL_{\lav,\mu}')$, simple object in $\KPcoh^{\hGO}(\cR)$; $\cL_{\lav}:=\cL_{\lav,0}$ & Eq. \eqref{def of KPlamu}
	\\
	\hline
	$\D_X$ & $\D_X(-)=\cHom(-,\omega_X)$ is the Grothendieck-Serre dual & Eq. \eqref{eq:tly} \\
	$\cD_X$ & $\cD_X(-)=\cHom(-,\omega_X)[-\dim X]$ is the shifted Grothendieck-Serre dual & Eq. \eqref{eq:cD}\\
	$\bD_X$ & Verdier dual functor for Hodge modules & Eq. \eqref{eq:wjy}\\
	\hline
	$[1],\lr{1},\{1\}$ & $[1]$: cohomology grading shift; $\lr{1}$ / $\{1\}$: grading shift w.r.t $\Gmdil$ / $\Gmrot$-equivariance & Def. \ref{shift functors} \\
	$(1)$ & Tate twist for mixed Hodge modules & Eq. \eqref{eq:Tate twist}
	\\
	\hline
\end{tabular}
}
\caption{Notation used in the paper}
\label{notation}
\end{figure}

We abuse certain symbols as follows.
\begin{enumerate}
	\item We view $ \cR,\Gr\times N_{\O},\cT$ as spaces over $\Gr$. The base changes of $i_{\le \lav}$ and $j_{ \lav}$ to $\cR,\Gr\times N_{\O},\cT$ is also denoted by the same symbols.
	\item We denote the base change of $i_1,i_2,\sigma_1,\sigma_2,p_1,p_2,p$ over any locally closed subscheme $S\subset \Gr$ by the same symbols. We also write $\Xi=\sigi$ in these situations.
	\item The morphisms $i_1,i_2,\sigma_1,\sigma_2,p_1,p_2,p,\pi_1$ all have global versions, and we denote their global versions by the same symbols.
\end{enumerate}

\subsection{Acknowledgements}
The author sincerely thanks his advisor, Professor Peng Shan, for suggesting this problem, and for her patient guidance and encouragement throughout the work. The author thanks Zhengze Xin for his results on the graded de Rham complex, which serves as a key tool for this paper. The author thanks Professor Lin Chen for discussing many details in the paper, especially those on (derived) algebraic geometry; the author learned a lot from these discussions. The author thanks Professor Penghui Li for his discussions on an early version of the paper. The author thanks one student study group on higher algebras for teaching him a lot, especially Jinyi Wang. The author thanks Fulin Xu for discussing reflexive modules with him.

The main idea of this paper originated during the preparation of the author's bachelor thesis. He sincerely thanks his family, teachers, and friends for their unwavering support throughout his undergraduate years.

This work is supported by NSFC Grant 12225108 through the author's advisor Peng Shan.

\section{Algebro-geometric background in Cautis-Williams theory}\label{sec: AG background}

In this section, we review some algebro-geometric notions in Cautis-Williams theory, following \cite{CW23,CW24stk,CW24mor}; we only collect what we use in this paper, and refer to \textit{ibid.} for more details. We also refer to \cite[Sec. 5.2]{VV25} for a quick review.

\subsection{Ind-geometric derived stacks}

\subsubsection{DG algebras}
Let $\CAlg_{\C}$ be the $\infty$-category of non-positively graded commutative\footnote{The DG algebras below will by default be non-positively graded commutative.} DG algebras; and given $A \in \CAlg_\C$, let $\CAlg_A$ be the $\infty$-category of non-positively graded commutative DG $A$-algebras. We say $B \in \CAlg_A$ is \textit{$n$-truncated} if $H^k(B) = 0$ for all $k < -n$. Let $\tau_{\le n}\CAlg_A$ denote the full $\infty$-subcategory of $\CAlg_A$ consisting of $n$-truncated DG algebras. There is a truncation functor $\tau_{\le n}:\CAlg_A\to \tau_{\le n}\CAlg_A$. We say $B\in\CAlg_A$ is \textit{truncated} if it is $n$-truncated for some $n$.

Let $B\in \CAlg_A$ be a DG $A$-algebra.
\begin{itemize} 
	\item It is called \textit{finitely $n$-presented} if it is a compact object in $\tau_{\le n}\CAlg_A$, i.e. $\mathrm{Hom}_A(B, -)$ commutes with arbitrary direct sums. It is \textit{almost finitely presented} if its truncation $\tau_{\le n}B$ is finitely $n$-presented for all $n$. 
	\item It is called \textit{strictly tamely $n$-presented} if it is a filtered colimit of finitely $n$-presented DG $A$-algebras $B_{\alpha}$ such that $B$ is flat over each $B_{\alpha}$. It is \textit{strictly tamely presented} if $\tau_{\le n}B$ is strictly tamely $n$-presented for all $n$. A typical example of tamely presented $A$-algebra is 
	\begin{equation}\label{eq:Ainfty}
		\C[\mathbb{A}^{\infty}_A]:=A[x_i,i\in \mathbb{N}]\cong \colim_n A[x_1,\dots,x_n].
	\end{equation}
\end{itemize}

\subsubsection{Derived stacks}
We first introduce some basic notions in derived algebraic geometry following \cite[Ch. 2]{GR17}. Let $\Grpd$ be the $\infty$-category of $\infty$-groupoids, and $\Cat_{\infty}$ be the $\infty$-category of $\infty$-categories. A derived (resp. classical) stack (implicitly over $\C$) is a functor from $\CAlg_{\C}$ (resp. $\cCAlg_{\C}$) to $\Grpd$ which is a sheaf for the fpqc topology. Let $\Stk_{\C}$ and $\Stk_{\C}^{\cl}$ be the $\infty$-category of derived and classical\footnote{Note that the notion for classical stacks here refers to $\infty$-stacks in much of literature.} stacks respectively. The embedding $\cCAlg_{\C}\hookrightarrow \CAlg_{\C}$ gives a restriction functor $\Stk_{\C}\to \Stk_{\C}^{\cl}$. This functor has a fully faithful left adjoint $\Stk_{\C}^{\cl}\hookrightarrow \Stk_{\C}$, thus we can view classical stacks as derived stacks. Write $\cl:\Stk_{\C}\to \Stk_{\C}^{\cl}\hookrightarrow \Stk_{\C}$ for the composition. 

The Yoneda embedding gives a fully faithful embedding $\Spec:\CAlg_{\C}\to \Stk_{\C}$, with objects in the image called affine derived schemes. Let $\Sch_{\C}\subset \Stk_{\C}$ be the full $\infty$-subcategory of derived schemes; this $\infty$-subcategory is stable under fiber products in $\Stk_{\C}$. Let $\cSch_{\C}$ be the (ordinary) category of classical schemes; we can view it as a full subcategory of $\Stk^{\cl}_{\C}$, and further a full subcategory of $\Stk_{\C}$. The fully faithful embedding $\Stk^{\cl}_{\C}\hookrightarrow \Stk_{\C}$ restricts to $\cSch_{\C}\hookrightarrow \Sch_{\C}$, thus we can view classical schemes as derived schemes; the composition functor $\cl$ also restricts to $\cl:\Sch_{\C}\to \cSch_{\C}\hookrightarrow \Sch_{\C}$.

A derived stack $X$ is called \textit{convergent} if $X(A)\cong \lim_n X(\tau_{\le n}A)$ for any $A\in \CAlg_{\C}$. Let $\wh{\Stk}_{\C}\subset \Stk_{\C}$ denote the full $\infty$-subcategory of convergent derived stacks.

A derived stack $X$ is called \textit{geometric} if its diagonal $X\to X\times X$ is affine and there exists a faithfully flat morphism $\Spec B\to X$ in $\Stk_{\C}$ (c.f. \cite[Ch. 9]{Lur18SAG}). According to \cite[Prop. 3.25]{CW24stk}, a geometric derived stack is convergent. A geometric derived stack $X$ is called \textit{truncated} if $A$ is truncated for any flat morphism $\Spec A\to X$, or equivalently, if there exists a truncated faithfully flat morphism $\Spec A\to X$ (c.f. \cite[Prop. 9.1.6.1]{Lur18SAG}). Let $\GStk_{\C}\subset {\Stk}_{\C}$ denote the full $\infty$-subcategory of geometric derived stacks, and $\GStk_{\C}^+\subset \GStk_{\C}$ denote the full $\infty$-subcategory of truncated geometric derived stack. 
\begin{example}\label{example:quotient_geom_stk}\ 
	\begin{itemize}
		\item A quasi-compact and semi-separated derived scheme $X$ is geometric;
		\item Furthermore, let $G$ be a classical affine group scheme acting on $X$, then the fpqc quotient derived stack $[G\backslash X]$ is geometric;
		\item $\GStk_{\C}$ is closed under fiber products in $\Stk_{\C}$. 
	\end{itemize}
\end{example}

\subsubsection{Some conventions about morphisms}\label{subsubsec:mor}
Let $f:X\to Y$ be a morphism between derived stacks. 
\begin{itemize}
	\item It is a \textit{closed immersion} (or called \textit{closed embedding}) if for any $\Spec A \to Y$, the morphism $\tau_{\le 0}(X\times_Y\Spec A) \to \tau_{\le 0}(\Spec A)$ is a closed immersion of ordinary affine schemes.
	\item It is \textit{proper} if for any morphism $\Spec A \to Y$ the fiber product $X \times_Y \Spec A$ is proper over
	$\Spec A$ in the sense of \cite[Def. 5.1.2.1]{Lur18SAG}.
	\item It is \textit{(locally) almost finitely presentated}\footnote{As in \cite{CW24stk}, we omit the word locally by default, as all morphisms we consider will be quasi-compact or effectively so.} if for any $n$ and any filtered colimit $A\cong \colim A_{\alpha}$ in $\tau_{\le n}\CAlg_{\C}$, the canonical map $$\colim X(A_{\alpha})\to X(A)\times_{Y(A)}\colim Y(A_{\alpha})$$ is an isomorphism (c.f. \cite[Def. 17.4.1.1]{Lur18SAG}). For example, if $X$ and $Y$ are Noetherian schemes and $f$ is finitely presented in classical sense, then $f$ is almost finitely presented.
	\item It is \textit{geometric} if, for any morphism $\Spec A\to Y$, the fiber product $X\times_{Y}\Spec A$ is geometric. 
	\item It is \textit{strictly tamely presented} if it is affine and for any $\Spec A\to Y$, the coordinate ring of $X\times_Y\Spec A$ is strictly tamely presented. It is \textit{tamely presented} if it is geometric and for any $\Spec A\to Y$ there is a strictly tamely presented flat cover $\Spec B \to X \times_Y \Spec A $ such that $B$ is a strictly tamely presented $A$-algebra. (c.f. \cite[Def. 4.6]{CW24mor})
\end{itemize}
These properties of morphisms are stable under composition and base change in $\Stk_{\C}$ (for the tamely presented property, see \cite[Prop. 4.9]{CW24mor}). 

A geometric derived stack $X$ is said to be tamely presented if the map $X\to \Spec \C$ is tamely presented.

\begin{example} \ 
\begin{itemize}
    \item If $X,Y$ are geometric then $f:X\to Y$ is also geometric.
    \item If $f$ is representable (by spectral Deligne-Mumford stack), geometric, and almost finitely presented, then it is tamely presented (c.f. \cite[Prop. 4.7]{CW24mor}).
    \item Let $G$ be a classical affine group scheme acting on a tamely presented derived scheme $X$, then the fpqc quotient derived stack $[G\backslash X]$ is tamely presented (c.f. \cite[Prop. 4.11]{CW24mor}).
\end{itemize}
\end{example}

\subsubsection{Ind-geometric derived stacks}
An \textit{ind-geometric derived stack} is a convergent derived stack $X$ which admits an expression $X\cong \colim_{\alpha}X_{\alpha}$ as filtered colimit of truncated geometric derived stacks along closed immersions in the $\infty$-category $\wh{\Stk}_{\C}$ of {convergent derived stacks}. We also call $\colim_{\alpha}X_{\alpha}$ an ind-geometric presentation of this ind-geometric derived stack.

\begin{example}\label{example: X/G}\ 
    \begin{itemize}
    	\item In the definition of ind-geometric derived stack, if all $X_{\alpha}$ are truncated quasi-compact and semi-separated derived schemes, we call $X$ an \textit{ind-derived scheme}. An ind-derived scheme is an ind-geometric derived stack.
    	\item The fpqc quotient $[G\backslash X]$ of the ind-derived scheme by a classical affine group scheme $G$ is also an ind-geometric derived stack.
    	
    	\noindent In fact, assume $X_{\alpha}$ are $G$-invariant, then $[G\backslash X]\cong \colim_{\alpha}[G\backslash X_{\alpha}]$ gives $[G\backslash X]$ a presentation as an ind-geometric derived stack (c.f. \cite[Sec. 4.2]{CW24stk}).
    \end{itemize}  
\end{example}

A morphism $f:X\to Y$ between ind-geometric derived stacks is \textit{ind-proper} (resp. \textit{ind-closed immersion}, \textit{almost ind-finitely presented}) if for some (or equivalently any) ind-geometric presentation $X\cong\colim_{\alpha}X_{\alpha}$ and for any $X_{\alpha}$, we can choose a truncated geometric closed substack $Y_\alpha \hookrightarrow Y$, such that $f$ factors through some $f_{\alpha}:X_{\alpha}\to Y_{\alpha}$ and each $f_{\alpha}$ is proper
(resp. a closed immersion, almost finitely presented). These properties of morphisms are stable under composition and base change in $\Stk_{\C}$; see \cite[Sec. 4]{CW24stk} for more details.

An ind-geometric derived stack $X\cong\colim_{\alpha}X_{\alpha}$ is \textit{reasonable} if the structure maps are almost finitely presented. It is \textit{ind-tamely presented} if it is reasonable and all the $X_{\alpha}$ are tamely presented.

\subsection{Coherent sheaves theory}\label{subsec: coh shvs}
\subsubsection{}\label{subsubsec:coh}
For a DG algebra $A\in\CAlg_{\C}$, let $\Mod_A$ be the stable $\infty$-category of DG $A$-modules; $\Mod_A$ admits a standard t-structure. We say an $A$-module $M$ is coherent if it is bounded and almost perfect (i.e. $\tau^{\ge n}M$ is compact in $\Mod_A^{\ge n}$ for all $n$).

For a derived stack $X$, let $\QCoh(X)$ be the stable $\infty$-category of quasi-coherent sheaves on $X$. It is the limit of the $\infty$-categories $\Mod_R$ over all morphisms $\Spec R\to X$. It admits a standard t-structure, and the heart is denoted by $\QCoh(X)^{\heartsuit}$. If $X$ is geometric, then $\QCoh(X)$ is equivalent to the corresponding limit over the Cech nerve of any faithfully flat cover (c.f. \cite[Prop. 9.1.3.1]{Lur18SAG}). 

When $X$ is a truncated geometric derived stack, let $\Coh(X)\subset \QCoh(X)$ denotes the full $\infty$-subcategory of coherent sheaves, i.e. those $\cF\in \QCoh(X)$ such that its pullback along some (equivalently, any) faithfully flat cover $\Spec A\to X$ is coherent. 

A geometric derived stack $X$ is \textit{locally coherent} (resp. \textit{locally Noetherian}, resp. \textit{flat locally almost finitely presented}) if it admits a faithfully flat cover $\Spec A\to X$ such that $A$ is coherent (resp. Noetherian, resp. almost finitely presented over $\pt=\Spec \C$). It is \textit{admissible} if it admits an affine morphism to a geometric derived stack which is flat locally almost finitely presented.\footnote{Here, the definition of admissible property is different to \cite[Def. 4.12]{CW24mor}, where a geometric derived stack $X$ is called admissible if it admits an affine morphism to a locally Noetherian geometric derived stack. 

An important feature for admissible property is that the class of admissible geometric derived stack should be closed under products over $\pt$ in $\Stk_{\C}$. But it's not clear whether the admissible property defined in \cite[Def. 4.12]{CW24mor} is closed under products over $\pt$, since Noetherian affine schemes are not closed under products over $\pt$. Note that almost finitely presented (over $\pt$) affine schemes are closed under products over $\pt$, and are Noetherian by Hilbert basis theorem \cite[Prop. 7.2.4.31]{Lur17HA}. Thus we have defined a subclass of geometric derived stacks which is closed under products over $\pt$, and satisfies the conditions in \cite[Def. 4.12]{CW24mor}. Although our definition here is more strict, the other instances where it is used in \cite{CW24mor,CW23} all meet the requirements of this stricter version.

The author thanks Prof. Lin Chen for pointing out this issue.} 
Admissible geometric derived stacks are closed under products over $\pt$, since flat locally almost finitely presentated geometric derived stacks are closed under products over $\pt$.

The standard t-structure $\QCoh(X)$ restricts to the one on $\Coh(X)$ if $X$ is locally coherent. An admissible tamely presented geometric derived stack $X$ is \textit{coherent}, which means $X$ is locally coherent and $\QCoh(X)^{\heartsuit}$ is compactly generated (by $\Coh(X)^{\heartsuit}$); this is deduced from \cite[Prop. 9.5.2.3]{Lur18SAG} together with \cite[Lem. 4.26]{CW24stk}. 

We also remark that an ind-geometric derived stack is \textit{{admissible}} if it admits an ind-geometric presentation whose terms are admissible geometric derived stacks, and it is \textit{coherent} if it admits a reasonable presentation whose terms are coherent geometric derived stacks. Admissible ind-tamely presented ind-geometric derived stacks provide a closed under products (over $\pt$) subclass of coherent ind-geometric derived stacks, while the whole class of coherent ind-geometric derived stacks is not closed under products.

\begin{example}\label{exam:QCoh(X/G)}
	Let $G$ be a classical affine group scheme acting on a quasi-compact and semi-separated derived scheme $X$ as in the quotient derived stack example \ref{example:quotient_geom_stk}. We write $\QCoh^G(X)$ (resp. $\Coh^G(X)$) for $\QCoh([G\backslash X])$ (resp. $\Coh([G\backslash X])$). Then using descent procedure, $\QCoh^G(X)^{\heartsuit}$ (resp. $\Coh^G(X)^{\heartsuit}$) is the abelian category of $G$-equivariant quasi-coherent (resp. coherent) ordinary sheaves on $X$. If $X$ is classical, then the bounded below part of $\QCoh^G(X)$ can be described by the bounded below derived category, i.e. we have a natural equivalence $$\mathsf{D}^+(\QCoh^G(X)^{\heartsuit}){\cong} \QCoh^G(X)^+;$$ 
	see for example \cite[Prop. 2.4.3]{GR17}. Under this equivalence, $\Coh^G(X)\subset \QCoh^G(X)^+$ corresponds to the full $\infty$-subcategory of $\mathsf{D}^+(\QCoh^G(X)^{\heartsuit})$ consisting of bounded complexes with coherent cohomologies when $X$ is locally coherent.
\end{example}

\subsubsection{Pull-push functors}\label{subsubsec:coh_geo}
Given a morphism $f:X\to Y$ in $\Stk_{\C}$, we have a natural pullback functor $f^*:\QCoh(Y)\to \QCoh(X)$. It admits a right adjoint $f_*:\QCoh(X)\to \QCoh(Y)$. 

Let $f:X\to Y$ be a morphism in $\GStk_{\C}$. 
\begin{itemize}
	\item It is \textit{of Tor-dimension $\le n$} if $f^*(\QCoh(Y)^{\ge 0})\subset \QCoh(Y)^{\ge n}$, and it is \textit{of finite Tor-dimension} if it is of Tor-dimension $\le n$ for some $n$.
	\item It is \textit{of cohomological dimension $\le n$} if $f_*(\QCoh(X)^{\le 0})\subset \QCoh(Y)^{\le n}$, and it is \textit{of finite cohomological dimension} if it is of cohomological dimension $\le n$ for some $n$.
\end{itemize}
Morphisms of finite Tor-dimension (resp. of finite cohomological dimension) are stable under base change in $\GStk_k$ and stable under composition. Morphisms of finite cohomological dimension are further flat on the target. (c.f. \cite[Prop. 3.6, Prop. 3.14]{CW24stk})

Let the following be a Cartesian diagram of geometric derived stacks
\begin{equation}\label{eq:bc}
	\begin{tikzcd}
		{X'} & {Y'} \\
		X & {Y.}
		\arrow["{f'}", from=1-1, to=1-2]
		\arrow["{h'}"', from=1-1, to=2-1]
		\arrow["h", from=1-2, to=2-2]
		\arrow["f", from=2-1, to=2-2]
	\end{tikzcd}
\end{equation}
If $h$ is of finite Tor-dimension (resp. $f$ is of finite cohomological dimension), then the Beck-Chevalley map 
\begin{equation}\label{eq:bcc}
	h^*f_*(\cG)\to f'_*{h'}^*(\cG)
\end{equation}
is an isomorphism for all $\cG\in \QCoh(X)^+$ (resp. $\cG\in \QCoh(X)$)

Let $f:X\to Y$ be a morphism of truncated geometric derived stacks. 
\begin{itemize}
	\item If $f$ is of finite Tor-dimension, then $f^*:\QCoh(Y)\to \QCoh(X)$ restricts to $f^*:\Coh(Y)\to \Coh(X)$ (\cite[Prop. 3.6]{CW24stk}).
	\item If $f$ is proper and almost finitely presented, then $f_*:\QCoh(X)\to \QCoh(Y)$ restricts to $f_*:\Coh(X)\to \Coh(Y)$ (\cite[Prop. 3.19]{CW24stk}).
\end{itemize}
In particular, in above diagram \eqref{eq:bc}, if $h$ is of finite Tor-dimension, $f$ is proper and almost finitely presented, then we can apply above base change isomorphism \eqref{eq:bcc} to coherent sheaves\footnote{These base change isomorphisms satisfy natural compatibilities. Following \cite{GR17}, all the information can be packaged into a functor $\Coh:\mathsf{Corr}(\GStk)_{prop;ftd}\to \Cat_{\infty}$. See \cite[Sec. 3.6]{CW24stk} for more details.}.

\subsubsection{Coherent sheaf theory for ind-geometric derived stacks}\label{subsub:coh_ind}
\cite[Def. 5.3]{CW24stk} extends coherent sheaf theory to reasonable ind-geometric derived stacks\footnote{According to \cite[Def. 5.3]{CW24stk}, they define $\Coh:\mathsf{Corr}(\indGStk_k^{reas})_{prop,ftd}\to \Cat_{\infty}$ by left Kan extention along $\mathsf{Corr}(\GStk_{\C}^+)_{prop;ftd}\subset\mathsf{Corr}(\indGStk_k^{reas})_{prop;ftd} $.}. \cite[Prop. 5.5]{CW24stk} implies that given a reasonable ind-geometric presentation $X\cong\colim_{\alpha} X_{\alpha}$, then $\Coh(X)$ can be computed by $$\colim_{\alpha} \Coh(X_{\alpha})$$ in $\Cat_{\infty}$. In particular any $\cF\in \Coh(X)$ can be written as $\cF\cong i_{\alpha*}\cF_{\alpha}$ for some $i_{\alpha}:X_{\alpha}\hookrightarrow X$ and $\cF_{\alpha}\in \Coh(X_{\alpha})$.

Let $f:X\to Y$ be a morphism of reasonable ind-geometric derived stacks.
\begin{itemize}
	\item It is of Tor-dimension $\le n$ (resp. of finite Tor-dimension) if it is geometric and its base change to any geometric derived stack is of Tor-dimension $\le n$ (resp. of finite Tor-dimension). 
	
	\noindent If $f$ is of finite Tor-dimension, then \cite[Def. 5.3]{CW24stk} gives the pullback functor $f^*:\Coh(Y)\to \Coh(X)$: for $\cF=i_{\alpha *}\cF_{\alpha}\in \Coh(X)$, $f^*\cF$ can be computed by $i'_{\alpha_*}h_{\alpha}^*(\cF_{\alpha})$, where $i'_{\alpha}$ and
	$h_{\alpha}$ are defined by base change from $i_{\alpha}$ and $h$.
	\item If $f$ is ind-proper and almost ind-finitely presented, then \cite[Def. 5.3]{CW24stk} gives the pushforward functor $ f_*:\Coh(X)\to \Coh(Y) $: for $\cF=i_{\alpha *}\cF_{\alpha}$, $f_*\cF$ can be computed by $j_{\alpha*}f_{\alpha*}\cF_{\alpha}$, where $X_{\alpha}\xrightarrow{f_{\alpha}}Y_{\alpha}\xrightarrow{j_{\alpha}}Y$ is any factorization of $f_{\alpha}\circ i_{\alpha}$ through a geometric substack $j_{\alpha}:Y_{\alpha}\hookrightarrow Y$ such that $j_{\alpha}$ is an almost finitely presented closed immersion.
\end{itemize}
Similar to the geometric case, we also have base change isomorphisms for pullback functors (along of finite Tor-dimension morphisms) and pushforward functors (along ind-proper and almost ind-finitely presented morphisms).

\subsubsection{Ind-coherent sheaves}\label{subsubsec:indcoh}
We will also encounter ind-coherent sheaves (e.g. \eqref{def of KPlamu'}). 

For a geometric derived stack $X$, the stable $\infty$-category of ind-coherent sheaves $\IndCoh(X)$ is defined to be the left anticompletion of $\QCoh(X)$ (c.f. \cite[Def. 5.10, Def. 5.13]{CW24stk}). There is a natural functor $\Psi_X:\IndCoh(X)\to \QCoh(X)$ which restricts to an equivalence for the bounded below part of these two stable $\infty$-categories with their natural t-structures. When $X$ is admissible and tamely presented (or more generally when $X$ is coherent), then $\IndCoh(X)$ is equivalent to the ind-completion of $\Coh(X)$ (c.f. \cite[Prop. 5.30]{CW24stk}\footnote{We also implicitly use \cite[Prop. 4.3]{CW24stk}.}). 

This ind-coherent sheaf theory also contains the information of pullback functors along finite Tor-dimension morphisms and pushforward functors along finite cohomological dimension morphisms, together with the base change isomorphisms (c.f. \cite[Def. 5.13]{CW24stk})\footnote{It can be summarized as a functor $\IndCoh:\mathsf{Corr}(\GStk_{\C})_{fcd;ftd}\to \Cat_{\infty}$.}. 

This ind-coherent sheaf theory can also be extended to ind-geometric derived stacks (c.f. \cite[Def. 5.16]{CW24stk})\footnote{It can be summarized as a functor $\IndCoh:\mathsf{Corr}(\indGStk_{\C})_{fcd;ftd}\to \Cat_{\infty}$.}. When $X$ is admissible and ind-tamely presented (or more generally when $X$ is coherent), then $\IndCoh(X)$ is also equivalent to the ind-completion of $\Coh(X)$ (c.f. \cite[Prop. 5.30]{CW24stk}).

\subsubsection{Stable coherent pullback}
We will need pullback functors for a class of morphisms extending the class of finite Tor-dimension morphisms. 

Let $f:X\to Y$ be a morphism of truncated geometric derived stacks. It is said to have \textit{coherent pullback} if $f^*:\QCoh(Y)\to \QCoh(X)$ takes $\Coh(Y)$ to $\Coh(X)$. It is said to have \textit{stable coherent pullback} if for any truncated $Y'$ and any tamely presented morphism $Y'\to Y$, the base change $f':X\times_Y Y'\to Y'$ has coherent pullback. Morphisms with stable coherent pullback are stable under composition and under base change along tamely presented morphisms in $\GStk^+$ (c.f. \cite[Prop. 4.14]{CW24mor}). 

\begin{example}\label{exam:stable_coh_pull}
	A typical example is the embedding $i:0\hookrightarrow \mathbb{A}^{\infty}_{\C}$. This has coherent pullback because any coherent sheaf on $\A_{\C}^{\infty}$ is the pullback of a coherent sheaf on some $\A_{\C}^n$ along the flat projection $\A_{\C}^{\infty}\to \A_{\C}^n$ (c.f. \cite[Exam. 3.10]{CW24mor}). Then according to \cite[Thm. 3.11]{CW24mor}\footnote{See the comments after \cite[Prop. 4.14]{CW24mor}.}, $i$ has stable coherent pullback.
	
	According to \cite[Prop. 4.14]{CW24mor}, a morphism $f:X\to Y$ in $\GStk_{\C}^+$ has stable coherent pullback if its base change along a faithfully flat cover has stable coherent pullback. Thus if the base change $f':X'\to Y'$ along some faithfully flat cover $Y'\to Y$ is isomorphic to the zero section embedding $X'\hookrightarrow X'\times \A^{\infty}_{\C}$, then $f$ has stable coherent pullback. 
\end{example}

Similarly, let $f:X\to Y$ be a morphism of ind-geometric derived stacks such that $Y$ is reasonable. Let $\colim_{\alpha} Y_{\alpha}$ be a reasonable presentation. We say $f$ has stable coherent pullback if its base change to every $Y_{\alpha}$ has stable coherent pullback (c.f. \cite[Prop. 5.13]{CW24mor}).

The coherent sheaf theory for truncated geometric derived stacks in Section \ref{subsubsec:coh_geo} and reasonable ind-geometric derived stacks in Section \ref{subsub:coh_ind} can be extended: we can apply pullback functors to morphisms with stable coherent pullback, and they also have base change isomorphisms with almost (ind-)finitely presentated (ind-)proper pushforward, c.f. \cite[Sec. 5.5]{CW24mor}\footnote{They can be summarized as functors $ \Coh: \mathsf{Corr}(\GStk^+_{\C})_{prop;coh}\to \Cat_{\infty}$, $\Coh:\mathsf{Corr}(\indGStk^{reas}_{\C})_{prop;coh}\to \Cat_{\infty}$}.

Let $X,Y$ be coherent ind-geometric derived stacks. In this case $\IndCoh(X)$ (resp. $\IndCoh(Y)$) is the ind-completion of $\Coh(X)$ (resp. $\Coh(Y)$). Then for a morphism $f:X\to Y$ with coherent pullback, $f^*:\IndCoh(Y)\to \IndCoh(X)$ can be defined to be the unique continuous functor whose restriction to $\Coh(Y)$ factors through the functor $f^*:\Coh(Y)\to \Coh(X)$.

\subsubsection{$!$-pullback}\label{subsubsec:!-pull}
Let $f:X\to Y$ be an ind-proper morphism of ind-geometric derived stacks. By the construction of ind-coherent sheaf theory (\cite[Def. 5.16]{CW24stk}), $f_*:\IndCoh(X)\to \IndCoh(Y)$ admits a right adjoint, which is denoted by $f^!$. 

We will write
$$\omega_{X/Y}:=f^!\O_{Y}.$$
When $Y=\pt$, we simplify the notation and write $\omega_{X}:=\omega_{X/\pt}$.

According to \cite[Prop. 6.17]{CW24mor}, $!$-pullback is compatible with stable coherent pullback. Let the following be a Cartesian diagram of ind-geometric derived stacks:
	\[
	\begin{tikzcd}
		X' \ar[r, "f'"] \ar[d, "h'"'] & Y' \ar[d, "h"] \\
		X \ar[r, "f"] & {Y,}
	\end{tikzcd}
	\]
such that all derived stacks in this diagram are coherent and ind-tamely presented, that $h$ is of ind-finite cohomological dimension and has stable coherent pullback, and that $f$ is ind-proper and almost ind-finitely presented. Then for any \(\mathcal{F} \in \IndCoh(Y)\), we have a Beck-Chevalley isomorphism 
\begin{equation}\label{eq:!and*}
	h'^* f^! (\mathcal{F}) \to f'^! h^* (\mathcal{F}) .
\end{equation}

\subsection{Some other functors}

\subsubsection{External product}
Let $X,Y$ be two derived stack, define the external product for quasi-coherent sheaves
\[\boxtimes:\QCoh(X)\times \QCoh(Y)\to \QCoh(X\times Y)\]
by $\cF\boxtimes \cG:=p_{X}^*(\cF)\otimes p_{Y}^*(\cG)$, where $p_{X}:X\times Y\to X$ and $p_{Y}:X\times Y\to Y$ are two projections. 

If $X$ and $Y$ are truncated geometric derived stacks, and $\cF\in \Coh(X)$, $\cG\in \Coh(Y)$, then $X\times Y$ is truncated geometric derived stack and $\cF\boxtimes \cG\in \Coh(X\times Y) $ (c.f. \cite[Prop. 7.9]{CW24stk}).

In \cite[Sec 7.3]{CW24stk}, they defined external product for coherent sheaves on reasonable ind-geometric derived stacks. Let $X\cong \colim_{\alpha}X_{\alpha}$ and $Z\cong \colim_{\alpha}Z_{\alpha}$ be reasonable presentations. \cite[Sec. 7.3]{CW24stk} shows that $X\times Y$ is also a reasonable ind-geometric derived stack. The external product $\boxtimes: \Coh(X)\times \Coh(Y)\to \Coh(X\times Y)$ can be characterized by fitting into a diagram
\[\begin{tikzcd}
	{\Coh(X_{\alpha})\times \Coh(Z_{\beta})} & {\Coh(X_{\alpha}\times Z_{\beta})} \\
	{\Coh(X)\times \Coh(Z)} & {\Coh(X\times Z)}
	\arrow["\boxtimes", from=1-1, to=1-2]
	\arrow["{i_{\alpha*}\times i_{\beta*}}"', from=1-1, to=2-1]
	\arrow["{(i_{\alpha}\times i_{\beta})_*}", from=1-2, to=2-2]
	\arrow["\boxtimes"', from=2-1, to=2-2]
\end{tikzcd}\]
for all $\alpha,\beta$.

In \cite{CW24stk}[Sec. 7], they studied external product for other sheaf theories, see \textit{ibid.} for more details.

\subsubsection{Twisted product}\label{subsubsec:twisted_prod}
Let $H$ be a classical affine group scheme over $\C$, and $P\to X$ be an $H$-torsor, where $P$ and $X$ are both reasonable ind-geometric derived stacks over $\C$. Let $Y$ be a reasonable ind-geometric derived stack with an $H$-action. Define the twisted product as follows:
\[X\ttimes Y:=P\times^H Y:=[H\backslash(P\times Y)],\]
which is the fpqc quotient of $P\times Y$ by the diagonal action of $H$. It can also be defined as the fiber product
\begin{equation}\label{eq:diagram_ttimes}
	\begin{tikzcd}
		{X\times^H Y} & {[H\backslash Y]} \\
		X & {[H\backslash\pt];}
		\arrow["{\rq}", from=1-1, to=1-2]
		\arrow["{\rp}"', from=1-1, to=2-1]
		\arrow[from=1-2, to=2-2]
		\arrow[from=2-1, to=2-2]
	\end{tikzcd}
\end{equation}
here the morphism $X\to [H\backslash \pt]$ is determined by the $H$-torsor $P\to X$.

Let $\cF\in \Coh(X),\cG\in \Coh^H(Y)=\Coh([H\backslash Y])$. 
Consider the natual projection $\mathbf{p}=\rp\times\rq:X\ttimes Y=[H\backslash(P\times Y)] \to X\times [H\backslash Y]$. Then we can define the twisted (external) product $\cF\tbox \cG\in \Coh(X\ttimes Y)$ as follows:
\[\cF\tbox \cG := \mathbf{p}^*(\cF\boxtimes \cG)=\rp^*(\cF)\otimes \rq^*(\cG).\]

\begin{remark}
	The constructions $\boxtimes,\tbox$ above can be defined for general smooth base scheme rather than $\Spec \C$, e.g. $X^I$, where $X$ is a smooth algebraic curve, $I$ is a finite set.
\end{remark}

\subsubsection{Sheaf Hom}\label{subsubsec:cHom}
Given a geometric derived stack $Y$ and $\cF\in \QCoh(Y)$, the sheaf Hom $$\cHom(\cF,-):\QCoh(Y)\to \QCoh(Y)$$ 
is defined by the adjunction
\[-\otimes \cF:\leftrightarrows : \cHom(\cF,-).\]

If $Y$ is a reasonable ind-geometric derived stack and $\cF\in \Coh(Y)$, there is still a natural functor $$\cHom(\cF,-) : \IndCoh(Y) \to \IndCoh(Y),$$ 
despite the absence of a tensor product of ind-coherent sheaves in general, see \cite[Eq. 7.38]{CW24stk}.

There are sevaral compatibilities of $\cHom$ with other functors. Let $X$ and $Y$ be admissible, ind-tamely presented ind-geometric derived stacks\footnote{These conditions guarantee that $X$, $Y$, $X\times X$, $Y\times Y$ are coherent (see the comments after \cite[Def. 5.11]{CW24mor}), which are used in this two propositions.}.
\begin{itemize}
	\item (\cite[Prop. 7.52]{CW24stk}) Let $f : X \to Y$ be an ind-proper, almost ind-finitely presented morphism. Then for any $\cF\in \Coh(X)$ and $\cG \in  \IndCoh(Y)$ the natural map 
	\begin{equation}\label{eq:f_*,f^!}
		f_* \cHom(\cF, f^!(\cG))\to
		\cHom(f_*(\cF), \cG)
	\end{equation}
	is an isomorphism.
	\item (\cite[Prop. 7.14]{CW24mor}) Let $h : X \to Y$ be a morphism with stable coherent pullback, and let $\cF \in \Coh(Y)$, $\cG \in \IndCoh(Y)$. Then the Beck–Chevalley map 
	\begin{equation}\label{eq:h^*cHom}
		h^* \cHom (\cF, \cG) \to \cHom (h^*(\cF), h^*(\cG))
	\end{equation}
	is an isomorphism.
\end{itemize}

\section{Categorified Coulomb branch}\label{sec:CW_theory}
From now on, we fix a connected reductive group $G$ over $\C$. We also fix a Borel subgroup $B$ and a maximal torus $T$ in $B$. The Lie algebra of $G$ is denoted by $\g$. Let $\Pv=X_*(T)$ (resp. $P=X^*(T)$) be the cocharacter (resp. character) lattice, and ${\check{\Phi}}$ (resp. $\Phi$) be the root set (resp. coroot set) of $G$. The quadruple $({\Pv},P,{\check{\Phi}},\Phi)$ is called the root datum of $G$. Let ${\check{G}}$ be the Langlands dual group, which is a connected reductive group over $\C$ with opposite root datum $(P,{\Pv},\Phi,{\check{\Phi}})$.

The choice of Borel $B$ determines the dominant cocharacters subset $\Pv_+\subset \Pv$ and the dominant characters subset $P_+\subset P$. It also determines a decomposition of ${\check{\Phi}}$ (resp. $\Phi$) into positive and negative coroots (resp. roots): ${\check{\Phi}}={\check{\Phi}}_+\bigsqcup{\check{\Phi}}_- $ (resp. $\Phi=\Phi_+\bigsqcup \Phi_-$). There is a partial order $\le$ on $\Pv$: $\lav\le\muv$ iff $\muv-\lav$ is a sum of positive coroots; similar partial order is also defined for $P$. 

For a dominant cocharacter $\lav\in {\Pv}_+$, let $V_{\lav}$ denote the irreducible representation of ${\check{G}}$ with highest weight $\lav$. For three dominant cocharacters $\lav,\muv,\nuv\in \Pv_+$, let 
\begin{equation}\label{def of clamunu}
	\clamunu\in \mathbb{N}
\end{equation}
denote the multiplicity of $V_{\nuv}$ in $V_{\lav}\otimes V_{\muv}$. 
\subsection{Affine Grassmannian and the space of triple}\label{subsec: def of spaces}
\subsubsection{Affine grassmannian $\Gr_G$}\label{subsubsec:Gr_G}
Let $\O$ denote the formal power series ring $\C[[t]]$, and $\K$ the formal Laurent power series ring $\C((t))$.

For a (classical) stack $Y\in \Stk_{\C}^{\cl}$, define the positive loop space $Y_{\O}\in \Stk_{\C}^{\cl}$
\[Y_{\O}:\cCAlg\to \Grpd,R\mapsto Y(R[[t]])\]
and the loop space $Y_{\K}\in \Stk_{\C}^{\cl}$
\[Y_{\K}:\cCAlg\to \Grpd,R\mapsto Y(R((t))).\]
We will concentrate on the case when $Y$ is an smooth affine scheme over $\C$. In this case, $Y_{\O}$ is represented by a scheme of infinite type over $\C$. Moreover, one can define $Y_{\O/t^n}\in \Stk_{\C}^{\cl}$
\begin{equation}\label{eq:Y_O/t^n}
	Y_{\O/t^n}:\cCAlg\to \Grpd,R\mapsto Y(R[t]/(t^n)),
\end{equation}
which are represented by smooth affine schemes over $\C$; then $Y_{\O}$ is an inverse limit of $Y_{\O/t^n}$ along flat morphisms. And $Y_{\K}$ is represented by an ind-affine scheme over $\C$, which contains $Y_{\O}$ as a closed subscheme. One may refer to \cite[Sec. 1]{KV04} for more details.

Let $G$ be a connected reductive group over $\C$. Then $G_{\O}$ is an affine group scheme, and $G_{\K}$ is a group ind-affine scheme containing $G_{\O}$ as a closed subgroup. Define $\hGO:=(G_{\O}\rtimes \Gm)\times\Gm$, where the inner $\Gm$ acts by natural loop rotation on the loop variable $t$ in $\O=\C[[t]]$. For future use, we write the inner $\Gm$ by $\Gmrot$, and the outer $\Gm$ by $\Gmdil$. Similarly, we define $\hGK:=(G_{\K}\rtimes \Gmrot)\times\Gmdil $.

Define the affine Grassmannian $\Gr_G$ to be the fpqc quotient $ [G_{\K}/G_{\O}] $. It's the same as $[\hGK/\hGO]$. The affine grassmannian $\Gr_G$ is represented by an ind-projective scheme over $\C$, with a natural $\hGO$-action by multiplication on the left. The orbits of this action can be described as follows. Let $\lav\in \Pv$ be a cocharacter, then it determines a map $\K^{\times}=\Gm(\K)\to T(\K)\hookrightarrow G(\K) $, with the image of $t$ denoted by $t^{\lav}\in G(\K)$. Its image in $\Gr_G(\C)=G(\K)/G(\O)$ is also denoted by $t^{\lav}$. For a dominant cocharacter $\lav\in \Pv_+$, define $\Gr_{\lav}$ to be the $\hGO$-orbit of $t^{\lav}$. It is a locally closed subscheme of $\Gr_G$, called a (spherical) Schubert cell; it is a smooth variety over $\C$. Let $\overline{\Gr}_{\lav}$ be its closure in $\Gr_G$, called a (spherical) Schubert variety; it is a normal projective variety over $\C$. We write $j_{ \lav}:\Gr_{\lav}\hookrightarrow \overline{\Gr}_{\lav}$ for the natural open embedding, and $i_{\le \lav}:\overline{\Gr}_{\lav}\hookrightarrow\Gr$ for the natural closed embedding. The Cartan decomposition on $G(\K)$ says
\begin{itemize}
    \item $\Gr_G(\C)=\bigsqcup_{\muv \in \Pv_+}{\Gr}_{\muv}(\C) $;
    \item $\overline{\Gr}_{\lav}(\C)=\bigsqcup_{\muv\le \lav}{\Gr}_{\muv}(\C) $; thus we often write $\overline{\Gr}_{\lav}$ as $\Gr_{\le \lav}$;
    \item The reduced locus of $\Gr_G$ is: $({\Gr}_G)_{\rred}=\colim_{\lav\in \Pv}\overline{\Gr}_{\lav}$.
\end{itemize}
The dimension of $\Gr_{\lav}$ and $\overline{\Gr}_{\lav}$ are both $d_{\lav}:=\lr{2\rho,\lav}$. Note that for any positive coroot $\check{\alpha}$, the pair $\lr{2\rho,\check{\alpha}}\in 2\Z$, so if $d_{\lav} \not\equiv d_{\muv}\pmod{2}$, then $\overline{\Gr}_{\lav}\bigcap \overline{\Gr}_{\muv}=\emptyset$. This shows $\Gr_{G}=\Gr_{G}^0\bigsqcup \Gr_{G}^{1}$, where $\Gr_G^0$ is the even component: $(\Gr_{G}^0)_{\rred}=\bigcup_{\lav\in \Pv,2\mid d_{\lav}}\Gr_{\lav}$, while $\Gr_G^1$ is the odd component: $(\Gr_{G}^1)_{\rred}=\bigcup_{\lav\in \Pv,2\nmid d_{\lav}}\Gr_{\lav}$. 

We usually view $\hGK$ as a $\hGO$-torsor over $\Gr_G$.
\begin{lemma}\label{lem:Zar_triv}
	The $\hGO$-torsor $\hGK\to \Gr_G$ is Zariski locally trivial.
\end{lemma}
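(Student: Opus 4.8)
The plan is to prove Lemma \ref{lem:Zar_triv} by reducing to a statement about the $G_{\O}$-torsor $G_{\K}\to \Gr_G$, and then using the local structure of $\Gr_G$ as an ind-scheme stratified by affine spaces, together with a standard fact that torsors under (pro-)unipotent / special groups are Zariski-locally trivial. Recall $\hGO = (G_{\O}\rtimes \Gmrot)\times\Gmdil$, and the $\Gmdil$-factor acts trivially on $G_{\K}$, so $\hGK = (G_{\K}\rtimes\Gmrot)\times\Gmdil$ maps to $\Gr_G = [\hGK/\hGO]$ and the torsor splits as a product of the trivial $\Gmdil$-torsor with the $(G_{\O}\rtimes\Gmrot)$-torsor $(G_{\K}\rtimes\Gmrot)\to\Gr_G$. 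Since $\Gmrot$ already acts on $\Gr_G$ compatibly, giving a section of $(G_{\K}\rtimes\Gmrot)\to\Gr_G$ over an open $U$ is the same as giving a $\Gmrot$-equivariant section of $G_{\K}\to\Gr_G$ over $U$; so it suffices to produce Zariski-local $\Gmrot$-equivariant sections of the $G_{\O}$-torsor $G_{\K}\to\Gr_G$.

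\textbf{Key steps.} First, reduce as above to the $G_{\O}$-torsor $p\colon G_{\K}\to\Gr_G$. Second, recall that $\Gr_G = \colim_n \Gr_G^{(n)}$ where $\Gr_G^{(n)}$ is a finite-dimensional projective variety, and $G_{\O} = \lim_n G_{\O/t^n}$ with $G_{\O/t^n}$ a smooth affine group scheme; moreover the kernel of $G_{\O/t^n}\to G_{\O/t^{n-1}}$ is a vector group (isomorphic to $\g\otimes t^{n-1}\C$), hence special in the sense of Serre, so $G_{\O/t^n}$ is an iterated extension of $G$ by vector groups. Third, on a given bounded piece, the torsor $G_{\K}\to\Gr_G$ is (up to restricting to a fixed lattice bound) induced from a $G_{\O/t^N}$-torsor $P_N\to \Gr_G^{(n)}$ for $N\gg 0$: indeed $\Gr_G^{(n)}$ can be covered by $G_{\O}$-orbits of $t^{\lav}$, and a section over an open set reduces to a section of a torsor under an affine group scheme with special successive quotients. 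Since $G$ is special for $\mathrm{GL}$-type (and in general any connected reductive group over an algebraically closed field has the property that its torsors are Zariski-locally trivial on smooth schemes when $G=\mathrm{GL}_n$, $\mathrm{SL}_n$, $\mathrm{Sp}_{2n}$; for type $A$ which is our eventual setting $G$ is indeed special), and vector groups are special, $P_N$ is Zariski-locally trivial, hence so is $p$. Fourth, upgrade these sections to $\Gmrot$-equivariant ones: the $\Gmrot$-action contracts (a neighborhood of) each $t^{\lav}$, so one can average/spread a section defined near $t^{\lav}$ along the $\Gmrot$-flow, or alternatively observe that the $\Gmrot$-fixed locus consists of the points $t^{\lav}$ and the torsor is trivial at each such point (the fiber over $t^{\lav}$ is $t^{\lav}G_{\O}$, with a canonical $\Gmrot$-fixed point $t^{\lav}$), then extend equivariantly over a $\Gmrot$-stable affine open by the Bia\l{}ynicki-Birula / contraction argument.

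\textbf{Main obstacle.} The genuinely delicate point is \emph{equivariance}: producing a section of $G_{\K}\to\Gr_G$ Zariski-locally is essentially classical (it rests on $G$ being special, or more robustly on the fact that $G_{\K}\to\Gr_G$ is an fpqc-locally trivial torsor under a group with special Jordan--Hölder quotients so is Zariski-locally trivial on a normal base, and $\Gr_G$ is ind-projective with normal Schubert varieties), but arranging the local sections to be compatible with $\Gmrot$ requires some care, since an arbitrary Zariski-open need not be $\Gmrot$-stable. I expect to handle this either by working on $\Gmrot$-stable opens obtained as unions of $\Gmrot$-orbits through a Zariski-open of the contracting locus, or by a direct averaging using the pro-unipotent radical of $G_{\O}$ together with the $\Gmrot$-contraction; the fallback is that for the applications in this paper one only needs sections over the $\Gmrot$-stable opens $\Gr_G\setminus\overline{\Gr}_{\muv}$ refined by the orbit stratification, where the contraction structure is explicit. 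A second, more bookkeeping-type obstacle is making the reduction from $G_{\O}$ to a finite quotient $G_{\O/t^N}$ precise on each bounded piece $\Gr_G^{(n)}$ and checking it is compatible with the colimit; this is routine given that $\Gr_G^{(n)}$ is quasi-compact so only finitely many truncation levels matter.
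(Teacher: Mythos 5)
Your reduction to the $G_{\O}$-torsor $G_{\K}\to \Gr_G$ is the right first move, but two things go wrong afterwards. The less serious one: the "main obstacle" of $\Gmrot$-equivariance is spurious. Writing $\hGK=G_{\K}\times\Gmrot\times\Gmdil$ as a scheme, the class of $(g,a,b)$ in $[\hGK/\hGO]\cong[G_{\K}/G_{\O}]$ is just $[g]$, so any section $s_0\colon U\to G_{\K}$ of $G_{\K}\to\Gr_G$ yields a section $u\mapsto(s_0(u),1,1)$ of $\hGK\to\Gr_G$ over $U$; sections of the larger torsor over $U$ are simply pairs (section of $G_{\K}\to\Gr_G$, arbitrary map $U\to\Gmrot\times\Gmdil$). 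Your claimed equivalence with \emph{equivariant} sections is false, and the Bia\l{}ynicki-Birula/averaging discussion — which in any case is only sketched — can be deleted entirely.

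The serious gap is the appeal to specialness. The lemma is stated (and used) for an arbitrary connected reductive $G$, and even the paper's eventual "type A" hypothesis does not make $G$ special: $\PGL_n$ has root system of type $A_{n-1}$ but is not special, so your step "(a) $P_1$ is a $G$-torsor, hence Zariski-locally trivial" has no justification there; only the lifting of sections through the vector-group kernels of $G_{\O/t^{n}}\to G_{\O/t^{n-1}}$ survives. The paper's proof is a citation to \cite[Rem. 1.3.8, Lem. 2.3.5]{Zhu17}, whose argument avoids specialness altogether and applies to this \emph{particular} torsor: the multiplication map $L_{\pol}^{<0}G\times G_{\O}\to G_{\K}$ is an open immersion (the big cell / Birkhoff decomposition), so $L_{\pol}^{<0}G\to\Gr_G$ is an open immersion whose inverse is a tautological section of $G_{\K}\to\Gr_G$ over its image $U_0$; translating by $\C$-points $g\in G(\K)$ gives sections over the opens $gU_0$, and these cover $\Gr_G$ because every $\C$-point $[h]$ lies in $hU_0$ and $\Gr_G$ is of ind-finite type over an algebraically closed field. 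You should replace the specialness argument by this one (or by the direct citation).
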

\begin{proof}
	This lemma follows from \cite[Rem. 1.3.8, Lem. 2.3.5]{Zhu17}.
\end{proof}

\subsubsection{The space of triples}\label{subsec:R}
Let $N$ be a finite dimensional representation of $G$. Let $N_{\O}$ (resp. $N_{\K}$) be the positive loop space (resp. loop space) of $N$. Here $N_{\O}$ is an affine scheme isomorphic to $\A^{\infty}_{\C}$ as in \eqref{eq:Ainfty}. We consider two natural Zariski-locally trivial fiber bundles over $\Gr_G$ with fibres $N_{\O}$. The first one is the product $\Gr_G\times N_{\O}$. The second one is the twisted product (see Sec. \ref{subsubsec:twisted_prod}):
\[\cT_{G,N}:=\Gr\ttimes N_{\O}:=\hGK\times^{\hGO}N_{\O}.\]
More concretely, $\cT_{G,N}$ is the fpqc quotient of $\hGK\times N_{\O}$ by the $\hGO$-action $h\cdot(g,x)=(gh^{-1},hx)$. Here the $\hGO$-action on $N_{\O}$ is given by the natural $G_{\O}$-action on $N_{\O}$, the $\Gmrot$-loop rotation on loop variable $t$ in $\O=\C[[t]]$, and the $\Gmdil$-dilation (i.e. scalar multiplication) on $N$ (hence on $N_{\O}$, $N_{\K}$).

Let $ j_1 : \mathrm{Gr}_G \times N_{\mathcal{O}} \to \mathrm{Gr}_G \times N_{\mathcal{K}} $ denote the natural embedding and let
\begin{equation}\label{eq:j_2}
	j_2 : \mathcal{T}_{G,N} \to \mathrm{Gr}_G \times N_{\mathcal{K}}, \ [g, s] \mapsto ([g], gs).
\end{equation}
These maps realize $ \mathrm{Gr}_G \times N_{\mathcal{O}} $ and $ \mathcal{T}_{G,N} $ as closed ind-subschemes of $ \mathrm{Gr}_G \times N_{\mathcal{K}} $.

\begin{remark}\label{rmk: an isom}
    There is a natural isomorphism $\Gr_G\ttimes N_{\K}\cong \Gr_G\times N_{\K}, [g,s]\to ([g],gs)$.
\end{remark}

\begin{definition}\label{def:R}
The space of triples $ \mathcal{R}_{G,N} $ is defined to be the fiber product
\begin{equation}\label{eq: R}
    \begin{tikzcd}
    \mathcal{R}_{G,N} \arrow[r, "i_1"] \arrow[d, "i_2"] & \mathrm{Gr}_G \times N_{\mathcal{O}} \arrow[d, "j_1"] \\
    \mathcal{T}_{G,N} \arrow[r, "j_2"'] & \mathrm{Gr}_G \times N_{\mathcal{K}}
    \end{tikzcd}
\end{equation}
in the $\infty$-category of ind-derived schemes.
\end{definition}
We will also write 
\begin{equation}\label{eq:RT}
	\begin{aligned}
		\cT_{\le \lav}:=\cT_{G,N}\times_{\Gr_G}\Gr_{\le \lav}&,\cT_{ \lav}:=\cT_{G,N}\times_{\Gr_G}\Gr_{\lav};\\
		\cR_{\le \lav}:=\cR_{G,N}\times_{\Gr_G}\Gr_{\le \lav}&,\cR_{ \lav}:=\cR_{G,N}\times_{\Gr_G}\Gr_{ \lav}.
	\end{aligned}
\end{equation}

The following proposition is proved in \cite{CW23}.
\begin{proposition}\label{Prop: prop of R}\ 
    \begin{enumerate}
        \item (\cite[Prop. 4.7]{CW23}) The ind-derived scheme $\cR_{G,N}$ is ind-tamely presented, and all maps in \eqref{eq: R} are almost ind-finitely presented ind-closed immersions;
        \item (\cite[Prop. 4.11]{CW23}) The fpqc quotient $[\hGO\backslash\cR_{G,N}]$ is an admissible, ind-tamely presented ind-geometric derived stack.
    \end{enumerate}
\end{proposition}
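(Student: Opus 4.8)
The statement reduces to two assertions about the space of triples $\cR_{G,N}$ defined by the Cartesian square \eqref{eq: R}: first that $\cR_{G,N}$ is ind-tamely presented with all four maps in that square being almost ind-finitely presented ind-closed immersions, and second that $[\hGO\backslash\cR_{G,N}]$ is admissible and ind-tamely presented. Since both claims are cited verbatim from \cite{CW23}, the natural plan is to reconstruct the argument there from the general formalism collected in Section \ref{sec: AG background}. I would begin by establishing the two maps $j_1:\Gr_G\times N_{\O}\to \Gr_G\times N_{\K}$ and $j_2:\cT_{G,N}\to \Gr_G\times N_{\K}$ are almost ind-finitely presented ind-closed immersions. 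For $j_1$: on each Schubert piece $\overline{\Gr}_{\lav}$, the bundle $\Gr_G\times N_{\O}\to\Gr_G$ and the bundle $\Gr_G\times N_{\K}\to \Gr_G$ are Zariski-locally trivial with fibers $N_{\O}\cong\A^\infty_\C$ and $N_{\K}\cong\colim_n t^{-n}N_{\O}$; locally $j_1$ is the product of $\overline{\Gr}_{\lav}$ with the chain of closed immersions $N_{\O}\hookrightarrow t^{-1}N_{\O}\hookrightarrow\cdots$, each of which is finitely presented (a coordinate subspace of affine space), so $j_1$ is an ind-closed immersion of ind-schemes which is almost ind-finitely presented. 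For $j_2$ one uses Remark \ref{rmk: an isom}: the isomorphism $\Gr_G\ttimes N_{\K}\cong\Gr_G\times N_{\K}$ identifies $j_2$ with the map induced by $N_{\O}\hookrightarrow N_{\K}$ on the twisted bundle, which (again working Zariski-locally over $\overline{\Gr}_{\lav}$, using Lemma \ref{lem:Zar_triv} to trivialize the $\hGO$-torsor) has the same local form, hence is also an almost ind-finitely presented ind-closed immersion.

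Next I would feed these into the stability properties recorded in Section \ref{subsubsec:mor}. The fiber product $\cR_{G,N}=\mathcal (\Gr_G\times N_{\O})\times_{\Gr_G\times N_{\K}}\cT_{G,N}$ is taken in ind-derived schemes; since ind-closed immersions and the almost ind-finitely presented property are stable under base change, the maps $i_1:\cR_{G,N}\to\Gr_G\times N_{\O}$ and $i_2:\cR_{G,N}\to\cT_{G,N}$ are almost ind-finitely presented ind-closed immersions, which gives the "all maps in \eqref{eq: R}" clause. To see $\cR_{G,N}$ itself is ind-tamely presented: $\Gr_G\times N_{\O}$ is ind-tamely presented (it is the product of the ind-projective scheme $\Gr_G$, which is ind-tamely presented, with $N_{\O}\cong\A^\infty_\C$, which is strictly tamely presented by \eqref{eq:Ainfty}; products of admissible ind-tamely presented stacks are again of this type), and $\cR_{G,N}$ is an ind-closed subscheme of it cut out derived-theoretically; an almost ind-finitely presented ind-closed immersion into an ind-tamely presented ind-derived scheme again lands in an ind-tamely presented ind-derived scheme, since the structure maps remain almost finitely presented and each term remains tamely presented (closed substacks of tamely presented stacks that are almost finitely presented are tamely presented). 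Concretely, one writes a reasonable presentation $\cR_{G,N}\cong\colim_\lav \cR_{\le\lav}$ with each $\cR_{\le\lav}$ a truncated tamely presented derived scheme obtained as a derived closed subscheme of $\Gr_{\le\lav}\times t^{-n}N_{\O}$ for suitable $n$, and checks the structure maps are almost finitely presented.

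For part (2): $\hGO=(G_{\O}\rtimes\Gmrot)\times\Gmdil$ is a classical affine group scheme acting on the ind-derived scheme $\cR_{G,N}$ preserving the $G$-invariant presentation $\colim_\lav\cR_{\le\lav}$. By Example \ref{example: X/G}, $[\hGO\backslash\cR_{G,N}]\cong\colim_\lav[\hGO\backslash\cR_{\le\lav}]$ is an ind-geometric derived stack. Each $[\hGO\backslash\cR_{\le\lav}]$ is tamely presented: $\cR_{\le\lav}$ is a tamely presented derived scheme and the quotient of a tamely presented derived scheme by a classical affine group scheme is tamely presented (the example after Section \ref{subsubsec:mor}). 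For admissibility one exhibits an affine morphism from $[\hGO\backslash\cR_{\le\lav}]$ to a flat locally almost finitely presented geometric derived stack — for instance, using that $\cR_{\le\lav}$ maps affinely (via $i_1$ followed by the truncation of the bundle) to something built from $\Gr_{\le\lav}$ and a finite-dimensional $N$-piece, one reduces to the fact that $[\hGO\backslash(\Gr_{\le\lav}\times \A^N_\C)]$ is admissible since $\hGO$ acts through a finite-type quotient on the relevant pieces — and then the structure maps being almost finitely presented gives a reasonable presentation by admissible coherent geometric derived stacks, so the colimit is admissible ind-tamely presented.

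The main obstacle is the bookkeeping in part (1) around the \emph{derived} fiber product: one must verify that cutting out $\cR_{G,N}$ inside $\Gr_G\times N_{\O}$ really does produce, term by term, a \emph{truncated} tamely presented derived scheme — i.e. that the structure maps of the presentation $\colim_\lav\cR_{\le\lav}$ are almost finitely presented and each $\cR_{\le\lav}$ is $n$-truncated for some $n$ — rather than drifting into untruncated territory because $N_{\K}$ is an infinite-type ind-scheme. This is precisely where one must be careful to choose, for each $\lav$, a finite truncation $t^{-n}N_{\O}$ large enough that the relevant equations are supported there and small enough that the derived zero locus is of bounded Tor-amplitude; the argument that this is possible, and compatible as $\lav$ grows, is the technical heart, and is exactly what is carried out in \cite[Prop. 4.7, Prop. 4.11]{CW23}, which I would invoke rather than reprove in full.
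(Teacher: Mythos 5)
The paper does not prove this proposition at all: both parts are imported verbatim from \cite[Prop.~4.7, Prop.~4.11]{CW23}, and the text surrounding the statement simply says ``The following proposition is proved in \cite{CW23}.'' So there is no in-paper argument to compare yours against; what you have written is a reconstruction of the cited proof from the formalism collected in Section \ref{sec: AG background}, ending (appropriately) with a deferral to \cite{CW23} for the technical core. As a reconstruction it is sound and consistent with the toolkit the paper sets up: identifying $j_1$ and $j_2$ locally (via Remark \ref{rmk: an isom} and Lemma \ref{lem:Zar_triv}) with the finitely presented closed immersions $N_{\O}\hookrightarrow t^{-n}N_{\O}$, deducing the properties of $i_1,i_2$ by base-change stability, and getting ind-tame presentation of $\cR_{G,N}$ by composing the almost finitely presented closed immersion (which is tamely presented by \cite[Prop.~4.7]{CW24mor}, as quoted in Section \ref{subsubsec:mor}) with the tamely presented structure map of $\Gr_G\times N_{\O}$ — all of this matches the stability facts the paper records.

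The one place your sketch is genuinely thin is the admissibility claim in part (2). Your ``for instance'' clause gestures at $[\hGO\backslash(\Gr_{\le\lav}\times\A^N_{\C})]$, but admissibility requires an \emph{affine} morphism to a flat locally almost finitely presented geometric stack, and the projection to anything containing the projective factor $\Gr_{\le\lav}$ is not affine; also note that $N_{\O}\cong\A^{\infty}_{\C}$ is emphatically \emph{not} almost finitely presented over $\pt$ (that is the whole reason ``tamely presented'' exists), so one cannot use it as the target either. The route actually available from the paper's own setup is via $\pi_1$ (Proposition \ref{prop: Rsymmdiagram}): $i_1$ followed by the affine projection gives an affine morphism $[\hGO\backslash\cR_{\le\lav}]\to[\hGO\backslash N_{\O}]\to[\hGO\backslash\pt]$, and $[\hGO\backslash\pt]$ is flat locally almost finitely presented because $\pt\to[\hGO\backslash\pt]$ is an affine faithfully flat cover by $\Spec\C$. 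With that correction, and given that the paper itself only cites \cite{CW23} here, your decision to invoke the reference for the truncation/Tor-amplitude bookkeeping rather than reprove it is exactly what the paper does.
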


There is a useful description of $[\hGO\backslash \cR_{G,N}]$ as follows.

\begin{proposition}[{\cite[Prop. 4.8]{CW23}}]\label{prop: Rsymmdiagram}
    There is a Cartesian diagram in the $\infty$-category of derived stacks
	\begin{equation}\label{eq:Rsymmdiagram}
		\begin{tikzcd}
	{[\hGO\backslash \cR_{G,N}]} & {[\hGO\backslash N_{\O}]} \\
	{[\hGO\backslash N_{\O}]} & {[\hGK\backslash N_{\K}].}
	\arrow["{\pi_1}", from=1-1, to=1-2]
	\arrow["{\pi_2}"', from=1-1, to=2-1]
	\arrow[from=1-2, to=2-2]
	\arrow[from=2-1, to=2-2]
\end{tikzcd}
	\end{equation}
Here $\pi_1$ is the $\hGO$-quotient of the composition $\wt{\pi}_1\circ i_1$, where $\wt{\pi}_1$ is the projection $ \Gr \times N_\O \to N_O$; and $\pi_2$ is the composition of $i_2$ (quotient by $\hGO$) with the morphism $\wt{\pi}_2: [\hGO\backslash\cT] \to [\hGK\backslash\cT] \cong [\hGO\backslash N_\O]$. The two morphisms $\pi_1$ and $\pi_2$ are ind-proper and almost ind-finitely presented.
\end{proposition}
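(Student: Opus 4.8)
The plan is to apply the quotient functor $[\hGO\backslash-]$ to the defining square \eqref{eq: R} and then recognise the result as the symmetric square \eqref{eq:Rsymmdiagram}. Since $\hGO$ is a flat affine group scheme and all four arrows of \eqref{eq: R} are $\hGO$-equivariant for the left-translation actions, and since $W\mapsto[\hGO\backslash W]$ is an equivalence onto (ind-)derived stacks over $[\hGO\backslash\pt]$ (with inverse the fibre over $[\hGO\backslash\pt]$) and hence preserves limits, the functor $[\hGO\backslash-]$ commutes with this derived fibre product, giving
\[
[\hGO\backslash\cR_{G,N}]\;\cong\;[\hGO\backslash(\Gr_G\times N_\O)]\times_{[\hGO\backslash(\Gr_G\times N_\K)]}[\hGO\backslash\cT_{G,N}].
\]
This is not yet \eqref{eq:Rsymmdiagram}; the content is to re-express the right-hand side as $[\hGO\backslash N_\O]\times_{[\hGK\backslash N_\K]}[\hGO\backslash N_\O]$ with $\pi_1=\wt{\pi}_1\circ i_1$ and $\pi_2=\wt{\pi}_2\circ i_2$ as its two projections.

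I would carry this out through an explicit atlas. By Remark \ref{rmk: an isom}, $\hGK\times^{\hGO}N_\K=\Gr_G\ttimes N_\K\cong\Gr_G\times N_\K$, and under this isomorphism the sub-ind-schemes $j_1(\Gr_G\times N_\O)$ and $j_2(\cT_{G,N})$ of $\Gr_G\times N_\K$ become $\{[g,w]\in\hGK\times^{\hGO}N_\K:\ gw\in N_\O\}$ and $\{[g,w]:\ w\in N_\O\}$ respectively; hence $\cR_{G,N}$ is identified with the derived intersection $\{[g,w]\in\hGK\times^{\hGO}N_\K:\ w\in N_\O,\ gw\in N_\O\}$, carrying the left-translation $\hGO$-action $[g,w]\mapsto[h'g,w]$. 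Choosing the atlas $\hGK\times N_\K\to\hGK\times^{\hGO}N_\K$ exhibits $[\hGO\backslash\cR_{G,N}]$ as the quotient of the derived scheme $(\hGK\times N_\O)\times_{N_\K}N_\O$ (the fibre product taken along $(g,w)\mapsto gw$ and the inclusion $N_\O\hookrightarrow N_\K$) by two commuting copies of $\hGO$: the one coming from the twisted product, $h\cdot(g,w)=(gh^{-1},hw)$, and the one coming from left translation, $h'\cdot(g,w)=(h'g,w)$. Dually, taking $N_\O\to[\hGK\backslash N_\K]$ to be the composite $N_\O\hookrightarrow N_\K\to[\hGK\backslash N_\K]$ and pulling the relation $N_\K\times_{[\hGK\backslash N_\K]}N_\K\cong\hGK\times N_\K$ back along $N_\O\hookrightarrow N_\K$ on both legs identifies $N_\O\times_{[\hGK\backslash N_\K]}N_\O$ with the same derived scheme $(\hGK\times N_\O)\times_{N_\K}N_\O$, and identifies $[\hGO\backslash N_\O]\times_{[\hGK\backslash N_\K]}[\hGO\backslash N_\O]$ with its quotient by the two $\hGO$-actions contributed by the two factors, which are exactly the two actions displayed above. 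The two presentations agree on the nose, so the identification is automatic at the derived level; tracing the structure maps through, $\pi_1$ becomes $[g,w]\mapsto[gw]$ and $\pi_2$ becomes $[g,w]\mapsto[w]$, i.e.\ the two projections of the symmetric fibre product. (The conceptual reason the symmetry of \eqref{eq:Rsymmdiagram} is invisible in \eqref{eq: R} is that $\Gr_G\times N_\O$ twists $N_\O$ by the left copy of $\hGO$ whereas $\hGK\times^{\hGO}N_\O$ twists it by the other copy, so that after the left quotient both bundles become the one-point Hecke stack $[\hGO\backslash\hGK/\hGO]$ pulled back to $[\hGO\backslash N_\O]$ along its two distinct projections to $[\hGO\backslash\pt]$.)

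For the last assertion, since \eqref{eq:Rsymmdiagram} is Cartesian, $\pi_1$ and $\pi_2$ are both base changes of the natural map $\phi\colon[\hGO\backslash N_\O]\to[\hGK\backslash N_\K]$ forming the other two edges of the square, so by stability of ind-properness and of almost ind-finite presentation under base change it suffices to treat $\phi$. Using the identification $[\hGO\backslash N_\O]\cong[\hGK\backslash\cT_{G,N}]$ (valid as $\cT_{G,N}=\hGK\times^{\hGO}N_\O$), the map $\phi$ is the $\hGK$-quotient of $\cT_{G,N}\xrightarrow{j_2}\Gr_G\times N_\K\to N_\K$, which is ind-proper and almost ind-finitely presented, being the almost-ind-finitely-presented ind-closed immersion $j_2$ of Proposition \ref{Prop: prop of R}(1) composed with the projection off the ind-projective scheme $\Gr_G$. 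Since $\phi$ pulls back along the faithfully flat cover $N_\K\to[\hGK\backslash N_\K]$ to precisely this composite, and both properties are fppf-local on the target, $\phi$, and therefore $\pi_1$ and $\pi_2$, is ind-proper and almost ind-finitely presented.

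I expect the main obstacle to be the middle paragraph: matching the two groupoid presentations while correctly tracking the pair of commuting $\hGO$-actions (in particular making the twist on $\cT_{G,N}$ sit on the copy of $\hGO$ opposite to the one twisting $\Gr_G\times N_\O$) and identifying the resulting two projections with $\pi_1=\wt{\pi}_1\circ i_1$ and $\pi_2=\wt{\pi}_2\circ i_2$. Everything else is formal.
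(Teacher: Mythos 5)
Your reconstruction is essentially correct, and it is worth noting that the paper itself does not prove this statement: it is quoted verbatim from \cite[Prop.\ 4.8]{CW23}, so there is no in-text proof to compare against. Your first two paragraphs recover the intended argument: $[\hGO\backslash-]$ commutes with the derived fibre product defining $\cR_{G,N}$, and both $[\hGO\backslash\cR_{G,N}]$ and $[\hGO\backslash N_\O]\times_{[\hGK\backslash N_\K]}[\hGO\backslash N_\O]$ are presented by the same derived scheme $(\hGK\times N_\O)\times_{N_\K}N_\O$ modulo the same pair of commuting $\hGO$-actions $h\cdot(g,w)=(gh^{-1},hw)$ and $h'\cdot(g,w)=(h'g,w)$, with the two projections $[g,w]\mapsto[gw]$ and $[g,w]\mapsto[w]$ matching $\wt{\pi}_1\circ i_1$ and $\wt{\pi}_2\circ i_2$. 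I checked the action bookkeeping and it is right.

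The one step I would not let stand as written is the last paragraph. You reduce the ind-properness and almost ind-finite presentation of $\pi_1,\pi_2$ to the same properties of $\phi\colon[\hGO\backslash N_\O]\to[\hGK\backslash N_\K]$ by base change and fppf-locality on the target. But $[\hGK\backslash N_\K]$ is a quotient of an ind-scheme by a group \emph{ind}-scheme; it is not geometric (the cover $N_\K\to[\hGK\backslash N_\K]$ is not affine) and not obviously ind-geometric, so the definitions of ``ind-proper'' and ``almost ind-finitely presented'' in Section \ref{subsubsec:mor} and the base-change/locality lemmas you invoke are not literally available for $\phi$. This is precisely why the statement is phrased for $\pi_1,\pi_2$ rather than for the vertical arrows of the square. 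The repair is immediate and uses exactly the computation you already did: check the properties of $\pi_1$ directly as the composite of the almost ind-finitely presented ind-closed immersion $i_1$ (Proposition \ref{Prop: prop of R}(1)) with the projection $\Gr_G\times N_\O\to N_\O$, which is the base change of the ind-proper, almost ind-finitely presented map $\Gr_G\to\pt$; for $\pi_2$, pull back along the affine faithfully flat cover $N_\O\to[\hGO\backslash N_\O]$, where it becomes the composite $\cT_{G,N}\to N_\K$ restricted over $N_\O$, i.e.\ the same map $\cR\to N_\O$ up to the involution swapping the two legs. Everything else in your write-up is fine.
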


\begin{definition}
	The stable $\infty$-category of coherent sheaves
	\[\Coh^{\hGO}(\cR_{G,N}):=\Coh([\hGO\backslash\cR_{G,N}])\]
	is called the categorified Coulomb branch.
\end{definition}
The $K$-group of this stable $\infty$-category is called the $K$-theoretic Coulomb branch as in \cite[Rem. 3.14]{BFNII}.

We define two degree shift functors $\{1\},\lr{1}$ on $\Coh^{\hGO}(\cR_{G,N})$ using the $\Gmrot\times\Gmdil$-equivariance as follows. 
\begin{definition}\label{shift functors}

	 Let $\mathrm{wt}_n$ denotes the weight $n$ one dimensional representation of $\Gm$. Define
	 $$\{1\}:\Coh^{\hGO}(\cR_{G,N})\to \Coh^{\hGO}(\cR_{G,N})$$
	 by tensoring with $\mathrm{wt}_{-1}\in\Rep{\Gmrot}=\Coh([\Gmrot\backslash \pt])$. Define
	 $$\lr{1}:\Coh^{\hGO}(\cR_{G,N})\to \Coh^{\hGO}(\cR_{G,N})$$
	 by tensoring with $\mathrm{wt}_{-1}\in\Rep{\Gmdil}=\Coh([\Gmdil\backslash \pt])$.\footnote{The notation $\lr{1}$ here is different from \cite{CW19} and \cite{CW23}.}
	 
	 Similarly, for any derived stack $X$ over $[\Gmrot\times \Gmdil\backslash \pt]$, we can define $\{1\}$ and $\lr{1}$ functors on $\QCoh(X)$, $\Coh(X)$, $\IndCoh(X)$ by tensoring with the corresponding $\Gmrot$ and $ \Gmdil$-representations.
\end{definition}

\begin{remark}\label{rmk:Gm_importance}
	The $\Gmrot$-equivariance plays an important role in the construction of commutativity constraints and fiber functor on $\KP_0$ (see Section \ref{subsec:comm_constraint} and $\ref{subsec:fiber_functor}$), while the $\Gmdil$-equivariance plays an important role in the construction of Koszul perverse t-structures (see Section \ref{subsec:Koszul_t-structure}).
\end{remark}

In the remaining part of this section, we will abbreviate $\Gr_G,\cT_{G,N},\cR_{G,N}$ by $\Gr,\cT,\cR$, respectively.

\subsection{Convolution}\label{Subsec: convolution}
In this subsection, we will introduce the convolution construction on $\Coh^{\hGO}(\cR)$, which gives $\Coh^{\hGO}(\cR)$ a rigid monoidal structure.
\subsubsection{} 
According to Proposition \ref{eq:Rsymmdiagram}, the quotient derived stack $[\hGO\backslash\cR]$ can be identified with the fibre product 
\begin{equation*}\label{eq: identification with fibre product}
    [\hGO\backslash N_{\O}]\underset{[\hGK\backslash N_{\K}]}{\times}[\hGO\backslash N_{\O}]. 
\end{equation*}
This identification gives a natural convolution construction on $\Coh^{\hGO}(\cR_{G,N})=\Coh([\hGO\backslash\cR_{G,N}])$. More precisely, consider the convolution diagram:
\[([\hGO\backslash\cR_{G,N}])\times ([\hGO\backslash\cR_{G,N}])\xleftarrow{p_{12}\times p_{23}}[\hGO\backslash N_{\O}]\underset{[\hGK\backslash N_{\K}]}{\times}[\hGO\backslash N_{\O}]\underset{[\hGK\backslash N_{\K}]}{\times}[\hGO\backslash N_{\O}]\xrightarrow{p_{13}} [\hGO\backslash\cR_{G,N}].\]
Here $p_{12}\times p_{23}$ has stable coherent pullback, and $p_{13}$ is ind-proper and almost ind-finitely presented. Then the convolution product can be defined as
\begin{equation}\label{eq: conv1}
    \cF\conv \cG:=p_{13*}(p_{12}\times p_{23})^*(\cF\boxtimes\cG).
\end{equation}
According to \cite[Prop. 5.5, Prop. 5.7]{CW23}, the convolution product on $\Coh^{\hGO}(\cR)$ extends to a monoidal structure, whose unit object is the pushforward $e_*(\O_{[\hGO\backslash N_{\O}]})$ for the diagonal map $e:[\hGO\backslash N_{\O}]\to [\hGO\backslash N_{\O}]\underset{[\hGK\backslash N_{\K}]}{\times}[\hGO\backslash N_{\O}]$. 

We also introduce the second method in \cite[Sec. 5.2]{CW23} defining the convolution product below, which involves unwinding the above convolution diagram before taking the quotient by $\hGO$. Such diagram appeared in \cite[Eq. (3.2)]{BFNII}.

Let $\Gr \ttimes \cR$ be the twisted product $\hGK \times^{\hGO} \cR$ as in Section \ref{subsubsec:twisted_prod}; this twisted product has a natural projection map $p:\Gr \ttimes \cR\to \Gr$ onto the first factor. The twisted product $\cT \ttimes \cR$ is the base change of $\Gr \ttimes \cR$ along the projection $ \cT \to \Gr$, and $\cR \ttimes \cR$ is the further base change of $\cT\ttimes \cR$ along $i_2: \cR \to \cT$. 

Recall in Proposition \ref{prop: Rsymmdiagram} we have a map $\pi_1:[\hGO\backslash \cR]\to [\hGO\backslash N_{\O}]$. We also write $\pi_1$ for the map before taking $\hGO$-quotient. Let $\td$ for the map $\id \ttimes (\pi_1 \times \id): \Gr \ttimes \cR \to \Gr \ttimes (N_\O \times \cR) \cong \cT \ttimes \cR$. More concretely, $\td$ is the map obtained by passing to $\hGO$-quotients from
\begin{align*}
	\id \times (\pi_1 \times \id):  \hGK \times \cR &\to \hGK \times (N_\O \times \cR),\\ (g_1,[g_2,s])&\mapsto (g_1,(g_2s,[g_2,s])).
\end{align*}
We also write $\tm: \Gr \ttimes \cR \to \cT$ for the composition
$$\Gr \ttimes \cR \xrightarrow{\id \ttimes i_2} \Gr \ttimes \cT \cong \Gr \ttimes \Gr \ttimes N_\O \xrightarrow{m_{\Gr} \ttimes \id} \Gr \ttimes N_\O \cong \cT, $$ 
Here $m_{\Gr}:\Gr\ttimes\Gr\to\Gr$ is the convolution map of $\Gr$, sending $[g,[h]]$ to $[gh]$.

\cite[Prop. 5.9]{CW23} shows that there exists an ind-derived scheme $\cS$ fitting into the following diagram
	\begin{equation}\label{eq:convS}
	\begin{tikzcd}
		{\cR\ttimes \cR} & \cS & \cR \\
		{\cT\ttimes \cR} & {\Gr \ttimes \cR} & {\cT.}
		\arrow["{i_2\ttimes \id}"', from=1-1, to=2-1]
		\arrow["d"', from=1-2, to=1-1]
		\arrow["m", from=1-2, to=1-3]
		\arrow["i", from=1-2, to=2-2]
		\arrow["{i_2}", from=1-3, to=2-3]
		\arrow["\td"', from=2-2, to=2-1]
		\arrow["\tm", from=2-2, to=2-3]
	\end{tikzcd}
	\end{equation}
in which both squares are Cartesian and all maps are $\hGO$-equivariant. They also show that the $\td$ has stable coherent pullback, and $\tm$ is ind-proper and almost ind-finitely presented; hence $d$ and $m$ have the same properties respectively. 

Let $\cF,\cG\in \Coh^{\hGO}(\cR)$. Following Section \ref{subsubsec:twisted_prod}, one can define the twisted product $\cF\tbox \cG\in \Coh^{\hGO}(\cR\ttimes \cR)$. Then, the convolution product $\conv$ on $\Coh^{\hGO}(\cR)$ can be defined as
\[\cF\conv\cG := m_*d^*(\cF\tbox \cG).\]
In \cite[Sec. 5.2]{CW23}, it is shown that this definition is the same as \eqref{eq: conv1}. The diagonal map $e:[\hGO\backslash N_{\O}]\to [\hGO\backslash N_{\O}]\underset{[\hGK\backslash N_{\K}]}{\times}[\hGO\backslash N_{\O}]$ can be identified with the $\hGO$-quotient of the $t^0$-fiber embedding $$t^0\times \id:N_{\O}\to \cR,$$ hence the unit object is identified with $(t^0\times \id)_*\O_{N_{\O}}\in \Coh^{\hGO}(\cR)$.

\subsubsection{Rigidity}\label{subsubsec:rigidity}
In \cite[Sec. 6]{CW23}, it is shown that the monoidal structure on $\Coh^{\hGO}(\cR_{G,N})$ given by convolution product is rigid. We briefly explain their constructions below.

\textbf{Firstly}, one can construct a $(-)^*$-involution on $\Coh^{\hGO}(\cR)$. Consider the morphism 
\begin{equation}\label{eq:inv}
	\mathrm{inv}:\hGK\to \hGK
\end{equation}
given by $g\mapsto g^{-1}$. This morphism commutes two $\hGO\times \hGO$-actions on $\hGK$: one is $(g_1,g_2)\cdot h=g_1hg_2^{-1}$, and the other one is $(g_1,g_2)\cdot h=g_2hg_1^{-1}$. Define 
\begin{equation}\label{eq:iotaGr}
	\iota_{\Gr}:[\hGO\backslash\hGK/\hGO]\to [\hGO\backslash\hGK/\hGO]
\end{equation}
to be the $\hGO\times \hGO$ quotient of $\mathrm{inv}:\hGK\to \hGK$, which is an involution on $[\hGO\backslash\hGK/\hGO]=[\hGO\backslash\Gr]$. After identifying $[\hGO\backslash \Gr]$ with $$ [\hGO\backslash \pt]\underset{[\hGK\backslash \pt]}{\times}[\hGO\backslash \pt]$$ as in Proposition \ref{eq:Rsymmdiagram}, $\iota_{\Gr}$ becomes the morphism exchanging the two factors\label{identification2}.

There are two $\hGO\times \hGO$-actions on $\hGK\times N_{\O}$: one is $(g_1,g_2)\cdot (h,s)=(g_1hg_2^{-1},g_1 s)$, and the other one is $(g_1,g_2)\cdot (h,s)=(g_2hg_1^{-1},g_1 s)$. The quotient of $G_{\K}\times N_{\O}$ by the first $\hGO\times \hGO$-action is naturally identified with $[\hGO\backslash(\Gr\times N_{\O})]$, and the quotient of $G_{\K}\times N_{\O}$ by the second one is naturally identified with $[\hGO\backslash\cT]$.

Consider the morphism $\mathrm{inv}\times \id_{N_{\O}}:\hGK\times N_{\O}\to \hGK\times N_{\O}$. It intertwines the two $\hGO\times \hGO$-actions on $\hGK\times N_{\O}$ described above. We have the following Cartesian diagram:
\[\begin{tikzcd}
	{G_{\K}\times N_{\O}} & {G_{\K}\times \id_{N_{\O}}} \\
	{G_{\K}} & {G_{\K}.}
	\arrow["{{\mathrm{inv}}\times N_{\O}}", from=1-1, to=1-2]
	\arrow["{\mathrm{pr}_1}"', from=1-1, to=2-1]
	\arrow["{\mathrm{pr}_1}", from=1-2, to=2-2]
	\arrow["{{\mathrm{inv}}}", from=2-1, to=2-2]
\end{tikzcd}\]

After taking quotient by the $\hGO\times\hGO$-actions, we construct a Cartesian diagram (use \cite[Lem. 4.10]{CW23}):
\begin{equation}\label{diagram1}
    \begin{tikzcd}
	{[\hGO\backslash(\Gr\times N_{\O})]} & {[\hGO\backslash\cT]} \\
	{[\hGO\backslash\Gr]} & {[\hGO\backslash\Gr].}
	\arrow["{\wt{\iota}}", from=1-1, to=1-2]
	\arrow["{p_1}"', from=1-1, to=2-1]
	\arrow["{p_2}", from=1-2, to=2-2]
	\arrow["{\iota_{\Gr}}", from=2-1, to=2-2]
\end{tikzcd}
\end{equation}

We flip the above Cartesian diagram left-right:
\begin{equation}\label{diagram2}
    \begin{tikzcd}
	{[\hGO\backslash\cT]} & {[\hGO\backslash(\Gr\times N_{\O})]} \\
	{[\hGO\backslash\Gr]} & {[\hGO\backslash\Gr].}
	\arrow["{\wt{\iota}^{-1}}", from=1-1, to=1-2]
	\arrow["{p_2}"', from=1-1, to=2-1]
	\arrow["{p_1}", from=1-2, to=2-2]
	\arrow["{\iota_{\Gr}^{-1}=\iota_{\Gr}}", from=2-1, to=2-2]
\end{tikzcd}
\end{equation}

By the same method, we obtain the following Cartesian diagram 
\begin{equation}\label{diagram3}
    \begin{tikzcd}
	{[\hGO\backslash(\Gr\times N_{\K})]} & {[\hGO\backslash(\Gr\ttimes N_{\K})]\cong [\hGO\backslash(\Gr\times N_{\K})]} \\
	{[\hGO\backslash\Gr]} & {[\hGO\backslash\Gr],}
	\arrow["{\wt{\iota}}", from=1-1, to=1-2]
	\arrow[from=1-1, to=2-1]
	\arrow[from=1-2, to=2-2]
	\arrow["{\iota_{\Gr}}", from=2-1, to=2-2]
\end{tikzcd}
\end{equation}
Here, the natural isomorphism in the top right corner is discussed in Remark \ref{rmk: an isom}.

Now consider the fiber product of diagrams \eqref{diagram1} and \eqref{diagram2} over \eqref{diagram3}, we obtain the following Cartesian diagram:

\begin{equation}\label{eq: diagram of iota}
    \begin{tikzcd}
	{[\hGO\backslash\cR]} & {[\hGO\backslash\cR]} \\
	{[\hGO\backslash\Gr]} & {[\hGO\backslash\Gr].}
	\arrow["\iota", from=1-1, to=1-2]
	\arrow["p"', from=1-1, to=2-1]
	\arrow["p", from=1-2, to=2-2]
	\arrow["{\iota_{\Gr}}", from=2-1, to=2-2]
\end{tikzcd}
\end{equation}

Here $\iota$ is also an involution on $[\hGO\backslash \cR]$. After identifying $[\hGO\backslash \cR]$ with $$[\hGO\backslash N_{\O}]\underset{[\hGK\backslash N_{\K}]}{\times}[\hGO\backslash N_{\O}]$$ as in Proposition \ref{eq:Rsymmdiagram}, $\iota$ becomes the morphism exchanging two factors\label{identification1}.

Define 
\begin{equation}\label{eq:(-)^*}
	(-)^*:=\iota^*(-):\Coh^{\hGO}(\cR)\to \Coh^{\hGO}(\cR).
\end{equation}
The same involution is defined on $\IndCoh^{\hGO}(\cR)$ by replacing $\Coh^{\hGO}(\cR)$ with $\IndCoh^{\hGO}(\cR)$ in the construction.

\textbf{Secondly}, we construct the Grothendieck-Serre dual functor
\[\D(-):\Coh^{\hGO}(\cR)^{\op}\to \Coh^{\hGO}(\cR).\]
Let $\pi:\Gr\to \pt$ be the projection, and define $\omega_{\Gr}:=\pi^!(\O_{\pt})\in \IndCoh^{\hGO}(\Gr)$. Recall $i_1:\cR\to \Gr\times N_{\O}$ denotes the defining embedding in \eqref{eq: R}, and we write $p_1:\Gr\times N_{\O}\to \Gr$ for the projection onto the first factor. Define 
\[\omega:=i_1^!p_1^*(\omega_{\Gr})\in \IndCoh^{\hGO}(\cR).\]
and
\begin{equation}\label{def of D}
	\D(-):=\cHom(-,\omega):\Coh^{\hGO}(\cR)^{\mathrm{op}}\to \IndCoh^{\hGO}(\cR).
\end{equation}
We refer to Section \ref{subsubsec:!-pull} and \ref{subsubsec:cHom} for the discussions on $!$-pullback and $\cHom$ functors. 

The following proposition is shown in \cite[Sec. 6]{CW23}.

\begin{proposition}\label{prop. dual in Coh(R)} \ 
\begin{enumerate}
    \item The monoidal $\infty$-category $\Coh^{\hGO}(\cR)$ is rigid (\cite[Prop. 6.5]{CW23});
    \item For any $\cF\in \Coh^{\hGO}(\cR)$, the right dual $\cF^{R}$ is given by $\D(\cF)^*$, and the left dual $\cF^{L}$ is given by $\D(\cF^*)$ (\cite[Prop. 6.8]{CW23}). 
    
    In particular, $\D(\cF)\cong (\cF^{R})^*\in \Coh^{\hGO}(\cR)$, and $\D\circ\D(\cF)\cong ((\cF^{R})^{**})^L\cong\cF$.
\end{enumerate}
\end{proposition}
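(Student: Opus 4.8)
The plan is to prove both assertions of Proposition \ref{prop. dual in Coh(R)} by combining the two symmetries already at hand: the swap involution $\iota$ of $[\hGO\backslash\cR]$ from \eqref{eq: diagram of iota}, which under Proposition \ref{prop: Rsymmdiagram} exchanges the two factors of $[\hGO\backslash N_{\O}]\times_{[\hGK\backslash N_{\K}]}[\hGO\backslash N_{\O}]$, and the Grothendieck--Serre duality functor $\D=\cHom(-,\omega)$ of \eqref{def of D}. Conceptually, rigidity should hold because $\Coh^{\hGO}(\cR)$ is a convolution category built from the "groupoid" $[\hGO\backslash N_{\O}]\rightrightarrows[\hGK\backslash N_{\K}]$, and the structure maps $\pi_1,\pi_2$ of Proposition \ref{prop: Rsymmdiagram} are ind-proper and almost ind-finitely presented; this makes the unit object (the diagonal) dualizable as a bimodule kernel, which is the abstract source of left and right duals. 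The hands-on route through $\iota^*$ and $\D$ is what produces the explicit formulas $\cF^{R}=\D(\cF)^*$ and $\cF^{L}=\D(\cF^*)$.

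First I would show that $(-)^*=\iota^*$ from \eqref{eq:(-)^*} is a monoidal anti-equivalence: $(\cF\conv\cG)^*\cong\cG^*\conv\cF^*$ naturally, and $\mathbf{1}^*\cong\mathbf{1}$. The three-fold fiber product governing \eqref{eq: conv1} carries the evident symmetry reversing the order of the three copies of $[\hGO\backslash N_{\O}]$, and the convolution maps $p_{12}\times p_{23}$ (stable coherent pullback) and $p_{13}$ (ind-proper) are intertwined with their "reversed" counterparts by the base-changed swap isomorphisms; a diagram chase using the base-change compatibilities of $*$-pullback and proper pushforward (Section \ref{subsec: coh shvs}) then yields the anti-monoidal identity. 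Since the unit is the pushforward of $\O$ along the $t^0$-section $N_{\O}\to\cR$, which is manifestly $\iota$-invariant, we get $\mathbf{1}^*\cong\mathbf{1}$.

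Second I would establish the duality properties of $\D$ and assemble the (co)evaluation morphisms. The key inputs are: (a) reflexivity $\D\circ\D\cong\id$ on $\Coh^{\hGO}(\cR)$, which follows because $\omega=i_1^!p_1^*\pi^!\O_{\pt}$ is a dualizing complex and the fibers $N_{\O}\cong\A^{\infty}$ together with the ind-proper $\Gr$ contribute only an invertible twist; (b) $\D$ exchanges $*$-pullback along stable-coherent-pullback maps with $!$-pullback, and commutes up to the relative dualizing complex with ind-proper pushforward, via \eqref{eq:f_*,f^!}, \eqref{eq:!and*} and \eqref{eq:h^*cHom}. Applying these to $\cF\conv\cG=m_*d^*(\cF\tbox\cG)$ from the diagram \eqref{eq:convS}, using that the relative dualizing complex of $d$ along the twisted product is invertible, expresses $\D(\cF\conv\cG)$ through $\D\cF,\D\cG$; combined with the first step this identifies $\D(\cF)^*$ and $\D(\cF^*)$ as the candidate right and left duals. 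The evaluation $\cF^{R}\conv\cF\to\mathbf{1}$ and coevaluation $\mathbf{1}\to\cF\conv\cF^{R}$ are then produced from the counit of the $(m_*,m^!)$ adjunction and the unit of the $(d^*,d_*)$ adjunction, transported to the $t^0$-section using base change along the Cartesian squares of \eqref{eq:convS}; the left-dual (co)evaluations are obtained symmetrically after applying $\iota^*$.

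I expect the main obstacle to be the homological bookkeeping in the ind-geometric setting: the maps $d,\td$ only have stable coherent pullback and $m,\tm$ are only ind-proper and almost ind-finitely presented, so one must track their relative dualizing complexes through the CW24stk/CW24mor formalism rather than invoking classical Grothendieck duality, and one must verify that these relative dualizing data are genuinely invertible twists (otherwise the dual formulas acquire unwanted shifts). A second subtle point is checking the triangle (zig-zag) identities for the constructed unit and counit on the nose, which reduces to a compatibility of adjunction (co)units over the two Cartesian squares in \eqref{eq:convS}; and finally one must confirm that $\iota^*$ and $\D$ interact so that $\D(\cF)^*$ and $\D(\cF^*)$ are honestly the right and left duals (agreeing only when $\cF$ is "self-dual" in the appropriate sense), which is exactly the content of the last sentence of Proposition \ref{prop. dual in Coh(R)}.
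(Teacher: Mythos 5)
The paper does not prove this proposition: it is stated as a recollection of \cite[Prop.~6.5, Prop.~6.8]{CW23} (``The following proposition is shown in [CW23, Sec.~6]''), and the surrounding Section~\ref{subsubsec:rigidity} only reconstructs the two ingredients $\iota^*$ and $\D$ that the cited result is phrased in terms of. So there is no in-paper argument to compare against line by line. Your outline is broadly consistent with how Cautis--Williams actually proceed -- they do exploit the symmetric fiber-product presentation of $[\hGO\backslash\cR]$, the swap involution $\iota$, and the functor $\D=\cHom(-,\omega)$, and anti-monoidality of $\iota^*$ is exactly what this paper later uses in the proof of Proposition~\ref{prop:sigi is ring hom} -- so as a reconstruction of the strategy it is on target.

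As a proof, however, the sketch defers precisely the hard content. The assertion that $\D\circ\D\cong\id$ ``follows because $\omega$ is a dualizing complex and the fibers $N_{\O}\cong\A^{\infty}$ contribute only an invertible twist'' is not an argument in this setting: establishing that $\cHom(-,\omega)$ with $\omega=i_1^!p_1^*(\omega_{\Gr})$ lands back in $\Coh^{\hGO}(\cR)$ (rather than merely $\IndCoh$) and squares to the identity is one of the main technical theorems of \cite[Sec.~6]{CW23}, and it depends on the specific interplay of $i_1^!$ with the infinite-dimensional $N_{\O}$-directions -- compare Lemma~\ref{lem2 calculation of D}, where the analogous finite-codimension normal-bundle computation is carried out by hand and the answer depends delicately on the $\Gmdil$-weights. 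Likewise the zig-zag identities for your proposed (co)evaluation maps are the crux of rigidity and are only flagged as ``an obstacle.'' Finally, your closing parenthetical misreads the ``in particular'' clause: $\D(\cF)\cong(\cF^R)^*$ and $\D\circ\D(\cF)\cong((\cF^R)^{**})^L\cong\cF$ are formal consequences of the two dual formulas together with $(-)^{**}\cong\id$ and $(\cF^R)^L\cong\cF$ in any rigid category; they are not a statement that the left and right duals coincide for self-dual objects. If your intent is to reprove the cited result rather than cite it, the reflexivity of $\D$ and the triangle identities are the points that must actually be supplied.
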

Take $N=0$, the functor $\D(-):\Coh^{\hGO}(\cR)^{\mathrm{op}}\to \Coh^{\hGO}(\cR)$ becomes $\D_{\Gr}(-):=\cHom(-,\omega_{\Gr}):\Coh^{\hGO}(\Gr)^{\mathrm{op}}\to \Coh^{\hGO}(\Gr)$. These functors are called Grothendieck-Serre dual functors.

\subsection{Koszul perverse t-structure}\label{subsec:Koszul_t-structure}

In this subsection, we review the Koszul perverse t-sructure on $\Coh^{\hGO}(\cR_{G,N})$ in \cite{CW23}. After a discussion on some degree modification, we will firstly introduce the perverse t-structure on $\Coh^{\hGO}(\Gr)$, and secondly introduce the Koszul perverse t-structures on $\Coh^{\hGO}(\Gr)$ and $\Coh^{\hGO}(\cR)$.

\subsubsection{A degree modification of $\Coh^{\hGO}(\Gr_G)$}\label{subsubsec: degree modification}
Before entering the main discussions, we provide a remark on a degree issue regarding $\Coh^{\hGO}(\Gr)$ and related categories (see \cite[Sec. 2.1]{CW19}). Recall $\Gr=\Gr^0\bigsqcup \Gr^1$, where $\Gr^0$ and $\Gr^1$ are the union of even and odd cells respectively. We want to modify $\Coh^{\hGO}(\Gr)$ to obtain ${\Coh}_{{ [ 1\! / 2]}}^{\hGO}(\Gr)$ (resp. ${\Coh}_{\lr{\onetwo}}^{\hGO}(\Gr)$). These new categories are essentially the same as $\Coh^{\hGO}(\Gr)$, except that formally the cohomological-degrees (resp. $\Gmdil$-degrees) of objects in $\Coh^{\hGO}(\Gr^1)$ lie in $\tfrac12 +\Z$. These modifications will be used in the discussions on (Koszul) perverse t-structures.

Let us make these modifications rigorous following \cite[Sec. 1.2.1]{Fe21}. 

\begin{definition}\label{sqrt of shift}
	Let $\cC$ be a ($\infty$-) category with an auto-equivalence $T$. Define the new category $$\mathcal{C}[\sqrt{T}]:=\mathcal{C}^+\times\mathcal{C}^-$$ to be the product of two copies of $\mathcal{C}$. This decomposition also gives $\mathcal{C}[\sqrt{T}]$ a $\mathbb{Z}/2\mathbb{Z}$-grading. If $\mathcal{F}$ is an object of $\mathcal{C}$, we write $\mathcal{F}^{\pm}$ for $\mathcal{F}$ viewed as an object of $\mathcal{C}^{\pm}\subset\mathcal{C}[\sqrt{T}]$.
	
	Define an autoequivalence $\sqrt{T}$ of $\mathcal{C}[\sqrt{T}]$ by
	$$
	\sqrt{T}(\mathcal{F}^+) = \mathcal{F}^-, \quad \sqrt{T}(\mathcal{F}^-) = T(\mathcal{F})^+.
	$$
	Note that $T \cong  (\sqrt{T})^2$. 
\end{definition}

Recall $[1]:\Coh^{\hGO}(\Gr)\to \Coh^{\hGO}(\Gr)$ stands for the cohomology grading shift, and $\lr{1}:\Coh^{\hGO}(\Gr)\to \Coh^{\hGO}(\Gr)$ stands for the $\Gmdil$-grading shift (Definition \ref{shift functors}). The decomposition $$\Coh^{\hGO}(\Gr)=\Coh^{\hGO}(\Gr^0) \times  \Coh^{\hGO}(\Gr^1)$$ 
can be viewed as a $\Z/2\Z$-grading.

\begin{definition}
    Consider $\Coh^{\hGO}(\Gr)[\sqrt{[1]}]$. There is a $\Z/2\Z\times \Z/2\Z$-grading on $\Coh^{\hGO}(\Gr)[\sqrt{[1]}]$: one $\Z/2\Z$-grading comes from the even-odd decomposition of $\Gr$, and the other one comes from the square root construction. We restrict this $\Z/2\Z\times\Z/2\Z$-grading along the diagonal embedding $\Z/2\Z\hookrightarrow \Z/2\Z\times\Z/2\Z$, and define 
    \[{\Coh}_{[\onetwo]}^{\hGO}(\Gr):=\left(\Coh^{\hGO}(\Gr)\left[\sqrt{[1]}\right]\right)_{+}\]
    to be the degree $0$ part of this $\Z/2\Z$-grading. 

    By replacing $[1]$ with $\lr{1}$, we can carry out the same constructions. Then we define
    \[{\Coh}_{\lr{\onetwo}}^{\hGO}(\Gr):=\left(\Coh^{\hGO}(\Gr)\left[\sqrt{\lr{1}}\right]\right)_{+}.\]

    Above constructions can apply to other sheaf categories (e.g., $\QCoh(\Gr), \IndCoh(\Gr)$), and other spaces with such natural even-odd decompositions as well (e.g., $\cR,\cT,\cR\ttimes \cR$, etc.). The pull-push functors defined by morphisms in \eqref{eq: R} and \eqref{eq:convS} also have natural modifications adapted to the modifications for the categories.

\end{definition}
To simplify notation, we will denote $\sqrt{[1]}$ and $\sqrt{\lr{1}}$ by $[\tfrac12]$ and $\lr{\tfrac12}$ respectively.

\begin{remark}
    We will discuss the {perverse t-structure} on ${\Coh}_{[\onetwo]}^{\hGO}(\Gr)$, and the {Koszul perverse t-structure} on ${\Coh}_{\lr{\onetwo}}^{\hGO}(\Gr)$, ${\Coh}_{\lr{\onetwo}}^{\hGO}(\cR)$ later. The Koszul perverse t-structure is the main player of this paper. So for simplicity of notation, we just use $\Coh$ (resp. $\QCoh$, $\IndCoh$) to denote $\Coh_{\lr{\onetwo}}$ (resp. $\QCoh_{\lr{\onetwo}}$, $\IndCoh_{\lr{\onetwo}}$) in those situations.
\end{remark}

\subsubsection{Perverse t-structure on $\Coh^{\hGO}_{[\onetwo]}(\Gr_G )$}
We first discuss the \textit{perverse (coherent) t-structure} on $\Coh_{[\onetwo]}^{\hGO}(\Gr)$, which was introduced and studied by \cite{AB10,BFM05,CW19}. 

Let $S \subset \Gr$ be a $\hGO$-invariant (classical) locally closed subscheme of $\Gr$. Let $a_{{S}}: {S}\to \pt$ be the projection. Let $\omega_{{S}}:=a_{{S}}^!(\O_{\pt})\in \Coh^{\hGO}({S})$
\footnote{\label{footnote}When $a_S$ is proper, we have the definition $a_S^!$ as in Section \ref{subsubsec:!-pull}. In general, let $\overline{S}$ be the closure of $S$ in $\Gr$. We can factor $a_S:S\to \pt$ as $S\xrightarrow{j_{S\hookrightarrow \overline{S}}}\overline{S}\xrightarrow{a_{\overline{S}}}\pt$, where $j_{S\hookrightarrow \overline{S}} $ is an open embedding and $a_{\overline{S}}$ is a proper morphism. Then $a_S^!$ is defined by composition $j_{S\hookrightarrow \overline{S}}^*\circ a_{\overline{S}}^!$.

We won't discuss the functorality of $!$-pullback functors for non-proper morphisms. We refer the reader to \cite{GR17} for more discussions on this aspect.

} be our fixed dualizing complex. Define the Grothendieck-Serre dual functor as follows:
\begin{equation}\label{eq:tly}
	\D_S(-):=\cHom(-,\omega_S):\Coh_{[\onetwo]}^{\hGO}(S)^{\mathrm{op}}\to \Coh_{[\onetwo]}^{\hGO}(S).
\end{equation}

\begin{definition}[\cite{AB10}]\label{def:perv t-structure}
	Define the full $\infty$-subcategories $$\Coh_{[\onetwo]}^{\hGO}(S)^{\leq 0}_p, \Coh_{[\onetwo]}^{\hGO}(S)^{\geq 0}_p$$ of $\Coh_{[\onetwo]}^{\hGO}(S)$ as follows: 
	\begin{itemize}
		\item $\cF \in \Coh_{[\onetwo]}^{\hGO}(S)^{\leq 0}_p$ if and only if $i_{\lav}^*(\cF) \in {\QCoh}_{[\onetwo]}^{\hGO}(\Gr_{\lav})^{\leq - \tfrac12 {d_{\lav}}}$ for all $\Gr_{\lav} \subset S$ (recall that $d_{\lav}=\dim \Gr_{\lav}$), 
		\item $\Coh_{[\onetwo]}^{\hGO}(S)^{\geq 0}_p:=\D\left(\Coh_{[\onetwo]}^{\hGO}(S)^{\leq 0,\mathrm{op}}_p\right)$.
	\end{itemize}
\end{definition}

\begin{theorem}\cite{AB10}
	The pair $\left(\Coh_{[\onetwo]}^{\hGO}(S)^{\leq 0}_p, \Coh_{[\onetwo]}^{\hGO}(S)^{\geq 0}_p\right)$ defines a bounded and finite length t-structure on $\Coh_{[\onetwo]}^{\hGO}(S)$, which is called the (middle) perverse t-structure.
\end{theorem}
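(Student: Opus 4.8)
The plan is to deduce this from the general theory of perverse coherent t-structures of Arinkin--Bezrukavnikov \cite{AB10}, in the equivariant form adapted to affine Grassmannians in \cite{BFM05, CW19}. The first reduction is to the case $S=\overline{\Gr}_{\lav}$ for a single $\lav$: since $\Coh^{\hGO}(S)$ is the filtered colimit of the $\Coh^{\hGO}(\overline{\Gr}_{\muv})$ ($\overline{\Gr}_{\muv}\subset S$) along the almost ind-finitely presented closed immersions, and membership in $\Coh_{[\onetwo]}^{\hGO}(S)^{\leq 0}_p$ is tested stratum-by-stratum and is compatible with these pushforwards, a t-structure on each $\Coh_{[\onetwo]}^{\hGO}(\overline{\Gr}_{\muv})$ with the stated support conditions glues to one on the colimit; boundedness and finite length descend because every object is pushed forward from a single $\overline{\Gr}_{\muv}$. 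Fixing $\lav$, note that $\overline{\Gr}_{\lav}$ is a projective variety, the Cartan decomposition gives the finite stratification $\overline{\Gr}_{\lav}=\bigsqcup_{\muv\le\lav}\Gr_{\muv}$ into smooth equidimensional $\hGO$-orbits, and $\hGO$ acts on $\overline{\Gr}_{\lav}$ through an algebraic (finite-type) quotient group $H$ obtained by truncating the jet part of $G_{\O}$, so that $\Coh^{\hGO}(\overline{\Gr}_{\lav})=\Coh^{H}(\overline{\Gr}_{\lav})$. This places us squarely in the framework of \cite{AB10}.

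Second, I would take the perversity on the finite poset of strata to be the \emph{middle} perversity $p(\muv):=-\tfrac12 d_{\muv}=-\lr{\rho,\muv}$. The $[\onetwo]$-renormalization of Section \ref{subsubsec: degree modification} is precisely what makes $-\tfrac12 d_{\muv}$ an honest shift in the renormalized category even on the odd component $\Gr^1$, so no genuine half-integers occur; here it is the evenness of $\lr{2\rho,\check\alpha}$ that forces all $d_{\muv}$ in a component to have the same parity. One checks the combinatorial conditions of \cite{AB10}: $p$ is \emph{monotone}, since $\muv'\le\muv$ forces $d_{\muv'}\le d_{\muv}$ hence $p(\muv')\ge p(\muv)$; and $p$ is \emph{self-dual} with respect to Grothendieck--Serre duality (using that each stratum is smooth, so $\omega_{\Gr_{\muv}}$ is a shift by $d_{\muv}$ of an equivariant line bundle, together with the renormalization), hence also comonotone. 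Granting these, \cite{AB10} --- or, equivalently, the recollement/gluing package of \cite[\S1.4]{BBD} applied by induction on the number of strata, with the standard t-structure on $\QCoh^{H}(\Gr_{\muv})$ shifted by $\tfrac12 d_{\muv}$ at each step --- produces the pair $\bigl(\Coh_{[\onetwo]}^{\hGO}(S)^{\leq 0}_p,\Coh_{[\onetwo]}^{\hGO}(S)^{\geq 0}_p\bigr)$ as a t-structure, and self-duality of $p$ ensures that the description of $\Coh_{[\onetwo]}^{\hGO}(S)^{\geq 0}_p$ via $\D_S$ in Definition \ref{def:perv t-structure} agrees with the description by costalk ($i_{\muv}^!$) conditions. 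Along the way one confirms that $i_{\le\muv}^*$, the open restrictions $j_{\muv}^*$, the costalks $i_{\muv}^!$, and the relevant pushforwards are defined and have the expected cohomological amplitude on these equivariant coherent categories; this follows from the finite Tor-dimension and finite cohomological dimension statements recorded in Section \ref{subsubsec:coh_geo}, since the (locally) closed embeddings of the smooth equidimensional strata are lci.

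Third, boundedness is immediate: on $\overline{\Gr}_{\lav}$ there are finitely many strata and $p$ assumes finitely many values, so the perverse truncation of any bounded coherent complex stabilizes after finitely many steps, and since every object of $\Coh_{[\onetwo]}^{\hGO}(\Gr)$ is supported on some $\overline{\Gr}_{\lav}$, the resulting t-structure is bounded. For finite length I would show that the heart $\Coh_{[\onetwo]}^{\hGO}(\overline{\Gr}_{\lav})^{\heartsuit}$ is Noetherian and Artinian: by the recollement, each object of the heart admits a finite filtration whose subquotients are intermediate extensions $j_{\muv,p!*}(\cE[\tfrac12 d_{\muv}])$ of $\hGO$-equivariant coherent sheaves in the heart on the orbits $\Gr_{\muv}$, and since $\Gr_{\muv}=\hGO/\mathrm{Stab}(t^{\muv})$ is homogeneous its equivariant coherent category is the representation category of the (reductive-modulo-unipotent) stabilizer, hence Noetherian; the chain conditions then follow from finiteness of the stratification together with finite-dimensionality of $\Hom$-spaces between these generators. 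Alternatively the finite length statement is imported directly from \cite{AB10, CW19}, whose hypotheses (Noetherian base, finitely many orbits on each Schubert variety, linearly reductive-type stabilizers) are met here.

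The main obstacle I expect is bookkeeping rather than conceptual: one must justify carefully the passage to the finite-type quotient group $H$ and to each projective Schubert variety so that \cite{AB10} literally applies; verify that the $[\onetwo]$-renormalization is compatible with all restriction, costalk and duality functors, so that the perversity is genuinely integral and self-dual in the renormalized categories; and check that the gluing of t-structures along the non-transverse (but lci) stratification of $\overline{\Gr}_{\lav}$ is valid, i.e.\ that $i_{\muv}^!$ restricts to the bounded equivariant coherent categories with the expected amplitude. Once these compatibilities are in place, the assertion is a direct consequence of \cite{AB10}.
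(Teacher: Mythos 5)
The paper offers no proof of this statement---it is imported directly from \cite{AB10} (in the equivariant, $[\onetwo]$-renormalized form of \cite{BFM05,CW19})---and your proposal follows the same route: pass to a finite-type quotient of $\hGO$ acting on each $\overline{\Gr}_{\lav}$ with its finite orbit stratification, check that the middle perversity $p(\muv)=-\tfrac12 d_{\muv}$ is (strictly) monotone and comonotone using $d_{\muv}-d_{\muv'}=\lr{2\rho,\muv-\muv'}$, and invoke the Arinkin--Bezrukavnikov machinery; this is correct. One inaccuracy worth flagging: the embeddings $\Gr_{\muv}\hookrightarrow\overline{\Gr}_{\lav}$ are \emph{not} lci for $\muv<\lav$ (locally they are the inclusion of $\Gr_{\muv}\times\{\text{cone point}\}$ into $\Gr_{\muv}\times\bW_{\lav}^{\muv}$ with $\bW_{\lav}^{\muv}$ singular), so one cannot expect $i_{\muv}^!$ to preserve bounded coherent complexes, and likewise the literal recollement of \cite{BBD} is unavailable since $j_*$ and $j_!$ do not preserve coherence; this is harmless, however, because \cite{AB10} states the stalk and costalk conditions in $\QCoh^{\pm}$ and defines the $\geq 0$ half by Grothendieck--Serre duality, exactly as in Definition \ref{def:perv t-structure}, which is the point where their argument deliberately departs from BBD gluing.
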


This perverse t-structure is a coherent analogue of the perverse t-structure for constructible sheaves in \cite{BBD}, and it shares many similar properties. For example, given a $\hGO$-invariant closed subscheme $Z\xhookrightarrow{i} S$, the push-forward $i_*:\Coh_{[\onetwo]}^{\hGO}(Z)\to \Coh_{[\onetwo]}^{\hGO}(S)$ is perverse t-exact; given a $\hGO$-invariant open subscheme $U\xhookrightarrow{j} S$, the pullback $j^*:\Coh_{[\onetwo]}^{\hGO}(S)\to \Coh_{[\onetwo]}^{\hGO}(U)$ is perverse t-exact.

Recall that $\Coh_{[\onetwo]}^{\hGO}(\Gr)\cong \colim_{\alpha}\Coh_{[\onetwo]}^{\hGO}(\Gr_{\alpha})$ for any reasonable presentation $\Gr\cong \colim_{\alpha} \Gr_{\alpha}$ as in Section \ref{subsub:coh_ind}. Then the perverse t-structure on $\Coh_{[\onetwo]}^{\hGO}(\Gr_{\alpha})$ glues to the perverse t-structure on $\Coh_{[\onetwo]}^{\hGO}(\Gr)$ using \cite[Lem. 4.13]{CW23}. In other words, the perverse t-structure on $\Coh_{[\onetwo]}^\hGO(\Gr)$ $$\left(\Coh_{[\onetwo]}^\hGO(\Gr)_p^{\le 0},\Coh_{[\onetwo]}^\hGO(\Gr)_p^{\ge 0}\right)$$ 
can be defined as the unique t-structure such that for any $\hGO$-invariant closed subscheme $S$, the pushforward $i_{S*}: \Coh_{[\onetwo]}^\hGO(S) \to \Coh_{[\onetwo]}^\hGO(\Gr)$ is t-exact with respect to the perverse t-structure on $\Coh_{[\onetwo]}^\hGO(S)$.

We write $\Pcoh^{\hGO}(S)$ and $\Pcoh^{\hGO}(\Gr)$ for the hearts of the perverse t-structures. Objects in the hearts are called perverse (coherent) sheaves. The abelian categories of perverse sheaves have some nice properties, which are summarized in the following theorem.

\begin{theorem}\cite[Sec. 4]{AB10}\label{theorem:simple objects on Gr}\ 
	\begin{enumerate}
		\item The heart $\Pcoh^{\hGO}(\Gr)$ is a finite length abelian category, which is stable under the Grothendieck-Serre dual functor $\D_{\Gr}$ on $\Coh_{[\onetwo]}^{\hGO}(\Gr)$; 
		
		\item  For any Schubert cell $\Gr_{\lav}$ and the natural embeddings $\Gr_{{\lav}}\xhookrightarrow{j_{\lav}} \overline{\Gr}_{\lav} \xhookrightarrow{i_{\le \lav}} \Gr$, there is a fully faithful functor called the intermediate extension functor
		\begin{equation}\label{intermediate extension for P}
			j_{\lav,p!*}:\Pcoh^{\hGO}(\Gr_{{\lav}})\to \Pcoh^{\hGO}(\overline{\Gr}_{\lav}).
		\end{equation}
        It satisfies $j_{\lav}^*\circ j_{\lav,p!*}\cong \id$.	A perverse sheaf in the $\cF\in \Pcoh^{\hGO}(\overline{\Gr}_{\lav})$ belongs to the essential image of $j_{\lav,p!*}$ if and only if
		\begin{equation}\label{Hom=0 criterion}
			\begin{aligned}
				\Hom(\cF,\cG)=0 \text{\ for all\ } \cG\in \Pcoh^{\hGO}(\overline{\Gr}_{\lav}) \text{\ such that\ } \Supp \cG \cap \Gr_{\lambda}=\emptyset\\
				\Hom(\cG,\cF)=0 \text{\ for all\ } \cG\in \Pcoh^{\hGO}(\overline{\Gr}_{\lav}) \text{\ such that\ } \Supp \cG \cap \Gr_{\lambda}=\emptyset
			\end{aligned}
		\end{equation}
		These two conditions are respectively equivalent to
		\begin{equation}\label{eq:dim supp}
				\begin{aligned}
					\codim \Supp \cH^{k-\tfrac12 d_{\lav}}(\cF)\ge 2k+2 &\text{\ for all\ } k\ge 1,\\
					\codim \Supp \cH^{k-\tfrac12 d_{\lav}}(\D_{\overline{\Gr}_{\lav}}(\cF))\ge 2k+2 &\text{\ for all\ } k\ge 1.
				\end{aligned}
			\end{equation}
		Let $\Pcoh^{\hGO}(\Gr_{\lav})_{!*}$ be the essential image of $j_{\lav,p!*}$. So $j_{\lav,p!*}$ yields an equivalence $$\Pcoh^{\hGO}(\Gr_{\lav})\cong \Pcoh^{\hGO}(\overline{\Gr}_{\lav})_{!*}.$$
		\item Simple objects of $\Pcoh^{\hGO}(\Gr)$ are of the form $i_{\le\lav*}j_{\lav,p!*}(\cL)$ for some $\lav\in \Pv_+$ and simple object $\cL\in \Pcoh^{\hGO}({\Gr}_{{\lav}})$. 
	\end{enumerate}
\end{theorem}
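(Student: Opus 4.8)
The plan is to reduce the whole statement to the equivariant geometry of a single Schubert variety $\overline{\Gr}_{\lav}$ — a normal projective variety on which $\hGO$ acts through a finite-type quotient — and then to run the general formalism of perverse coherent sheaves of \cite{AB10} (after Deligne). The relevant perversity is the middle one, $p(\Gr_{\muv})=-\tfrac12 d_{\muv}$; since $\codim_{\overline{\Gr}_{\lav}}\Gr_{\nuv}=d_{\lav}-d_{\nuv}$ is even whenever $\nuv\le\lav$, the half-integers disappear after the $[\onetwo]$-modification of Section \ref{subsubsec: degree modification}, and on each connected component of $\Gr$ the perversity $p$ is integer-valued and strictly monotone and strictly comonotone for the closure order. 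So the first step is to verify these (co)monotonicity properties together with the finiteness of the stratification of each $\overline{\Gr}_{\lav}$; these are exactly the hypotheses feeding the abstract recollement machine that produces the t-structure, the intermediate extension functors, and the gluing over all $\lav$ via \cite[Lem. 4.13]{CW23}.

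For part (1), stability of the heart under $\D_{\Gr}$ is immediate from Definition \ref{def:perv t-structure}, where ``$\cF\in\Coh_{[\onetwo]}^{\hGO}(\Gr)^{\ge 0}_p$'' is by definition ``$\D_{\Gr}(\cF)\in\Coh_{[\onetwo]}^{\hGO}(\Gr)^{\le 0}_p$'', together with $\D_{\Gr}\circ\D_{\Gr}\cong\id$. Finite length is proved by induction on the number of strata in the support. On a single orbit, $[\hGO\backslash\Gr_{\lav}]\cong B\mathrm{Stab}_{\lav}$ and the perverse heart equals $\Coh^{\hGO}(\Gr_{\lav})^{\heartsuit}[\tfrac12 d_{\lav}]$, i.e. the category of finite-dimensional representations of the stabilizer group scheme $\mathrm{Stab}_{\lav}\subset\hGO$; since $\mathrm{Stab}_{\lav}$ is (pro-)linear algebraic with ordinary reductive quotient, every such representation has a finite composition series, so this heart is finite length. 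One then propagates finite length up the stratification using the recollement triangles $j_{\lav!}j_{\lav}^*(-)\to(-)\to i_{\partial*}i_{\partial}^*(-)$ and $i_{\partial*}i_{\partial}^!(-)\to(-)\to j_{\lav*}j_{\lav}^*(-)$ on $\overline{\Gr}_{\lav}$ (with $i_{\partial}:\partial\overline{\Gr}_{\lav}\hookrightarrow\overline{\Gr}_{\lav}$ the boundary): a perverse sheaf has finite length once its restriction to the open stratum and its largest sub/quotient supported on $\partial\overline{\Gr}_{\lav}$ do, which holds by induction.

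For part (2), note first that $j_{\lav}^*:\Pcoh^{\hGO}(\overline{\Gr}_{\lav})\to\Pcoh^{\hGO}(\Gr_{\lav})$ is exact (as $j_{\lav}$ is open) and, by the general formalism, admits a left adjoint ${}^{p}j_{\lav!}$ and a right adjoint ${}^{p}j_{\lav*}$; one then sets $j_{\lav,p!*}(\cF):=\mathrm{Im}\big({}^{p}j_{\lav!}(\cF)\to{}^{p}j_{\lav*}(\cF)\big)$ in $\Pcoh^{\hGO}(\overline{\Gr}_{\lav})$, exactly as in \cite{BBD}. Full faithfulness of $j_{\lav,p!*}$ and $j_{\lav}^*\circ j_{\lav,p!*}\cong\id$ are then formal, as is the characterization \eqref{Hom=0 criterion}: a perverse sheaf lies in the image of $j_{\lav,p!*}$ iff it has no nonzero sub- or quotient object supported on $\partial\overline{\Gr}_{\lav}$. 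The step requiring genuine work — and the one I expect to be the main obstacle — is the passage from \eqref{Hom=0 criterion} to the explicit support inequalities \eqref{eq:dim supp}. This means spelling out the two halves of the $!*$-condition for the stratum embeddings $i_{\muv}:\Gr_{\muv}\hookrightarrow\overline{\Gr}_{\lav}$ ($\muv<\lav$), namely ``$i_{\muv}^*\cF$ in perverse degrees $<0$'' and ``$i_{\muv}^!\cF$ in perverse degrees $>0$'', in terms of the ordinary cohomology sheaves $\cH^{k-\tfrac12 d_{\lav}}(\cF)$: the perverse constraint alone only forces $\codim\Supp\cH^{k-\tfrac12 d_{\lav}}(\cF)\ge 2k$, and extracting the sharper bound $\ge 2k+2$ — together with its mirror for $\D_{\overline{\Gr}_{\lav}}(\cF)$, coming from the $i^!$ half via local duality — is the combinatorial core of \cite[Sec. 4]{AB10}. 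Granting \eqref{eq:dim supp}, the equivalence $\Pcoh^{\hGO}(\Gr_{\lav})\cong\Pcoh^{\hGO}(\overline{\Gr}_{\lav})_{!*}$ is immediate.

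For part (3), one runs the classification of simple objects from \cite[Sec. 4.3]{BBD} by induction on $\dim\Supp$. A simple $\cL\in\Pcoh^{\hGO}(\Gr)$ is supported on some $\overline{\Gr}_{\lav}$, so $\cL\cong i_{\le\lav*}\cL'$ with $\cL'\in\Pcoh^{\hGO}(\overline{\Gr}_{\lav})$ simple, since $i_{\le\lav*}$ is perverse t-exact and fully faithful. If $j_{\lav}^*\cL'=0$, then $\cL'$ is supported on $\partial\overline{\Gr}_{\lav}$ and we conclude by induction applied to $\overline{\Gr}_{\lav}$ in place of $\Gr$; otherwise $\cL'$, being simple, has no nonzero sub/quotient supported on $\partial\overline{\Gr}_{\lav}$, so \eqref{Hom=0 criterion} gives $\cL'\cong j_{\lav,p!*}(j_{\lav}^*\cL')$, and $j_{\lav}^*\cL'$ is simple because $j_{\lav}^*$ is exact and $j_{\lav,p!*}$ is fully faithful. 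Conversely $i_{\le\lav*}j_{\lav,p!*}(\cL'')$ is simple for every simple $\cL''\in\Pcoh^{\hGO}(\Gr_{\lav})$, once more by \eqref{Hom=0 criterion}. This completes the proposed argument.
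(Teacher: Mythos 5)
This theorem is quoted from \cite[Sec.\ 4]{AB10}; the paper supplies no proof of its own, so the comparison is really with the argument of \emph{ibid.} (parts of which the paper recalls later, in Definition \ref{def:cIC} and Proposition \ref{prop:cIC_and_support}). Your skeleton --- middle perversity is strictly (co)monotone because all codimensions of strata in $\overline{\Gr}_{\lav}$ are even, finite length by induction on strata, characterization of the image of $j_{\lav,p!*}$ by absence of sub/quotient objects on the boundary, and the classification of simples --- is the right shape. But there is a genuine gap at the central step: you define $j_{\lav,p!*}(\cF)$ as $\mathrm{Im}\bigl({}^{p}j_{\lav!}(\cF)\to{}^{p}j_{\lav*}(\cF)\bigr)$ ``by the general formalism,'' and you use recollement triangles involving $j_{\lav!}$ to propagate finite length. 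In the coherent setting neither functor is available: $j^*$ for an open immersion has no left adjoint on $\Coh$ (extension by zero is not a coherent operation), and $j_*$ of a coherent sheaf on $\Gr_{\lav}$ is in general only quasi-coherent, so neither ${}^{p}j_{\lav!}$ nor ${}^{p}j_{\lav*}$ exists as a perverse truncation of a functor landing in $\Coh^{\hGO}(\overline{\Gr}_{\lav})$. This is exactly the point where perverse coherent sheaves diverge from \cite{BBD}, and it is why \cite[Sec.\ 4]{AB10} must construct the minimal extension by hand: one first takes the honest coherent extension $\cH^0(j_{\lav*}(-))$ (coherent because $\partial\overline{\Gr}_{\lav}$ has codimension $\ge 2$, giving a reflexive sheaf), and then corrects it by an alternating sequence of standard truncations and Grothendieck--Serre dualities --- precisely the formula \eqref{eq:cIC for Refl} that the paper records from \cite{Sch15av,Xin25}. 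Without some such construction, existence of $j_{\lav,p!*}$ is unproved.

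A second, smaller issue is that you explicitly defer the sharpening of the support bound from $\codim\Supp\cH^{k-\tfrac12 d_{\lav}}(\cF)\ge 2k$ (which follows from membership in the heart) to $\ge 2k+2$ for $k\ge 1$ in \eqref{eq:dim supp}, calling it ``the combinatorial core.'' You have identified the right pressure point, but this equivalence together with its dual statement is the actual content of \cite[Sec.\ 4]{AB10} and cannot be granted; it is also what makes the alternating truncation--duality formula terminate and characterize the minimal extension. Once the construction and the $2k+2$ bound are in place, your parts (1) and (3) go through essentially as written.
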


Let's make the description of simple objects in $\Pcoh^{\hGO}(\Gr)$ more explicit. Write $\hPla$\label{stabilizer hPla} for the stabilizer of $t^{{\lav}}$ with respect to the $\hGO$-action on $\Gr_{{\lav}}$. The quotient stack $[\hGO\backslash \Gr_{\lav}]$ can be identified with $[\hPla\backslash\pt]$. This gives an equivalence:
\[\Coh_{[\onetwo]}^{\hGO}(\Gr_{{\lav}}){\cong} \Coh^{\hGO}(\Gr_{{\lav}})\cong  \Coh^{\hPla}(\pt)\cong \mathsf{D}^b(\Rep{\hPla}).\]
The perverse t-structure on $\Coh_{[\onetwo]}^{\hGO}(\Gr_{{\lav}})$ translates to the standard t-structure on $\mathsf{D}^b(\Rep \hPla)$. So $\Pcoh^{\hGO}(\Gr_{\lav})\cong \Rep{\hPla}$ as abelian categories. Let $L_{\lav}\subset G$ be the subgroup of $G$ generated by $T$ and the root subgroups $U_{\alpha}$ of $G$ for those roots $\alpha$
satisfying $\lr{\alpha,\lav}=0$. Then the Levi quotient of $\hPla$ is isomorphic to $L_{\lav}\times \Gmrot\times \Gmdil$. The irreducible representations of $L_{\lav}$ are labeled by the set of weights $\mu\in P$ which are dominant with respect to $L_{\lav}$. Such a pair $(\lav,\mu)\in \Pv\times P$ is called a dominant pair. The subset of dominant pairs is denoted by $(\Pv\times P)_{\dom}$; this set is bijective to $(\Pv\times P)/W$. Let $\Olamu$ be the $\hGO$-equivariant sheaf over $\Gr_{\lav}$ corresponding to the $(\mu,0,0)$-representation of $L_{\mu}\times \Gmrot\times \Gmdil$. Then we get
\begin{equation*}
	\begin{aligned}
		\Irr(\Pcoh^{\hGO}(\Gr_{\lav}))=\{\Olamu[\tfrac{1}{2}d_{\lav}]\{m\}\lr{n}:m,n\in\mathbb{Z}, \mu\in P \text{\ such that \ }(\lav,\mu)\in (\Pv\times P)_{\dom}\}.
	\end{aligned}
\end{equation*}
Here $\Irr$ stands for the simple objects of an abelian category.

Now, for a dominant pair $(\lav,\mu)$, we define a simple object in $\Pcoh^{\hGO}(\Gr)$:
\begin{equation}\label{eq:cLp}
	\overline{\cL}^p_{\lav,\mu}:=i_{\le \lav *}j_{\lav,p!*}(\Olamu[\tfrac{1}{2}d_{\lav}]).
\end{equation}
Then by Theorem \ref{theorem:simple objects on Gr}, we have
\begin{equation}\label{simple in perverse heart}
		\Irr(\Pcoh^{\hGO}(\Gr))=\{\overline{\cL}^p_{\lav,\mu}\{m\}\lr{n}\colon m,n\in\mathbb{Z}, (\lav,\mu)\in ({\Pv}\times P)_{\dom}\}.
\end{equation}

At the end of this part, we introduce an explicit formula of the intermediate extension functor, following \cite{Sch15av} and \cite{Xin25}. The formula also allows this functor to be defined in broader cases, which we will use in Section \ref{subsec: twisted prod is IC}.\footnote{More explicitly, we want to use some intermediate extension functor on $\Gr\ttimes\Gr$. Notice that the $\hGO$-orbits on $\Gr\ttimes\Gr$ don't have parity property, which means, if two $\hGO$-orbits $\O_1,\O_2\subset \Gr\ttimes\Gr$ such that $\overline{\O_1}\supset \O_2$, $\dim \O_1$ and $\dim \O_2$ may not have the same parity. Thus we couldn't define some middle perverse coherent t-structure on $\Coh^{\hGO}(\Gr\ttimes\Gr)$ to use the intermediate extension functor constructed in \cite[Thm. 4.2]{AB10}.}

Let $X$ be a variety over $\C$, equipped with an action of an affine group scheme $H$. Let $\omega_X$ be our fixed dualzing complex $a^!\O_{\pt}$ for $a:X\to \pt$.\footnote{We assume $X$ to be $H$-embeddable as in the previous footnote \ref{footnote}. This condition is always satisfied in our situations.} We write
\begin{equation}\label{eq:cD}
	\cD_X(-):=\cHom(-,\omega_X[-\dim X]):\Coh^H(X)^{\op}\to \Coh^H(X),
\end{equation}
which is a cohomological shift version of the Grothendieck-Serre dual functor.

Let $\mathrm{Refl}^H(X)\subset \Coh^H(X)^{\heartsuit}$ be the additive full subcategory consisting of $H$-equivariant reflexive coherent sheaves in the sense of \cite[\href{https://stacks.math.columbia.edu/tag/0AVU}{Tag 0AVU}]{stacks-project}.
\begin{definition}\label{def:cIC}
	\begin{enumerate}
		\item Define
		\begin{equation}\label{eq:cIC for Refl}
			\begin{aligned}
			\wt{\cIC}_{X}:\mathrm{Refl}^H(X)&\to \Coh^{H}(X)\\
			\cF  &\mapsto (\tau_{\le \ell-1}\circ \cD_X \circ \tau_{\le \ell-2}\circ \cD_X\circ\dots \circ \cD_X\circ \tau_{\le 1} \circ \cD_X)(\cF).
		\end{aligned}
		\end{equation}
		Here $\ell$ is any odd integer such that $2\ell +1\ge \dim X$. According to the proof of \cite[Prop. 22.2]{Sch15av}, for each two $\ell_1,\ell_2$ satisfying the conditions, the functors produced by the above formula can be canonically identified.
		\item Suppose $X$ is normal. Let $U$ be an $H$-invariant open subset with $\codim (X\setminus U)\ge 2$. Write $\VB^H(U)\subset \Coh^{H}(U)^{\heartsuit}$ for the additive full subcategory consisting of $H$-equivariant locally free sheaves of finite rank. According to \cite[\href{https://stacks.math.columbia.edu/tag/0AY6}{Tag 0AY6}]{stacks-project}, $\cH^0 (j_*(-))$ defines a functor $\VB^H(U)\to\mathrm{Refl}^H(X) $. Define
		\begin{equation}\label{eq:cIC for VB}
			\begin{aligned}
				\cIC_{U\subset X}:\VB^H(U)&\to \Coh^{H}(X)\\
				\cE &\mapsto \wt{\cIC}_{X}\circ \cH^0 (j_*(\cE)).
			\end{aligned}
		\end{equation}
	\end{enumerate}
\end{definition}
We will often omit the subscript $X$ and $U\subset X$ respectively when it is clear. Since the $\wt{\cIC}$ and $\cIC$ functors are by definition compatible with forgetting $H$-equivariance, we use the same notations before and after forgetting $H$-equivariance.

The following proposition is proved in \cite[Prop. 4.7]{Xin25}, which uses the same method in {\cite[Prop. 21.1]{Sch15av}
\begin{proposition}[{\cite[Prop. 21.1]{Sch15av}, \cite[Prop. 4.7]{Xin25}}]\label{prop:cIC_and_support}
    Let $\cC\in \Coh^{H}(X)$. Suppose we have following support conditions
    \begin{equation}\label{eq: supp condition}
        \codim \Supp \cH^k\cC \ge 2k+2 \ \ \ \text{and}\ \ \  \codim \Supp \cH^k\cD_X(\cC) \ge 2k+2 \ \ \ \text{for all }k\ge 1,
    \end{equation}
    and $j^*\cH^0\cC$ is locally free. Then there is a natural isomorphism $\cC\cong \cIC(j^*\cH^0\cC)$.
\end{proposition}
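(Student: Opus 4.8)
The plan is to follow Scholze's method \cite[Prop. 21.1]{Sch15av} (as carried out in \cite[Prop. 4.7]{Xin25}). As a preliminary I would discard the $H$-equivariance: the functors $\cD_X$, the truncations $\tau_{\le k}$, $j_*$ and $\cH^0$, and hence $\cIC$, all commute with the forgetful functor $\Coh^H(X)\to\Coh(X)$, which is conservative, and an $H$-equivariant structure transports along any isomorphism of underlying objects; so it suffices to produce the isomorphism $\cC\cong\cIC(j^*\cH^0\cC)$ after forgetting the equivariance. From now on $X,\cC,\cD_X,\dots$ denote the non-equivariant data.

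By Definition \ref{def:cIC} we have $\cIC(j^*\cH^0\cC)=\wt{\cIC}_X(\cH^0 j_*(j^*\cH^0\cC))$, where $\wt{\cIC}_X=\tau_{\le\ell-1}\circ\cD_X\circ\cdots\circ\tau_{\le1}\circ\cD_X$ for any odd $\ell$ with $2\ell+1\ge\dim X$, and the hypothesis that $j^*\cH^0\cC$ be locally free is exactly what makes the right-hand side meaningful. The conditions \eqref{eq: supp condition} are, in the shift conventions of \cite{AB10}, precisely those singling out the essential image of $\cIC_{U\subset X}$, so the strategy is to run the iterated-truncation construction of $\wt{\cIC}_X$ on the reflexive extension $\cH^0 j_*(j^*\cH^0\cC)$ of $j^*\cH^0\cC$ and match the output against $\cC$ step by step. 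Concretely I would prove, by induction on the number $r$ of $(\tau_{\le\bullet}\circ\cD_X)$-blocks applied, that the $r$-th partial composite coincides with the appropriate truncation of $\cC$, respectively of $\cD_X\cC$, alternately. The ingredients are: $\cD_X$ is an anti-involution of $\Coh(X)$ (Grothendieck--Serre duality for the normal variety $X$) interchanging the two support conditions in \eqref{eq: supp condition}; $\cD_X$ carries complexes concentrated in degrees $\le -1$ into degrees $\ge 1$, and carries a sheaf with codimension-$c$ support to a complex whose cohomology has codimension-$\ge c$ support; and the ``$+2$'' gap in \eqref{eq: supp condition} ensures that each truncation in the chain discards only cohomology sitting in a codimension so high that no later block needs it. Since $\ell$ is odd with $2\ell+1\ge\dim X$, after $\ell$ blocks everything has stabilized, the outermost $\tau_{\le\ell-1}$ acts as the identity on the result, and the output is $\cC$. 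A byproduct is that $\cH^0\cC\cong\cH^0 j_*(j^*\cH^0\cC)$, so $\cH^0\cC$ was already reflexive; naturality of the isomorphism follows from functoriality of all the operations used.

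The step I expect to be the main obstacle is precisely this bookkeeping: tracking how the bounds $\codim\Supp\cH^k(-)\ge 2k+2$ propagate through the alternating truncation--duality chain, and verifying at each stage that $\cD_X$ genuinely transports the bound on the $\cC$-side into the bound on the $\cD_X\cC$-side. This is the self-duality of the relevant middle perversity --- the reason the gap is ``$+2$'' and not ``$+1$'' --- and it is exactly the content imported from \cite[Prop. 21.1]{Sch15av}. Everything in the argument is local on $X$ and insensitive to the group action, so the proof is a transcription of Scholze's with the normal variety $X$ in place of the affine Grassmannian, the equivariance being reinstated at the end by the first paragraph.
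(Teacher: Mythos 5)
Your proposal takes essentially the same route as the paper, which does not reprove this statement at all: it cites \cite[Prop. 21.1]{Sch15av} and \cite[Prop. 4.7]{Xin25} and adds only the two remarks that (i) the argument is functorial and hence extends to the $H$-equivariant setting, and (ii) the hypothesis $\cC\in\Coh^H(X)^{\ge 0}$ imposed in \cite{Xin25} follows from the second support condition via \cite[Lem. 17.5]{Sch15av}. Two points to tighten. First, your reduction to the non-equivariant case is justified by ``an $H$-equivariant structure transports along any isomorphism of underlying objects''; this is not sufficient here, since both $\cC$ and $\cIC(j^*\cH^0\cC)$ already carry their own equivariant structures and you need the comparison map to respect them. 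The correct argument (which you gesture at in your last clause) is that the canonical comparison morphism is assembled from natural transformations between $H$-equivariant functors ($\cD_X$, truncations, $j_*$, adjunction units), hence is itself a morphism in $\Coh^H(X)$, and conservativity of the forgetful functor then promotes it to an equivariant isomorphism once it is one non-equivariantly. Second, before launching the truncation--duality induction you must record that the second condition in \eqref{eq: supp condition} forces $\cC\in\Coh(X)^{\ge 0}$ (this is exactly the paper's second remark); without it the matching of the partial composites of $\tau_{\le\bullet}\circ\cD_X$ against truncations of $\cC$ does not get started. With those two repairs, the remaining ``bookkeeping'' you flag is precisely the content of the cited propositions, which the paper likewise defers to its references.
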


\begin{remark}\ 
    \begin{enumerate}
        \item In \cite{Sch15av}\cite{Xin25}, they stated this result in the non-equivariant setting, but the proof actually extends to the $H$-equivariant setting.
        \item In \cite[Prop. 4.7]{Xin25}, the author assumed $\cC\in \Coh^H(X)^{\ge 0}$. This condition can be deduced from the second support condition in \eqref{eq: supp condition}, using \cite[Lem. 17.5]{Sch15av}.
    \end{enumerate}
\end{remark}

In the case when $H=\hGO,X=\overline{\Gr}_{\lav}, U=\Gr_{\lav}$, we deduce from \eqref{eq:dim supp} that: for any $\cE\in \VB^{\hGO}(\Gr_{\lav})$, 
\[j_{\lav,p!*}(\cE[\tfrac12 d_{\lav}])\cong \cIC(\cE)[\tfrac12 d_{\lav}],\]
and
\begin{equation}\label{cIC in our case}
	\codim \Supp \cH^k\cIC(\cE) \ge 2k+2 \ \ \ \text{and}\ \ \  \codim \Supp \cH^k\cD_X(\cIC(\cE)) \ge 2k+2 \ \ \ \text{for all }k\ge 1.
\end{equation}

\subsubsection{Koszul perverse t-structure on $\Coh^{\hGO}(\Gr_G)$}
To define the so-called Koszul perverse t-structure on $\Coh^{\hGO}(\cR)$, we need to introduce the Koszul perverse t-structure on $\Coh^{\hGO}(\Gr)$, which is a grading shift of the perverse t-structure on $\Coh_{[\onetwo]}^{\hGO}(\Gr)$.

Let $S \subset \Gr^{\epsilon}$ be an $\hGO$-invariant subscheme for $\epsilon\in \{0,1\}$. Let $(\Coh_{[\onetwo]}^{\hGO}(S)^{\leq 0}_{p}, \Coh_{[\onetwo]}^{\hGO}(S)^{\geq 0}_{p})$ be the perverse t-structure on $\Coh_{[\onetwo]}^{\hGO}(S)$.

First, since the $\Gmdil$ in $\hGO=(G_\O \rtimes \Gmrot) \times \Gmdil$ acts on $S$ trivially, we obtain the following weight decomposition given by this $\Gm$-equivariance:
\[\Coh_{[\onetwo]}^{\hGO}(S)\cong \bigoplus_{n\in\mathbb{Z}}\Coh^{\hGO}(S)_{n}.\]
The grading shift $\lr{1}$ has degree $-1$.

By Definition \ref{def:perv t-structure}, the perverse t-structure is compatible with this weight decomposition, which means we have the weight decompositions
\[\Coh_{[\onetwo]}^{\hGO}(S)^{\le 0}_{p}=\bigoplus_{n\in\mathbb{Z}}\Coh_{[\onetwo]}^{\hGO}(S)^{\le 0}_{p,n}\ ,\ \Coh_{[\onetwo]}^{\hGO}(S)^{\ge 0}_{p}=\bigoplus_{n\in\mathbb{Z}}\Coh_{[\onetwo]}^{\hGO}(S)^{\ge 0}_{p,n}.\]

Next, let's consider the following grading shift. The basic idea comes from Koszul duality.
\begin{definition}
	Define $\infty$-subcategories $\Coh^{\hGO}(S)^{\leq 0}_{\Kp}, \Coh^{\hGO}(S)^{\geq 0}_{\Kp}$ of $\Coh^{\hGO}(S)$ as follows:
	\begin{equation}
		\begin{aligned}
			&\Coh^{\hGO}(S)^{\leq 0}_{\Kp}:=\bigoplus_{n\in\mathbb{Z}}\Coh_{[\onetwo]}^{\hGO}(S)^{\leq 0}_{p,n}[-n][\tfrac{\epsilon}{2}]\lr{\tfrac{\epsilon}{2}},
			\\&\Coh^{\hGO}(S)^{\geq 0}_{\Kp}:=\bigoplus_{n\in\mathbb{Z}}\Coh_{[\onetwo]}^{\hGO}(S)^{\geq 0}_{p,n}[-n][\tfrac{\epsilon}{2}]\lr{\tfrac{\epsilon}{2}};
		\end{aligned}
	\end{equation}
	here, the reason for $[\tfrac{\epsilon}{2}]\lr{\tfrac{\epsilon}{2}}$ refers to Section \ref{subsubsec: degree modification}. This will define a t-structure on $\Coh^{\hGO}(S)$, called the \textbf{Koszul perverse t-structure}. The heart of the Koszul perverse t-structure is denoted by $\KPcoh^{\hGO}(S)$. It can be decomposed as
	\begin{equation}\label{eq:KP&P}
		\KPcoh^{\hGO}(S)=\bigoplus_{n\in\mathbb{Z}}\Pcoh^{\hGO}(S)_n[-n][\tfrac{\epsilon}{2}]\lr{\tfrac{\epsilon}{2}}.
	\end{equation}

	Similarly, we also obtain the Koszul perverse t-structure on $\Coh^{\hGO}(\Gr)$.
\end{definition}

There are two t-exact grading shifts with respect to the perverse t-structure: $\{1\}$ and $\lr{1}$. Similarly, we have two t-exact grading shifts with respect to the Koszul perverse t-structure: $\{1\}$ and $[1]\lr{1}$. We will denote $[1]\lr{1}$ by $(1)_K$, and call it the Koszul shift. (We add the subscript ${}_K$ to distinguish it from the Tate twist for Hodge modules.)

The intermediate extension functors \eqref{intermediate extension for P} can be transferred to Koszul perverse sheaves
\begin{equation}\label{intermediate extension for KP}
	j_{\lav,!*}: \KPcoh^{\hGO}(\Gr_{ \lav}) \to \KPcoh^{\hGO}(\overline{\Gr}_{\lav}).
\end{equation}
by means of the identity \eqref{eq:KP&P}; more precisely, for a $\hGO$-equivariant vector bundle $\cE$ on $\Gr_{\lav}$ of $\Gmdil$-weight $n$, we have $\cE[-n+\tfrac12 d_{\lav}]\in \KPcoh^{\hGO}(\Gr_{\lav}) $, and
\begin{equation}\label{eq:!*relations}
	j_{\lav,!*}(\cE[\tfrac12 d_{\lav}-n]):=j_{\lav,p!*}(\cE[\tfrac12 d_{\lav}])[-n]=\cIC(\cE)[\tfrac12 d_{\lav}-n].
\end{equation}
The essential image of $j_{\lav,!*}$ is denoted by
\begin{equation}\label{eq:KP_!*}
	\KPcoh^{\hGO}(\overline{\Gr}_{\lav})_{!*};
\end{equation}
the criterion for a Koszul perverse sheaf to lie in this full subcategory is similar to \eqref{Hom=0 criterion}.

For a dominant pair $(\lav,\mu)\in (\Pv\times P)_{\dom}$, we define 
\begin{equation}\label{eq:cLb}
\overline{\cL}_{\lav,\mu}:=\overline{\cL}^p_{\lav,\mu}(-\tfrac12 d_{\lav})_K=i_{\le \lav *}j_{\lav,!*}(\Olamu\lr{-\tfrac{1}{2}d_{\lav}})=i_{\le \lav *}\cIC(\Olamu)\lr{-\tfrac{1}{2}d_{\lav}}.
\end{equation}

From the description \eqref{simple in perverse heart} for simple objects in the perverse heart, we obtain the following description for the set of simple objects in the Koszul perverse heart:
\begin{equation}\label{simple in Koszul perverse heart for Grlambda}
	\Irr(\KPcoh^{\hGO}(\Gr))=\{\overline{\cL}_{\lav,\mu}\{m\}( n)_K\colon m,n\in\mathbb{Z}, (\lav,\mu)\in ({\Pv}\times P)_{\dom}\}.
\end{equation}

\subsubsection{Koszul perverse t-structure on $\Coh^{\hGO}(\cR_{G,N})$}\label{subsubsec:Koszul_perverse}
Now, the Koszul perverse t-structure on $\Coh^{\hGO}(\cR)$ can be constructed, which is one of the main achievement in \cite{CW23}. We summarize the main features of this t-structure below, and refer to \textit{ibid.} for more details.

We write $i_1:\cR\to \Gr\times N_{\O},i_2:\cR\to \cT$ for the defining embeddings in \eqref{eq: R}, $p_1:\Gr\times N_{\O}\to \Gr,p_2:\cT\to \Gr$ for the projections, and $\sigma_1:\Gr\to \Gr\times N_{\O},\sigma_2:\Gr\to \cT$ for the embeddings of the zero sections. The morphisms $\sigma_1,\sigma_2$ have stable coherent pullback (see Example \ref{exam:stable_coh_pull}).

Let $S\subset \Gr$ be an $\hGO$-invariant locally closed subscheme. We write $\cR_S,\cT_S$ for the base change of $\cR,\cT$ to $S$ respectively. We abuse notations to denote the base changes of $\sigma_1,\sigma_2,i_1,i_2,p_1,p_2$ along $S\hookrightarrow \Gr$ by the same symbols.
\begin{theorem}\label{thm:koszul-perverse}
\ 

    \begin{enumerate}
        \item (\cite[Prop. 3.16]{CW23}) There exists a unique t-structure on $\Coh^{\hGO}(S\times N_{\O})$ (resp. $\Coh^{\hGO}(\cT_S)$) such that the pullback 
        \[\sigma_1^*:\Coh^{\hGO}(S\times N_{\O})\to \Coh^{\hGO}(S)\]
        \[(\text{resp.\ }\sigma_2^*:\Coh^{\hGO}(\cT_S)\to \Coh^{\hGO}(S))\]
        along the zero section is t-exact with respect to the Koszul perverse t-structure on $\Coh^{\hGO}(S)$. This t-structure on $\Coh^{\hGO}(S\times N_{\O})$ (resp. $\Coh^{\hGO}(\cT_S)$) is also called the Koszul perverse t-structure.
        
        Moreover, this t-structure is bounded and finite length, and there is a bijection between simple objects in Koszul perverse hearts given by $p_1^*$ and $\sigma_1^*$ (resp. $p_2^*$ and $\sigma_2^*$): 
        \begin{equation}\label{eq: simple objects bijection}
            \begin{tikzcd}
            	{\Irr(\KPcoh^{\hGO}(S))} & {\Irr(\KPcoh^{\hGO}(S\times N_{\O}))}
            	\arrow["{p_1^*}", shift left, from=1-1, to=1-2]
            	\arrow["{\sigma_1^*}", shift left, from=1-2, to=1-1]
            \end{tikzcd}
        \end{equation}
    	\[(\text{resp. \ }\begin{tikzcd}
    		{\Irr(\KPcoh^{\hGO}(S))} & {\Irr(\KPcoh^{\hGO}(\cT_S))}
    		\arrow["{p_2^*}", shift left, from=1-1, to=1-2]
    		\arrow["{\sigma_2^*}", shift left, from=1-2, to=1-1]
    	\end{tikzcd}).\]

        \item (\cite[Thm. 3.1, Thm. 4.1]{CW23}) There exists a unique t-structure on $\Coh^{\hGO}(\cR_S)$ such that 
        $$i_{1*}:\Coh^\hGO(\cR_S) \to \Coh^\hGO(S\times N_{\O})$$
        is t-exact with respect to the Koszul perverse t-structure on $\Coh^\hGO(S\times N_{\O})$. Symmetrically, it is the unique t-structure such that 
        $$i_{2*}:\Coh^\hGO(\cR_S) \to \Coh^\hGO(\cT_S)$$
        is t-exact with respect to the Koszul perverse t-structure on $\Coh^\hGO(\cT_{S})$. This t-structure on $\Coh^{\hGO}(\cR_S)$ is also called the Koszul perverse t-structure.

        Moreover, this t-structure is bounded and finite length, and there is a bijection between simple objects in $\KPcoh^{\hGO}(S\times N_{\O})$ and $\KPcoh^{\hGO}(\cR_S)$.
        
        \item (\cite[Prop 3.11, 3.13, 3.24]{CW23}) For a $\hGO$-invariant subscheme $Z\xhookrightarrow{i}S$, the push-forward $\Coh^{\hGO}(\cR_Z)\to \Coh^{\hGO}(\cR_S)$ is Koszul perverse t-exact. For a $\hGO$-invariant open subscheme $U\xhookrightarrow{j}S$, the pullback $j^*:\Coh^{\hGO}(\cR_S)\to \Coh^{\hGO}(\cR_U)$ is Koszul perverse t-exact.
        
        \item (\cite[Thm. 4.1]{CW23})       Since
       	\begin{align*}
       		\Coh^{\hGO}(\Gr\times N_{\O})&\cong \colim_{\alpha} \Coh^{\hGO}(\Gr_{\alpha}\times N_{\O}),\\ (\text{resp.\ } \Coh^{\hGO}(\cT)&\cong \colim_{\alpha} \Coh^{\hGO}(\cT_{\Gr_{\alpha}}),\\ \Coh^{\hGO}(\cR)&\cong \colim_{\alpha} \Coh^{\hGO}(\cR_{\Gr_{\alpha}})).
       	\end{align*}
       	for any reasonable presentation $\Gr\cong \colim_{\alpha} \Gr_{\alpha}$ as in Section \ref{subsub:coh_ind}, the Koszul perverse t-structure on $\Coh^{\hGO}(\Gr_{\alpha}\times N_{\O})$, (resp. $\Coh^{\hGO}(\cT_{\Gr_{\alpha}})$, resp. $\Coh^{\hGO}(\cR_{\Gr_{\alpha}})$) glues to the Koszul perverse t-structure on $\Coh^{\hGO}(\Gr\times N_{\O})$ (resp. $\Coh^{\hGO}(\cT)$, $\Coh^{\hGO}(\cR)$) by \cite[Lem. 4.13]{CW23}.
    \end{enumerate}
\end{theorem}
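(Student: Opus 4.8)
The plan is to establish the four assertions (1)--(4) following the strategy of \cite{CW23}: reduce everything to finite-dimensional approximations over the Schubert varieties $\overline{\Gr}_{\lav}$, and then exploit the contracting $\Gmdil$-action on the fibres together with the Koszul-duality picture of \cite{CW19}. The structural input is that $N_{\O}$ is an inductive limit of smooth affine spaces, so $\Coh^{\hGO}(S\times N_{\O})\cong\colim_n\Coh^{\hGO}(S\times N_{\O/t^n})$, and over each $\overline{\Gr}_{\lav}$ the spaces $\cT_{\le\lav}$ and $\cR_{\le\lav}$ are built from finite-type pieces; one first constructs the $t$-structures at finite level and then propagates and glues them.

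For part (1): the zero-section map $\sigma_1$ has a flat affine retraction $p_1$ with $p_1^*$ t-exact for the standard t-structures and $\sigma_1^*p_1^*\cong\id$, and the decisive point is that, because of the $\Gmdil$-equivariance, the zero section is the unique closed $\Gmdil$-fixed locus in each fibre, so graded Nakayama makes the derived functor $\sigma_1^*$ conservative; this already forces uniqueness of a t-structure rendering $\sigma_1^*$ Koszul perverse t-exact. For existence I would set $\Coh^{\hGO}(S\times N_{\O})^{\le0}_{\Kp}:=(\sigma_1^*)^{-1}\bigl(\Coh^{\hGO}(S)^{\le0}_{\Kp}\bigr)$ and dually for $\ge 0$, and build the truncation functors by applying $p_1^*\tau^{\le0}_{\Kp}\sigma_1^*$ and correcting by the error term, which by the $\Gmdil$-weight filtration lies in strictly higher weight and is stripped off inductively (the filtration being exhaustive and, over a fixed $\overline{\Gr}_{\lav}$, eventually constant on coherent objects). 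Boundedness, finite length, and the simple-object bijection are then inherited from the corresponding statements on $S$ through this comparison, with $p_1^*$ and $\sigma_1^*$ restricting to mutually inverse bijections on simple objects. The argument for $\cT_S$ is identical, with $\Gr\ttimes N_{\O}$ and $(p_2,\sigma_2)$ in place of $\Gr\times N_{\O}$ and $(p_1,\sigma_1)$.

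For part (2) one uses the local geometry of $\cR$. Over $\overline{\Gr}_{\lav}$ the pole order of $gs\bmod N_{\O}$ is bounded, so $i_2$ realises $\cR_{\le\lav}$ inside $\cT_{\le\lav}$ as the \emph{derived} zero locus of a section of a vector bundle $\mathcal{V}_{\lav}$ of finite rank $r_{\lav}$ and $\Gmdil$-weight $1$; after the evident identification $\Gr\ttimes N_{\K}\cong\Gr\times N_{\K}$, $i_1$ likewise presents $\cR_{\le\lav}$ inside $\overline{\Gr}_{\lav}\times N_{\O}$ by the ``same'' section. Consequently $i_{1*}$ and $i_{2*}$ are fully faithful, conservative, and t-exact for the standard t-structures, and $i_{2*}\O_{\cR_{\le\lav}}$ is the Koszul complex $\Lambda^{\bullet}\mathcal{V}_{\lav}^{\vee}$, whose cohomological degrees and $\Gmdil$-weights are matched against each other precisely as the Koszul shift $(1)_K=[1]\lr{1}$ in the definition of the t-structure prescribes. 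With this one verifies the hypotheses of the abstract criterion for transferring a t-structure along a fully faithful conservative functor admitting adjoints --- concretely, that the essential image of $i_{2*}$ (resp.\ $i_{1*}$) is stable under Koszul perverse truncation in $\Coh^{\hGO}(\cT_S)$ (resp.\ $\Coh^{\hGO}(S\times N_{\O})$) --- and transports the t-structure to $\Coh^{\hGO}(\cR_S)$, bounded and of finite length with simple objects in bijection via part (1). That the $i_1$- and $i_2$-versions coincide follows from the symmetric description \eqref{eq:Rsymmdiagram}, which exhibits $[\hGO\backslash\cR]$ as the same derived fibre product in two ways interchanged by the involution $\iota$ of Section~\ref{subsubsec:rigidity}, carrying one Koszul perverse t-structure onto the other. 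I expect this closure-under-truncation statement, together with the comparison of the two t-structures, to be the main obstacle: it is precisely where the derived (Koszul-complex) nature of $\cR$ over $\Gr$ must be reconciled with the non-standard t-structure on the ambient space.

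Parts (3) and (4) are comparatively formal. For a $\hGO$-invariant closed $Z\xhookrightarrow{i}S$, base change along $\cR_Z\to\cR_S$ and the analogous maps for $\cT$ and $\Gr\times N_{\O}$ gives closed immersions commuting with $i_{1*},i_{2*},\sigma_1^*,\sigma_2^*$; since pushforward along a closed immersion is perverse t-exact on $\Gr$ (the coherent analogue of \cite{BBD} recalled above), the intertwining relations from (1) and (2) propagate t-exactness to $\Coh^{\hGO}(\cR_{\bullet})$, and dually $j^*$ is t-exact for an open embedding $U\xhookrightarrow{j}S$. For part (4), the transition maps in $\cR\cong\colim_{\alpha}\cR_{\Gr_{\alpha}}$ are then t-exact pushforwards along closed immersions, so \cite[Lem. 4.13]{CW23} applies and glues the finite-level Koszul perverse t-structures into one on $\Coh^{\hGO}(\cR)$, and in the same way on $\Coh^{\hGO}(\Gr\times N_{\O})$ and $\Coh^{\hGO}(\cT)$.
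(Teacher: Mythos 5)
This theorem is not proved in the paper at all: every part carries an explicit citation to \cite{CW23} (Prop.~3.16, Thm.~3.1, Thm.~4.1, Props.~3.11, 3.13, 3.24, Lem.~4.13), and the surrounding text treats it as imported background. So there is no in-paper argument to compare your proposal against; what can be assessed is whether your reconstruction would stand on its own.

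As a reconstruction it has the right architecture --- finite-level approximation over $\overline{\Gr}_{\lav}$, conservativity of $\sigma_1^*$ from the contracting $\Gmdil$-action (which, combined with t-exactness, does pin down the aisles and hence uniqueness), the Koszul-complex presentation of $\cR_{\le\lav}$ inside $\cT_{\le\lav}$ matching cohomological degree against $\Gmdil$-weight, and gluing along closed pushforwards via \cite[Lem.~4.13]{CW23}. But the proposal defers precisely the step that constitutes the content of \cite[Thm.~3.1]{CW23}: you reduce part (2) to showing that the essential image of $i_{1*}$ (resp.\ $i_{2*}$) is stable under Koszul perverse truncation in the ambient category, explicitly flag this as ``the main obstacle,'' and then do not prove it. Without that, the t-structure on $\Coh^{\hGO}(\cR_S)$ is not constructed and parts (3)--(4), which you derive from (1)--(2) by intertwining, have nothing to rest on. A secondary soft spot: your argument that the $i_1$- and $i_2$-induced t-structures coincide invokes the involution $\iota$, but $\iota$ intertwines $i_1$ with $i_2$ only through the identification $\wt{\iota}$ of $\Gr\times N_{\O}$ with $\cT$, so what you get directly is that one t-structure is the $\iota$-transport of the other; you still need $\iota$-invariance (equivalently, compatibility of the two Koszul perverse t-structures of part (1) under $\wt{\iota}$), which should be argued rather than asserted. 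As a standalone proof the proposal therefore has a genuine gap at its central step, even though the overall strategy matches the cited source.
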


\begin{remark}
	By some categorical arguments as in \cite[Prop. 3.2]{CW23}, the Koszul perverse t-structure on $\Coh^{\hGO}(X)$ (here $X$ stands for $S,S\times N_{\O},\cR_S,\Gr,\Gr\times N_{\O},\cR$, etc.) can be extended to a t-structure on $\IndCoh^{\hGO}(X)$ (resp. $\QCoh^{\hGO}(X)$) by taking $\IndCoh^{\hGO}(X)_{\Kp}^{\le 0}$ (resp. $\QCoh^{\hGO}(X)_{\Kp}^{\le 0}$) to be the full $\infty$-subcategory generated by $\Coh^{\hGO}(X)^{\le 0}_{\Kp}$ (resp. $\Coh^{\hGO}(X)^{\le 0}_{\Kp}$) under extensions and colimits, and $\IndCoh^{\hGO}(X)_{\Kp}^{\ge 0}$ (resp. $\QCoh^{\hGO}(X)_{\Kp}^{\ge 0}$) to be the right orthogonal of $\IndCoh^{\hGO}(X)_{\Kp}^{\le 0}$ (resp. $\QCoh^{\hGO}(X)_{\Kp}^{\le 0}$). We will write the heart of these Koszul perverse t-structures by $\KPindcoh^{\hGO}(X)$ (resp. $\KPqcoh^{\hGO}(X)$). These extensions are not involved in the main content of this article, but they will be used in some arguments; for example, the description of simple objects below \eqref{def of KPlamu}.
\end{remark}

Let's make the description of simple objects in $\KPcoh^{\hGO}(\cR)$ more explicit. Let $\cl:\cR^{\cl}_{\lav}\to \cR_{\lav}$ be the classical locus embedding, and $p^{\cl}:\cR^{\cl}_{\lav}\to \Gr_{\lav}$ be the projection (which is a vector bundle of infinite dimension). Then
\begin{equation}\label{simple in Koszul perverse heart for cRlambda}
\begin{aligned}
	&\Irr{\KPcoh^{\hGO}(\cR_{\lav})}=\{\cl_*{p^{\cl*}}(\Olamu\lr{-\tfrac{1}{2}d_{\lav}})\{m\}(n):m,n\in\mathbb{Z}, \mu \text{ dominant for }L_{\lav}\}.
\end{aligned}
\end{equation}

Let $j_{\lav}:\cR_{\lav}\to \cR_{\le \lav},i_{\le \lav}:\cR_{\le \lav}\to \cR$ be the embeddings. Define 
\begin{equation}\label{def of KPlamu'}
	\cL_{\lav,\mu}':=\text{the socle of }\HK^0(j_{\lav*}\cl_* {p^{\cl*}}(\Olamu\lr{-\tfrac{1}{2}d_{\lav}})).
\end{equation}
Here, we view $j_{\lav*}\cl_*{p^{\cl*}}(\Olamu\lr{-\tfrac{1}{2}d_{\lav}})$ as an object in $\IndCoh^{\hGO}(\cR_{\le \lav})$, and $\HK^0$ denotes the cohomology functor for the Koszul perverse t-structure on $\IndCoh^{\hGO}(\cR)$. According to \cite[Prop. 3.28]{CW23}, $\cL'_{\lav,\mu}$ is a simple object in $\KPcoh^{\hGO}(\cR_{\le \lav})$ satisfying 
\begin{equation}\label{eq: j_{lav}^*simple}
    j_{\lav}^*(\cL_{\lav,\mu}')\cong \cl_*{p^{\cl*}}(\Olamu\lr{-\tfrac{1}{2}d_{\lav}}).
\end{equation}
By \cite[Prop. 3.29]{CW23}, 
\begin{equation}\label{def of KPlamu}
	\cL_{\lav,\mu}:=i_{\le \lav*}(\cL'_{\lav,\mu})
\end{equation}
is a simple object in $\KPcoh^{\hGO}(\cR)$.

According to \cite[Thm. 3.31]{CW23}, we have the following description for the set of simple objects in $\KPcoh^{\hGO}(\cR)$: 

\begin{equation}\label{simple in Koszul perverse heart}
	\Irr(\KPcoh^{\hGO}(\cR))=\{\cL_{\lav,\mu}\{m\}( n)\colon m,n\in\mathbb{Z}, (\lav,\mu)\in ({\Pv}\times P)_{\dom}\}.
\end{equation}

Later, we will pay more attention to the simple objects for $\mu=0$, so we write
\begin{equation}\label{simple object cL_lav}
	\cL_{\lav}':=\cL_{\lav,0}',\ \cL_{\lav}:=\cL_{\lav,0}.
\end{equation}

\ 

At the end of this subsection, we discuss the compatibility of the Koszul perverse t-structure with the monoidal structure on $\Coh^{\hGO}(\cR)$, which is encapsulated in the following theorem.
\begin{theorem}[{\cite[Thm. 5.1, Thm. 6.1]{CW23}}]\ 
   \begin{enumerate}
   	\item The convolution product $\conv$ is Koszul perverse t-exact, i.e. if $\cF,\cG\in\KPcoh^{\hGO}(\cR)$, then $\cF\conv \cG\in \KPcoh^{\hGO}(\cR)$. Additionally, the monoidal unit in $\Coh^{\hGO}(\cR)$ is a Koszul perverse sheaf (which is in fact $\cL_0$).
   	
   	\item Taking (right/left) dual is Koszul perverse t-exact, i.e. if $\cF\in \KPcoh^{\hGO}(\cR)$, then $\cF^R$ and $\cF^L$ are both in $\KPcoh^{\hGO}(\cR)$. Also, taking $(-)^*$ and $\D$ are both Koszul perverse t-exact.
   \end{enumerate}

   In conclusion, $\KPcoh^{\hGO}(\cR)$ is a rigid monoidal abelian category.
\end{theorem}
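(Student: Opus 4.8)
The plan is to obtain both assertions from the characterizations of the Koszul perverse t-structure collected in Theorem~\ref{thm:koszul-perverse}, together with one genuinely geometric input: a ``semismallness'' statement for the convolution morphism of the affine Grassmannian.

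\emph{t-exactness of $\conv$.} Write $\cF\conv\cG=m_*d^*(\cF\tbox\cG)$ as in the convolution diagram \eqref{eq:convS}, where $d$ has stable coherent pullback and $m$ is ind-proper and almost ind-finitely presented. The plan is: (i) equip the twisted-product spaces $\cR\ttimes\cR$, $\Gr\ttimes\cR$ and $\cT\ttimes\cR$ with Koszul perverse t-structures, by base change along their first-factor projection to $\Gr$ exactly as in Theorem~\ref{thm:koszul-perverse}, so that $\cF\tbox\cG$ lies in the heart whenever $\cF,\cG$ do; and (ii) show that $d^*$ is Koszul perverse t-exact and $m_*$ is Koszul perverse t-exact. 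For (ii), the pullback statement rests on the same stable-coherent-pullback bookkeeping that makes $\sigma_1^*$ and $\sigma_2^*$ t-exact in Theorem~\ref{thm:koszul-perverse}(1), while the pushforward statement is the coherent analogue of the key input to geometric Satake: the convolution morphism $m_{\Gr}:\Gr\ttimes\Gr\to\Gr$ is stratified semismall for the $\hGO$-orbit (Schubert) stratification, so $m_{\Gr*}$ is perverse coherent --- hence, after the grading shift, Koszul perverse --- t-exact, and this propagates through the Cartesian squares of \eqref{eq:convS} to $m_*$; one also uses that a twisted product of intermediate extensions is again an intermediate extension. For the monoidal unit: by the identification of the diagonal $e$ with the $t^0$-fibre embedding $t^0\times\id:N_{\O}\to\cR$ recorded after \eqref{eq:convS}, the object $(t^0\times\id)_*\O_{N_{\O}}$ is supported over the base point $\Gr_0$, and reading \eqref{simple in Koszul perverse heart for cRlambda}--\eqref{def of KPlamu} for $\lav=0$ identifies it with $\cL_0$; in particular it is a Koszul perverse sheaf.

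\emph{t-exactness of the dualities.} For $(-)^*=\iota^*$, I would use the \emph{symmetric} characterization of the Koszul perverse t-structure on $\Coh^{\hGO}(\cR)$, namely that $i_{2*}:\Coh^{\hGO}(\cR)\to\Coh^{\hGO}(\cT)$ is t-exact (Theorem~\ref{thm:koszul-perverse}(2)). The involution $\iota$ of \eqref{eq: diagram of iota} covers the involution $\iota_{\Gr}$ of $[\hGO\backslash\Gr]$, which is perverse coherent (hence Koszul perverse) t-exact because it sends $\Gr_{\lav}$ to $\Gr_{-w_0\lav}$ and $d_{-w_0\lav}=d_{\lav}$; and, via the Cartesian squares \eqref{diagram1}--\eqref{eq: diagram of iota}, $\iota$ interchanges the $i_1$- and $i_2$-presentations of $[\hGO\backslash\cR]$. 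Since these two presentations define the same t-structure, $\iota^*$ is t-exact. For the Grothendieck--Serre dual $\D=\cHom(-,\omega)$ with $\omega=i_1^!p_1^*\omega_{\Gr}$, the adjunction \eqref{eq:f_*,f^!} gives $i_{1*}\D(\cF)\cong\cHom(i_{1*}\cF,p_1^*\omega_{\Gr})$, and the compatibility \eqref{eq:h^*cHom} for the stable-coherent-pullback morphism $\sigma_1$ gives $\sigma_1^*\cHom(-,p_1^*\omega_{\Gr})\cong\D_{\Gr}\circ\sigma_1^*$; so it suffices to check that $\D_{\Gr}$ is Koszul perverse t-exact on $\Coh^{\hGO}(\Gr)$. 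But $\D_{\Gr}$ exchanges $\Coh_{[\onetwo]}^{\hGO}(\Gr)^{\le0}_p$ with $\Coh_{[\onetwo]}^{\hGO}(\Gr)^{\ge0}_p$ by the very definition of the perverse coherent t-structure, and it sends the $\Gmdil$-weight-$n$ summand cohomologically shifted by $[-n]$ to the weight-$(-n)$ summand shifted by $[n]$, which is precisely the bookkeeping encoded in \eqref{eq:KP&P}; hence $\D_{\Gr}$, and therefore $\D$, is Koszul perverse t-exact. Finally, Proposition~\ref{prop. dual in Coh(R)}(2) gives $\cF^R=\D(\cF)^*$ and $\cF^L=\D(\cF^*)$, so t-exactness of $\D$ and of $(-)^*$ forces that of $(-)^R$ and $(-)^L$.

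\emph{Conclusion and main obstacle.} Since $\conv$ is t-exact, the heart $\KPcoh^{\hGO}(\cR)$ --- which is automatically abelian --- is closed under convolution and contains the unit $\cL_0$, hence is a monoidal abelian category; since $(-)^R$ and $(-)^L$ are t-exact and $\Coh^{\hGO}(\cR)$ is rigid by Proposition~\ref{prop. dual in Coh(R)}(1), the evaluation/coevaluation data restrict to the heart, so $\KPcoh^{\hGO}(\cR)$ is a rigid monoidal abelian category. The step I expect to be the main obstacle is the t-exactness of $\conv$: one must construct and compare the Koszul perverse t-structures on the twisted-product spaces over infinite-type bases, control $d^*$ and $m_*$ purely through the stable-coherent-pullback and ind-proper calculus, and --- most delicately --- establish the coherent semismallness of $m_{\Gr}$ while keeping track of the half-integer cohomological and $\Gmdil$-shifts $[\tfrac{\epsilon}{2}]\lr{\tfrac{\epsilon}{2}}$ of Section~\ref{subsubsec: degree modification} consistently through every base change.
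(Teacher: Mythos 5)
First, a point of reference: the paper does not prove this statement --- it is quoted verbatim from \cite[Thm.~5.1, Thm.~6.1]{CW23} --- so your proposal can only be measured against the strategy of that source. Your treatment of part (2) is essentially correct and matches it: you reduce $(-)^*=\iota^*$ to the fact that $\iota$ interchanges the $i_1$- and $i_2$-characterizations of the Koszul perverse t-structure (Theorem~\ref{thm:koszul-perverse}(2)) together with t-exactness of $\iota_{\Gr}^*$ (which holds since $d_{-w_0\lav}=d_{\lav}$), and you reduce $\D$ through $i_{1*}$ and $\sigma_1^*$, via \eqref{eq:f_*,f^!} and \eqref{eq:h^*cHom}, to the tautological self-duality of the perverse coherent t-structure on $\Gr$ plus the weight bookkeeping of \eqref{eq:KP&P}; combined with Proposition~\ref{prop. dual in Coh(R)}(2) this yields $(-)^R$ and $(-)^L$. (This reduction does use that $i_{1*}$ and $\sigma_1^*$ not only are t-exact but detect the heart --- conservativity --- which you should state explicitly.) The identification of the unit with $\cL_0$ is also fine.

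The genuine gap is in part (1), step (i). You propose to equip $\cR\ttimes\cR$ and the intermediate twisted products with Koszul perverse t-structures ``so that $\cF\tbox\cG$ lies in the heart whenever $\cF,\cG$ do,'' and then deduce exactness of $\conv$ from t-exactness of $d^*$ and $m_*$ relative to these t-structures. No such t-structure exists: as the paper itself records in the footnote of Section~\ref{subsec: twisted prod is IC}, the $\hGO$-orbits on $\Gr\ttimes\Gr$ fail the parity condition required to define a middle perverse coherent t-structure in the sense of \cite{AB10}; moreover a t-structure obtained by base change along the first-factor projection to $\Gr$ would ignore the perversity conditions in the second factor, so $\cF\tbox\cG$ would not land in its heart. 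Consequently the assertion ``$m_{\Gr*}$ is perverse coherent t-exact'' does not even typecheck. The correct route (the one taken in \cite{CW19,BFM05}, to which \cite{CW23} reduces via $i_{2*}$, $\sigma_2^*$ and the diagram \eqref{eq:convS}, exactly as in Proposition~\ref{prop:sigi is ring hom}) bypasses any t-structure on the source of $m$: one verifies directly that $m_{\Gr*}(\cF\tbox\cG)$ satisfies the support condition $i_{\nuv}^*(-)\in \QCoh^{\le-\tfrac12 d_{\nuv}}$ of Definition~\ref{def:perv t-structure} and its Grothendieck--Serre dual, using the semismallness estimate $\dim\bigl(m_{\Gr}^{-1}(t^{\nuv})\cap(\Gr_{\lav}\ttimes\Gr_{\muv})\bigr)\le\tfrac12(d_{\lav}+d_{\muv}-d_{\nuv})$ together with a filtration by the strata $\Gr_{\lav}\ttimes\Gr_{\muv}$. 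You have correctly identified semismallness as the key geometric input and correctly flagged it as the main obstacle, but the packaging through a heart on the twisted product would fail and must be replaced by this direct support/cosupport verification.
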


\subsection{Renormalized \texorpdfstring{$r$}{r}-matrices}\label{subsec: Renormalized r-matrices}
In this subsection, we first review the notion of renormalized $r$-matrices. Then we discuss the factorization structure on $\cR_{G,N}$ as presented in \cite[Sec. 7]{CW23}. Finally, following \cite[Sec. 5]{CW19}, we explain how this factorization structure induces the renormalized $r$-matrices in $\mathrm{h}(\Coh^{\hGO}(\cR_{G,N}))$ (the homotopy category of $\Coh^{\hGO}(\cR_{G,N})$). The renormalized $r$-matrices also restrict to the Koszul perverse heart $\KPcoh^{\hGO}(\cR_{G,N})\subset \mathrm{h}(\Coh^{\hGO}(\cR_{G,N}))$.

\subsubsection{Renormalized $r$-matrices}
Let $(\cC,\conv)$ be a $\C$-linear monoidal category. Assume $\cC$ has a grading shift functor, denoted by $\{1\}$.

\begin{definition}[{\cite[Def. 4.1, Def. 4.16]{CW19}}]
	A system of renormalized $r$-matrices in $\cC$ is an assignment to each pair of objects $M,N \in \cC$ an element $\Lambda(M,N) \in \Z \cup \{-\infty\}$ and a map \[\rmat{M,N}: M \conv N \to N \conv M\{\Lambda(M,N)\},\] satisfying the following properties. (Here $\Lambda(M,N)=-\infty$ means $\rmat{M,N}=0$.)
	\begin{enumerate}
		\item\label{p1} For any $M \in \cC$ the morphisms $\rmat{M,1_\cC}$ and $\rmat{1_\cC,M}$ are given by composing the unit isomorphisms of $M \conv 1_\cC$ and $1_\cC \conv M$ with $M$. 
		\item\label{p2} $\rmat{M,N}$ is nonzero if and only if $\Lambda(M,N)\neq-\infty$.
		\item\label{p3} For any $M,N_1,N_2 \in \cC$ we have 
		$$\Lambda(M, N_1 \conv N_2) \leq \Lambda(M, N_1) + \Lambda(M, N_2).$$
		If equality holds, then 
		$$\rmat{M,N_1 \conv N_2} = (\id_{N_1} \conv \rmat{M,N_2}\{\Lambda(M,N_1)\}) \circ (\rmat{M,N_1} \conv \id_{N_2}),$$
		while the right-hand composition is zero if the inequality is strict. The corresponding statement with $M$ on the other side also holds. 
		
		\item\label{p4} For $M,N \in \cC$ we have $\Lambda(M,N) + \Lambda(N,M) \ge 0$. Moreover
		\begin{equation}\label{eq:two equiv condition}
			\rmat{N,M} \circ \rmat{M,N} \ne 0 \iff \Lambda(M,N) + \Lambda(N,M) = 0.
		\end{equation}
		\item\label{p5} For any $M,N_1,N_2 \in \cC$ and morphism $f: N_1 \to N_2$, consider the diagram
		
		$$
		\begin{tikzcd}[row sep=large, column sep=3.1cm] 
			M \conv N_1 \arrow[r, "{\rmat{M, N_1}\{-\Lambda(M,N_1)\}}"] \arrow[d, "\id_M \conv f"'] & N_1 \conv M  \arrow[d, "f \conv \id_M"] \\
			M \conv N_2 \arrow[r, "{\rmat{M, N_2}\{-\Lambda(M,N_2)\}}"] & N_2 \conv M.
		\end{tikzcd}
		$$
		\begin{itemize}
			\item If $\Lambda(M,N_1) = \Lambda(M,N_2)$, the diagram commutes.
			\item If $\Lambda(M,N_1) < \Lambda(M,N_2)$, the bottom left composition is zero. 
			\item If $\Lambda(M,N_1) > \Lambda(M,N_2)$, the top right composition is zero. 
		\end{itemize}
		The corresponding statements hold when the product with $M$ is taken on the other side. 
	\end{enumerate}
\end{definition}

\subsubsection{The factorization structure on $\cR_{G,N}$}\label{subsubsec:gobal}
Let $X$ be a smooth algebraic curve, $I$ be a finite set, and $S=\Spec R$ be an (classical) affine scheme. For a tuple of maps $f_I=(f_i)_{i\in I}:S\to X^I$, write $z_I\subset S\times X$ for the union of the graphs of all the $f_i,i\in I$. Let $\O_{\wh{z_I}}$ denote the inverse limit of coordinate rings of closed subschemes in $S\times X$ with reduced locus $z_I$. Define $\wh{z_I}:=\Spec \O_{\wh{z_I}}$, which can be viewed as the scheme-theoretic formal neighborhood of $z_I$. We also have the punctured formal neighborhood $\wh{z_I}\setminus z_I$.

For a (classical) stack $Y\in \Stk_{\C}^{\cl}$, define the $I$-positive loop space $Y_{\O,X^I}$
	\[ Y_{\O,X^I}:\cCAlg\to \Grpd, R\mapsto \{f_I:\Spec R\to X^I, \rho: \wh{z_I}\to Y\}\]
and the $I$-loop space $Y_{\K,X^I}$
	\[ Y_{\K,X^I}:\cCAlg\to \Grpd, R\mapsto \{f_I:\Spec R\to X^I, \rho: \wh{z_I}\setminus z_I\to Y\}.\]

We will concentrate on the case when $Y$ is a smooth affine scheme over $\C$. In this case, $Y_{\O,X^I}$ is represented by a scheme of infinite type over $X^I$; moreover, this scheme is an inverse limit of flat morphisms between smooth schemes which are flat over $X^I$. And $Y_{\K,X^I}$ is represented by an ind-affine scheme over $X^I$, which contains $Y_{\O,X^I}$ as a closed subscheme. One may refer to \cite[Sec. 2]{KV04} for more details.

The key feature of these constructions is that the families $Y_{\O,X^I}$, $Y_{\K,X^I}$ (with $I$ running over nonempty finite sets) each form a factorization monoid in the sense of \cite[Def. 2.2.1]{KV04}: for a surjection of finite sets $f:J\twoheadrightarrow I$, let's write $\Delta_{J/I}:X^I\hookrightarrow X^J$ for the induced diagonal embedding and $j_{J/I}:U_{J/I}:=X^J\setminus X^I\hookrightarrow X^J$ for the complement; then the factorization monoid structure consists of isomorphisms
\begin{equation}\label{eq: factorization isom}
    \Delta_{J/I}^*(Y_{\O,X^J})\cong Y_{\O,X^I},\ \ \ j_{J/I}^*(\prod_{i\in I}Y_{\O,X^{f^{-1}(i)}})\cong j_{J/I}^*(Y_{\O,X^J}) 
\end{equation}
satisfying compatibilities in \textit{ibid.}. Similar statements also hold for $\left\{Y_{\K,X^I}\right\}_{I}$.

The spaces constructed in Section \ref{subsec: def of spaces} all have their factorization counterparts. The factorization counterpart of $\Gr_G$ is the Beilinson-Drinfeld grassmannian $$\Gr_{G,X^I}:=G_{\K,X^I}/G_{\O,X^I}$$ (or $\Gr_{X^I}$ for short), which is represented by an ind-projective scheme over $X^I$. The factorization counterpart of $\cT_{G,N}$ is $$\cT_{G,N,X^I}:=G_{\K,X^I}\times ^{G_{\O,X^I}} N_{\O,X^I}$$ (or $\cT_{X^I}$ for short). The natural action of $G_{\K,X^I}$ on $N_{\K,X^I}$ produces a map 
$$\cT_{X^I}\to \Gr_{X^I}\times_{X^I} N_{\K,X^I}$$
as in \eqref{eq:j_2}. 

\begin{definition}
	The factorization space of triples $\cR_{G,N,X^I}$ (or $\cR_{X^I}$ for short) is defined to be the fiber product:
	\begin{equation}\label{eq:global R}
		\begin{tikzcd}
			{\cR_{X^I}} & {\Gr_{X^I}\times_{X^I}N_{\O,X^I}} \\
			{\cT_{X^I}} & {\Gr_{X^I}\times_{X^I}N_{\K,X^I}}
			\arrow["{i_1}", from=1-1, to=1-2]
			\arrow["{i_2}"', from=1-1, to=2-1]
			\arrow["{j_1}", from=1-2, to=2-2]
			\arrow["{j_2}", from=2-1, to=2-2]
		\end{tikzcd}
	\end{equation}
in the category of ind-derived schemes.
\end{definition}

From now on, we will fix $X$ to be $\A^1=\Spec \C[t]$. Then $X$ admits a natural rotation $\Gm$-action. As in the local case, we add two $\Gm$ actions $\Gmrot$ and $\Gmdil$ here. Define $\cGmrot{X^I}$ to be the action groupoid scheme
\[\begin{tikzcd}
	{\Gmrot \times X^I} & {X^I.}
	\arrow["{\mathrm{pr}}"', shift right, from=1-1, to=1-2]
	\arrow["{\mathrm{act}}", shift left, from=1-1, to=1-2]
\end{tikzcd}\]
Define the groupoid scheme
\[{\hGOfac{X^I}}:=(G_{\O,X^I}\rtimes_{X^I} \cGmrot{X^I})\times \Gmdil.\]
The spaces over $X^I$ appearing in \eqref{eq:global R} all have natural $\hGOX$-actions. 
\begin{remark}
	For a derived scheme $\mathcal{Y}$ over $X^I$ with a $\hGOX$-action, the fpqc quotient $[\hGOfac{X^I}\backslash \mathcal{Y}]_{X^I}$ can be identified with $$\Big[\Gmrot\big{\backslash}\big[(G_{\O,X^I}\times \Gmdil)\backslash\mathcal{Y}\big]_{X^I}\Big]_{\pt}.$$ This means: firstly, we quotient $\mathcal{Y}$ by $G_{\O,X^I}\times \Gmdil$ over $X^I$; secondly, we quotient this derived stack by $\Gmrot$ over $\pt$. Then we can view $[\hGOfac{X^I}\backslash \mathcal{Y}]_{X^I}$ as a derived stack over $[\Gmrot\backslash X^I]$.
\end{remark}

The algebro-geometric properties in Proposition \ref{Prop: prop of R} also holds for $\cR_{X^I}$ and $[\hGOX\backslash \cR_{X^I}]_{X^I}$ respectively. We can consider $\left\{\Coh^{\hGOX}(\cR_{X^I})\right\}_I$ and $\left\{\IndCoh^{\hGOX}(\cR_{X^I})\right\}_I$. All of them admit convolution product constructions similar to those for $\Coh^{\hGO}(\cR)$ (as described in Section \ref{Subsec: convolution}), via the global analogue of the convolution diagram:
\[\begin{tikzcd}
	{\cR_{X^I}\ttimes_{X^I} \cR_{X^I}} & {\cS_{X^I}} & {\cR_{X^I}} \\
	{\cT_{X^I}\ttimes_{X^I} \cR_{X^I}} & {\Gr_{X^I} \ttimes_{X^I} \cR_{X^I}} & {\cT_{X^I},}
	\arrow["{i_2\ttimes \id}"', from=1-1, to=2-1]
	\arrow["d"', from=1-2, to=1-1]
	\arrow["m", from=1-2, to=1-3]
	\arrow["i", from=1-2, to=2-2]
	\arrow["{i_2}", from=1-3, to=2-3]
	\arrow["\td"', from=2-2, to=2-1]
	\arrow["\tm", from=2-2, to=2-3]
\end{tikzcd}\]
and the convolution products are compatible with the factorization isomorphisms \eqref{eq: factorization isom} for $\left\{\cR_{X^I}\right\}$ and $\hGOX$. See \cite[Sec. 7.2]{CW23} for more details.

\subsubsection{Renormalized $r$-matrices in $\mathrm{h}(\Coh^{\hGO}(\cR_{G,N}))$}
The factorization monoid structure for $\left\{[\hGO\backslash\cR_{X^I}]_{X^I}\right\}_I$ gives $\left\{\IndCoh^{\hGOX}(\cR_{X^I})\right\}_{I}$ a $\Gmrot$-equivariant chiral category structure in the sense of \cite{Ras14}\footnote{The $\Gmrot$-equivariance notion follows \cite[Def. 5.4]{CW23}}. Moreover, \cite[Sec. 7.3]{CW23} shows that this chiral structure is unital. We will extract the information behind the notion of (unital) chiral category as far as we need. Then we will review how to construct a system of renormalized $r$-matrices in $\mathrm{h}(\Coh^{\hGO}(\cR))$ using these structures.

Let $\cF,\cG\in \Coh^{\hGO}(\cR)$. Since we assume $X=\A^1=\mathbb{G}_a$, the group structure on $X$ gives trivializations $\cR_X\cong \cR\times X$, $\hGOfac{X}\cong \left((G_{\O}\times X)\rtimes_{X}\cGmrot{X}\right) \times \Gmdil$. Let $\wt{\cF},\wt{\cG}\in \Coh^{\hGOfac{X}}(\cR_X)$ be the pullback of $\cF,\cG$ along the projection 
$$[\hGOfac{X}\backslash\cR_X]_{X}\cong \Big[\Gmrot\big{\backslash}\big([(G_{\O}\times \Gmdil)\backslash\cR]\times X\big)\Big]\to\Big[\Gmrot\big{\backslash}\big([(G_{\O}\times \Gmdil)\backslash\cR]\big)\Big]\cong[\hGO\backslash\cR].$$ 

A unital chiral structure contains, as part of its definition, monoidal functors
\[\eta_{I}^{I\sqcup J}:\Coh^{\hGOX}(\cR_{X^I}\times X^J)\to \Coh^{\hGOfac{X^{I\sqcup J}}}(\cR_{X^{I\sqcup J}}).\] 
These functors are compatible with the factorization isomorphisms \eqref{eq: factorization isom}. For example, after identifying $(\cR_{X^2})|_{\Delta}\cong \cR_X$ for $\Delta=\Delta_{\{12\}/\{1\}}:X\hookrightarrow X^2$, we have functorial isomorphisms
\begin{equation}\label{eq:Delta}
	(\eta^{12}_1(\mathcal{P}\boxtimes \O_X))|_{\Delta}\cong \mathcal{P},\ \ \text{for }\mathcal{P}\in \Coh^{\hGOfac{X}}(\cR_X).
\end{equation}
More precisely, this means for the embedding 
\[\iota_{\Delta}:[\hGOfac{X}\backslash \cR_{X}]_X\to [\hGOfac{X^2}\backslash \cR_{X^2}]_{X^2}\]
over the diagonal embedding
\[\Delta:[\Gmrot\backslash X]\rightarrow [\Gmrot\backslash X^2],\]
we have 
\[\iota_{\Delta}^*(\eta^{12}_1(\mathcal{P}\boxtimes \O_X))\cong \mathcal{P}.\]
After identifying $ {\cR_{X^2}}|_{U}\cong (\cR_{X}\times \cR_X )|_{U} $ for $U=X^2\setminus\Delta$, we have functorial isomorphisms
\begin{equation}\label{eq:UU}
	\eta^{12}_1(\mathcal{P}\boxtimes \O_X)|_{U}\cong (\mathcal{P}\boxtimes \O_X)|_{U},\ \ \text{for }\mathcal{P}\in \Coh^{\hGOfac{X}}(\cR_X).
\end{equation} 
More precisely, this means when we identify the restrictions of 
\[[\hGOfac{X^2}\backslash \cR_{X^2}]_{X^2} \text{ \ and \ }[\hGOfac{X}\backslash \cR_{X}]_{X}\times_{[\Gmrot\backslash \pt]}[\hGOfac{X}\backslash \cR_{X}]_{X}\]
over the open embedding
\[[\Gmrot\backslash U]\rightarrow [\Gmrot\backslash X^2],\]
the restrictions of 
\[\eta^{12}_1(\mathcal{P}\boxtimes \O_X)\text{ \ and \ } \mathcal{P}\boxtimes \O_X\]
over their corresponding opens are naturally isomorphic.
\begin{remark}
	We will abbreviate the notations as above \eqref{eq:Delta} \eqref{eq:UU}, thus we just write the restrictions as $(\ )|_{\Delta},(\ )|_U$. The actual meaning is similar to the explanations as above. 
\end{remark}

We will abbreviate $\eta_1$ for $\eta^{12}_1:\Coh^{\hGOfac{X}}(\cR_X\times X)\to \Coh^{\hGOfac{X^2}}(\cR_{X^2})$ and $\eta_2$ for $\eta^{12}_2:\Coh^{\hGOfac{X}}(X\times \cR_X)\to \Coh^{\hGOfac{X^2}}(\cR_{X^2})$. 

Define 
\begin{equation}\label{eq:rmat1}
	\begin{aligned}
	 C_{\cF,\cG}&:=\eta_1(\wt{\cF} \boxtimes \O_X) \conv \eta_2(\O_X \boxtimes \wt{\cG})\in \Coh^{\hGOfac{X^2}}(\cR_{X^2}).
	\end{aligned}
\end{equation}

According to \eqref{eq:Delta} \eqref{eq:UU}, we have 
\begin{equation}\label{eq: isom of CFG}
	\begin{gathered}
		(C_{\cF,\cG})|_{\Delta}\cong \wt{\cF}\conv \wt{\cG}\cong \wt{\cF\conv \cG},\ \ 
		(C_{\cF,\cG})|_{U}\cong (\wt{\cF}\boxtimes \wt{\cG})|_{U}.
\end{gathered}
\end{equation}

Define $ \wh{\Hom}(C_{\cF,\cG},C_{\cG,\cF}):=\bigoplus_{m\in\Z}\Hom(C_{\cF,\cG},C_{\cG,\cF}\{m\})$\footnote{Here $\Hom$ means the DG complex associated to the mapping space of objects in $\C$-linear $\infty$-categories.}. It can be viewed as a $\Gmrot$-equivariant DG-module of $\O_X(X)=\C[t_1,t_2]$. We can similarly define $\wh{\Hom}(C_{\cF,\cG}|_{U},C_{\cG,\cF}|_{U})$. By the fact that $C_{\cF,\cG}$ is a compact object, it is shown in \cite[Sec. 5.2]{CW19} that 
\begin{equation}\label{eq: Hom isom}
 \wh{\Hom}(C_{\cF,\cG}|_{U},C_{\cG,\cF}|_{U})\cong \wh{\Hom}(C_{\cF,\cG},C_{\cG,\cF})\otimes_{\C[t_1,t_2]}\C[t_1,t_2,(t_1-t_2)^{-1}].
\end{equation}

Since \eqref{eq: isom of CFG} gives isomorphisms $C_{\cF,\cG}|_{U}\cong (\wt{\cF}\boxtimes \wt{\cG})|_{U}$, $C_{\cG,\cF}|_{U}\cong (\wt{\cG}\boxtimes \wt{\cF})|_{U}$, swapping two factors gives a morphism 
$$\swap_{\cF,\cG}: C_{\cF,\cG}|_{U}\xrightarrow{\cong}C_{\cG,\cF}^{\sw}|_{U},$$
where $C_{\cG,\cF}^{\sw}:=\sw^*C_{\cG,\cF}$ for the swapping two factors map $\sw:X^2\to X^2,(a_1,a_2)\mapsto (a_2,a_1)$.\footnote{We implicitly use the isomorphism $\sw^*\cR_{X^2}\cong \cR_{X^2}$, which is one of the factorization isomorphisms} This produces a $\Gmrot$-equivariant section
\[\C[t_1,t_2,(t_1-t_2)^{-1}]\to \wh{\Hom}(C_{\cF,\cG}|_{U},C_{\cG,\cF}^{\sw}|_{U}).\]
Combining \eqref{eq: Hom isom}, we get a $ \Gmrot $-equivariant section
\begin{equation}\label{eq: alpha section}
\alpha:\C[t_1,t_2]\hookrightarrow \C[t_1,t_2,(t_1-t_2)^{-1}]\to \wh{\Hom}(C_{\cF,\cG},C_{\cG,\cF}^{\sw})\otimes_{\C[t_1,t_2]}\C[t_1,t_2,(t_1-t_2)^{-1}].
\end{equation}

The $\C[t_1,t_2]$-module $\C[t_1,t_2,(t_1-t_2)^{-1}]$ has a natural increasing filtration $F_k:=(t_1-t_2)^{-k}\C[t_1,t_2]$, which gives a natural increasing filtration on the right hand side of the above equation \eqref{eq: alpha section}. Let $\Lambda(\cF,\cG)\in \Z\bigcup \{-\infty\}$ be the minimal value of $k$ such that the section $\alpha$ factors as $$\alpha:\C[t_1,t_2]\to \wh{\Hom}(C_{\cF,\cG},C_{\cG,\cF}^{\sw})\bigotimes_{\C[t_1,t_2]} F_k.$$ 
\begin{itemize}
	\item If $\Lambda(\cF,\cG)=-\infty$, $\rmat{\cF,\cG}$ is defined to be $0$. 
	\item If $\Lambda(\cF,\cG)\neq -\infty$. We can identify $F_{\Lambda(\cF,\cG)}$ with $\C[t_1,t_2]\{\Lambda(\cF,\cG)\}$ as $\Gmrot$-equivariant $\C[t_1,t_2]$-modules. Then $\alpha$ becomes
	\[\C[t_1,t_2]\to \wh{\Hom}(C_{\cF,\cG},C_{\cG,\cF}^{\sw})\{\Lambda(\cF,\cG)\}\]
	and produces an object in $$\alpha(1)\in \Hom(C_{\cF,\cG},C_{\cG,\cF}^{\sw}\{\Lambda(\cF,\cG)\}).$$
	By restricting it to $\{0\}\subset\Delta\subset X^2$, and using the first isomorphism in \eqref{eq: isom of CFG}, we get an object $\overline{\alpha(1)}\in \cH^0\big(\Hom({\cF\conv \cG},{\cG\conv \cF}\{\Lambda(\cF,\cG)\})\big)$, i.e. a morphism 
	\[{\rmat{\cF,\cG}}:{\cF\conv \cG}\to{\cG\conv \cF}\{\Lambda(\cF,\cG)\}\]
	in $\mathrm{h}(\Coh^{\hGO}(\cR))$. For more details, see \cite[Sec. 5.2]{CW19}.
\end{itemize}

In particular, when $\Lambda(\cF,\cG)=0$, the morphism $\swap_{\cF,\cG}: C_{\cF,\cG}|_{U}\xrightarrow{\cong} C_{\cG,\cF}^{\sw}|_{U}$ given by swapping two factors can extend (uniquely in $\mathrm{h}(\Coh^{\hGOfac{X^2}}(\cR_{X^2}))$) to a morphism $$\overline{\swap}_{\cF,\cG}:C_{\cF,\cG}\to C_{\cG,\cF}^{\sw},$$
and $\rmat{\cF,\cG}:\cF\conv\cG\to \cG\conv\cF$ is the restriction of $\overline{\swap}_{\cF,\cG}$ over $\{0\}\in X^2$.

We end this subsection by stating \cite[Thm. 5.10]{CW19}.
\begin{theorem}[{\cite[Thm. 5.10]{CW19}}]
	The morphisms $\rmat{\cF,\cG}$ and elements $\Lambda(\cF,\cG)\in \Z\bigcup\{-\infty\}$ defined above form a system of renormalized $r$-matrices in $\mathrm{h}(\Coh^{\hGO}(\cR))$. 

\end{theorem}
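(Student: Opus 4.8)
The plan is to check the five axioms in turn, in each case reducing the assertion to a statement about the objects $C_{\cF,\cG}\in\Coh^{\hGOfac{X^2}}(\cR_{X^2})$, about their restrictions to the diagonal $\Delta$ and to $U=X^2\setminus\Delta$, and about the pole-order filtration $F_k=(t_1-t_2)^{-k}\C[t_1,t_2]$ on $\C[t_1,t_2,(t_1-t_2)^{-1}]$. The unifying device is $\Gmrot$-equivariance: compactness of $C_{\cF,\cG}$ makes $\wh{\Hom}(C_{\cF,\cG},C_{\cG,\cF}^{\sw})$ a finitely generated \emph{graded} $\C[t_1,t_2]$-module, so the localization \eqref{eq: Hom isom} and the section $\alpha$ of \eqref{eq: alpha section} live in a controlled algebraic setting. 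I would first isolate the elementary \emph{nonvanishing lemma}: for a $\Gmrot$-equivariant class in such a module, minimality of the pole order along $\{t_1=t_2\}$ forces the restriction along $\{0\}\hookrightarrow\Delta\hookrightarrow X^2$ to be nonzero --- a $\Gmrot$-weight count, since $t_1-t_2$ has nonzero $\Gmrot$-degree. This lemma is what licenses the passage from the meromorphic section $\alpha(1)$ to the honest morphism $\rmat{\cF,\cG}$ without loss of information, and it is invoked repeatedly below.

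Axiom \eqref{p1} is direct: since each $\eta_i$ is monoidal, $C_{M,1_\cC}=\eta_1(\wt{M}\boxtimes\O_X)$ and $C_{1_\cC,M}=\eta_2(\O_X\boxtimes\wt{M})$ are ``the same object placed at the two points'', their swaps agree over all of $X^2$ with no pole, so $\Lambda(M,1_\cC)=\Lambda(1_\cC,M)=0$, and restriction to $\{0\}\in\Delta$ recovers the unit isomorphisms of $M\conv 1_\cC$ and $1_\cC\conv M$ via \eqref{eq: isom of CFG}. Axiom \eqref{p2} holds by construction in the direction $\Lambda=-\infty\Rightarrow\rmat{}=0$; conversely, if $\Lambda(\cF,\cG)\ne-\infty$ then by minimality $(t_1-t_2)^{\Lambda(\cF,\cG)}\alpha(1)$ is a class in $\wh{\Hom}(C_{\cF,\cG},C_{\cG,\cF}^{\sw})$ not divisible by $(t_1-t_2)$, so the nonvanishing lemma gives $\rmat{\cF,\cG}=\overline{\alpha(1)}\ne0$.

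For axiom \eqref{p4}, composing ``transpose, then transpose back'' gives $\swap_{\cG,\cF}^{\sw}\circ\swap_{\cF,\cG}=\id_{C_{\cF,\cG}}$ on $U$ (using $\sw^*C_{\cG,\cF}^{\sw}=C_{\cG,\cF}$ and the factorization isomorphism of $\{\cR_{X^2}\}$ under $\sw$). Tracking pole orders, the composite meromorphic section $\alpha_{\cG,\cF}^{\sw}\circ\alpha_{\cF,\cG}$ equals $(t_1-t_2)^{\Lambda(\cF,\cG)+\Lambda(\cG,\cF)}\cdot\id_{C_{\cF,\cG}}$; since the right-hand side is a nonzero honest section this forces $\Lambda(\cF,\cG)+\Lambda(\cG,\cF)\ge0$, and by the nonvanishing lemma its restriction to $\{0\}$ --- which is $\rmat{\cG,\cF}\circ\rmat{\cF,\cG}$ --- is nonzero exactly when the exponent vanishes, i.e.\ \eqref{eq:two equiv condition}. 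Axiom \eqref{p5} is pure naturality: for $f\colon N_1\to N_2$ all of $\eta_i$, $\conv$ and the swap identification are functorial, so the two composites in the displayed square agree as meromorphic maps $C_{M,N_1}\to C_{N_2,M}^{\sw}$ before renormalization; comparing $\Lambda(M,N_1)$ with $\Lambda(M,N_2)$ and multiplying by the appropriate power of $(t_1-t_2)$ produces exactly the three stated cases after restriction to $\{0\}$ --- via the nonvanishing lemma in the equal-$\Lambda$ case and a pole-order count in the other two --- and likewise with $M$ on the right.

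The main obstacle is axiom \eqref{p3}, the compatibility of $\Lambda$ and $\rmat{}$ with convolution. Here I would work on the three-point space $\cR_{X^3}$ and use the full chiral structure --- the three-point versions of the monoidal functors $\eta$ together with their compatibility with the factorization isomorphisms along the partial diagonals of $X^3$ --- to form a single object $C_{M;N_1,N_2}\in\Coh^{\hGOfac{X^3}}(\cR_{X^3})$ with $M$ at the first point and $N_1,N_2$ at the second and third, whose restriction to $\{t_2=t_3\}$ is $C_{M,N_1\conv N_2}$ (using monoidality of $\eta$ and the associativity constraint on $\Coh^{\hGOfac{X^2}}(\cR_{X^2})$) and whose restriction to the small diagonal $\{t_1=t_2=t_3\}$ is $\wt{M\conv N_1\conv N_2}$. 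Moving $M$ past $N_1$ and then past $N_2$ corresponds to the two swap sections supported along $\{t_1=t_2\}$ and $\{t_1=t_3\}$; the pole order of their composite along the small diagonal is at most $\Lambda(M,N_1)+\Lambda(M,N_2)$, which gives the inequality, with equality precisely when the renormalized iterated section does not vanish on restriction to $\{t_1=t_2=t_3\}$ --- and the nonvanishing lemma in three variables then identifies this restriction with the asserted factorization $(\id_{N_1}\conv\rmat{M,N_2}\{\Lambda(M,N_1)\})\circ(\rmat{M,N_1}\conv\id_{N_2})$, the strict-inequality case yielding $0$. The delicate part is twofold: (i) setting up the $\cR_{X^3}$ picture so that these restrictions are \emph{canonically} the claimed objects, and (ii) the bookkeeping of pole orders of products of meromorphic sections supported on distinct hyperplanes of $\A^3$ as one degenerates to the small diagonal; this, rather than the other axioms, is where the real work lies.
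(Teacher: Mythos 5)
First, a point of order: the paper does not prove this theorem. It is quoted verbatim from \cite[Thm.~5.10]{CW19}, and the text surrounding it in Section \ref{subsec: Renormalized r-matrices} only reproduces the \emph{construction} of the data $(\rmat{\cF,\cG},\Lambda(\cF,\cG))$. So your proposal can only be measured against the original Cautis--Williams argument. In outline it matches that argument: the two-point object $C_{\cF,\cG}$ and the filtration $F_k$ for axioms \eqref{p1}, \eqref{p2}, \eqref{p4}, \eqref{p5}, the three-point space $\cR_{X^3}$ with the unital chiral structure for axiom \eqref{p3}, and the correct identification of axiom \eqref{p3} as the place where the real work lies. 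Your treatment of axiom \eqref{p4} is also essentially self-contained, because there the composite of the two renormalized sections is explicitly $(t_1-t_2)^{\Lambda(\cF,\cG)+\Lambda(\cG,\cF)}\cdot\id$, so no auxiliary nonvanishing statement is needed to analyse its restriction to $\{0\}$.

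The genuine gap is in the ``nonvanishing lemma'' itself, which you invoke for axioms \eqref{p2}, \eqref{p3} and \eqref{p5} and justify only by saying that $t_1-t_2$ has nonzero $\Gmrot$-degree. That is not sufficient. Minimality of the pole order tells you that $(t_1-t_2)^{\Lambda(\cF,\cG)}\alpha(1)$ is not divisible by $t_1-t_2$, hence that its restriction to $\Delta$ is nonzero; but the further restriction from $\Delta\cong\A^1$ to $\{0\}$ can still kill a nonzero $\Gmrot$-equivariant section. Concretely, $\wh{\Hom}(C_{\cF,\cG},C_{\cG,\cF}^{\sw})|_{\Delta}$ is a graded $\C[t]$-module of the form $\bigoplus_{i\ge 0} H_i\cdot t^i$ with $H_i$ built from $\Hom$-spaces on $\cR$, and a weight-zero element of the shape $h\cdot t$ with $h\ne 0$ homogeneous of compensating $\Gmrot$-weight is nonzero on $\Delta$ yet restricts to $0$ at the origin; a $\Gmrot$-weight count alone cannot exclude this. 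The missing ingredient is the translation equivariance coming from the group structure on $X=\A^1=\mathbb{G}_a$ --- exactly what the trivializations $\cR_X\cong\cR\times X$ and the definition of $\wt{\cF}$ as a pullback are encoding. Because $C_{\cF,\cG}$ is equivariant for the diagonal $\mathbb{G}_a$-action on $X^2$, the module $\wh{\Hom}(C_{\cF,\cG},C_{\cG,\cF}^{\sw})$ is pulled back from a graded module over $\C[t_1-t_2]$, sections are constant along $\Delta$, and only then does ``minimal pole order along $\{t_1=t_2\}$'' become equivalent to ``nonzero after restriction to $\{0\}$''. With the lemma repaired in this way (and its three-variable analogue, where one must quotient by the diagonal translation of $\A^3$ before counting pole orders along the hyperplanes $\{t_1=t_2\}$ and $\{t_1=t_3\}$), the rest of your outline goes through; but as written, the step you rely on most often is not established.
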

Then restricting to the Koszul perverse heart, we obtain a system of renormalized $r$-matrices in $\KPcoh^{\hGO}(\cR)$.

\begin{remark}\label{rmk: rmat neq 0}
	According to \cite{CW19}[Cor. 4.7], the rigidity of the monoidal abelian category $\KPcoh^{\hGO}(\cR)$ ensures that $\Lambda(\cF,\cG)\neq -\infty$ (thus $\rmat{\cF,\cG}\neq 0$) for all nonzero $\cF,\cG\in \KPcoh^{\hGO}(\cR)$.
\end{remark}

\section{The full subcategory \texorpdfstring{$\KP_0\subset \KPcoh^{\hGO}(\cR_{G,\g})$}{KP0}}\label{sec:KP_0}
In the following sections, we will concentrate on the case when $N=\g$, the adjoint representation of $G$ (except Section \ref{subsec:ring_hom} \ref{subsec:sigi}). 

We introduce the following subcategory.
\begin{definition}\label{def: def of KP_0}
    Let $\KP_0$ be the full subcategory of $\KPcoh^{\hGO}(\cR_{G,\g})$ generated by simple objects $\cL_{\lav}$ (defined in (\ref{simple object cL_lav})) for all $\lav\in \Pv_+$ under taking direct sums, convolutions, subquotients and right and left duals.
\end{definition}
By definition, $\KP_0$ is a rigid monoidal abelian full subcategory of $\KPcoh^{\hGO}(\cR_{G,\g})$.

\subsection{Computation of duals in \texorpdfstring{$\KP_0$}{KP0}}
The remaining task of this section is to compute right and left duals in $\KP_0$ (Proposition \ref{Prop for calculation of dual}), and then take the initial step toward describing $\KP_0$ (Corollary \ref{cor after calculation of dual}). 

\begin{proposition}\label{Prop for calculation of dual}
    For $\lav\in \Pv_+$ and the simple object $\cL_{\lav}\in \KPcoh^{\hGO}(\cR_{G,\g})$, we have 
    $$\cL^R_{\lav}\cong \cL^L_{\lav}\cong \cL_{{\lav}^{\dagger}}.$$
    Here ${\lav}^{\dagger}:=-w_0(\lav) $.
\end{proposition}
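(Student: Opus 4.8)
The plan is to compute the right dual $\cL_\lav^R$ explicitly by tracing through the rigidity construction from Section \ref{subsubsec:rigidity}, and then deduce the left dual by a symmetry argument. By Proposition \ref{prop. dual in Coh(R)}(2), we have $\cL_\lav^R \cong \D(\cL_\lav)^*$, so the strategy reduces to two separate computations: first understand the action of the Grothendieck-Serre dual $\D$ on $\cL_\lav$, and second understand the action of the involution $(-)^* = \iota^*$ on the result.

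\textbf{Step 1: Reduce to the Schubert variety.} Since $\cL_\lav = i_{\le\lav *}(\cL_\lav')$ where $i_{\le\lav}: \cR_{\le\lav}\hookrightarrow \cR$ is the closed embedding, and since $\D$ commutes with proper pushforward (i.e. $\D\circ i_{\le\lav *} \cong i_{\le\lav *}\circ \D_{\cR_{\le\lav}}$ by the formula \eqref{eq:f_*,f^!} applied to the $!$-pullback of the dualizing complex), I would first compute $\D_{\cR_{\le\lav}}(\cL_\lav')$. The key point is that $\cL_\lav'$ is defined via an intermediate extension from the open cell $\cR_\lav$ (Definition \eqref{def of KPlamu'}), and $\D$ (being Koszul perverse t-exact and an anti-equivalence) must send a simple object to a simple object. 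Moreover $\D$ commutes with restriction to the open cell $j_\lav^*$ up to appropriate shifts. On the cell $\cR_\lav$, we have $j_\lav^*(\cL_\lav') \cong \cl_* p^{\cl *}(\O_{\lav,0}\lr{-\tfrac12 d_\lav})$ by \eqref{eq: j_{lav}^*simple}, and computing $\D$ on this pulled-back sheaf from $[\hPla \backslash \pt]$ is a finite-dimensional computation: the dualizing complex of $\cR_\lav$ relative to a point, together with the fact that for $\mu = 0$ the representation $\O_{\lav,0}$ is trivial, should give back $\cL_\lav'$ (possibly after a shift that is absorbed into the normalization). Thus I expect $\D_{\cR_{\le\lav}}(\cL_\lav') \cong \cL_\lav'$, hence $\D(\cL_\lav) \cong \cL_\lav$.

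\textbf{Step 2: Compute the involution $(-)^* = \iota^*$.} Recall from page \pageref{identification1} that under the identification $[\hGO\backslash\cR] \cong [\hGO\backslash N_\O]\times_{[\hGK\backslash N_\K]}[\hGO\backslash N_\O]$, the involution $\iota$ swaps the two factors, and covers the involution $\iota_{\Gr}$ on $[\hGO\backslash\Gr]$ which (by \eqref{eq:iotaGr}) is induced by $g\mapsto g^{-1}$ on $\hGK$. On the affine Grassmannian this inversion sends $t^\lav$ to $t^{-\lav}$, whose $\hGO$-orbit is $\Gr_{-w_0(\lav)} = \Gr_{\lav^\dagger}$; indeed $\iota_{\Gr}$ restricts to an isomorphism $\Gr_{\le\lav} \xrightarrow{\sim} \Gr_{\le\lav^\dagger}$ (this is a standard fact about Schubert varieties in $\Gr_G$). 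Since $\mu = 0$, the line bundle data $\O_{\lav,0}$ is trivial and is preserved under this isomorphism. Therefore $\iota^*$ maps the simple object $\cL_\lav$ (supported on $\cR_{\le\lav}$) to a simple object supported on $\cR_{\le\lav^\dagger}$ whose restriction to the open cell $\cR_{\lav^\dagger}$ matches that of $\cL_{\lav^\dagger}$, forcing $\cL_\lav^* \cong \cL_{\lav^\dagger}$. Combining Steps 1 and 2: $\cL_\lav^R \cong \D(\cL_\lav)^* \cong \cL_\lav^* \cong \cL_{\lav^\dagger}$.

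\textbf{Step 3: The left dual.} For the left dual, Proposition \ref{prop. dual in Coh(R)}(2) gives $\cL_\lav^L \cong \D(\cL_\lav^*)$. By Step 2, $\cL_\lav^* \cong \cL_{\lav^\dagger}$, and by Step 1 applied to $\lav^\dagger$ in place of $\lav$, we get $\D(\cL_{\lav^\dagger}) \cong \cL_{\lav^\dagger}$. Hence $\cL_\lav^L \cong \cL_{\lav^\dagger}$ as well.

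The main obstacle I anticipate is Step 1 --- carefully pinning down that the Grothendieck-Serre dual of $\cL_\lav'$ is $\cL_\lav'$ itself rather than a twist or a different simple object. This requires knowing that (a) $\D$ preserves the Koszul perverse heart and sends simples to simples (which follows from the t-exactness statements cited in the excerpt together with $\D$ being an anti-auto-equivalence), and (b) the specific normalization of $\cL_\lav'$ via the shift $\lr{-\tfrac12 d_\lav}$ on $\O_{\lav,0}$ is precisely the self-dual normalization on the cell $\cR_\lav$ --- this is where the choice $\mu = 0$ is essential, since for general $\mu$ the dual would involve $\mu^* = -w_0(\mu)$ and the statement would be more complicated. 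One must check the self-duality on the open cell $\cR_\lav \cong [\hPla\backslash\pt]$-fibered space, where $\D$ acts through the dualizing complex $\omega$ restricted appropriately; the trivial representation is manifestly self-dual, and the half-dimension shift in the normalization is designed to make the IC extension self-dual under $\D$ (compare the support conditions \eqref{cIC in our case} which are symmetric in $\cC$ and $\D_X(\cC)$).
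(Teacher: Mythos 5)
Your overall architecture is the same as the paper's: reduce to $\cL_\lav^R\cong\D(\cL_\lav)^*$ and $\cL_\lav^L\cong\D(\cL_\lav^*)$ via Proposition \ref{prop. dual in Coh(R)}(2), prove $(\cL_\lav)^*\cong\cL_{\lav^\dagger}$ and $\D(\cL_\lav)\cong\cL_\lav$ separately, and in each case use the fact that an involutive (anti-)equivalence sends simples to simples so that it suffices to identify the restriction to the open cell. Your Steps 2 and 3 match the paper's Lemma on $(-)^*$ essentially verbatim and are fine.

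The gap is in Step 1, and it is exactly where you anticipated trouble — but your proposed resolution is not correct as stated. The self-duality $\D_{\cR_{\le\lav}}(\cL_\lav')\cong\cL_\lav'$ is \emph{not} a formal consequence of the half-dimension normalization $\lr{-\tfrac12 d_\lav}$ or of the triviality of $\O_{\lav,0}$, and it is not computed from "the dualizing complex of $\cR_\lav$ relative to a point" (there is no such absolute dualizing complex; $\omega$ is defined as $i_1^!p_1^*\omega_{\Gr}$). What must be computed is $j_\lav^*\D_{\cR_{\le\lav}}(\cL_\lav')\cong \cl_*\,{i_1^{\cl}}^!p_1^*(\omega_{\Gr_\lav})\lr{\tfrac12 d_\lav}$, and the $!$-pullback along the finite-codimension embedding $i_1^{\cl}:\cR_\lav^{\cl}\hookrightarrow\Gr_\lav\times\g_\O$ contributes the determinant of its normal bundle, namely $p_1^*\det\bigl(\hGO\times^{\hPla}(\g_\O/\pla)\bigr)[-\dim(\g_\O/\pla)]$. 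The proof goes through only because for $N=\g$ the quotient $\g_\O/\pla$ is identified with the tangent space of $\Gr_\lav$ at $t^\lav$ as a $\hPla$-representation (with a $\Gmdil$-weight discrepancy), so this determinant cancels $\omega_{\Gr_\lav}$ up to the grading shift $\lr{-d_\lav}$, yielding ${i_1^{\cl}}^!p_1^*(\omega_{\Gr_\lav})\cong\O_{\cR_\lav^{\cl}}\lr{-d_\lav}$ and hence exactly $j_\lav^*\cL_\lav'$. Without this computation your argument only shows $\D(\cL_\lav)\cong\cL_{\lav,\mu'}\{m\}(n)$ for some undetermined dominant $\mu'$ and shifts $m,n$: the determinant line bundle could a priori produce a nontrivial equivariant twist. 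This is the step the paper flags as "specific to our situation $N=\g$"; for a general representation $N$ the statement $\D(\cL_{\lav,0})\cong\cL_{\lav,0}$ is false, so no argument that never uses $N=\g$ can be complete.
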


\begin{proof}

According to Proposition \ref{prop. dual in Coh(R)}, the right and left duals of $\cL_{\lav}$ can be computed as
\[\cL_{\lav}^R\cong \D(\cF)^*, \cL^L_{\lav}\cong \D(\cF^*).\]
We refer the readers to Section \ref{subsubsec:rigidity} for constructions of the functors $(-)^*$ and $\D(-)$. 

So we divide the proof of this proposition into two steps, which are summarized as two lemmas as follows.
\begin{lemma}\label{lem1 calculate star}
	For $\lav\in \Pv_+$, we have $(\cL_{\lav})^*\cong \cL_{{\lav}^{\dagger}}$.
\end{lemma}
\begin{proof}
	
	Recall that using the morphism $\mathrm{inv}:\hGK\to\hGK,g\mapsto g^{-1}$ as in \eqref{eq:inv}, we construct two involutions
	\[\iota_{\Gr}:[\hGO\backslash \Gr]\to [\hGO\backslash \Gr]\]
	\[\iota:[\hGO\backslash \cR]\to [\hGO\backslash \cR]\]
	as in \eqref{eq:iotaGr} and \eqref{eq: diagram of iota}.
	
	Notice that $\mathrm{inv}(t^{\muv})=t^{-\muv}=\dot{w_0}t^{{\muv}^{\dagger}}\dot{w_0}^{-1}$; here $\dot{w_0}\in N(T)$ is a representative of the longest element $w_0$ in the Weyl group $W=N(T)/T$. Then we have $\iota_{\Gr}([\hGO\backslash\Gr_{\le \lav}])=[\hGO\backslash\Gr_{\le {\lav}^{\dagger}}]$, i.e. we have the following Cartesian diagram
\[\begin{tikzcd}
	{[\hGO\backslash\Gr_{\le{\lav}^{\dagger}}]} & {[\hGO\backslash\Gr_{\le{\lav}}]} \\
	{[\hGO\backslash \Gr]} & {[\hGO\backslash \Gr].}
	\arrow["{\iota_{\Gr}}", from=1-1, to=1-2]
	\arrow["{i_{\le {\lav}^{\dagger}}}"', from=1-1, to=2-1]
	\arrow["{i_{\le \lav}}", from=1-2, to=2-2]
	\arrow["{\iota_{\Gr}}", from=2-1, to=2-2]
\end{tikzcd}\]
By doing base change along the Cartesian diagram \eqref{eq: diagram of iota}, we get the following Cartesian diagram
\[\begin{tikzcd}
	{[\hGO\backslash\cR_{\le{\lav}^{\dagger}}]} & {[\hGO\backslash\cR_{\le{\lav}}]} \\
	{[\hGO\backslash \cR]} & {[\hGO\backslash \cR].}
	\arrow["{\iota}", from=1-1, to=1-2]
	\arrow["{i_{\le {\lav}^{\dagger}}}"', from=1-1, to=2-1]
	\arrow["{i_{\le \lav}}", from=1-2, to=2-2]
	\arrow["{\iota}", from=2-1, to=2-2]
\end{tikzcd}\]

It follows that $(\cL_{\lav})^*\cong\iota^*(i_{\le \lav })_*(\cL'_{\lav})\cong (i_{\le {\lav}^{\dagger}})_*\iota^*(\cL_{\lav}')$.

Restricting $\iota^*(\cL_{\lav}')$ to $j_{{\lav}^{\dagger}}:\cR_{{\lav}^{\dagger}}\hookrightarrow \cR_{\le {\lav}^{\dagger}}$, we get $$j_{{\lav}^{\dagger}}^*\iota^*(\cL_{\lav}')\cong \iota^* j_{\lav}^*(\cL_{\lav}')\overset{\eqref{eq: j_{lav}^*simple}}{\cong} \iota^*\left(\cl_*{p^{\cl*}}(\O_{\Gr_{\lav}}\lr{-\tfrac12 d_{\lav}})\right).$$
Using the Cartesian diagram
\[\begin{tikzcd}
	{[\hGO\backslash\cR_{{\lav}^{\dagger}}^{\cl}]} & {[\hGO\backslash\cR_{{\lav}}^{\cl}]} \\
	{[\hGO\backslash\cR_{{\lav}^{\dagger}}]} & {[\hGO\backslash\cR_{{\lav}}],}
	\arrow["{\iota^{\cl}}", from=1-1, to=1-2]
	\arrow["\cl"', from=1-1, to=2-1]
	\arrow["\cl", from=1-2, to=2-2]
	\arrow["\iota", from=2-1, to=2-2]
\end{tikzcd}\]

we deduce $$\iota^*\cl_*{p^{\cl*}}(\O_{\Gr_{\lav}}\lr{-\tfrac12 d_{\lav}})\cong \cl_*{{\iota^{\cl}}^*}{p^{\cl*}}(\O_{\Gr_{\lav}}\lr{-\tfrac12 d_{\lav}})\cong \cl_*{p^{\cl*}}(\O_{\Gr_{{\lav}^{\dagger}}}\lr{-\tfrac12 d_{\lav^{\dagger}}}).$$. 

In conclusion, 
\begin{equation}\label{eq:()}
	(\cL_{\lav})^*\cong (i_{\le {\lav}^{\dagger}})_*\iota^*(\cL_{\lav}') \text{\ \ and \ \ } j_{{\lav}^{\dagger}}^*\iota^*(\cL_{\lav}')\cong j_{{\lav}^{\dagger}}^*(\cL_{{\lav}^{\dagger}}').
\end{equation}
Since $(-)^*$ is an involution on $\KPcoh^{\hGO}(\cR)$, it sends simple objects to simple objects, so $(\cL_{\lav})^*$ has the form $\cL_{{\lav}',\mu'}$ for some $(\lav',\mu')\in (\Pv\times P)_{\dom}$. By \eqref{eq:()}, $({\lav}',\mu')$ can only be $({\lav}^{\dagger},0)$, thus $(\cL_{\lav})^*\cong \cL_{{\lav}^{\dagger}}$.
\end{proof}

The second lemma is more crucial, which is specific to our situation $N=\g$, especially the computations in \eqref{eq: normal bundle calculation} below.
\begin{lemma}\label{lem2 calculation of D}
    For $\lav\in \Pv_+$, we have $\D(\cL_{\lav})\cong \cL_{\lav}$.
\end{lemma}
\begin{proof}
	Firstly, we perform the following computations: 
    \begin{equation*}
        \begin{aligned}
        \D(\cL_{\lav})=\D(i_{\le \lav*}(\cL_{\lav}'))&{=}\cHom(i_{\le \lav*}\cL_{\lav}',i_1^!p_{1}^*\omega_{\Gr}) \ \ \ \ \ \ \text{use \eqref{def of D}}\\
        &{\cong} i_{\le {\lav}*}\cHom(\cL_{\lav}',i_{\le {\lav}}^!i_1^!p_{1}^*\omega_{\Gr}) \ \ \text{use \eqref{eq:f_*,f^!}}\\
        &\cong i_{\le {\lav}*}\cHom(\cL_{\lav}',i_1^!i_{\le {\lav}}^!p_{1}^*\omega_{\Gr});
        \end{aligned}
    \end{equation*}
	here, we abuse notation: $i_1$ represents both $\cR_{\lav}\hookrightarrow \Gr_{\lav}\times \g_{\O}$,  $\cR\hookrightarrow \Gr\times \g_{\O}$, and $i_{\le \lav}$ represents both $\cR_{\lav}\hookrightarrow \cR$ and $\Gr_{\lav}\times\g_{\O}\hookrightarrow \Gr\times\g_{\O}$.
    Applying Beck-Chevalley isomorphism \eqref{eq:!and*} to the Cartesian diagram 
\[\begin{tikzcd}
	{\Gr_{\le\lav}\times \g_{\O}} & {\Gr\times \g_{\O}} \\
	{\Gr_{\le\lav}} & {\Gr,}
	\arrow["{i_{\le \lav}}", from=1-1, to=1-2]
	\arrow["{p_1}"', from=1-1, to=2-1]
	\arrow["{p_1}", from=1-2, to=2-2]
	\arrow["{i_{\le \lav}}", from=2-1, to=2-2]
\end{tikzcd}\]
we get
\begin{equation}\label{eq: D(L_{lav})}
    \D(\cL_{\lav})\cong i_{\le {\lav}*}\cHom(\cL_{\lav}',i_1^!p_{1}^*i_{\le {\lav}}^!(\omega_{\Gr}))  \cong i_{\le \lav*}{\cHom(\cL_{\lav}',i_{1}^!p_1^*(\omega_{\Gr_{\le \lav}}))}=i_{\le \lav*}\D_{\cR_{\le \lav}}(\cL_{\lav}');
\end{equation}
here, we define 
\begin{equation}\label{eq:D_R}
	\D_{\cR_{\le \lav}}(-):=\cHom(-,i_1^!p_1^*(\omega_{\Gr_{\le \lav}})):\Coh^{\hGO}(\cR_{\le\lav})^{\op}\to\Coh^{\hGO}(\cR_{\le\lav}).
\end{equation}

Next, we compute $j_{\lav}^*\D_{\cR_{\le \lav}}(\cL_{\lav}')$ as follows:
\begin{equation}\label{eq: j^*Hom}
    \begin{aligned}
    & j_{\lav}^*{\cHom(\cL_{\lav}',i_{1}^!p_1^*(\omega_{\Gr_{\le \lav}}))}\\
    \overset{\eqref{eq:h^*cHom}}{\cong}& \cHom(j_{\lav}^*\cL_{\lav}',j_{\lav}^*i_{1}^!p_1^*(\omega_{\Gr_{\le \lav}}))\\
    \overset{\eqref{eq: j_{lav}^*simple}}{\cong} &\cHom(\cl_*{p^{\cl*}}(\O_{\Gr_{\lav}}\lr{-\tfrac12 d_{\lav}}),j_{\lav}^*i_{1}^!p_1^*(\omega_{\Gr_{\le \lav}}))\\
    \overset{\eqref{eq:f_*,f^!}}{\cong} & \cl_*\cHom(\O_{\cR_{\lav}^{\cl}},\cl^!i_1^!p_1^*(\omega_{\Gr_{\lav}}))\lr{\tfrac12 d_{\lav}}\\
    \overset{ \ }{\cong} \ \ &\cl_*{i_1^{\cl }}^!p_1^*(\omega_{\Gr_{\lav}})\lr{\tfrac12 d_{\lav}}.
\end{aligned}
\end{equation}
Here, in the last term, the morphisms are $\cR_{\lav}^{\cl}\xhookrightarrow{i_1^{\cl}}\Gr_{\lav}\times \g_{\O}\xrightarrow{p_1}\Gr_{\lav}$. 

Recall that we denote the stabilizer of $t^{\lav}\in \Gr_{\lav}$ with respect to the $\hGO$-action by $\hPla$. We also restrict the $\hGO$-action to $G_{\O}$, and denote the stabilizer of $t^{\lav}$ by $\Pla$. Let $\pla$ be the Lie algebra of $\Pla$, which can be viewed as a subspace in $\g_{\O}$ with finite codimension. Now we make the following identification of vector bundles over $\Gr_{\lav}\cong \hGO/\hPla\cong G_{\O}/\Pla$:
	\begin{equation}\label{identification of cRlav}
\begin{tikzcd}
	{\cR_{\lav}^{\cl}} & {\Gr_{{\lav}}\times\g_{\O}} & {\Gr_{{\lav}}} \\
	{\hGO\times^{\hPla}\pla } & {\hGO\times^{\hPla}\g_{\O}} & {\hGO/\hPla.}
	\arrow["{i_1^\cl}", from=1-1, to=1-2]
	\arrow["{\sigma_1}"', from=1-3, to=1-2]
	\arrow["{\text{\ding{172}}}", from=2-1, to=1-1]
	\arrow["\cong"', from=2-1, to=1-1]
	\arrow["{i_1^{\cl}}", from=2-1, to=2-2]
	\arrow["{\text{\ding{173}}}", from=2-2, to=1-2]
	\arrow["\cong"', from=2-2, to=1-2]
	\arrow["{\text{\ding{174}}}", from=2-3, to=1-3]
	\arrow["\cong"', from=2-3, to=1-3]
	\arrow["{\sigma_1}"', from=2-3, to=2-2]
\end{tikzcd}
	\end{equation}
	
	Here, the three column maps are 
	\begin{equation*}
		\begin{aligned}
			&\text{\ding{172}}:[g,x]\mapsto [gt^{\lav},\mathrm{Ad}_{t^{-{\lav}}}x],\\
			&\text{\ding{173}}:[g,x]\mapsto ([gt^{{\lav}}],\mathrm{Ad}_{g}x),\\
			&\text{\ding{174}}:[g]\mapsto [gt^{{\lav}}].
		\end{aligned}
	\end{equation*}

	Using \eqref{identification of cRlav}, we identify the normal bundle of inclusion $i_1^{\cl}:\cR_{\lav}^{\cl}\hookrightarrow \Gr_{\lav}\times \g_{\O}$ with $p_1^*\left(\hGO\times^{\hPla}(\g_{\O}/\pla)\right)$. So 
	\begin{equation}\label{eq: normal bundle calculation}
		 {i_1^{\cl}}^! p_1^*(\omega_{\Gr_{\lav}})\cong {i_1^{\cl}}^* p_1^*\left({\omega_{\hGO/\hPla}}\otimes {\det(\hGO\times^{\hPla}(\g_{\O}/\pla))}\right)[-\dim(\g_{\O}/\pla)].
	\end{equation}

	Notice that $\omega_{\hGO/\hPla}$ can also be identified with $ \det(\hGO\times^{\hPla}((\g_{\O}/\pla)^*))[\dim(\g_{\O}/\pla)]$, but here $(\g_{\O}/\pla)^*$ has \textbf{trivial} $\Gmdil$-action. 
	Then the right hand side of \ref{eq: normal bundle calculation} becomes 
	\begin{equation}\label{eq: calculation of omega}
		{i_1^{\cl}}^*p_1^*(\O_{\Gr_{\lav}}\lr{-\dim (\g_{\O}/\pla)})\cong \O_{\cR_{\lav}^{\cl}}\lr{-d_{\lav}}.
	\end{equation}

    Combining equations \eqref{eq: j^*Hom}, \eqref{eq: normal bundle calculation}, \eqref{eq: calculation of omega}, we conclude that
    \begin{equation}\label{eq: 1}
        j_{\lav}^*\D_{\cR_{\le \lav}}(\cL_{\lav}')\cong\cl_* \O_{\cR_{\lav}^{\cl}}\lr{-\tfrac12 d_{\lav}}\cong j_{\lav}^*\cL_{\lav}'.
    \end{equation}

    Then we use the argument in previous lemma \ref{lem1 calculate star} again. Since $\D$ is an involution on $\KPcoh^{\hGO}(\cR)$, $\D(\cL_{\lav})$ is also a simple object of the form $\cL_{{\lav}',\mu'}$ for some $({\lav}',\mu')\in (\Pv\times P)_{\dom}$. By \eqref{eq: D(L_{lav})} and \eqref{eq: 1}, $({\lav}',\mu')$ can only be $(\lav,0)$, thus $\D(\cL_{\lav})\cong \cL_{\lav}$.
\end{proof}

After these two lemmas, we complete the proof of Proposition \ref{Prop for calculation of dual}.
\end{proof}

We conclude this section by the following corollary.
\begin{corollary}\label{cor after calculation of dual}
	The full subcategory $\KP_0\subset \KP_{G,\g}$ defined in Definition \ref{def: def of KP_0} is generated by simple objects $\cL_{\lav}$ for all $ \lav\in \Pv_+$ under direct sums, convolutions, subquotients. This means we don't need to additionally add duals as in Definition \ref{def: def of KP_0}.
\end{corollary}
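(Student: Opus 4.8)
The goal is Corollary \ref{cor after calculation of dual}: the subcategory $\KP_0$, a priori closed under direct sums, convolutions, subquotients, \emph{and} left/right duals, is already closed under duals once we only impose closure under direct sums, convolutions, and subquotients. By Proposition \ref{Prop for calculation of dual} the left and right duals of each generator $\cL_{\lav}$ ($\lav\in\Pv_+$) are again generators, namely $\cL_{\lav^{\dagger}}$ with $\lav^{\dagger}=-w_0(\lav)$. So the plan is simply to propagate this fact: show that the class of objects built from the $\cL_{\lav}$ under $\oplus$, $\conv$, and subquotients is stable under $(-)^R$ and $(-)^L$.

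First I would set up the induction. Let $\mathcal{D}$ denote the full subcategory generated by the $\cL_{\lav}$, $\lav\in\Pv_+$, under direct sums, convolutions, and subquotients; this is the class we want to prove is closed under duals. Every object of $\mathcal{D}$ is obtained from the generators by finitely many such operations, so one argues by induction on the length of a construction. The base case is Proposition \ref{Prop for calculation of dual}: $\cL_{\lav}^R\cong\cL_{\lav}^L\cong\cL_{\lav^{\dagger}}\in\mathcal{D}$. For the inductive step one uses the standard monoidal-categorical identities in the rigid monoidal abelian category $\KPcoh^{\hGO}(\cR_{G,\g})$: $(M\oplus N)^R\cong M^R\oplus N^R$ and $(M\conv N)^R\cong N^R\conv M^R$ (and likewise for $(-)^L$), so duals of direct sums and convolutions of objects of $\mathcal{D}$ stay in $\mathcal{D}$ by the inductive hypothesis together with closure of $\mathcal{D}$ under $\oplus$ and $\conv$.

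The one point that needs a small argument is subquotients: if $N$ is a subquotient of $M\in\mathcal{D}$, why is $N^R\in\mathcal{D}$? Here one uses that in a rigid monoidal abelian category the functor $(-)^R$ is exact (it is both a left and a right adjoint, up to the $(-)^L$/$(-)^R$ symmetry, hence exact); concretely, $(-)^R=(-)\otimes\text{(something)}$-type adjoints are exact because rigidity makes $M\mapsto M^R$ biexact. Thus an inclusion $N\hookrightarrow M$ dualizes to an epimorphism $M^R\twoheadrightarrow N^R$, so $N^R$ is a quotient of $M^R\in\mathcal{D}$, hence lies in $\mathcal{D}$; similarly a quotient $M\twoheadrightarrow N$ dualizes to a subobject $N^R\hookrightarrow M^R$. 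Combining, every subquotient of an object of $\mathcal{D}$ has its dual a subquotient of the dual, which is in $\mathcal{D}$ by induction. The same argument applies verbatim to $(-)^L$. This closes the induction and shows $\mathcal{D}$ is stable under both duals, hence $\mathcal{D}=\KP_0$.

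The main obstacle — really the only content — is Proposition \ref{Prop for calculation of dual}, which is already proved above; everything in this corollary is then formal manipulation with the rigid monoidal abelian structure established in Theorem (the last theorem of Section \ref{subsubsec:Koszul_perverse}), namely that $\KPcoh^{\hGO}(\cR_{G,\g})$ is a rigid monoidal abelian category and that $(-)^R,(-)^L$ are t-exact. I would keep the write-up short: state the three monoidal identities being used, invoke exactness of the duality functors from rigidity, and run the induction. One should be slightly careful that "generated under subquotients" is interpreted correctly (closure under passing to subobjects and quotient objects of objects already in the class), but with that reading the argument is a clean two-paragraph induction.
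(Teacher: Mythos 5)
Your proposal is correct and follows essentially the same route as the paper: let $\mathcal{D}$ (the paper's $\cC$) be the subcategory generated without duals, use Proposition \ref{Prop for calculation of dual} for the generators, and propagate closure under duals through $\oplus$, $\conv$, and subquotients via the standard identities $(\cF\conv\cG)^R\cong\cG^R\conv\cF^R$ etc.\ and the fact that duals of subquotients are subquotients of duals. The only difference is cosmetic: the paper simply asserts the subquotient compatibility, while you justify it by noting that $(-)^R$ and $(-)^L$ are mutually inverse contravariant equivalences (hence exact) on the rigid abelian category, which is a fair elaboration of the same point.
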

\begin{proof}
	Let $\cC$ be the full subcategory of $\KP_0$ generated by $\cL_{\lav}$ for all $ \lav\in \Pv_+$ under direct sums, convolutions, subquotients. This is a monoidal abelian full subcategory, and we need to show it's rigid.
	
	First, by the Proposition \ref{Prop for calculation of dual} above, we know all the left and right duals of $\cL_{\lav}$ lie in $\cC$. Second, we use the following properties for (left/right) dual: if $\cF,\cG\in \cC$ are (left/right) dualizable in $\cC$, then
	\begin{itemize}
		\item $(\cF\oplus\cG)^L\cong \cF^L\oplus \cG^L$, $(\cF\oplus\cG)^R\cong \cF^R\oplus \cG^R$; 
		\item the (left/right) dual of a subquotient of $\cF$ is a subquotient of the (left/right) dual of $\cF$;
		\item $(\cF\conv\cG)^L\cong \cG^L\conv \cF^L$, $(\cF\conv\cG)^R\cong \cG^R\conv \cF^R$.
	\end{itemize}
	Using these observations and the definition of $\cC$, we know $\cC$ is rigid. From the definition of $\KP_0$, we conclude that $\cC=\KP_0$.
\end{proof}

\section{Determine \texorpdfstring{$\KP_0$}{KP0} at the \texorpdfstring{$K$}{K}-group level}\label{sec:K_group}
In this section, we describe $\KP_0$ at the $K$-group level. We will assume $G$ is \textbf{of type A} from now on, although this assumption is only crucially used in Section \ref{subsec: Tools coming from Hodge module}.

The main theorem of this section is as follows.
\begin{theorem}\label{thm: determine KP_0 on K-group}
	In the $K$-group $K(\KP_0)\subset K^{\hGO}(\cR_{G,\g})$, we have the following identity:
	\begin{equation*}
		[\cL_{\lav}]\conv [\cL_{\muv}]=\sum_{\nuv\in \Pv_+}\clamunu [\cL_{\nuv}]. 
	\end{equation*}
    Here, the notation $\clamunu$ is defined in \eqref{def of clamunu}.

    In particular, simple objects in $\KP_0$ are precisely those $\cL_{\lav}$ for $\lav\in \Pv_+$.
\end{theorem}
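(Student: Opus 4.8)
The plan is to reduce the $K$-group identity to the geometric Satake equivalence via the functor $\Xi = \sigma_1^*{i_1}_*$ and the Hodge-module description of $\Xi(\cL_{\lav})$ advertised in the introduction. First I would establish the key fact that $\Xi(\cL_{\lav}) \cong \bigoplus_{0\le k\le d_{\lav}}\cIC(\Omega^k_{\Gr_{\lav}})\lr{k-\tfrac12 d_{\lav}}[k]$ (Corollary \ref{sigi of KPla}); this rests on the characterization of $\cL_{\lav}'$ via $j_{\lav}^*(\cL_{\lav}') \cong \cl_*p^{\cl*}(\O_{\Gr_{\lav}}\lr{-\tfrac12 d_{\lav}})$, on the $i_{1}$-base-change identity $\Xi(\cL_{\lav})\cong \sigma_1^* {i_1}_* i_{\le\lav *}(\cL_{\lav}')$, and crucially on Theorem \ref{thm:Xin_intro} applied to the affine Grassmannian slices $\overline{\mathcal{W}}_{\lav}^{\muv}$ (which admit symplectic resolutions in type A), so that $\cIC(\Omega^k_{\Gr_{\lav}})$ is pinned down by its restriction to the open cell together with support bounds — this is where the type-A hypothesis enters. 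Then I would pass to $K$-theory: since $\Xi$ is Koszul perverse $t$-exact and (one must check) its induced map on $K$-groups $K(\KP_0)\to K^{\hGO}(\Gr_G)$ is injective when restricted to the classes $[\cL_{\lav}]$, it suffices to prove the multiplication identity after applying $[\Xi]$.

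Next I would identify the image $[\Xi(\cL_{\lav})]$ in $K^{\hGO}(\Gr_G)$ with (a shift of) the class of the constructible IC sheaf $\rmIC_{\overline{\Gr}_{\lav}}$ under a suitable comparison of $K$-groups. The point is that $\sum_k (-1)^k [\gr_{-k}\DR^{\hGO}(\ICHg_{\lav})]$, being the class associated to a pure Hodge module via its de Rham complex, recovers — after matching the $\Gmdil$-grading with the de Rham/Hodge filtration grading — the class of $\ICHg_{\lav}$, and the behaviour of the (weak equivariant) graded de Rham functor under proper pushforward (reviewed in the appendix) is compatible with convolution. Concretely, I would show that the assignment $[\cL_{\lav}]\mapsto [\Xi(\cL_{\lav})]$ intertwines convolution on $K(\KP_0)$ with the convolution product on $K^{G_\O}(\Gr_G)$, using that $\Xi$ is lax monoidal (or at least monoidal on the relevant classes) via the projection formula for ${i_1}_*$ and $\sigma_1^*$ together with the convolution diagram \eqref{eq:convS}. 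Under classical geometric Satake, $[\rmIC_{\overline{\Gr}_{\lav}}]$ corresponds to the class of $V_{\lav}\in \Rep{\check G}$, and $[\rmIC_{\overline{\Gr}_{\lav}}]\conv[\rmIC_{\overline{\Gr}_{\muv}}] = \sum_{\nuv} \clamunu [\rmIC_{\overline{\Gr}_{\nuv}}]$ — this is exactly the decomposition of the convolution of IC sheaves, which holds already at the level of the constructible Satake category (semisimplicity) and hence in its $K$-group.

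Putting these together: apply $[\Xi]$ to $[\cL_{\lav}]\conv[\cL_{\muv}]$, use monoidality to get $[\Xi(\cL_{\lav})]\conv[\Xi(\cL_{\muv})]$, recognize this as the geometric-Satake convolution which expands with coefficients $\clamunu$ into $\sum_{\nuv}\clamunu [\Xi(\cL_{\nuv})]$, and finally invoke injectivity of $[\Xi]$ on the span of the $[\cL_{\nuv}]$ to conclude the identity in $K(\KP_0)$. The last sentence of the theorem — that the $\cL_{\lav}$, $\lav\in\Pv_+$, exhaust the simples of $\KP_0$ — then follows formally: $\KP_0$ is by definition generated under sums, subquotients, convolutions and duals from the $\cL_{\lav}$; Proposition \ref{Prop for calculation of dual} handles duals, and the $K$-group identity shows convolutions of the $\cL_{\lav}$ decompose (in $K$-theory, hence as Jordan–Hölder series) into the $\cL_{\nuv}$, so no new simple object is produced.

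The main obstacle I anticipate is twofold. The hardest single step is proving the monoidality of $[\Xi]$ on $K$-groups with the right normalization of gradings: $\Xi$ is only the "fiber at the zero section" functor, not an honest monoidal functor on the nose, so establishing that $[\Xi(\cL_{\lav}\conv\cL_{\muv})] = [\Xi(\cL_{\lav})]\conv[\Xi(\cL_{\muv})]$ requires carefully chasing the convolution diagram \eqref{eq:convS}, the base change isomorphisms for $i_1, \sigma_1$ and their twisted-product analogues, and bookkeeping the $\lr{\bullet}$ and $[\bullet]$ shifts so that the Koszul shift $(1)_K = [1]\lr{1}$ cancels correctly — essentially one must check that the renormalized convolution on the coherent side, after applying $\Xi$ and collapsing the grading, matches the constructible convolution. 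Secondarily, verifying the identification \eqref{eq:intro_Hg} $\cIC(\Omega^k_{\Gr_{\lav}}) \cong \gr_{-k}\DR^{\hGO}(\ICHg_{\lav})[k-d_{\lav}]$ requires knowing the Hodge filtration on $\ICHg_{\lav}$ is "as simple as possible" near the open cell (so that $\gr_0$ of the de Rham complex restricts to $\Omega^\bullet_{\Gr_{\lav}}$), plus the support estimate from Theorem \ref{thm:Xin_intro} on the slices; the type-A restriction is genuinely used here and I would flag that this is the only place the argument does not generalize.
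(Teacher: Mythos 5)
Your plan follows the same overall route as the paper: push everything to $\Gr_G$ via $\Xi=\sigma_1^*i_{1*}$, show $[\Xi]$ is an injective ring homomorphism on $K$-groups (Proposition \ref{prop:sigi is ring hom}, Corollary \ref{injectivity of sigi}), compute $\Xi(\cL_{\lav})\cong\bigoplus_k i_{\le\lav*}\cIC(\Omega^k_{\Gr_{\lav}})\lr{k-\tfrac12 d_{\lav}}[k]$ (Corollary \ref{sigi of KPla}), convert this to $\gr_{-k}\DR^{\hGO}(\ICHg_{\lav})$ via the type-A slice argument (Corollary \ref{cor: Hodge-de Rham desciption for IC lav}), and conclude by geometric Satake in its Hodge-module form. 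The two difficulties you flag (monoidality of $[\Xi]$ and the Hodge-theoretic identification of $\cIC(\Omega^k)$) are real and are both handled in the paper.

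There is, however, one genuine missing step, at the point where you propose to ``recognize'' $[\Xi(\cL_{\lav})]\conv[\Xi(\cL_{\muv})]$ as a geometric-Satake convolution. The coherent convolution is $m_*\bigl(\Xi[\cL_{\lav}]\tbox\Xi[\cL_{\muv}]\bigr)$, and before any pushforward compatibility of the graded de Rham functor (Saito's $m_*\circ\gr\DR\cong\gr\DR\circ m_{*,\Hg}$) can be invoked, you must know that the twisted external product $\Xi[\cL_{\lav}]\tbox\Xi[\cL_{\muv}]$ is itself, in $K$-theory, the graded de Rham complex of a Hodge module on the convolution space $\overline{\Gr}_{\lav,\muv}$; concretely, that
\[
\sum_{k+l=m}\bigl[\cIC(\Omega^k_{\Gr_{\lav}})\tbox\cIC(\Omega^l_{\Gr_{\muv}})\bigr]=\bigl[\cIC(\Omega^m_{\Gr_{\lav,\muv}})\bigr].
\]
This is Proposition \ref{prop:calculation of tbox}, which occupies a full section and is not formal: the $\hGO$-torsor $\hGK\to\Gr$ is only Zariski-locally trivial, so the sequence $0\to\Omega^1_{\overline{\Gr}_{\lav}}\tbox\O\to\Omega^1_{\overline{\Gr}_{\lav,\muv}}\to\O\tbox\Omega^1_{\overline{\Gr}_{\muv}}\to0$ splits only locally; the paper passes to reflexive hulls, shows $\wt{\cIC}$ sends the locally split sequence to a fiber--cofiber sequence, and verifies the support estimates \eqref{eq: supp condition} for the twisted product by a K\"unneth argument. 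One also needs the support bounds of Theorem \ref{thm: Xin's result} to hold on $\overline{\Gr}_{\lav,\muv}$ itself (Proposition \ref{2}, via the local product structure of the convolution bundle), not only on $\overline{\Gr}_{\lav}$, before Corollary \ref{cor:m*(cIC)} can identify $m_*\cIC(\Omega^m_{\Gr_{\lav,\muv}})$ with $\bigoplus_{\nuv}\cIC(\Omega^{\bullet}_{\Gr_{\nuv}})^{\oplus\clamunu}$ through $m_{*,\Hg}(\ICHg_{\lav,\muv})$ and Fedorov's Tate subcategory. Your phrase ``under a suitable comparison of $K$-groups'' also suggests a direct map from $K$ of the constructible Satake category to $K^{\hGO}(\Gr_G)$; no such comparison is used or available, and the bridge between the coherent and constructible worlds is exactly the chain $\cIC(\Omega^{\bullet})\rightsquigarrow\gr\DR(\ICHg)\rightsquigarrow m_{*,\Hg}\rightsquigarrow\Rat$ above, whose first link over the convolution space is the step your plan omits.
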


The idea of the proof is as follows: we try to relate $\cL_{\lav}$ to Hodge modules, and then use the knowledge on geometric Satake equivalence to deduce the above identity.

\subsection{The ring homomorphism \texorpdfstring{$\Xi:=\sigi:K^{\hGO}(\cR_{G,N})\to K^{\hGO}(\Gr_G)$}{Ξ}}\label{subsec:ring_hom}
In this subsection, we deal with general $N$.

The convolution product on $\KPcoh^{\hGO}(\cR_{G,N})$ is hard to compute. Our strategy is to apply the functor $$\Xi:=\sigi:\Coh^{\hGO}(\cR)\to \Coh^{\hGO}(\Gr)$$ 
for morphisms $\cR\xhookrightarrow{i_1}\Gr\times \g_{\O}\xleftarrow{\sigma_1}\Gr$, and perform all computations on the affine grassmannian $\Gr$. Although $\sigi$ is not monoidal, it truly induces a ring homomorphism at the $K$-group level.

\begin{proposition}\label{prop:sigi is ring hom}
    The maps $\sigi,\sjgj:K^{\hGO}(\cR_{G,N})\to K^{\hGO}(\Gr_G)$ are two ring homomorphisms.
\end{proposition}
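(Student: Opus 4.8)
The plan is to show that $\Xi = \sigma_1^*{i_1}_*$ (and symmetrically $\sigma_2^*{i_2}_*$) is \emph{lax monoidal} enough at the level of $K$-groups, even though it is not monoidal as a functor. First I would unwind the convolution diagram \eqref{eq:convS}. Recall the convolution product is $\cF \conv \cG = m_* d^*(\cF \tbox \cG)$. The strategy is to compare this with the external-product structure after applying $\Xi$. Concretely, I want to produce, for $\cF, \cG \in \Coh^{\hGO}(\cR)$, a natural isomorphism (or at least a $K$-theory identity)
\[
\Xi(\cF \conv \cG) \;\cong\; \Xi(\cF) \conv \Xi(\cG)
\]
in $K^{\hGO}(\Gr)$, where the right-hand convolution is the convolution product on $\Coh^{\hGO}(\Gr)$ coming from $[\hGO \backslash \hGK / \hGO]$ (i.e. the ``$N = 0$'' case). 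The unit is also preserved: $\Xi$ applied to the monoidal unit $(t^0 \times \id)_* \O_{N_\O}$ of $\Coh^{\hGO}(\cR)$ gives $\sigma_1^* (t^0 \times \id \text{ pushed to } \Gr \times N_\O)$, which after $\sigma_1^*$ recovers the skyscraper $\O_{t^0} \in \Coh^{\hGO}(\Gr)$, the unit for convolution on $\Gr$; I would verify this directly from the base-change description.

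The key geometric input is the following. Consider the twisted products $\cR \ttimes \cR$, $\Gr \ttimes \cR$, $\cT$, and the zero sections. The map $\td: \Gr \ttimes \cR \to \cT \ttimes \cR$ and $\tm: \Gr \ttimes \cR \to \cT$, together with their base changes $d, m$ to $\cR \ttimes \cR \to \cS \to \cR$, control the convolution. I would set up the enlarged commutative diagram relating (i) the zero section $\sigma_1: \Gr \to \Gr \times N_\O$ and its twisted-product analogues, and (ii) the embeddings $i_1, i_2$. The crucial point is that the $N_\O$-fiber directions are ``linear'', so pulling back along a zero section and pushing forward along the $i_\bullet$ interact compatibly with the twisted product: one gets a base-change square expressing $\sigma_1^* {i_1}_* (\cF \tbox \cG)$ in terms of $(\sigma_1^*{i_1}_* \cF) \,\widetilde\boxtimes\, (\sigma_1^*{i_1}_* \cG)$ on $\Gr \ttimes \Gr$, using the projection formula and the base-change isomorphisms \eqref{eq:bcc}, \eqref{eq:!and*} for finite-Tor-dimension / stable-coherent-pullback morphisms against ind-proper ones, all of which are available from Section \ref{subsec: coh shvs}. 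Then $m_* d^*$ on the $\cR$ side matches $m_{\Gr *}$ on the $\Gr$ side after applying $\Xi$. I would do this carefully first for $\sigma_2^* {i_2}_*$, where the relevant zero section $\sigma_2: \Gr \to \cT$ is ``more symmetric'' with respect to the twisted-product structure (since $\cT = \Gr \ttimes N_\O$), and then deduce the statement for $\sigma_1^* {i_1}_*$ either by the analogous argument or by using the symmetry diagram \eqref{eq:Rsymmdiagram} which interchanges the roles of $\pi_1$ and $\pi_2$.

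The main obstacle I anticipate is \emph{not} exactness but the bookkeeping of base-change and projection-formula isomorphisms across ind-geometric derived stacks: one must check that all the squares appearing are Cartesian, that the horizontal maps have finite Tor-dimension or stable coherent pullback, and that the vertical maps are ind-proper and almost ind-finitely presented, so that \eqref{eq:bcc} applies --- and crucially that everything is happening in $\Coh$ (not merely $\IndCoh$), which requires the admissibility/ind-tame-presentedness hypotheses verified in Proposition \ref{Prop: prop of R}. A secondary subtlety is that $\Xi$ is genuinely not monoidal as a functor of $\infty$-categories (the isomorphism $\Xi(\cF \conv \cG) \cong \Xi(\cF)\conv\Xi(\cG)$ may fail to be coherent / natural in the higher sense), so I would phrase the conclusion purely at the level of $K_0$: since $K(\Coh^{\hGO}(\cR))$ is generated by classes of coherent sheaves and $\Xi$ sends fiber-cofiber sequences to fiber-cofiber sequences (being a composite of an exact pushforward and an exact pullback on the relevant bounded subcategories), the induced map $\Xi: K^{\hGO}(\cR) \to K^{\hGO}(\Gr)$ is a well-defined group homomorphism, and the identity $\Xi([\cF] \conv [\cG]) = \Xi([\cF]) \conv \Xi([\cG])$ together with preservation of the unit class makes it a ring homomorphism. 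The same argument verbatim, using $i_2, \sigma_2$ in place of $i_1, \sigma_1$, handles $\sigma_2^* {i_2}_*$.
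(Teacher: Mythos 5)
There is a genuine gap in the central step. Your plan is to commute $\Xi$ past the convolution by base change and the projection formula alone, producing a square that expresses $\sigma_1^*{i_1}_*(\cF\tbox\cG)$ in terms of $(\sigma_1^*{i_1}_*\cF)\tbox(\sigma_1^*{i_1}_*\cG)$. No such square exists, and the isomorphism $\Xi(\cF\conv\cG)\cong\Xi(\cF)\conv\Xi(\cG)$ fails at the level of objects. The obstruction is the map $\td=\id\ttimes(\pi_1\times\id):\Gr\ttimes\cR\to\cT\ttimes\cR$ in the convolution diagram \eqref{eq:convS}: its $\cT$-component sends $[g_1,[g_2,s]]$ to $[g_1,g_2s]$, i.e.\ it genuinely depends on the $N_{\O}$-datum of the second factor, so $\td^*\bigl(i_{2*}(\cF)\tbox\cG\bigr)$ is not controlled by the restriction of $i_{2*}(\cF)$ to the zero section. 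The simplification $\td^*\bigl(\wt{\cF}\tbox\cG\bigr)\cong \cF'\tbox\cG$ only holds for objects $\wt{\cF}=p_2^*(\cF')$ pulled back from $\Gr$, via the identity $(p_2\ttimes\id)\circ\td=\id$ --- and $i_{2*}(\cF)$ is not of that form.

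The missing idea, which is the actual content of the paper's proof, is a d\'evissage through the Koszul perverse t-structure: by Theorem \ref{thm:koszul-perverse} and \eqref{eq: simple objects bijection}, $i_{2*}(\cF)$ lies in the finite-length heart $\KPcoh^{\hGO}(\cT)$ and \emph{every simple object there is of the form} $p_2^*(\cF_i)$ with $\cF_i=\sigma_2^*(\wt{\cF_i})$ simple on $\Gr$. Replacing $i_{2*}(\cF)$ by its composition factors, applying the identity above factor by factor, and then factoring $\tm$ through $m_{\Gr}\ttimes\id_{N_{\O}}$ shows that $\sjgj(\cF\conv\cG)$ is an \emph{iterated extension} of the objects $\cF_i\conv\sjgj(\cG)$ --- an equality that survives only in $K_0$, which is exactly why the proposition is stated at the $K$-group level. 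Your proposal correctly anticipates that the functor is not monoidal and that one should work in $K_0$, and your plan to deduce the $\sigi$ case from the $\sjgj$ case via the symmetry/involution matches the paper's Step 2; but without the composition-series input the base-change manipulations do not close up, so the multiplicativity identity is left unproved.
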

This proposition essentially follows from the proof in \cite[Sec. 5.3]{CW23}\footnote{See the last paragraph of that section.}. We present this proof below for the purpose of completeness. We remark that the same proposition with $K$-group replaced by Borel-Moore homology also appears in \cite[Lem. 5.11]{BFNII}. 

\begin{proof} We first prove that $\sjgj$ is a ring homomorphism at the $K$-group level, and subsequently transfer​ this result to $\sigi$ via the involution $(-)^*$ as in \eqref{eq:(-)^*}.
    
\begin{enumerate}[label=\textbf{Step \arabic*}]
    \item ($\sjgj$ is a ring homomorphism) Let $\cF,\cG\in \KPcoh^{\hGO}(\cR)$. Following the notations in diagram \eqref{eq:convS}, we computations as follows
    \begin{equation*}
        \begin{aligned}
            \sjgj(\cF\conv \cG)&\cong \sjgj {m}_*d^*(\cF\tbox \cG)\\
            &\cong \sigma_2^*\wt{m}_*i_*d^*(\cF\tbox \cG)\\
            &\cong \sigma_2^*\wt{m}_*\wt{d}^*(i_{2*}(\cF)\tbox \cG);
        \end{aligned}
    \end{equation*}
	here, the first isomorphism is by definition of convolution product, and the third isomorphism is by the base change isomorphism for stable coherent pullback and almost ind-finitely presented ind-proper pushforward. 
	
	According to Theorem \ref{thm:koszul-perverse}, $i_{2*}(\cF)$ belongs to the Koszul perverse heart $\KPcoh^{\hGO}(\cT)$, which is a finite length abelian category. Let $\left\{\wt{\cF_1},\dots, \wt{\cF_n}\right\}$ be the simple composition factors of $i_{2*}(\cF)$. Then $\sigma_2^*\wt{m}_*\wt{d}^*(i_{2*}(\cF)\tbox \cG)$ is an iterated extension of $\left\{\sigma_2^*\wt{m}_*\wt{d}^*(\wt{\cF}_1\tbox \cG),\dots , \sigma_2^*\wt{m}_*\wt{d}^*(\wt{\cF}_n\tbox \cG)\right\}$.
    
    The description for simple objects of $\KPcoh^{\hGO}(\cT)$ in \eqref{eq: simple objects bijection} implies that each $\wt{\cF_i}$ is isomorphic to $p_2^*(\cF_i)$ for the simple object $\cF_i:=\sigma_2^*(\wt{\cF_i})\in \KPcoh^{\hGO}(\Gr)$. Thus we have:
    \[\sigma_2^*\wt{m}_*\wt{d}^*(\wt{\cF}_i\tbox \cG)\cong \sigma_2^*\wt{m}_*\wt{d}^*(p_2^*(\cF_i)\tbox \cG)\cong \sigma_{2}^*\wt{m}_*(\cF_i\tbox \cG);\]
    the reason for the last isomorphism is that the composition $(p_2\ttimes \id)\circ \wt{d}$ is the identity map.

    Now one observes that $\tm$ factors as the composition along the bottom row of the diagram below
\[\begin{tikzcd}
	& {\Gr\ttimes \Gr} & \Gr \\
	{\Gr\ttimes \cR} & {\Gr\ttimes \cT} & \cT
	\arrow["{m_{\Gr}}", from=1-2, to=1-3]
	\arrow["{\id\ttimes \sigma_2}"', from=1-2, to=2-2]
	\arrow["{\sigma_2}", from=1-3, to=2-3]
	\arrow["{\id\ttimes i_2}"', from=2-1, to=2-2]
	\arrow["{m_{\Gr}\ttimes \id_{N_{\O}}}"', from=2-2, to=2-3]
\end{tikzcd}\]
	where $m_\Gr$ is the convolution map of $\Gr$. The right square is Cartesian, hence 
	\begin{align*}
		\sigma_2^* \tm_* (\cF_i \tbox \cG) 
		&\cong \sigma_2^* (m_\Gr \ttimes \id_{N_\O})_* (id \ttimes i_2)_* (\cF_i \tbox \cG) \\
		&\cong m_{\Gr*} (\id \ttimes \sigma_2)^* (id \ttimes i_2)_* (\cF_i \tbox \cG) \\
		&\cong m_{\Gr*} (\cF_i \tbox \sigma_2^*i_{2*}(\cG)) \\
		&\cong \cF_i \conv \sigma_2^*i_{2*}(\cG).
	\end{align*}
    In conclusion, $\sjgj(\cF\conv \cG)$ is an iterated extension of $\left\{\cF_i \conv \sigma_2^*i_{2*}(\cG)\colon i=1,\dots,n\right\}$. Since $i_{2*}(\cF)$ is an iterated extension of $\left\{
    \wt{\cF_i}\colon i=1,\dots,n\right\}$, $\sjgj(\cF)$ is an iterated extension of $\left\{
    \cF_i=\sigma_2^*(\wt{\cF_i})\colon i=1,\dots,n\right\}$. So in the $K$-group $K^{\hGO}(\Gr)$, we have
    \begin{align*}
        \sjgj([\cF]\conv [\cG])&=\sum_{i=1}^n [\cF_i]\conv\sjgj([\cG]),\\ \sjgj([\cF])&=\sum_{i=1}^n [\cF_i].
    \end{align*}
    Combining these two equations, we deduce $\sjgj([\cF]\conv [\cG])=\sjgj([\cF])\conv\sjgj([\cG])$, i.e. $\sjgj$ is a ring homomorphism.

    \item ($\sigi$ is a ring homomorphism)
    
    \textit{Claim:} $\sigi\cong \iota_{\Gr}^*\circ \sjgj \circ \iota_*: \Coh^{\hGO}(\cR)\to \Coh^{\hGO}(\Gr)$; here the morphisms $\iota_{\Gr}$ and $\iota$ are defined in \eqref{eq:iotaGr} and \eqref{eq: diagram of iota}.

    Via the construction of $ \iota $ in diagrams \eqref{diagram1} \eqref{diagram2} \eqref{diagram3}, we obtain the following two Cartesian diagrams:
\[\begin{tikzcd}
	{[\hGO\backslash \cR]} & {[\hGO\backslash \cR]} && {[\hGO\backslash(\Gr\times N_{\O})]} & {[\hGO\backslash\cT]} \\
	{[\hGO\backslash(\Gr\times N_{\O})]} & {[\hGO\backslash\cT];} && {[\hGO\backslash \Gr]} & {[\hGO\backslash \Gr].}
	\arrow["\iota", from=1-1, to=1-2]
	\arrow["{i_1}"', from=1-1, to=2-1]
	\arrow["{i_2}", from=1-2, to=2-2]
	\arrow["{\wt{\iota}}", from=1-4, to=1-5]
	\arrow["{\wt{\iota}}", from=2-1, to=2-2]
	\arrow["{\sigma_1}", from=2-4, to=1-4]
	\arrow["{\iota_{\Gr}}", from=2-4, to=2-5]
	\arrow["{\sigma_2}"', from=2-5, to=1-5]
\end{tikzcd}\]
    Then we can deduce our claim:
    \[\sigi\cong \sigma_1^*\wt{\iota}^*\wt{\iota}_*i_{1*}\cong (\wt{\iota}\sigma_1)^*(\wt{\iota}i_1)_*\cong (\sigma_2\iota_{\Gr})^*(i_2\iota)_*\cong \iota_{\Gr}^*\sigma_2^*i_{2*}\iota_*.\]

    After identifying $[\hGO\backslash\cR]$ with $$ [\hGO\backslash N_{\O}]\underset{[\hGK\backslash N_{\K}]}{\times}[\hGO\backslash N_{\O}]$$
    as in Proposition \ref{prop: Rsymmdiagram}, $\iota$ becomes the morphism exchanging two factors, then the involution $\iota_*$ is anti-monoidal by the definition of convolution product \eqref{eq: conv1}, i.e. $\iota_*(\cF\conv \cG)\cong \iota_*(\cG)\conv \iota_*(\cF)$. Similarly, $\iota_\Gr^*$ is also anti-monoidal. Combing the claim above and Step 1, we deduce that $\sigi:K^{\hGO}(\cR)\to K^{\hGO}(\Gr)$ is a ring homomorphism.

\end{enumerate}
    
\end{proof}

\subsection{Computation of \texorpdfstring{$\Xi(\cL_{\lav,\mu})$}{Ξ(L)}}\label{subsec:sigi}
In this subsection, we deal with general $N$. The goal of this subsection is to compute $\Xi(\cL_{\lav,\mu})$ for the simple object $\cL_{\lav,\mu}\in \KPcoh^{\hGO}(\cR)$.

Define $N_{\lav}$ to be the classical intersection $N_{\O}\overset{\cl}{\cap} t^{\lav} N_{\O} $ in $N_{\K}$; it is a classical scheme (of infinite type) and can be viewed as a vector space with finite codimension in $N_{\O}$. Recall that we identify $\Gr_{\lav}$ with $\hGO/\hPla$ as $\hGO$-homogeneous spaces.

\begin{proposition}\label{prop: sigi of L}
	 For $(\lav,\mu)\in (\Pv\times P)_{\dom}$, consider simple objects $\cL_{\lav,\mu}\in \KPcoh^{\hGO}(\cR)$, $\cL'_{\lav,\mu}\in \KPcoh^{\hGO}(\cR_{\le \lav})$. We have isomorphisms
	\begin{equation}\label{calculation of sigi}
		\begin{aligned}
			&\Xi(\cL_{\lav,\mu}')\cong j_{\lav,!*}\left( \bigoplus_{0\le k\le \dim (N_{\O}/N_{\lav})} \O_{\lav,\mu} \lr{-\tfrac12 d_{\lav}} \otimes \bigwedge^k\left(\hGO\times^{\hPla} (N_{\O}/N_{\lav})^*\right)[k]\right);\\
			&\Xi(\cL_{\lav,\mu})\cong i_{\le \lav *}j_{\lav,!*}\left( \bigoplus_{0\le k\le \dim (N_{\O}/N_{\lav})} \O_{\lav,\mu} \lr{-\tfrac12 d_{\lav}} \otimes \bigwedge^k\left(\hGO\times^{\hPla} (N_{\O}/N_{\lav})^*\right)[k]\right).
		\end{aligned}
	\end{equation}
	Here $j_{\lav,!*}$ is the intermediate extension functor for $j_{\lav}:\Gr_{\lav}\hookrightarrow \Gr_{\le \lav}$ in \eqref{intermediate extension for KP}.
\end{proposition}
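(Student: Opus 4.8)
The plan is to compute $\Xi=\sigi$ first over the open Schubert cell $\Gr_{\lav}$, where it reduces to an explicit $\hGO$-equivariant vector-bundle calculation, and then to propagate the answer to the closure $\Gr_{\le\lav}$ using that $\Xi$ is Koszul-perverse t-exact. Indeed, by Theorem~\ref{thm:koszul-perverse} the functors $i_{1*}$ and $\sigma_{1}^{*}$ are each Koszul-perverse t-exact (over $\Gr$, and by the same token over $\Gr_{\le\lav}$ and over $\Gr_{\lav}$), hence so is their composite $\Xi$, which therefore restricts to an \emph{exact} functor $\KPcoh^{\hGO}(\cR_{\le\lav})\to\KPcoh^{\hGO}(\Gr_{\le\lav})$. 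Moreover $\Xi$ commutes, via base change along the evident Cartesian squares, with restriction to an open ($j_{\lav}^{*}$), with pushforward from a closed subscheme ($i_{\le\lav*}$), and with the recollement functors $j_{\lav,!},j_{\lav,*}$ on the Koszul-perverse hearts — the point being that $i_{1}$ and $\sigma_{1}$ move only in the ``$N_{\O}$-direction'' and so are transverse to everything happening on $\Gr$. Granting this, it suffices to prove the first isomorphism in \eqref{calculation of sigi}: the second follows by applying $i_{\le\lav*}$ and using $\cL_{\lav,\mu}=i_{\le\lav*}\cL_{\lav,\mu}'$.

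For the computation over the cell, recall from \eqref{eq: j_{lav}^*simple} that $j_{\lav}^{*}\cL_{\lav,\mu}'\cong\cl_{*}p^{\cl*}(\Olamu\lr{-\tfrac12 d_{\lav}})$, a sheaf pushed forward from the classical locus $\cR_{\lav}^{\cl}$. Arguing as in the identification \eqref{identification of cRlav} (written there for $N=\g$, but valid verbatim in general), under $\Gr_{\lav}\cong\hGO/\hPla$ the subscheme $\cR_{\lav}^{\cl}\hookrightarrow\Gr_{\lav}\times N_{\O}$ is the $\hGO$-equivariant sub-vector-bundle with fibre $N_{\lav}=N_{\O}\cap t^{\lav}N_{\O}$, cut out by the canonical section of the rank-$m$ bundle $p_{1}^{*}\big(\hGO\times^{\hPla}(N_{\O}/N_{\lav})\big)$, where $m:=\dim(N_{\O}/N_{\lav})$; thus $i_{1}^{\cl}=i_{1}\circ\cl$ is a regular embedding with conormal bundle $p_{1}^{*}\big(\hGO\times^{\hPla}(N_{\O}/N_{\lav})^{*}\big)$. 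The Koszul resolution of $\cl_{*}\O_{\cR_{\lav}^{\cl}}$ together with the projection formula for $i_{1}^{\cl}$ identifies $i_{1*}\big(\cl_{*}p^{\cl*}(\Olamu\lr{-\tfrac12 d_{\lav}})\big)$ with the complex $\bigoplus_{k=0}^{m}p_{1}^{*}\big(\Olamu\lr{-\tfrac12 d_{\lav}}\otimes\bigwedge^{k}(\hGO\times^{\hPla}(N_{\O}/N_{\lav})^{*})\big)[k]$, whose differential has matrix entries the linear coordinates along the $N_{\O}/N_{\lav}$-factor. Applying $\sigma_{1}^{*}$: those coordinates vanish along the zero section, so the differential dies, and since $\sigma_{1}^{*}p_{1}^{*}=\id$ we obtain
\[
\Xi\big(j_{\lav}^{*}\cL_{\lav,\mu}'\big)\;\cong\;\bigoplus_{k=0}^{m}\Olamu\lr{-\tfrac12 d_{\lav}}\otimes\bigwedge^{k}\big(\hGO\times^{\hPla}(N_{\O}/N_{\lav})^{*}\big)[k],
\]
which is exactly $j_{\lav}^{*}$ of the right-hand side of \eqref{calculation of sigi}.

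It remains to see that $\Xi(\cL_{\lav,\mu}')$ is the intermediate extension of this restriction. Since $\cL_{\lav,\mu}'$ is the socle of $\HK^{0}j_{\lav*}(j_{\lav}^{*}\cL_{\lav,\mu}')$ by \eqref{def of KPlamu'} and dually the head of $\HK^{0}j_{\lav,!}(j_{\lav}^{*}\cL_{\lav,\mu}')$, applying the exact functor $\Xi$ to the monomorphism $\cL_{\lav,\mu}'\hookrightarrow\HK^{0}j_{\lav*}(j_{\lav}^{*}\cL_{\lav,\mu}')$ and to the epimorphism $\HK^{0}j_{\lav,!}(j_{\lav}^{*}\cL_{\lav,\mu}')\twoheadrightarrow\cL_{\lav,\mu}'$, and commuting $\Xi$ past $j_{\lav}^{*}$, $j_{\lav,*}$ and $j_{\lav,!}$, exhibits $\Xi(\cL_{\lav,\mu}')$ simultaneously as a subobject of $\HK^{0}j_{\lav,*}$ and a quotient of $\HK^{0}j_{\lav,!}$ of its own cell-restriction; hence it has no nonzero sub- or quotient object supported on $\Gr_{\le\lav}\setminus\Gr_{\lav}$, so it lies in $\KPcoh^{\hGO}(\Gr_{\le\lav})_{!*}$ and equals $j_{\lav,!*}$ of its restriction. (Alternatively one verifies the support-codimension conditions \eqref{eq:dim supp} for $\Xi(\cL_{\lav,\mu}')$ directly, using \eqref{cIC in our case} and \eqref{eq:!*relations}.) This gives the first isomorphism in \eqref{calculation of sigi}, and hence the proposition.

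I expect the main obstacle to be the cell computation itself: making the Koszul-resolution and projection-formula manipulations rigorous within Cautis--Williams' framework of stable coherent pullback over ind-schemes of infinite type, and in particular justifying that restriction along the zero section $\sigma_{1}$ genuinely annihilates the Koszul differential, so that the complex splits as a direct sum after applying $\sigma_{1}^{*}$. The propagation step is by contrast a routine application of the recollement formalism for the Koszul-perverse heart together with base change; one should nonetheless record carefully the commutation of $\Xi$ with $j_{\lav,!}$, $j_{\lav,*}$ and $i_{\le\lav*}$, since these underlie the identification with the intermediate extension.
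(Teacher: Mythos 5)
Your overall architecture matches the paper's: compute $\Xi$ on the open cell via a Koszul resolution (this part of your argument is essentially identical to the paper's Step 2, including the identification of $\cR_{\lav}^{\cl}\hookrightarrow\Gr_{\lav}\times N_{\O}$ as a sub-bundle with quotient $\hGO\times^{\hPla}(N_{\O}/N_{\lav})$ and the vanishing of the Koszul differential along the zero section), and then show that $\Xi(\cL'_{\lav,\mu})$ is the intermediate extension of its cell-restriction. The gap is in the second half of the extension step. To rule out quotient objects supported on the boundary you invoke a functor $j_{\lav,!}$ on Koszul-perverse hearts, the characterization of $\cL'_{\lav,\mu}$ as the \emph{head} of $\HK^{0}j_{\lav,!}$ of its restriction, and the commutation of $\Xi$ with $j_{\lav,!}$. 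None of these is available here: in the coherent setting the left adjoint $j_{!}$ to $j^{*}$ does not preserve coherence (and is not part of the Cautis--Williams formalism used in this paper), and \eqref{def of KPlamu'} defines the simple object only as the socle of $\HK^{0}j_{\lav*}(-)$, with no dual $j_{!}$-description established. Your parenthetical alternative — verifying the support conditions \eqref{eq:dim supp} directly — is circular as stated, since those conditions are equivalent to the Hom-vanishing you are trying to prove and would require knowing the cohomology sheaves of $\Xi(\cL'_{\lav,\mu})$ on all of $\overline{\Gr}_{\lav}$ in advance. The paper closes this half of the criterion differently: it proves $\Xi\circ\D_{\cR_{\le\lav}}\cong\D_{\Gr_{\le\lav}}\circ\Xi$ (via \eqref{eq:f_*,f^!}, \eqref{eq:h^*cHom} and $p_{1}\circ\sigma_{1}=\id$), notes that $\D_{\cR_{\le\lav}}(\cL'_{\lav,\mu})$ is again simple, and applies the subobject-of-$\HK^{0}j_{*}$ argument to the dual, so that Grothendieck--Serre duality converts quotient-vanishing into sub-vanishing. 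You should replace the $j_{!}$ step by this duality argument (your Lemma \ref{lem2 calculation of D}-style compatibility of $\Xi$ with $\D$ is exactly what is needed).

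A second, smaller point: you describe the commutation $\sigma_{1}^{*}j_{\lav*}\cong j_{\lav*}\sigma_{1}^{*}$ as routine base change, but $\sigma_{1}$ is the zero-section into an $\A^{\infty}$-bundle (stable coherent pullback, not finite Tor-dimension) and $j_{\lav*}$ of the relevant object is only ind-coherent, so none of the standard base-change isomorphisms of Section \ref{subsubsec:coh_geo}--\ref{subsub:coh_ind} applies directly. The paper proves this as a separate lemma by first descending $\mathcal{P}=i_{1*}\cl_{*}p^{\cl*}(\Olamu\lr{-\tfrac12 d_{\lav}})$ to a finite-dimensional jet scheme $\Gr_{\lav}\times N_{\O/t^{k}}$ using \cite[Lem. 4.14]{CW23}, and only then invoking base change for the flat projection and the finite-Tor-dimension section $\sigma_{1}^{(k)}$. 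You should record this reduction explicitly rather than treating the propagation as formal.
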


\begin{proof} 
	There are many symbols in the proof; we refer the reader to the list Figure \ref{notation} for a quick review.
	
    We divide the proof into two steps.
	\begin{enumerate}[label=\textbf{Step \arabic*}]
		\item Recall that $\cL_{\lav,\mu}=i_{\le \lav *}\cL_{\lav,\mu}'$ as in (\ref{def of KPlamu}). So 
		\[\Xi(\cL_{\lav,\mu})=\sigi(i_{\le \lav *}\cL_{\lav,\mu}')\cong i_{\le \lav *}\sigi\cL_{\lav,\mu}'=i_{\le \lav *}\Xi(\cL_{\lav,\mu}').\]
		The goal of the first step is to show $\Xi(\cL_{\lav,\mu}')$ lies in the full subcategory $\KPcoh^{\hGO}(\overline{\Gr}_{\lav})_{!*}$ (see \eqref{eq:KP_!*} for the definition). To do this, we use Theorem \ref{theorem:simple objects on Gr} (2), and show that $\Xi(\cL_{\lav,\mu}')$ meets the criterion \eqref{Hom=0 criterion}.

		By definition, $\cL_{\lav,\mu}'$ is the socle of $\HK^0(j_{\lav*}\cl_* {p^{\cl*}}(\Olamu\lr{-\tfrac{1}{2}d_{\lav}}))\in \KPindcoh^{\hGO}(\cR_{\le \lav})$ (see \eqref{def of KPlamu'}). So $i_{1*}(\cL'_{\lav,\mu})$ is a subobject of
		\begin{equation*}
			i_{1*}\left(\HK^0(j_{\lav*}\cl_* {p^{\cl*}}(\Olamu\lr{-\tfrac{1}{2}d_{\lav}}))\right)=\HK^0(i_{1*}j_{\lav*}\cl_* {p^{\cl*}}(\Olamu\lr{-\tfrac{1}{2}d_{\lav}})).
		\end{equation*}
		The composition $\cR_{\lav}\xrightarrow{j_{\lav}} \cR_{\le\lav}\xrightarrow{i_1}\overline{\Gr}_{\lav}\times N_{\O}$ is isomorphic to the composition $\cR_{\lav}\xrightarrow{i_1}\Gr_{ \lav}\times N_{\O} \xrightarrow{{j}_{\lav}} \overline{\Gr}_{\lav} \times N_{\O}$. So the right hand side of the above equation becomes
		\begin{equation*}
			\HK^0({j}_{\lav*}i_{1*}\cl_* {p^{\cl*}}(\Olamu\lr{-\tfrac{1}{2}d_{\lav}})).
		\end{equation*}
		Let $\mathcal{P}$ denote $i_{1*}\cl_* {p^{\cl*}}(\Olamu\lr{-\tfrac{1}{2}d_{\lav}})\in \KPcoh^{\hGO}(\Gr_{\lav}\times N_{\O})$. We further apply $\sigma_1^*$ to $i_{1*}(\cL'_{\lav,\mu})$, and deduce $\Xi(\cL_{\lav,\mu}')=\sigi(\cL_{\lav,\mu}')$ is a subobject of $$\sigma_1^*\HK^0({j}_{\lav*}\mathcal{P})\cong \HK^0(\sigma_1^*{j}_{\lav*}\mathcal{P}).$$.
        \begin{lemma}
        	We have an isomorphism $\sigma_1^*{j}_{\lav*}\mathcal{P}\cong {j}_{\lav*}\sigma_1^*\mathcal{P}$ in $\IndCoh^{\hGO}(\Gr_{\le \lav})$.
        \end{lemma}
    	\begin{proof}

        According to \cite[Lem. 4.14]{CW23}, $\mathcal{P}\cong p_1^{(k)*}\mathcal{P}_k$ for some $k\in \mathbb{N}$, where $p_1^{(k)}:\Gr_{\lav}\times N_{\O}\to \Gr_{\lav}\times N_{\O/t^k}$ is the projection and $\mathcal{P}_k\in \Coh^{\hGO}(\Gr_{\lav}\times N_{\O/t^k})$; here $N_{\O/t^k}$ is a finite dimensional vector space defined as in \eqref{eq:Y_O/t^n}; we also denote the projection $\Gr_{\le\lav}\times N_{\O}\to \Gr_{\le\lav}\times N_{\O/t^k}$ by $p_1^{(k)}$. Then we have
        \[ \sigma_1^*{j}_{\lav*}\mathcal{P}\cong \sigma_1^*{j}_{\lav*}p_1^{(k)*}\mathcal{P}_k {\cong} \sigma_1^*p_1^{(k)*}{j}_{\lav*}\mathcal{P}_k.\]
        The last isomorphism comes from the base change isomorphism; it holds because $j_{\lav}$ is of finite cohomological dimension, and $p_1^{(k)}$ is flat; see Section \ref{subsubsec:indcoh}.

        Let $\sigma_1^{(k)}$ denote the composition $\Gr_{\le \lav}\xhookrightarrow{\sigma_1}\Gr_{\le \lav}\times N_{\O}\xrightarrow{p_1^{(k)}}\Gr_{\le \lav}\times N_{\O/t^k}$, which is of finite Tor-dimension; we also write its restriction to $\Gr_{\lav}$ by $\sigma_1^{(k)}$. Then we continue the computation:
        \[\sigma_1^*p_1^{(k)*}{j}_{\lav*}\mathcal{P}_k\cong  \sigma_1^{(k)*}{j}_{\lav*}\mathcal{P}_k
        {\cong} {j}_{\lav*}\sigma_1^{(k)*}\mathcal{P}_k\cong {j}_{\lav*}\sigma_1^*p_1^{(k)*}\mathcal{P}_k
        \cong
        {j}_{\lav*}\sigma_1^*\mathcal{P},\]
        where the second isomorphism comes from the base change isomorphism; it holds because $j_{\lav}$ is of finite cohomological dimension and $\sigma_1^{(k)}$ is of finite Tor-dimension.
    	\end{proof}

        In conclusion, $\Xi({\cL_{\lav,\mu}'})$ is a subobject of $\HK^0({j}_{\lav*}\sigma_1^*\mathcal{P})$ in $\KPindcoh^{\hGO}(\Gr_{\le \lav})$. Then for any $\cG\in \KPcoh^{\hGO}(\Gr_{\le \lav})$ such that $\Supp \cG\cap \Gr_{\lav}=\emptyset$, we have 
        \begin{equation}\label{eq: 11}
            \Hom_{\KP}(\cG,\Xi({\cL_{\lav,\mu}'}))\subset \Hom_{\KP}(\cG,\HK^0({j}_{\lav*}\sigma_1^*\mathcal{P}))=\footnotemark\Hom_{\KP}({j}_{\lav}^*\cG,\sigma_1^*\mathcal{P})=0.
        \end{equation}\footnotetext{Here, $(j_{\lav}^*(-),\HK^0(j_{\lav*}(-)))$ is an adjoint pair for Koszul perverse sheaves because $(j_{\lav}^*(-),j_{\lav*}(-))$ is an ajoint pair and $j_{\lav}^*$ is Koszul perverse t-exact.}
        Thus the second condition in the criterion \eqref{Hom=0 criterion} is verified.
		
		Next, we use the Grothendieck-Serre dual functors to verify the first condition in the criterion \eqref{Hom=0 criterion}. Recall that we define $\D_{\cR_{\le \lav}}(-)=\cHom(-,i_1^!p_1^*(\omega_{\Gr_{\le\lav}}))$ as in \eqref{eq:D_R}, and we have $\D(\cL_{\lav,\mu})\cong i_{\le \lav}\D_{\cR_{\le \lav}}(\cL_{\lav,\mu}')$ as shown in \eqref{eq: D(L_{lav})}\footnote{The setting there assumed $N=\g$, but the computation in \eqref{eq: D(L_{lav})} is valid for general $N$.}. Since $\D(\cL_{\lav,\mu})$ is a simple object, we know $\D_{\cR_{\le \lav}}(\cL_{\lav,\mu}')$ is also a simple object. Apply \eqref{eq: 11} to this simple object, we deduce that for any $\cG\in \KPcoh^{\hGO}(\Gr_{\le \lav})$ such that $\Supp \cG\cap \Gr_{\lav}=\emptyset$,  
        $$\Hom_{\KP}(\D_{\Gr_{\le \lav}}(\cG),\Xi\circ{\D_{\cR_{\le \lav}}(\cL_{\lav,\mu}')})=0.$$
        
        Notice that
        \begin{align*}
        	\Xi\circ \D_{\cR_{\le \lav}}(\cL_{\lav,\mu}')&=\sigi\cHom(\cL_{\lav,\mu}',i_1^!p_1^*(\omega_{\Gr_{\le\lav}}))\\
        	&\overset{\eqref{eq:f_*,f^!}}{\cong} \sigma_1^*\cHom(i_{1*}\cL_{\lav,\mu}',p_1^*(\omega_{\Gr_{\le \lav}}))\\
        	&\overset{\eqref{eq:h^*cHom}}{\cong}\cHom(\sigma_1^*{i_1}_*\cL_{\lav,\mu}',\sigma_1^*p_1^*(\omega_{\Gr_{\le \lav}}))\\
        	&\overset{p_1\circ\sigma_1=\id}{\cong} \cHom(\sigma_1^*{i_1}_*\cL_{\lav,\mu}',\omega_{\Gr_{\le \lav}})=\D_{\Gr_{\le \lav}}\circ \Xi(\cL_{\lav,\mu}').
        \end{align*}
        Then 
        \begin{align*}
        	\Hom_{\KP}(\Xi(\cL_{\lav,\mu}'),\cG)&\cong\Hom_{\KP}(\D_{\Gr_{\le \lav}}(\cG),\D_{\Gr_{\le \lav}}\circ\Xi(\cL_{\lav,\mu}'))\\
        	&\cong \Hom_{\KP}(\D_{\Gr_{\le \lav}}(\cG),\Xi\circ{\D_{\cR_{\le \lav}}(\cL_{\lav,\mu}')})=0.
        \end{align*}
        Thus the first condition in the criterion \eqref{Hom=0 criterion} is verified. In conclusion, $\Xi(\cL_{\lav,\mu}')\in \KPcoh^{\hGO}(\Gr_{\le \lav})_{!*}$

		\item After completing \textbf{Step 1}, it suffices to compute $j_{\lav}^*\Xi(\cL_{\lav,\mu}')$. 
		
		Firstly, 
		\begin{align*}
			j_{\lav}^*\Xi(\cL_{\lav,\mu}')&\cong \Xi\circ j_{\lav}^*(\cL_{\lav,\mu}')\\
			&\cong \Xi (\cl_*{p^{\cl*}}(\O_{\lav,\mu}\lr{-\tfrac12 d_{\lav}}))\\
			&=\sigma_1^* i_{1*}^{\cl}{p^{\cl*}}(\O_{\lav,\mu}\lr{-\tfrac12 d_{\lav}}).
		\end{align*} 
		We do similar identifications as diagram \eqref{identification of cRlav}:
		\[\begin{tikzcd}
			{\Gr_{\lambda^{\vee}}} & {\cR_{\lav}^{\cl}} & {\Gr_{\lambda^{\vee}}\times N_{\O}} & {\Gr_{\lambda^{\vee}}} \\
			{\hGO/\hPla} & {\hGO\times^{\hPla}N_{\lav}} & {\hGO\times^{\hPla}N_{\O}} & {\hGO/\hPla}
			\arrow["{p^{\cl}}"', from=1-2, to=1-1]
			\arrow["{i_1^{\cl}}", from=1-2, to=1-3]
			\arrow["{\sigma_1}"', from=1-4, to=1-3]
			\arrow["\cong"', from=2-1, to=1-1]
			\arrow["{\ding{174}}", from=2-1, to=1-1]
			\arrow["{\text{\ding{172}}}", from=2-2, to=1-2]
			\arrow["\cong"', from=2-2, to=1-2]
			\arrow["{p^{\cl}}"', from=2-2, to=2-1]
			\arrow["{i_1^{\cl}}", from=2-2, to=2-3]
			\arrow["{\text{\ding{173}}}", from=2-3, to=1-3]
			\arrow["\cong"', from=2-3, to=1-3]
			\arrow["{\text{\ding{174}}}", from=2-4, to=1-4]
			\arrow["\cong"', from=2-4, to=1-4]
			\arrow["{\sigma_1}"', from=2-4, to=2-3]
		\end{tikzcd}\]
		where the three column maps are 
		\begin{equation*}
			\begin{aligned}
				&\text{\ding{172}}:[g,x]\mapsto [gt^{\lav},{t^{-{\lav}}}x],\\
				&\text{\ding{173}}:[g,x]\mapsto ([gt^{{\lav}}],gx),\\
				&\text{\ding{174}}:[g]\mapsto [gt^{{\lav}}].
			\end{aligned}
		\end{equation*}
		
		By the standard Koszul resolution argument, we have:
		\begin{equation*}
			\begin{aligned}
				\sigma_1^* i_{1*}^{\cl}{p^{\cl*}}(\O_{\lav,\mu}\lr{-\tfrac12 d_{\lav}}) \cong \bigoplus_{k} \O_{\lav,\mu} \lr{-\tfrac12 d_{\lav}} \otimes \bigwedge^k\left(\hGO\times^{\hPla} (N_{\O}/N_{\lav})^*\right)[k].
			\end{aligned}
		\end{equation*}
		Combining \textbf{Step 1}, we complete the proof of this proposition.	
	\end{enumerate}
\end{proof}

We end this subsection with two corollaries of the above Proposition \ref{prop: sigi of L}.
\begin{corollary}\label{injectivity of sigi}
	The map between $K$-groups
	\begin{equation}
		\Xi:K^{\hGO}(\cR)\to K^{\hGO}(\Gr)
	\end{equation}
	is injective.\footnote{We remark that the following proof essentially only uses the easier part \textbf{Step 2} in the proof of Proposition \ref{prop: sigi of L}.
		
	We also remark that this corollary also holds for Borel-Moore homology version Coulomb branch; both of these two versions can also be proved by the localization theorem.}
\end{corollary}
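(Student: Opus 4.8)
The plan is to reduce the injectivity of $\Xi$ on $K$-groups to the statement that the images of the simple objects $\cL_{\lav,\mu}$ under $\Xi$ span a free abelian subgroup of $K^{\hGO}(\Gr)$. Since $\KPcoh^{\hGO}(\cR)$ is a finite length abelian category whose simple objects (up to the grading shifts $\{m\}(n)$) are exactly the $\cL_{\lav,\mu}$ indexed by $(\lav,\mu)\in(\Pv\times P)_{\dom}$, the group $K^{\hGO}(\cR)=K(\KPcoh^{\hGO}(\cR))$ is free with basis $\{[\cL_{\lav,\mu}]\{m\}(n)\}$. So it suffices to show that $\Xi$ carries distinct basis elements to $\Z$-linearly independent classes in $K^{\hGO}(\Gr)=K(\KPcoh^{\hGO}(\Gr))$, which likewise is free on $\{[\overline{\cL}_{\lav,\mu}]\{m\}(n)_K\}$.

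First I would invoke Proposition \ref{prop: sigi of L} (specifically \textbf{Step 2}, the computation of $j_{\lav}^*\Xi(\cL_{\lav,\mu}')$): it shows that $\Xi(\cL_{\lav,\mu})$ is, up to lower-order terms supported on $\overline{\Gr}_{\muv}$ for $\muv<\lav$, the intermediate extension from $\Gr_{\lav}$ of a sheaf whose restriction to the open cell is $\bigoplus_{k}\O_{\lav,\mu}\lr{-\tfrac12 d_{\lav}}\otimes\bigwedge^k(\hGO\times^{\hPla}(N_{\O}/N_{\lav})^*)[k]$. The crucial point is that the top associated-graded piece of $\Xi(\cL_{\lav,\mu})$ along the stratification by Schubert cells — i.e., the $k=0$ summand restricted to $\Gr_{\lav}$ — is exactly $[\overline{\cL}_{\lav,\mu}]$ plus a $\Z[q^{\pm 1}]$-combination (where $q$ records the shifts $\{1\}$ and $(1)_K$) of classes $[\overline{\cL}_{\muv,\nu}]$ with $\muv\le\lav$ and, when $\muv=\lav$, only with a strictly positive shift coming from the $[k]$ for $k\ge 1$. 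Thus in the $K$-group, if we order the index set $(\Pv\times P)_{\dom}$ by refining the dominance order on the first coordinate, the matrix expressing $\{\Xi([\cL_{\lav,\mu}])\}$ in terms of $\{[\overline{\cL}_{\muv,\nu}]\}$ is block upper-triangular with diagonal entries equal to $1$ (an invertible element of $\Z[q^{\pm 1}]$). Hence $\Xi$ is injective on $K$-groups.

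Concretely, the steps in order are: (1) recall $K^{\hGO}(\cR)$ is free on the shifted simples by Theorem \ref{thm:koszul-perverse} and the conventions on $K$-groups of finite length abelian categories; (2) apply Proposition \ref{prop: sigi of L} to get $\Xi(\cL_{\lav,\mu})\in\KPcoh^{\hGO}(\overline{\Gr}_{\lav})_{!*}$ together with the explicit formula for $j_{\lav}^*\Xi(\cL_{\lav,\mu}')$; (3) pass to $K$-theory: decompose $[\Xi(\cL_{\lav,\mu})]$ along the Cartan stratification and read off that the $\Gr_{\lav}$-leading term is $[\overline{\cL}_{\lav,\mu}]$ while all other contributions involve strictly smaller $\lav$ or, for the same $\lav$, a genuine cohomological shift $[k]$, $k\ge1$, which is a different basis vector; (4) conclude that the transition matrix is unitriangular, hence invertible over $\Z[q^{\pm1}]$, so $\Xi$ is injective.

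The main obstacle I anticipate is bookkeeping in step (3): one must be careful that the "lower-order" corrections supported on $\overline{\Gr}_{\muv}$ with $\muv<\lav$, which come from passing from $j_{\lav,!*}$ to the actual class in $K^{\hGO}(\Gr)$, never conspire to cancel a leading term of another $\Xi(\cL_{\muv,\nu})$. This is handled by the triangularity: a leading term $[\overline{\cL}_{\lav,\mu}]$ (with no shift) can only be produced by $\Xi(\cL_{\lav,\mu})$ itself and never by any $\Xi(\cL_{\muv,\nu})$ with $\muv\ne\lav$ (since those have support in $\overline{\Gr}_{\muv}$, a different closed stratum, or produce $[\overline{\cL}_{\lav,\cdot}]$ only with positive shifts). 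The only subtlety is ensuring the $k\ge1$ terms in the formula of Proposition \ref{prop: sigi of L} really are distinct basis vectors from the $k=0$ term, which is immediate since they differ by the shift $[k]\ne 0$ in the stable $\infty$-category, hence correspond to $[\overline{\cL}_{\lav,\mu}]$ multiplied by a nontrivial power of $q$ in $K^{\hGO}(\Gr)$.
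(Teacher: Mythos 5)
Your proposal is correct and follows essentially the same route as the paper: both reduce, via the stratification of $\Gr$ by the $\overline{\Gr}_{\lav}$ together with the formula of Proposition \ref{prop: sigi of L}, to checking injectivity on each $\lav$-block, where the $k=0$ summand gives the unshifted simple $\overline{\cL}_{\lav,\mu}$ and the $k\ge 1$ summands carry strictly positive Koszul shifts. The paper packages the block computation as multiplication by the nonzero element $\sum_k(-1)^k[\bigwedge^k(N_{\O}/N_{\lav})^*]$ of the integral domain $K^{\hPla}(\pt)$, which is your lowest-shift-degree (unitriangularity) observation in different words; the only imprecision is that the diagonal entries over $\Z[q^{\pm 1}]$ are $1$ plus strictly positive powers of $q$ rather than literally $1$, which your leading-term argument handles correctly anyway.
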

\begin{proof}
	The $K$-group $K^{\hGO}(\cR)$ admits a filtration with terms  $\left\{K^{\hGO}(\cR_{\le \lav})\right\}_{\lav\in \Pv_+}.$ The associated graded of this filtration is identified with 
	$\bigoplus_{\lav\in\Pv_+}K^{\hGO}(\cR_{ \lav}).$ 
	Similarly, $K^{\hGO}(\Gr)$ admits a filtration with terms
	$\left\{K^{\hGO}(\Gr_{\le \lav})\right\}_{\lav\in \Pv_+},$ 
	whose associated graded is described analogously. 
	
	By definition, $\Xi=\sigi$ preserves the filtrations, thus we need to show that this map is injective for each associated graded pieces
	\[\Xi=\sigi:K^{\hGO}(\cR_{\lav})\to K^{\hGO}(\Gr_{\lav}).\]

	After the following identifications
	\begin{align*}
		& K^{\hGO}(\Gr_{ \lav})\cong K^{\hPla}(\pt)\\
		& K^{\hGO}(\cR_{\lav})=K^{\hGO}(\cR^{\cl}_{\lav}) \xrightarrow[\cong]{\sigma^*} K^{\hGO}(\Gr_{\lav})\cong K^{\hPla}(\pt),
	\end{align*}
	Proposition \ref{prop: sigi of L} implies that the map $\Xi$ is identified with the map $K^{\hPla}(\pt)\to K^{\hPla}(\pt)$ given by tensoring with $\sum_k (-1)^k [\bigwedge^k(N_{\O}/N_{\lav})^*]$, which is a nonzero element. Since $K^{\hPla}(\pt)$ is an integral domain with respect to tensor product as its multiplication, the map $\Xi$ is injective.
\end{proof}

Another corollary is for the special case $N=\g$.
\begin{corollary}\label{sigi of KPla}
	Let $N$ be the adjoint representation $\g$. Then 
	\begin{equation}\label{eq: sigi of KPla}
		\Xi(\cL_{\lav})\cong \bigoplus_{0\le k\le d_{\lav}}i_{\le \lav*}j_{\lav,!*}\left(\Omega_{\Gr_{{\lav}}}^k\lr{k-\tfrac12 d_{\lav}}[k]\right) \overset{\eqref{eq:!*relations}}{\cong} \bigoplus_{0\le k\le d_{\lav}} i_{\le \lav*}\cIC(\Omega_{\Gr_{{\lav}}}^k)\lr{k-\tfrac12 d_{\lav}}[k].
	\end{equation} 
	Here $\Omega_{\Gr_{\lav}}^k=\bigwedge^k\Omega_{\Gr_{\lav}}^1$ is the $k$-th differential sheaf on $\Gr_{\lav}$ relative to $\pt=\Spec \C$ equipped with the natural $\hGO$-equivariance.
\end{corollary}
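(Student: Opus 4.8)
The plan is to specialize the general computation of Proposition~\ref{prop: sigi of L} to the case $N=\g$ and then identify the resulting equivariant bundles with differential sheaves on the Schubert cell. Recall that for $N=\g$ the formula \eqref{calculation of sigi} gives
\[
\Xi(\cL_{\lav})=\Xi(\cL_{\lav,0})\cong i_{\le\lav*}j_{\lav,!*}\!\left(\bigoplus_{0\le k\le \dim(\g_\O/\g_\lav)}\O_{\lav,0}\lr{-\tfrac12 d_\lav}\otimes\textstyle\bigwedge^k\!\left(\hGO\times^{\hPla}(\g_\O/\g_\lav)^*\right)[k]\right),
\]
where $\O_{\lav,0}\cong\O_{\Gr_\lav}$ is the structure sheaf, and $\g_\lav=\g_\O\overset{\cl}{\cap}t^\lav\g_\O$ inside $\g_\K$. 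So the entire content of the corollary is the identification, as $\hGO$-equivariant vector bundles on $\Gr_\lav\cong\hGO/\hPla$,
\[
\hGO\times^{\hPla}(\g_\O/\g_\lav)^*\;\cong\;\Omega^1_{\Gr_\lav},
\]
together with matching the $\Gmdil$-weights so that the $\lr{-\tfrac12 d_\lav}$ and $\lr{k}$ shifts come out as in \eqref{eq: sigi of KPla}. Once this is established, taking $k$-th exterior powers and noting $\dim(\g_\O/\g_\lav)=d_\lav=\dim\Gr_\lav$ (which must also be checked) yields the stated direct sum, and the second isomorphism in \eqref{eq: sigi of KPla} is just the relation \eqref{eq:!*relations} between $j_{\lav,!*}$ and $\cIC$ applied to the vector bundle $\Omega^k_{\Gr_\lav}$ of $\Gmdil$-weight $k$.

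First I would compute the tangent bundle $T_{\Gr_\lav}$ at the base point $t^\lav$. Since $\Gr_\lav$ is the $G_\O$-orbit of $t^\lav$ in $\Gr_G$, its tangent space at $t^\lav$ is $\g_\O/(\g_\O\cap\mathrm{Ad}_{t^\lav}\g_\O)=\g_\O/\g_\lav$ as a $\Pla$-representation (the stabilizer $\Pla\subset G_\O$ acts via the adjoint action on this quotient); equivariantly this gives $T_{\Gr_\lav}\cong\hGO\times^{\hPla}(\g_\O/\g_\lav)$, hence $\Omega^1_{\Gr_\lav}\cong\hGO\times^{\hPla}(\g_\O/\g_\lav)^*$. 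Here one must be careful about the $\Gmrot$- and $\Gmdil$-weights: the outer $\Gmdil$ scales $\g$ linearly, so $\g_\O/\g_\lav$ carries $\Gmdil$-weight $+1$ and its dual carries weight $-1$, i.e. $\Omega^1_{\Gr_\lav}$ has $\Gmdil$-weight $-1$, equivalently $\Omega^k_{\Gr_\lav}$ has $\Gmdil$-weight $-k$; in the conventions of Definition~\ref{shift functors} the normalization $\O_{\lav,0}\lr{-\tfrac12 d_\lav}$ combined with $\bigwedge^k\Omega^1$ accounts for the shift $\lr{k-\tfrac12 d_\lav}$. The $\Gmrot$-weight is zero on all these sheaves because we took $\mu=0$ and the structure sheaf trivialization is $\Gmrot$-invariant. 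This is essentially the same bookkeeping already performed in \eqref{identification of cRlav}--\eqref{eq: calculation of omega} in the proof of Lemma~\ref{lem2 calculation of D}, where the normal bundle of $\cR^{\cl}_\lav\hookrightarrow\Gr_\lav\times\g_\O$ was identified with $p_1^*(\hGO\times^{\hPla}(\g_\O/\g_\lav))$ via the maps \ding{172},\ding{173},\ding{174}; I would reuse that diagram and those explicit formulas verbatim, simply dualizing.

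The routine but necessary checks are: (i) $\dim(\g_\O/\g_\lav)=d_\lav$, which follows because $\Gr_\lav$ is smooth of dimension $d_\lav=\langle 2\rho,\lav\rangle$ and its tangent space at $t^\lav$ is exactly $\g_\O/\g_\lav$; (ii) the $\hGO$-equivariant structures match on the nose (not just the underlying bundles), which follows from tracking the $\Pla$-action through the orbit map $G_\O/\Pla\xrightarrow{\sim}\Gr_\lav$, $g\Pla\mapsto g\cdot t^\lav$; (iii) the weight normalizations, as above. The main (and really only) obstacle is the weight bookkeeping: one has to be careful that ``$\g_\O/\g_\lav$ with its natural $\Gmdil$-weight'' versus ``the trivial-$\Gmdil$-weight version'' are distinguished — exactly the subtlety flagged after \eqref{eq: normal bundle calculation}, where $(\g_\O/\pla)^*$ appears with \emph{trivial} $\Gmdil$-action. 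Since here we genuinely want $\Omega^1_{\Gr_\lav}$ with its honest $\Gmdil$-equivariance (induced from the linear scaling action on $\g$), the weight is $-1$ per exterior factor, producing precisely the $[k]\lr{k}$ in \eqref{eq: sigi of KPla}; I would state this carefully and refer back to Definition~\ref{shift functors} and the computation in Lemma~\ref{lem2 calculation of D}. With these identifications in place, substituting into \eqref{calculation of sigi} and invoking \eqref{eq:!*relations} gives \eqref{eq: sigi of KPla}.
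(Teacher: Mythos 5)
Your proposal is correct and follows essentially the same route as the paper's proof: specialize Proposition \ref{prop: sigi of L} to $N=\g$ using $N_{\lav}=\g_{\O}\cap\mathrm{Ad}_{t^{\lav}}\g_{\O}=\pla$, identify $\hGO\times^{\hPla}(\g_{\O}/\pla)^*$ with the cotangent bundle of $\Gr_{\lav}\cong\hGO/\hPla$, and track the $\Gmdil$-weights. One small remark on your bookkeeping prose: in the paper's convention $\Omega^1_{\Gr_{\lav}}$ with its natural equivariance has \emph{trivial} $\Gmdil$-weight (since $\Gmdil$ acts trivially on $\Gr_{\lav}$), so the correct identification is $\hGO\times^{\hPla}(\g_{\O}/\pla)^*\cong\Omega^1_{\Gr_{\lav}}\lr{1}$ — this discrepancy is exactly what produces the extra $\lr{k}$ in \eqref{eq: sigi of KPla} — rather than $\Omega^1_{\Gr_{\lav}}$ itself carrying weight $-1$ as you state; your final formula nonetheless comes out right.
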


\begin{proof}
	For $N=\g$, we can identify $N_{\lav}=\pla$, and \eqref{calculation of sigi} becomes
	\begin{equation}\label{eq: 111}
		\Xi(\cL_{\lav})\cong\bigoplus_{0\le k\le d_{\lav}}i_{\le \lav*} j_{\lav,!*}\left(\bigwedge^k(\hGO\times^{\hPla} (\g_{\O}/\pla)^*)\lr{-\tfrac12 d_{\lav}}[k]\right).
	\end{equation}
	We can identify $\hGO\times^{\hPla} (\g_{\O}/\pla)^*$ with the cotangent bundle of $\hGO/\hPla$, but with the $\Gmdil$-acting by weight $-1$ scaling. Then we can identify $\hGO\times^{\hPla} (\g_{\O}/\pla)^*$ with $\Omega_{\Gr_{\lav}}^1\lr{1}$. Substituting it into the above equation \eqref{eq: 111} completes the proof of this corollary.
\end{proof}

\subsection{Computation of \texorpdfstring{$\Xi[\cL_{\lav}]\tbox \Xi[\cL_{\muv}]$}{Ξ[L] tbox Ξ[L]}} \label{subsec: twisted prod is IC}
From now on, we assume $N=\g$.

In the previous subsection, we identify $\Xi(\cL_{\lav})$ with $\bigoplus_{k}i_{\le \lav*}\cIC(\Omega_{\Gr_{{\lav}}}^k)\lr{k-\tfrac12 d_{\lav}}[k]$. In this subsection, we will give a description of the twisted product $\Xi[\cL_{\lav}']\tbox \Xi[\cL_{\muv}']$ in Proposition \ref{prop:calculation of tbox}.

We write $\Gr_{\lav,\muv}:=\Gr_{\lav}\ttimes \Gr_{\muv}\subset \Gr\ttimes \Gr$; its closure in $\Gr\ttimes \Gr$ is $\overline{\Gr}_{\lav}\ttimes \overline{\Gr}_{\muv}$, which we write as $\overline{\Gr}_{\lav,\muv}$. We also write $d_{\lav,\muv}:=\dim\Gr_{\lav,\muv}= d_{\lav}+d_{\muv}$.

\begin{proposition}\label{prop:calculation of tbox}	
    In $K^{\hGO}(\overline{\Gr}_{\lav,\muv})$, we have the following identification
    \begin{equation*}
    	\Xi[\cL_{\lav}']\tbox \Xi[\cL_{\muv}']=\sum_{0\le m\le d_{\lav,\muv}}\left[\cIC(\Omega_{{\Gr}_{\lav,\muv}}^m)\lr{m-\tfrac12 d_{\lav,\muv}}[m]\right].
    \end{equation*}
\end{proposition}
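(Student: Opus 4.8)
The idea is to reduce the statement on $\overline{\Gr}_{\lav,\muv}=\overline{\Gr}_{\lav}\ttimes\overline{\Gr}_{\muv}$ to the already-established computation of $\Xi(\cL'_{\lav})$ and $\Xi(\cL'_{\muv})$ on the individual Schubert varieties (Corollary \ref{sigi of KPla} and Proposition \ref{prop: sigi of L}), plus a clean identification of the twisted product of coherent IC-sheaves on the smooth open stratum with the IC-sheaf of differential forms on $\Gr_{\lav,\muv}$. First I would unwind $\Xi[\cL'_{\lav}]\tbox \Xi[\cL'_{\muv}]$ using \eqref{eq: sigi of KPla}: both $\Xi(\cL'_{\lav})$ and $\Xi(\cL'_{\muv})$ are (on $\Gr_{\le\lav}$, resp.\ $\Gr_{\le\muv}$) the intermediate extensions of $\bigoplus_k \Omega^k_{\Gr_\lav}\lr{k-\tfrac12 d_\lav}[k]$ etc., so in $K$-theory
\[
\Xi[\cL'_{\lav}]\tbox\Xi[\cL'_{\muv}]=\sum_{a,b}(-1)^{a+b}\big[\,j_{\lav,!*}(\Omega^a_{\Gr_\lav})\lr{a-\tfrac12 d_\lav}\,\big]\tbox\big[\,j_{\muv,!*}(\Omega^b_{\Gr_\muv})\lr{b-\tfrac12 d_\muv}\,\big].
\]

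The central point is then a \emph{geometric} lemma: the twisted external product of the coherent IC-extensions on the open cells, restricted to the open stratum $\Gr_{\lav,\muv}=\Gr_\lav\ttimes\Gr_\muv$, is $\bigoplus_{a+b=m}\Omega^a_{\Gr_\lav}\boxtimes\Omega^b_{\Gr_\muv}\cong \Omega^m_{\Gr_{\lav,\muv}}$ (using that $\Gr_{\lav,\muv}\to\Gr_\lav$ is a smooth fibration with fiber $\Gr_\muv$, so $\Omega^1_{\Gr_{\lav,\muv}}$ sits in a split short exact sequence of the relative and pulled-back cotangent sheaves, hence $\bigwedge^m$ decomposes), with the $\Gmdil$-grading shifts matching up to give $\lr{m-\tfrac12 d_{\lav,\muv}}$. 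I would then check that the support and dual-support estimates \eqref{cIC in our case} are preserved under $\ttimes$ — here I would use that $\overline{\Gr}_\lav\ttimes\overline{\Gr}_\muv\to\overline{\Gr}_\lav$ is flat with fibers $\overline{\Gr}_\muv$ (a locally trivial fibration in the Zariski topology by Lemma \ref{lem:Zar_triv}), so codimensions of supports add correctly, and $\cD$ commutes with $\tbox$ up to the relevant shift. This lets me invoke Proposition \ref{prop:cIC_and_support} to conclude $\big(j_{\lav,!*}\Omega^a\big)\tbox\big(j_{\muv,!*}\Omega^b\big)$, being a sheaf with the right support conditions whose restriction to $\Gr_{\lav,\muv}$ is locally free, is canonically $\cIC(\Omega^a_{\Gr_\lav}\boxtimes\Omega^b_{\Gr_\muv})$ up to shift. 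Summing over $a+b=m$ and using $\bigoplus_{a+b=m}\Omega^a_{\Gr_\lav}\boxtimes\Omega^b_{\Gr_\muv}\cong\Omega^m_{\Gr_{\lav,\muv}}$ (which also needs that $\cIC$ is additive, i.e.\ commutes with finite direct sums of vector bundles — immediate from Definition \ref{def:cIC}) gives the claimed formula, the $(-1)^{a+b}=(-1)^m$ absorbing into the $[m]$ shift.

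I expect the main obstacle to be the careful bookkeeping in the geometric lemma: one must verify that $\tbox$ of the \emph{IC-extensions} (not just of the vector bundles on the open cells) genuinely satisfies the support conditions of Proposition \ref{prop:cIC_and_support} on all of $\overline{\Gr}_{\lav,\muv}$, including the non-generic strata where the product $\overline\Gr_\lav\ttimes\overline\Gr_\muv$ can be more singular than either factor. The key technical input is the flatness and Zariski-local triviality of $\overline{\Gr}_\lav\ttimes\overline{\Gr}_\muv\to\overline{\Gr}_\lav$ (Lemma \ref{lem:Zar_triv}), which guarantees $\codim_{\overline\Gr_{\lav,\muv}}(Z_1\ttimes Z_2)=\codim_{\overline\Gr_\lav}Z_1+\codim_{\overline\Gr_\muv}Z_2$ and that $\cH^k$ of a twisted product is the twisted product of the $\cH$'s when one factor is the pullback of something flat; combined with the compatibility of $\cD_X$ with $\tbox$ (an instance of the base-change and $\cHom$-compatibilities \eqref{eq:f_*,f^!}, \eqref{eq:h^*cHom} recorded earlier), this yields exactly \eqref{eq: supp condition} for the twisted product. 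Once that is in place, the rest is formal manipulation in $K^{\hGO}(\overline\Gr_{\lav,\muv})$, so the proposition follows.
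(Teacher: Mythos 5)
Your overall strategy coincides with the paper's: reduce via \eqref{eq: sigi of KPla} to the identity $[\cIC(\Omega^m_{\Gr_{\lav,\muv}})]=\sum_{k+l=m}[\cIC(\Omega^k_{\Gr_\lav})\tbox\cIC(\Omega^l_{\Gr_\muv})]$, prove $\cIC(\Omega^k_{\Gr_\lav})\tbox\cIC(\Omega^l_{\Gr_\muv})\cong\cIC(\Omega^k_{\Gr_\lav}\tbox\Omega^l_{\Gr_\muv})$ by checking the support conditions of Proposition \ref{prop:cIC_and_support} via K\"unneth and additivity of codimensions along the Zariski-locally trivial fibration, and then assemble the differential sheaves of the total space from those of base and fiber. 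The second of these steps is carried out correctly and matches the paper's Step 2.

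There is, however, a genuine gap in the remaining step. You assert that $\Omega^1_{\Gr_{\lav,\muv}}$ sits in a \emph{split} short exact sequence of the pulled-back and relative cotangent sheaves, hence $\Omega^m_{\Gr_{\lav,\muv}}\cong\bigoplus_{a+b=m}\Omega^a_{\Gr_\lav}\tbox\Omega^b_{\Gr_\muv}$, and you then conclude by "additivity of $\cIC$ under direct sums". The sequence $0\to \rp^*\Omega^1_X\to\Omega^1_{X\ttimes Y}\to\Omega^1_{X\ttimes Y/X}\to 0$ is only \emph{Zariski-locally} split (the twisted product is only Zariski-locally trivial, Lemma \ref{lem:Zar_triv}); it is not globally split, and a fortiori not split $\hGO$-equivariantly. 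So all you get is a filtration of $\Omega^m_{X\ttimes Y}$ with graded pieces $\Omega^k_X\tbox\Omega^l_Y$, i.e.\ an equality of $K$-classes \emph{before} applying $\cIC$. The problem is that the intermediate extension $\cIC$ is not an exact functor, so it does not automatically send a filtration on the open stratum (or on the reflexive hulls) to a filtration of $\cIC(\Omega^m_{\Gr_{\lav,\muv}})$ in $\Coh^{\hGO}(\overline{\Gr}_{\lav,\muv})$ — exactly as the perverse IC-extension of a filtered local system is generally not an extension of the ICs of its graded pieces. This is the technical heart of the paper's proof (its Steps 3--4): one first passes to the short exact sequence of reflexive hulls, observes it is still exact and Zariski-locally split, and then uses the explicit formula \eqref{eq:cIC for Refl} for $\wt{\cIC}$ as a composite of truncations and duals, together with fpqc/Zariski descent, to show that $\wt{\cIC}$ applied to a Zariski-locally split short exact sequence of reflexive sheaves yields a fiber--cofiber sequence in $\Coh^{\hGO}(X\ttimes Y)$. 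Without this argument (or a replacement for it), the passage from the filtration of $\Omega^m_{X\ttimes Y}$ to the claimed $K$-group identity for its $\cIC$-extension is unjustified.
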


\begin{proof}
    Using the identification \eqref{eq: sigi of KPla}, we have
    \begin{equation*}
        \begin{aligned}
            \Xi[\cL_{\lav}']\tbox \Xi[\cL_{\muv}']=\sum_{k,l}\left[\cIC(\Omega_{\Gr_{{\lav}}}^k)\tbox \cIC(\Omega_{\Gr_{\muv}}^l)\lr{k+l-\tfrac12 d_{\lav,\muv}}[k+l]\right].
        \end{aligned}
    \end{equation*}
	Thus, it remains to show
	\begin{equation}\label{eq:ICbox}
		\left[\cIC(\Omega_{\Gr_{\lav,\muv}}^m)\right]=\sum_{k+l=m}\left[\cIC(\Omega_{\Gr_{\lav}}^k)\tbox\cIC(\Omega_{\Gr_{\muv}}^l)\right].
	\end{equation}

	To simplify notations, in this proof we write $H=\hGO$, $X=\overline{\Gr}_{\lav}$, $Y=\overline{\Gr}_{\muv}$. We can restrict the $H$-torsor $\hGK\to \Gr$ to $X=\overline{\Gr}_{\lav}\hookrightarrow \Gr$, and denote the resulting $H$-torsor by $P(:=\hGK\times_{\Gr}\overline{\Gr}_{\lav})\to X$, which is Zariski locally trivial (Lemma \ref{lem:Zar_triv}). By definition, the twisted product $X\ttimes Y=\overline{\Gr}_{\lav}\ttimes \overline{\Gr}_{\muv}$ is the associated bundle $P\times^H Y$. The open subsets $\Gr_{\lav}\subset \overline{\Gr}_{\lav}$, $\Gr_{\muv}\subset \overline{\Gr}_{\muv}$ are denoted by $\Xo$, $\Yo$ respectively. We write $j_{X}:\Xo\hookrightarrow X$, $j_{Y}:\Yo\hookrightarrow Y$, $j:\Xo\ttimes\Yo\hookrightarrow X\ttimes Y$ for the open embeddings, and write $\rp:X\ttimes Y\to X$, $\rq:X\ttimes Y\to [H\backslash Y]$, $\rpc:\Xo\ttimes \Yo\to \Xo$, $\rqc:\Xo\ttimes \Yo\to [H\backslash \Yo]$, for the natural projections (see Section \ref{subsubsec:twisted_prod}).

\begin{enumerate}[label=\textbf{Step \arabic*}]
	\item To illustrate the main idea of \eqref{eq:ICbox}, we first explain the equality when restricted to the open subscheme $\Xo\ttimes \Yo \subset X\ttimes Y$. 
	
	For the fiber bundle $\Xo\ttimes \Yo\to \Xo$, there is a short exact sequence in $\Coh^H(\Xo\ttimes \Yo)^{\heartsuit}$:
	\begin{equation}\label{eq:SESo}
		0\to \rpc^*\Omega_{\Xo}^1\to \Omega^1_{\Xo \ttimes \Yo}\to \Omega^1_{\Xo\ttimes \Yo/\Xo}\to 0.
	\end{equation}
	Note that
	\begin{equation}\label{eq:step11}
		\rpc^*\Omega_{\Xo}^1=\Omega_{\Xo}^1\tbox \O_{\Yo}.
	\end{equation}
	According to the Cartesian diagram \eqref{eq:diagram_ttimes}, we also have
	\begin{equation}\label{eq:step12}
		\Omega^1_{\Xo\ttimes \Yo/\Xo}\cong \rqc^* \Omega^1_{\Yo}= \O_{\Xo}\tbox \Omega^1_{\Yo}.
	\end{equation}
	The short exact sequence \eqref{eq:SESo} produces a filtration for each $\Omega^m_{\Xo \ttimes \Yo}=\bigwedge^m \Omega^1_{\Xo \ttimes \Yo}$ with associated graded pieces
	\[\bigwedge^k\left(\Omega_{\Xo}^1\tbox \O_{\Yo}\right) \otimes \bigwedge^l\left(\O_{\Xo}\tbox \Omega^1_{\Yo} \right)=\Omega_{\Xo}^k\tbox \Omega_{\Yo}^l,\text{ for }k+l=m.\]
	Thus we have
	\[\left[\Omega_{\Xo\ttimes \Yo}^m\right]=\sum_{k+l=m}\left[\Omega_{\Xo}^k\tbox\Omega_{\Yo}^l\right]\]
	in the $K$-group $K^{H}(\Xo\ttimes\Yo)$.

    \item We show that $$\cIC(\Omega_{\Xo}^k)\tbox \cIC(\Omega_{\Yo}^l)\cong \cIC(\Omega_{\Xo}^k\tbox \Omega_{\Yo}^l)$$ using Proposition \ref{prop:cIC_and_support}. 
    
    Note that similar to the exterior product, the twisted product also satisfies the following K\"unneth formula:
    \[\cH^{r}\left(\cIC(\Omega_{\Xo}^k)\tbox \cIC(\Omega_{\Yo}^l)\right)\cong \bigoplus_{i+j=r}\cH^i\left(\cIC(\Omega_{\Xo}^k)\right)\tbox\cH^j\left(\cIC(\Omega_{\Yo}^l)\right).\]
    So $$\Supp \cH^{r}\left(\cIC(\Omega_{\Xo}^k)\tbox \cIC(\Omega_{\Yo}^l)\right)=\bigcup_{i+j=r}\Supp\cH^i\left(\cIC(\Omega_{\Xo}^k)\right)\ttimes\Supp\cH^j\left(\cIC(\Omega_{\Yo}^l)\right).$$
    According to \eqref{cIC in our case}, $\cIC(\Omega_{\Xo}^k)$ and $\cIC(\Omega_{\Yo}^l)$ satisfy the support conditions \eqref{eq: supp condition}. Let $i+j=r\ge1$. If $i\ge 1$ and $ j\ge 1$, we have
    $$\codim \Big(\Supp\cH^i\left(\cIC(\Omega_{\Xo}^k)\right)\ttimes\Supp\cH^j\left(\cIC(\Omega_{\Yo}^l)\right)\Big)\ge 2i+2+2j+2> 2r+2.$$
    If one of $i,j$ is $0$, without loss of generality, let $i=0$, $j=r\ge 1$; then
    $$\codim \Big(\Supp\cH^0\left(\cIC(\Omega_{\Xo}^k)\right)\ttimes\Supp\cH^r\left(\cIC(\Omega_{\Yo}^l)\right)\Big)\ge \codim \Supp\cH^r\left(\cIC(\Omega_{\Yo}^l)\right)\ge 2r+2.$$
    So $\cIC(\Omega_{\Xo}^k)\tbox \cIC(\Omega_{\Yo}^l)$ satisfies the first support condition in \eqref{eq: supp condition}. Since $$\omega_{X\ttimes Y}\cong \rp^!\omega_X \cong \rp^*\omega_X\otimes \omega_{X\ttimes Y/X}\cong \rp^*\omega_X\otimes \rq^*\omega_{Y}= \omega_{X}\tbox \omega_{Y},$$ we have $$\cD_{X\ttimes Y}\left(\cIC(\Omega_{\Xo}^k)\tbox \cIC(\Omega_{\Yo}^l)\right)\cong \cD_{X}\left(\cIC(\Omega_{\Xo}^k)\right)\tbox\cD_{Y}\left( \cIC(\Omega_{\Yo}^l)\right);$$ here $\cD$ is the shifted Grothendieck-Serre dual functor as in \eqref{eq:cD}. Then applying the arguments above again, we deduce $\cIC(\Omega_{\Xo}^k)\tbox \cIC(\Omega_{\Yo}^l)$ satisfies the second support condition in \eqref{eq: supp condition}. Combined with the equality $${j}_{\lav,\muv}^*(\cIC(\Omega_{\Xo}^k)\tbox \cIC(\Omega_{\Yo}^l))\cong \Omega_{\Xo}^k\tbox \Omega_{\Yo}^l,$$ we conclude that $$\cIC(\Omega_{\Xo}^k)\tbox \cIC(\Omega_{\Yo}^l)\cong \cIC(\Omega_{\Xo}^k\tbox \Omega_{\Yo}^l).$$
	
	\item 	We show that there is a fiber-cofiber sequence
	\begin{equation}\label{eq:cofiber_sequence}
		\cIC(\Omega^1_{\Xo}\tbox \O_{\Yo})\to \cIC(\Omega^1_{\Xo}\tbox \Omega^1_{\Yo})\to \cIC(\O_{\Xo}\tbox \Omega^1_{\Yo})
	\end{equation}
	in the stable $\infty$-category $\Coh^{H}(X\ttimes Y)$; in other words, $\cIC(\Omega^1_{\Xo}\tbox \Omega^1_{\Yo})$ admits a filtration in the stable $\infty$-category with associated graded pieces $\cIC(\Omega^1_{\Xo}\tbox \O_{\Yo})$ and $\cIC(\O_{\Xo}\tbox \Omega^1_{\Yo})$. Then in the $K$-group $K^{H}(X\ttimes Y)$, we obtain the identity
	\[[\cIC(\Omega^1_{\Xo}\tbox \Omega^1_{\Yo})]=[\cIC(\Omega^1_{\Xo}\tbox \O_{\Yo})]+ [\cIC(\O_{\Xo}\tbox \Omega^1_{\Yo})].\]
	
	Indeed, for the fiber bundle $X\ttimes Y\to X$, we have a short exact sequence in $\Coh^H(X\ttimes Y)^{\heartsuit}$:
	\[	0\to \rp^*\Omega_{X}^1\to \Omega^1_{X \ttimes Y}\to \Omega^1_{X\ttimes Y/X}\to 0.\]
	After similar computations as in \textbf{Step} 1 \eqref{eq:step11} \eqref{eq:step12}, this short exact sequence becomes
	\begin{equation}\label{eq:SES}
			0\to \Omega_{X}^1\tbox \O_{Y}\to \Omega^1_{X \ttimes Y}\to \O_X\tbox \Omega^1_{Y}\to 0.
	\end{equation}
	We write $\Omega_X^{[k]}:=(\Omega_{X}^k)^{\vee\vee}\in\Coh^H(X)^{\heartsuit}$, where $(-)^{\vee}:=\cH^0(\cHom(-))$. This is the reflexive hull of $\Omega_X^{k}$ as in \cite[\href{https://stacks.math.columbia.edu/tag/0AVT}{Tag 0AVT}]{stacks-project}; we have $ \Omega_X^{[k]}=\cH^0(j_{X*}(\Omega_{\Xo}^k))$ by \cite[\href{https://stacks.math.columbia.edu/tag/0AY6}{Tag 0AY6}]{stacks-project}. Similar notations are used for other K\"ahler differential sheaves. 
	
	After forgetting the $H$-equivariance, the short exact sequence \eqref{eq:SES} splits Zariski locally since the fiber bundle $X\ttimes Y\to X$ is Zariski locally trivial. More explicitly, we choose a Zariski open cover $\phi:U\to X$, such that $\phi^*(X\ttimes Y)=U\ttimes Y\cong U\times Y$ over $U$. The pullback of \eqref{eq:SES} along $\phi\ttimes \id$ becomes
	\[0\to \Omega_{U}^1\tbox \O_{Y}\to \Omega^1_{U \ttimes Y}\to \O_U\tbox \Omega^1_{Y}\to 0,\]
	which splits by a choice of trivialization $U\ttimes Y\cong U\times Y$. Applying the additive functor $(-)^{\vee\vee}$, we obtain a split short exact sequences for reflexive hulls:
	\begin{equation}\label{eq:SESICU}
		0\to \Omega_{U}^{[1]}\tbox \O_{Y}\to \Omega^{[1]}_{U \ttimes Y}\to \O_U\tbox \Omega^{[1]}_{Y}\to 0;
	\end{equation}
	this sequence is the pullback along $\phi\ttimes \id$ of the following sequence whose terms are reflexive hulls of sheaves in \eqref{eq:SES}:
	\begin{equation}\label{eq:SESrefl}
		0\to \Omega_{X}^{[1]}\tbox \O_{Y}\to \Omega^{[1]}_{X \ttimes Y}\to \O_X\tbox \Omega^{[1]}_{Y}\to 0.
	\end{equation}
	In conclusion, the sequence \eqref{eq:SESrefl} is a short exact sequence, which splits Zariski locally.
	
	Now we apply the functor \eqref{eq:cIC for Refl}
	\[\wt{\cIC}:\mathrm{Refl}^H(X\ttimes Y)\to \Coh^H(X\ttimes Y)\]
	to \eqref{eq:SESrefl}, and obtain a sequence of morphisms:
	\begin{equation}\label{eq:SESIC}
		\wt{\cIC}(\Omega_{X}^{[1]}\tbox \O_{Y})\xrightarrow{} \wt{\cIC}(\Omega^{[1]}_{U \ttimes Y})\xrightarrow{} \wt{\cIC}(\O_U\tbox \Omega^{[1]}_{Y}),
	\end{equation}
	where each term is the same as \eqref{eq:cofiber_sequence} respectively by Definition \ref{def:cIC}. To show this sequence can be made into a fiber-cofiber sequence, we only need to show its pullback along the faithfully flat cover $X\ttimes Y\to [H\backslash X\ttimes Y]$ (i.e. forgetting the $H$-equivariance) can be made into a fiber-cofiber sequence using the fpqc descent for $\QCoh$. We further pullback the sequence to the Zariski cover ${\phi}\ttimes \id_Y:U\ttimes Y\to X\ttimes Y$ and show that it can be made into a fiber-cofiber sequence. Note that $\wt{\cIC}$-functors commute with pullback along $\wt{\phi}\ttimes \id_Y$:
	\[({\phi}\ttimes \id_Y)^*\circ \wt{\cIC}\cong \wt{\cIC}\circ ({\phi}\ttimes \id_Y)^*\]
	since our chosen dualizing complexes satisfy $\omega_{U\ttimes Y}=({\phi}\ttimes \id_Y)^*\omega_{X\ttimes Y} $. Thus the pullback of the sequence \eqref{eq:SESIC} to $U\ttimes Y$ becomes
	\[\wt{\cIC}(\Omega_U^{[1]}\tbox \O_{Y})\to \wt{\cIC}(\Omega_{U\ttimes Y}^{[1]})\to \wt{\cIC}(\O_U\tbox\Omega_{Y}^{[1]}),\]
	which can be made into a split fiber-cofiber sequence, since \eqref{eq:SESICU} is a split short exact sequence. In conclusion, \eqref{eq:SESIC} can be made into a fiber-cofiber sequence.
	
	 \item Using the Zariski locally split short exact sequence \eqref{eq:SES}, we obtain a filtration for each $\Omega_{X\ttimes Y}^{m}=\bigwedge^m\Omega_{X\ttimes Y}^1$ with associated graded pieces
	 \[\bigwedge^k\left(\Omega_{X}^1\tbox \O_{Y}\right) \otimes \bigwedge^l\left(\O_{X}\tbox \Omega^1_{Y} \right)=\Omega_{X}^k\tbox \Omega_{Y}^l,\text{ for }k+l=m.\]
	 After the same arguments as in \textbf{Step 2}, the filtration for $\Omega_{X\ttimes Y}^{m}$ produces a filtration for the reflexive hull $\Omega_{X\ttimes Y}^{[m]}$ in the abelian category $\Coh^H(X\ttimes Y)^{\heartsuit}$ with associated graded pieces
	 \[\Omega_{X}^{[k]}\tbox \Omega_{Y}^{[l]},\text{ for }k+l=m,\]
	 and it further produces a filtration for $\wt{\cIC}(\Omega_{X\ttimes Y}^{[m]})$ in the stable $\infty$-category $\Coh^H(X\ttimes Y)$ with associated graded pieces
	 \[\wt{\cIC}(\Omega_{X}^{[k]})\tbox \wt{\cIC}(\Omega_{Y}^{[l]}),\text{ for }k+l=m.\]
	 Thus, we obtain the equation \eqref{eq:ICbox} in the $K$-group $K^{H}(X\ttimes Y)$.

\end{enumerate}
\end{proof}

\subsection{Tools coming from Hodge module theory}\label{subsec: Tools coming from Hodge module}
After the proposition \ref{prop:calculation of tbox}, we just need to compute the class
\[\left[{m}_{\Gr,*}\left(\cIC(\Omega_{\Gr_{ \lav,\muv}}^k)\right)\right]\]
in the $K$-group $K^{\hGO}(\Gr)$ for each $k\in \Z, 0\le k\le d_{\lav,\muv}$, where ${m}_{\Gr}:\overline{\Gr}_{\lav,\muv}=\overline{\Gr}_{\lav}\ttimes\overline{\Gr}_{\muv}\to \overline{\Gr}_{\lav+\muv}$ is the convolution map; we abbreviate $m_{\Gr}$ as $m$ in the remaining section.

The tools come from Hodge module theory. We briefly review some basic facts on Hodge module theory in the appendix \ref{sec:appendix}. 

The key to connecting coherent $\cIC$-sheaves with Hodge modules lies in a result from \cite{Xin25}. We first review some knowledge on symplectic varieties.
\begin{definition}[\cite{Bea00}]
	A variety $X$ has symplectic singularity if it admits a symplectic form $\omega$ on the regular locus $X_{\mathrm{reg}}$ of $X$, such that for some (equivalently, any) resolution of singularities $\pi:\tilde{X}\to X$, the pullback $\pi^*\omega$ to $\pi^{-1}(X_{\mathrm{reg}})$ extends to a holomorphic 2-form $\tilde{\omega}$ on $\tilde{X}$.
	
	If moreover the form $\tilde{\omega}$ is nondegenerate everywhere, $\pi$ is said to be a symplectic resolution.
\end{definition}

\begin{theorem}[{\cite[Thm. 5.4]{Xin25}}]\label{thm: Xin's result}
	Let $X$ be a variety with symplectic singularity of dimension $2n$, and suppose it admits a \textbf{symplectic resolution}. Let $\ICHg_X$ be the IC-Hodge module on $X$, which is the pure Hodge module of weight $2n$ such that $\Rat(\ICHg_X)=\rmIC_X^{\Q}$. Then the following inequality holds
	\[\codim \Supp \cH^{i}\left(\gr_{-k}\DR(\ICHg_{X})[k- 2n]\right)\ge 2i+2 \text{ \ for any $i\ge 1$}.\]
	Here, $\gr_{-k}\DR(-)$ is the graded de Rham functor reviewed in the appendix \ref{subsec:grDR}.
\end{theorem}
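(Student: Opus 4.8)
\textbf{Proof proposal for Theorem \ref{thm: Xin's result}.} This theorem is quoted from \cite{Xin25}, so the plan here is to reconstruct the argument from the structure of Hodge module theory that the excerpt sets up in Appendix \ref{sec:appendix}, rather than to reprove it from scratch. The strategy is to exploit the symplectic resolution $\pi:\wt X\to X$ to transport the computation of $\gr_{-k}\DR(\ICHg_X)$ onto the smooth variety $\wt X$, where the IC Hodge module is simply the constant Hodge module, and then push forward by $\pi$ using the decomposition theorem. First I would record the input: a symplectic resolution is in particular a \emph{semismall} (indeed crepant) resolution, so by the decomposition theorem for pure Hodge modules, $\pi_*(\ICHg_{\wt X})$ contains $\ICHg_X$ as a direct summand, with the other summands supported on the singular locus $X\setminus X_{\mathrm{reg}}$, each of codimension $\ge 2$ (symplectic singularities have no codimension-$1$ singular locus). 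On $\wt X$ the Hodge filtration on $\ICHg_{\wt X}=\Q^H_{\wt X}[2n]$ is the trivial one, so $\gr_p\DR(\ICHg_{\wt X})$ is, up to shift, the sheaf of $(-p)$-forms $\Omega^{-p}_{\wt X}$ placed in the appropriate cohomological degree; in particular it is concentrated in a single degree with locally free cohomology.

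The main step is then to analyze the pushforward. The graded de Rham functor $\gr_p\DR$ is compatible with proper pushforward of Hodge modules in the sense that $\gr_p\DR(\pi_*M)\cong R\pi_*\big(\gr_p\DR^{\mathrm{rel}}(M)\big)$ for a suitable relative/filtered version (this is the content of Saito's strictness results, reviewed in Appendix \ref{subsec:grDR}). Applying this to $M=\ICHg_{\wt X}$, the complex $\gr_{-k}\DR(\pi_*\ICHg_{\wt X})[k-2n]$ is computed by $R\pi_*$ of a locally free sheaf on $\wt X$ (a twist of $\Omega^k_{\wt X}$) sitting in degree $0$. Since $\pi$ is a semismall (in fact crepant) proper birational morphism, the higher direct images $R^i\pi_*\Omega^k_{\wt X}$ have support of codimension growing with $i$: the fibers of $\pi$ over the codimension-$c$ stratum of $X$ have dimension $\le (c-2)/2$ by semismallness together with the symplectic (even-dimensional, holomorphic-symplectic) structure, which forces $R^i\pi_*$ to vanish unless $\codim(\text{support})\ge 2i+2$. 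This is exactly the asserted inequality for the summand $\pi_*\ICHg_{\wt X}$; it then descends to the direct summand $\ICHg_X$ because passing to a summand only shrinks supports, and the complementary summands are supported in codimension $\ge 2$ and can be handled by induction on dimension (their own $\gr\DR$ satisfies the analogous estimate by the inductive hypothesis, since they are IC Hodge modules of symplectic leaves, which again admit symplectic resolutions).

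The hard part — and the reason this is a genuine theorem rather than a formality — is the precise codimension bookkeeping in the last step: one must match the \emph{homological} degree $i$ in $\cH^i(\gr_{-k}\DR(\ICHg_X)[k-2n])$ with the \emph{fiber dimension} of the resolution over each stratum, and show the combination with the shift $[k-2n]$ and the grading index $-k$ produces exactly the clean bound $2i+2$ uniformly in $k$. This requires knowing that the symplectic form controls the fiber dimensions (Kaledin's results on symplectic resolutions: the fibers over a codimension-$2m$ symplectic leaf are Lagrangian of dimension $\le m$, and the resolution restricted to a leaf closure is again semismall), and then running a spectral-sequence / weight argument to isolate the relevant cohomology sheaves. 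A secondary technical point is handling the non-constant graded pieces correctly when $\wt X$ is only a resolution and not itself proper over a point — but since all the statements are local on $X$ and $\pi$ is proper, one reduces to the affine-local situation where $R\pi_*$ of coherent sheaves behaves well. I would present the induction on $\dim X$ as the organizing principle, with the base case $X$ smooth being immediate, and cite Kaledin's and Namikawa's structure theory for the fiber-dimension estimates.
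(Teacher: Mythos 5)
First, a point of reference: the paper does not prove this statement — it is imported verbatim from \cite[Thm.~5.4]{Xin25} — so there is no internal proof to compare against, and I can only assess your reconstruction on its own terms. Your skeleton (resolve by $\pi:\wt{X}\to X$, use that $\gr_p\DR$ commutes with proper pushforward as in Proposition \ref{prop:push}, identify $\gr_{-k}\DR(\pi_{*,\Hg}\ICHg_{\wt{X}})[k-2n]$ with $R\pi_*\Omega^k_{\wt{X}}$, and extract the summand $\ICHg_X$ via the decomposition theorem for the semismall map $\pi$) is a reasonable frame. The gap sits exactly in the step you yourself flag as the hard part, and the mechanism you offer for it is false. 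You assert that the fibers of a symplectic resolution over a codimension-$c$ stratum have dimension $\le (c-2)/2$; semismallness (and Kaledin's results, which you correctly quote two sentences later as giving Lagrangian fibers of dimension $\le c/2$, contradicting your earlier claim) only give $\le c/2$, and equality is typical: the minimal resolution $T^*\mathbb{P}^1\to \C^2/(\pm 1)$ has a one-dimensional fiber over the codimension-two singular point. Consequently fiber-dimension vanishing of higher direct images yields only $\codim\Supp R^i\pi_*\Omega^k_{\wt{X}}\ge 2i$, not $2i+2$, and the stronger bound genuinely fails for the full pushforward: in the $A_1$ example one has $R^1\pi_*\Omega^1_{T^*\mathbb{P}^1}\neq 0$ at the origin, which has codimension $2<2\cdot 1+2$. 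Your step ``descend to the direct summand because passing to a summand shrinks support'' therefore cannot start, because the bound you would descend from is not true upstairs.

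What is actually required — and what constitutes the content of the theorem — is to show that over (the generic point of) each codimension-$2i$ symplectic leaf the entire degree-$i$ cohomology of $R\pi_*\Omega^k_{\wt{X}}$ is carried by the complementary summands in $\pi_{*,\Hg}\ICHg_{\wt{X}}\cong \ICHg_X\oplus(\cdots)$, so that the $\ICHg_X$ summand contributes nothing there; in the $A_1$ example the nonzero $R^1\pi_*\Omega^1$ is exactly $\gr\DR$ of the skyscraper summand. Via Kaledin's formal slice theorem this reduces to a top-degree vanishing statement for isolated symplectic singularities admitting symplectic resolutions, which is where the existence of a \emph{symplectic} resolution (rather than mere semismallness of some resolution) and Hodge-theoretic input such as relative hard Lefschetz must enter. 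Your proposal does not supply this cancellation, and your proposed induction on lower strata points in the wrong direction: it would be needed to control the complementary summands you wish to discard, not the summand you keep.
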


To apply this result, we need to introduce the affine grassmannian slices $\bW^{\muv}_{\lav}$, which capture the local singularities of the (spherical) Schubert variety $\overline{\Gr}_{\lav}$. Let $G$ be a reductive group over $\C$. Consider the negative (polynomial) loop group
\[L_{\pol}^-G:\cCAlg\to \Set, \ R\to G(R[t^{-1}]).\]
It can be viewed as a subgroup of $G_{\K}$. The group $L_{\pol}^-G$ has an evaluation map $ev:L_{\pol}^-G\to G$. The kernel is denoted by $L_{\pol}^{<0}G$. Then the orbit $L_{\pol}^{<0}G\cdot t^{\muv}$ is a transversal slice of $\Gr_{\muv}$ in $\Gr$; see e.g. \cite[Prop. 2.3.9]{Zhu17} for more details.

\begin{definition}
	For two dominant cocharacters $\muv \le \lav$, define the affine grassmannian slice to be
	\begin{equation}\label{eq:slice}
		\bW^{\muv}_{\lav}:=L_{\pol}^{<0}G\cdot t^{\muv}\cap \overline{\Gr}_{\lav}.
	\end{equation}
	This is a transversal slice of $\Gr_{\muv}$ in $\overline{\Gr}_{\lav}$, which means there is an open neighborhood of $t^{\muv}$ in $\overline{\Gr}_{\lav}$ isomorphic to $\bW^{\muv}_{\lav}\times \Gr_{\muv}$.
\end{definition}

\begin{proposition}[{\cite[Thm. 2.7]{KWWY}}]
	Let $G$ be a connected reductive group. 
	\begin{enumerate}
		\item All the affine grassmannian slices $\bW_{\lav}^{\muv}$ have symplectic singularities;
		\item \textbf{If $G$ is of type A}, then all the affine grassmannian slices $\bW_{\lav}^{\muv}$ have symplectic resolutions.
	\end{enumerate}
\end{proposition}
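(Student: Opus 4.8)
The statement to be proved is the Proposition at the end: for a connected reductive $G$, (1) every affine Grassmannian slice $\bW_{\lav}^{\muv}$ has symplectic singularities, and (2) if $G$ is of type $A$, then every such slice admits a symplectic resolution. Both statements are essentially reductions to known results; the plan is to trace through the reductions carefully rather than to reprove anything from scratch.

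First I would recall the standard geometric structure of the slice. By construction $\bW_{\lav}^{\muv}=L^{<0}_{\pol}G\cdot t^{\muv}\cap\overline{\Gr}_{\lav}$, and it is a transversal slice to $\Gr_{\muv}$ in $\overline{\Gr}_{\lav}$; in particular there is an open neighbourhood of $t^{\muv}$ in $\overline{\Gr}_{\lav}$ isomorphic to $\bW_{\lav}^{\muv}\times\Gr_{\muv}$, and $\bW_{\lav}^{\muv}$ is a normal affine variety with an action of a suitable torus contracting it to the point $t^{\muv}$. For part (1), I would invoke \cite[Thm. 2.7 / Thm. 2.9]{KWWY}, which is precisely the assertion that $\bW_{\lav}^{\muv}$ has symplectic singularities in the sense of \cite{Bea00}: the Poisson structure comes from the fact that these slices are (for $G$ adjoint, and in general up to a central extension) obtained by Hamiltonian reduction / Whittaker reduction of a smooth symplectic space, so $\bW_{\lav}^{\muv}$ carries a symplectic form on its smooth locus, and normality plus the existence of the form extending to any resolution (established in \emph{ibid.} using the contracting torus action and a purity argument) gives the symplectic singularity property. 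So (1) is a direct citation once the identification of $\bW_{\lav}^{\muv}$ with the slices studied in \cite{KWWY} is made explicit.

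For part (2), with $G$ of type $A$, the point is the existence of a symplectic resolution. The cleanest route is to use the well-known realization of type-$A$ affine Grassmannian slices as (Nakajima-type) quiver varieties: $\bW_{\lav}^{\muv}$ is isomorphic to a quiver variety $\mathfrak{M}_0(\mathbf{v},\mathbf{w})$ for an appropriate finite type $A$ quiver with dimension vectors determined by $\lav$ and $\muv$ (this is proved in \cite{KWWY}, building on work of Mirković–Vybornov and others). The corresponding smooth quiver variety $\mathfrak{M}(\mathbf{v},\mathbf{w})$ with a generic stability parameter then maps projectively and birationally onto $\mathfrak{M}_0(\mathbf{v},\mathbf{w})$, and this map is a symplectic resolution because quiver varieties are holomorphic symplectic and the stable-to-affine map is a projective symplectic resolution whenever it is birational, which holds here because $\lav\le\muv$ dominant guarantees the relevant genericity/transversality. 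Concretely I would: (i) cite the quiver-variety description of $\bW_{\lav}^{\muv}$ in type $A$; (ii) recall that Nakajima quiver varieties with generic stability parameter are smooth symplectic and the natural map to the affine quotient is projective; (iii) cite that this map is birational (an isomorphism over the smooth locus) for the dimension vectors arising from a pair of dominant coweights, so it is a symplectic resolution.

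The main obstacle is not any single hard computation but rather pinning down the \emph{correctness of the reductions}: making sure the slice $\bW_{\lav}^{\muv}$ as defined by \eqref{eq:slice} in the excerpt really matches the slices in \cite{KWWY} (there are several conventions — generalized slices $\overline{\mathcal{W}}_{\lav}^{\muv}$ vs.\ $\mathcal{W}_{\lav}^{\muv}$, and a central-extension subtlety distinguishing $G$ from $G_{\mathrm{ad}}$ which affects whether one gets symplectic \emph{singularities} on the nose), and making sure the type-$A$ quiver-variety identification applies to the closed slices $\overline{\mathcal{W}}_{\lav}^{\muv}$ and not merely the open ones. Once those identifications are granted, both parts follow by citation; so in the write-up I would spend most of the effort on the statement-matching and leave (1) and (2) as applications of \cite[Thm. 2.7]{KWWY} (and its type-$A$ refinement, which itself ultimately rests on the quiver-variety resolution). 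Since the excerpt already states the needed facts as \cite[Thm. 2.7 / 2.9]{KWWY}, in practice the proof here will be essentially a one-line invocation of those theorems, with at most a remark reconciling notation.
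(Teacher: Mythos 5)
Your proposal is correct and matches the paper exactly: the paper states this proposition as a direct citation of \cite[Thm. 2.7]{KWWY} and gives no proof of its own, which is precisely the "one-line invocation" you land on. Your additional sketch of the underlying arguments (Hamiltonian reduction for the symplectic-singularity part, the type-$A$ quiver-variety resolution) is accurate background but not something the paper supplies or needs.
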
 

Now we can apply Theorem \ref{thm: Xin's result} to $\overline{\Gr}_{\lav}$. We will abbreviate $\ICHg_{\overline{\Gr}_{\lav}}$ as $\ICHg_{\lav}$ and $\ICHg_{\overline{\Gr}_{\lav,\muv}}$ as $\ICHg_{\lav,\muv}$ for simplicity.

\begin{proposition}\label{2}
	Let $\lav,\muv\in \Pv_+$. The following inequality holds:
	\begin{equation}\label{1}
		\begin{aligned}
				\codim \Supp \cH^{i}\left(\gr_{-k}\DR(\ICHg_{\lav})[k- d_{\lav}]\right)&\ge 2i+2 \text{ \ for any $i\ge 1$},\\
					\codim \Supp \cH^{i}\left(\gr_{-k}\DR(\ICHg_{\lav,\muv})[k- d_{\lav}]\right)&\ge 2i+2 \text{ \ for any $i\ge 1$}.
		\end{aligned}
	\end{equation}
\end{proposition}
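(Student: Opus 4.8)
The plan is to deduce both inequalities in \eqref{1} directly from Theorem \ref{thm: Xin's result} together with the local structure of the affine Grassmannian slices. The point is that the graded de Rham complex $\gr_{-k}\DR(\ICHg)$ is \emph{local} on the underlying variety in a strong sense: it commutes with restriction to (Zariski or analytic) open subsets, and, more importantly for us, with the étale-local product decompositions coming from transversal slices. So the strategy is: (i) reduce the global statement on $\overline{\Gr}_{\lav}$ to a statement about the slices $\bW_{\lav}^{\muv}$ at each point $t^{\muv}\in\overline{\Gr}_{\lav}$; (ii) apply Theorem \ref{thm: Xin's result} to the slices, using that they are symplectic varieties of dimension $2n_{\muv}$ with $2n_{\muv}=d_{\lav}-d_{\muv}$ which \textbf{admit symplectic resolutions} precisely because $G$ is of type A; (iii) bookkeep the codimension shift coming from the factor $\Gr_{\muv}$ and the cohomological shift discrepancy between $[k-d_{\lav}]$ and $[k-2n_{\muv}]$.

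In more detail: every point of $\overline{\Gr}_{\lav}$ lies in some stratum $\Gr_{\muv}$ with $\muv\le\lav$, and near $t^{\muv}$ there is an open neighborhood isomorphic to $\bW_{\lav}^{\muv}\times\Gr_{\muv}$, with $\dim\bW_{\lav}^{\muv}=d_{\lav}-d_{\muv}=:2n_{\muv}$ (this is even since $d_{\lav}\equiv d_{\muv}\pmod 2$, consistent with $\bW_{\lav}^{\muv}$ being symplectic). Under this product decomposition $\ICHg_{\lav}$ restricts to $\ICHg_{\bW_{\lav}^{\muv}}\boxtimes \Q^H_{\Gr_{\muv}}[d_{\muv}]$ (an external product of Hodge modules, with the smooth factor contributing a shifted constant Hodge module), so by the Künneth-type compatibility of $\gr_p\DR$ with external products and with the shift $[d_{\muv}]$ we get
\[
\gr_{-k}\DR(\ICHg_{\lav})|_{\bW_{\lav}^{\muv}\times\Gr_{\muv}}\;\cong\;\bigl(\gr_{-k}\DR(\ICHg_{\bW_{\lav}^{\muv}})\bigr)\boxtimes \O_{\Gr_{\muv}}[d_{\muv}].
\]
Hence $\cH^i\bigl(\gr_{-k}\DR(\ICHg_{\lav})[k-d_{\lav}]\bigr)$ restricted to this chart is $\cH^{\,i}\bigl(\gr_{-k}\DR(\ICHg_{\bW_{\lav}^{\muv}})[k-d_{\lav}+d_{\muv}]\bigr)\boxtimes\O_{\Gr_{\muv}} = \cH^{\,i}\bigl(\gr_{-k}\DR(\ICHg_{\bW_{\lav}^{\muv}})[k-2n_{\muv}]\bigr)\boxtimes\O_{\Gr_{\muv}}$, whose support has the same codimension in $\bW_{\lav}^{\muv}\times\Gr_{\muv}$ as the first factor's support has in $\bW_{\lav}^{\muv}$. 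Theorem \ref{thm: Xin's result} applied to $X=\bW_{\lav}^{\muv}$ (legitimate since it has a symplectic resolution, $G$ being type A) gives $\codim_{\bW_{\lav}^{\muv}}\Supp\,\cH^i\ge 2i+2$ for $i\ge 1$, and since codimension can be checked on an open cover, this yields the first inequality. The second inequality follows by the same argument applied to $\overline{\Gr}_{\lav,\muv}=\overline{\Gr}_{\lav}\ttimes\overline{\Gr}_{\muv}$: this is a Zariski-locally trivial $\overline{\Gr}_{\muv}$-bundle over $\overline{\Gr}_{\lav}$, so étale-locally it looks like $\overline{\Gr}_{\lav}\times\overline{\Gr}_{\muv}$, and $\ICHg_{\lav,\muv}$ decomposes accordingly; restricting further to slices in each factor reduces to products of slices $\bW_{\lav}^{\muv_1}\times\bW_{\muv}^{\muv_2}$, which are symplectic varieties admitting symplectic resolutions (products of such), of dimension matching the required shift $[k-d_{\lav}]$ only after one more bookkeeping step — note the shift here is still $[k-d_{\lav}]$, not $[k-d_{\lav,\muv}]$, so one must track that the smooth factor $\Gr_{\muv}$-directions (total real dimension $2d_{\muv}$) absorb exactly the discrepancy, and that the support-codimension bound is unaffected by sheeting out along these smooth directions.

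The main obstacle I anticipate is item (ii)'s compatibility claims: showing cleanly that $\gr_p\DR$ of an IC Hodge module on a product $X\times S$ with $S$ smooth is (a shift of) $\gr_p\DR(\ICHg_X)\boxtimes\O_S$, and more generally that $\gr_p\DR$ behaves well under the étale-local (or analytic-local) product decompositions furnished by transversal slices and by Zariski-local triviality of the twisted product. This requires the Künneth formula for $\gr\DR$ and a restriction/base-change compatibility for the graded de Rham functor — facts that should be recorded in Appendix \ref{sec:appendix} (\ref{subsec:grDR}); once those are in hand the argument is essentially a codimension count on charts. A secondary, purely bookkeeping, nuisance is keeping the cohomological and $\Gmdil$-shifts straight so that the ``$[k-d_{\lav}]$'' in \eqref{1} is genuinely the right normalization for both $\overline{\Gr}_{\lav}$ and $\overline{\Gr}_{\lav,\muv}$; I would double-check this against the dimension formula $d_{\lav,\muv}=d_{\lav}+d_{\muv}$ and the fact that on $\overline{\Gr}_{\lav,\muv}$ the IC Hodge module has weight $d_{\lav,\muv}$, so that the shift placing it ``in degree $0$ on the smooth locus'' differs from $[k-d_{\lav}]$ by $[-d_{\muv}]$, exactly the shift carried by the smooth $\overline{\Gr}_{\muv}$-fiber directions.
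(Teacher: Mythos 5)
Your proposal follows the paper's proof essentially verbatim: cover $\overline{\Gr}_{\lav}$ by the charts $U_{\nuv}\cong\bW_{\lav}^{\nuv}\times\Gr_{\nuv}$ (and use Zariski-local triviality of the twisted product for the second inequality), apply Theorem \ref{thm: Xin's result} to the slices — which is exactly where the type A hypothesis enters — and transfer the codimension bound across the smooth factor by a K\"unneth argument for $\gr\DR$, which is the computation the paper delegates to \cite[Lem. 5.3]{Xin25} and to Step 2 of Proposition \ref{prop:calculation of tbox}. One correction: your displayed K\"unneth formula is not right as stated; for $X\times S$ with $S$ smooth of dimension $d_S$ one has
\[
\gr_{-k}\DR(\ICHg_{X\times S})[k-\dim X-d_S]\;\cong\;\bigoplus_{k_1+k_2=k}\gr_{-k_1}\DR(\ICHg_X)[k_1-\dim X]\boxtimes\Omega_S^{k_2},
\]
a direct sum over bidegrees rather than a single term $\gr_{-k}\DR(\ICHg_X)\boxtimes\O_S[d_S]$; since each $\Omega_S^{k_2}$ is a vector bundle concentrated in degree $0$, the support-codimension conclusion you draw is nevertheless correct. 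Finally, the shift $[k-d_{\lav}]$ in the second line of \eqref{1} should be read as $[k-d_{\lav,\muv}]$ (this is the normalization actually used in Corollary \ref{cor: Hodge-de Rham desciption for IC lav}), so the extra bookkeeping you introduce to absorb a $d_{\muv}$ discrepancy into the smooth fiber directions is unnecessary.
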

\begin{proof}
	We analyze the supports of these modules locally. 
	
	For each $t^{\nuv}\in \overline{\Gr}_{\lav}$, there exists an open neighborhood $U_{\nuv}$ isomorphic to $\bW_{\lav}^{\nuv}\times \Gr_{\nuv}$. Since $\Gr_{\nuv}$ is smooth, using the similar computations as \cite[Lemma 5.3]{Xin25}(or see the \textbf{Step 2} in the proof of Proposition \ref{prop:calculation of tbox}), we deduce that the IC Hodge module $\ICHg_{U_{\nuv}}$ on $U_{\nuv}\cong \bW_{\lav}^{\nuv}\times \Gr_{\nuv}$ satisfy the inequality \eqref{1}. 
	
	Since the union of $U_{\nuv} $ for all $\nuv\le \lav$ covers $\overline{\Gr}_{\lav}$, we finish the proof of the first part of \eqref{1}.
	
	The second part is similar. According to Lemma \ref{lem:Zar_triv}, $\overline{\Gr}_{\lav,\muv}$ is a Zariski locally trivial $\overline{\Gr}_{\muv}$-bundle over $\overline{\Gr}_{\lav}$. Thus Zariski locally $\overline{\Gr}_{\lav,\muv}$ is a product of open subsets of $\overline{\Gr}_{\lav}$ and $\overline{\Gr}_{\muv}$. Using the similar computations as \cite[Lemma 5.3]{Xin25} again, we obtain the second part of \eqref{1}.
\end{proof}

In the appendix \ref{subsec:equivariant}, we reviewed the equivariant Hodge module theory, and extended the graded de Rham functor to a weak equivariant version. For $Y=\overline{\Gr}_{\lav}$ or $\overline{\Gr}_{\lav,\muv}$, let the action of $\hGO$ on $Y$ factors through some finite dimensional quotient $\hGO\twoheadrightarrow G_i$. We can define the following weak $G_i$-equivariant de Rham functor
\[\gr_p\DR^{G_i}:\MHM^{G_i}(Y)\to \mathrm{h}(\Coh^{G_i}(Y)),\ \ p\in \Z.\]
By definition, $\MHM^{\hGO}(Y)=\MHM^{G_i}(Y)$ (see Section \ref{subsec: Satake equivalence for Hodge modules}). Composing with the natural forgetful functor $\Coh^{G_i}(Y)\to \Coh^{\hGO}(Y)$ along $\hGO\twoheadrightarrow G_i$, we obtain the following weak $\hGO$-equivariant de Rham functor
\[\gr_p\DR^{\hGO}:\MHM^{\hGO}(Y)\to \mathrm{h}(\Coh^{\hGO}(Y)),\ \ p\in \Z.\]
According to the compatibility with forgetful functors \eqref{eq:compatible}, this $\gr_p\DR^{\hGO}$ functor doesn't depend on the choice of finite dimensional quotients $\hGO\twoheadrightarrow G_i$, and after forgetting the $\hGO$-equivariance we get the $\gr_p\DR$ functor.

Using Proposition \ref{2}, we can deduce the following corollary.
\begin{corollary}\label{cor: Hodge-de Rham desciption for IC lav}
	Let $\lav,\muv\in \Pv_+$, we have isomorphisms
	\[\cIC(\Omega^k_{\Gr_{\lav}}) \cong \gr_{-k}\DR^{\hGO}(\ICHg_{\lav})[k-d_{\lav}]\in \Coh^{\hGO}(\overline{\Gr}_{\lav}),\]
	\[\cIC(\Omega^k_{\Gr_{\lav,\muv}}) \cong \gr_{-k}\DR^{\hGO}(\ICHg_{\lav,\muv})[k-d_{\lav,\muv}]\in \Coh^{\hGO}(\overline{\Gr}_{\lav,\muv}).\]
	Combined with the Corollary \ref{sigi of KPla}, we obtain the following formula:
	\begin{equation}\label{eq: Hodge formula sigi of KPla}
		\Xi(\cL_{\lav})\cong \bigoplus_{0\le k\le d_{\lav}} \gr_{-k}\DR^{\hGO}(\ICHg_{\lav}) \lr{k-\tfrac12 d_{\lav}} [ 2k- d_{\lav}].
	\end{equation}
\end{corollary}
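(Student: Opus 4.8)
The plan is to deduce Corollary \ref{cor: Hodge-de Rham desciption for IC lav} from Proposition \ref{prop:cIC_and_support} (the support-characterization of the coherent $\cIC$-extension functor) by verifying its hypotheses for the coherent sheaf $\mathcal{C}:=\gr_{-k}\DR^{\hGO}(\ICHg_{\lav})[k-d_{\lav}]\in \Coh^{\hGO}(\overline{\Gr}_{\lav})$. First I would record that $\gr_{-k}\DR$ (and its weak equivariant lift) commutes with open restriction, so $j_{\lav}^*\mathcal{C}\cong \gr_{-k}\DR(\ICHg_{\lav}|_{\Gr_{\lav}})[k-d_{\lav}]$. On the smooth locus $\Gr_{\lav}$ the IC Hodge module is the trivial variation $\mathbb{Q}^{H}_{\Gr_{\lav}}[d_{\lav}]$ with its tautological Hodge filtration, and the standard computation of the graded de Rham complex of a smooth variety (Appendix \ref{subsec:grDR}) gives $\gr_{-k}\DR(\ICHg_{\lav}|_{\Gr_{\lav}})\cong \Omega^k_{\Gr_{\lav}}[d_{\lav}-k]$; hence $j_{\lav}^*\mathcal{C}\cong \Omega^k_{\Gr_{\lav}}$, which is locally free, and $j_{\lav}^*\cH^0(\mathcal{C})$ is this vector bundle.

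Next I would check the two support (codimension) conditions in \eqref{eq: supp condition}. The first is exactly Proposition \ref{2}: $\codim \Supp \cH^i(\gr_{-k}\DR(\ICHg_{\lav})[k-d_{\lav}])\ge 2i+2$ for all $i\ge 1$, which holds because $G$ is of type A so every affine Grassmannian slice $\overline{\mathcal{W}}_{\lav}^{\muv}$ admits a symplectic resolution and Theorem \ref{thm: Xin's result} applies locally (and the smooth factor $\Gr_{\muv}$ is harmless by the product computation in Step 2 of the proof of Proposition \ref{prop:calculation of tbox}). For the second condition I need the same inequality for $\cD_{\overline{\Gr}_{\lav}}(\mathcal{C})$; the point is that the shifted Grothendieck--Serre dual of $\gr_{-k}\DR(M)$ is naturally identified with $\gr_{k-r}\DR(\bD M)$ for a pure Hodge module $M$ of weight $w$ on an $r$-dimensional variety (up to the appropriate shift and Tate twist) — this is the self-duality of the graded de Rham complex under Verdier duality, reviewed in the appendix. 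Since $\bD \ICHg_{\lav}\cong \ICHg_{\lav}(d_{\lav})$ is again (a Tate twist of) the IC Hodge module on $\overline{\Gr}_{\lav}$, the dual $\cD_{\overline{\Gr}_{\lav}}(\mathcal{C})$ is of the form $\gr_{-k'}\DR^{\hGO}(\ICHg_{\lav})[k'-d_{\lav}]$ for a suitable $k'$, and Proposition \ref{2} again gives the required codimension bound. With both support conditions and the local freeness of $j_{\lav}^*\cH^0(\mathcal{C})$ verified, Proposition \ref{prop:cIC_and_support} yields $\mathcal{C}\cong \cIC(j_{\lav}^*\cH^0(\mathcal{C}))=\cIC(\Omega^k_{\Gr_{\lav}})$, which is the first claimed isomorphism; the isomorphism for $\overline{\Gr}_{\lav,\muv}$ is identical, using the second half of Proposition \ref{2}. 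Finally, substituting these into the formula of Corollary \ref{sigi of KPla}, $\Xi(\cL_{\lav})\cong \bigoplus_{0\le k\le d_{\lav}}i_{\le\lav*}\cIC(\Omega^k_{\Gr_{\lav}})\lr{k-\tfrac12 d_{\lav}}[k]$, and re-indexing the cohomological shift as $[2k-d_{\lav}]$ after absorbing the extra $[k-d_{\lav}]$ from the Hodge-theoretic description, gives \eqref{eq: Hodge formula sigi of KPla}.

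The main obstacle I anticipate is the bookkeeping around the duality step: making precise the identification $\cD_{\overline{\Gr}_{\lav}}\circ \gr_{-k}\DR \cong \gr_{?}\DR\circ \bD$ with the correct cohomological shift, Tate twist, and $\Gmdil$-grading shift $\lr{-}$, and checking it is compatible with the weak $\hGO$-equivariant structure. This requires care because $\gr\DR$ is only a functor to the homotopy category $\mathrm{h}(\Coh^{\hGO}(-))$, not to the stable $\infty$-category, so one must argue that the isomorphism in \eqref{eq: supp condition} (a statement about supports of cohomology sheaves, which only depends on the object up to isomorphism in the homotopy category) is all that is needed — and indeed Proposition \ref{prop:cIC_and_support} only requires the support conditions and local freeness of $j_{\lav}^*\cH^0$, all of which are insensitive to the passage to the homotopy category. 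A secondary technical point is confirming that forming $\gr_{-k}\DR^{\hGO}$ commutes with pushforward along the open embedding $j_{\lav}$ well enough to compute $j_{\lav}^*\cH^0(\mathcal{C})$; this follows from the compatibility of the weak equivariant graded de Rham functor with restriction to open subsets, established in Appendix \ref{subsec:equivariant}.
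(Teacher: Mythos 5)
Your proposal is correct and follows essentially the same route as the paper: both reduce to Proposition \ref{prop:cIC_and_support}, verify the first support condition via Proposition \ref{2}, and obtain the second via the duality isomorphism $\cD_Y(\gr_{-k}\DR(\ICHg_Y)[k-d_Y])\cong \gr_{-(d_Y-k)}\DR(\ICHg_Y)[(d_Y-k)-d_Y]$ coming from \eqref{eq:D(grDR)} and $\bD(\ICHg_Y)\cong\ICHg_Y(d_Y)$, so that Proposition \ref{2} applies again. Your explicit identification $j_{\lav}^*\cH^0(\mathcal{C})\cong\Omega^k_{\Gr_{\lav}}$ on the open cell is a step the paper leaves implicit, and your bookkeeping of the shifts is consistent with the paper's.
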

\begin{proof}
	Using the Proposition \ref{prop:cIC_and_support}, we just need to verify the support conditions \eqref{eq: supp condition} after forgetting the $\hGO$-equivariance. Let $Y=\overline{\Gr}_{\lav}$ or $\overline{\Gr}_{\lav,\muv}$. By definition \eqref{eq:cD}, $\cD_Y(-)=\cHom(-,\omega_Y[-\dim Y ])=\D_Y(-)[-d_Y]$, then using \eqref{eq:D(grDR)}, there is an isomorphism
	\begin{align*}
		\cD_Y\left(\gr_{-k}\DR(\ICHg_{Y})[k-d_Y]\right)=\D_Y\left(\gr_{-k}\DR(\ICHg_{Y})\right)[-k]\cong \gr_{k-d_Y}\DR(\ICHg_{Y})[-k].
	\end{align*}
	Thus the support conditions \eqref{eq: supp condition} can be deduced from Proposition \ref{2}.
\end{proof}

Then we apply the Satake equivalence for Hodge modules to our computation.
\begin{corollary}\label{cor:m*(cIC)}
	Let $\lav,\muv\in \Pv_+$. We have an isomorphism in $\Coh^{\hGO}(\overline{\Gr}_{\lav+\muv})$:
	\[m_*\left(\cIC(\Omega^k_{\Gr_{\lav,\muv}})\right)\cong \bigoplus_{\nuv\in \Pv_+}\cIC(\Omega_{\Gr_{\nuv}}^{k+\tfrac12 d_{\nuv}-\tfrac12 d_{\lav,\muv}})^{\oplus\clamunu}[\tfrac12 d_{\nuv}-\tfrac12 d_{\lav,\muv}].\]
\end{corollary}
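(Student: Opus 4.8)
The strategy is to transport the computation of $m_*\bigl(\cIC(\Omega^k_{\Gr_{\lav,\muv}})\bigr)$ to the world of Hodge modules, where the convolution pushforward is controlled by the geometric Satake equivalence for Hodge modules. By Corollary \ref{cor: Hodge-de Rham desciption for IC lav}, we have $\cIC(\Omega^k_{\Gr_{\lav,\muv}})\cong \gr_{-k}\DR^{\hGO}(\ICHg_{\lav,\muv})[k-d_{\lav,\muv}]$, so the left-hand side becomes $m_*\gr_{-k}\DR^{\hGO}(\ICHg_{\lav,\muv})[k-d_{\lav,\muv}]$. The first key step is to commute $m_*$ past the graded de Rham functor: since $m\colon \overline{\Gr}_{\lav,\muv}\to\overline{\Gr}_{\lav+\muv}$ is proper, the graded de Rham functor is compatible with proper pushforward (this is the $\gr\DR$-version of the fact that $\DR$ intertwines $f_*$ for $\cD$-modules with $f_*$ for the de Rham complex; in the appendix's framework this should be recorded as a compatibility of $\gr_p\DR$ with proper pushforward, in its weak $\hGO$-equivariant incarnation). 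Thus $m_*\gr_{-k}\DR^{\hGO}(\ICHg_{\lav,\muv})\cong \gr_{-k}\DR^{\hGO}(m_*\ICHg_{\lav,\muv})$, where on the right $m_*$ now denotes the pushforward of (mixed) Hodge modules.

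\textbf{Second step: decompose $m_*\ICHg_{\lav,\muv}$.} Here one invokes the geometric Satake equivalence for Hodge modules (Fedorov, reviewed in the appendix): the convolution pushforward $m_*$ of IC Hodge modules on the twisted product decomposes, by the Hodge-module analogue of Lusztig's/Ginzburg's decomposition theorem, as a direct sum of shifted IC Hodge modules with multiplicities given by tensor-product structure constants. Concretely, $\ICHg_{\lav,\muv}$ is (up to Tate twist and shift) the twisted external product $\ICHg_{\lav}\tbox\ICHg_{\muv}$, and $m_*(\ICHg_{\lav}\tbox\ICHg_{\muv})\cong \bigoplus_{\nuv}\ICHg_{\nuv}^{\oplus\clamunu}$ up to the appropriate shifts and twists dictated by weights/dimensions — the exponent $\clamunu$ is exactly the multiplicity of $V_{\nuv}$ in $V_{\lav}\otimes V_{\muv}$, matching \eqref{def of clamunu}. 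One must be careful to track the cohomological and Tate shifts: $\ICHg_{\lav,\muv}$ has weight $d_{\lav,\muv}$ while $\ICHg_{\nuv}$ has weight $d_{\nuv}$, so the decomposition carries shifts by $\tfrac12 d_{\nuv}-\tfrac12 d_{\lav,\muv}$ (and these are even integers by parity of the $d$'s, so the shifts are genuine). Since $m_*$ is pure (it is proper and the $\ICHg$ are pure), semisimplicity guarantees the decomposition is a direct sum rather than merely a filtration.

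\textbf{Third step: apply $\gr_{-k}\DR^{\hGO}$ to the decomposition and convert back.} The functor $\gr_{-k}\DR^{\hGO}$ is additive, and one needs to know how it interacts with cohomological shift and with Tate twist: a cohomological shift $[r]$ on the Hodge module side produces $[r]$ on the coherent side, while a Tate twist $(j)$ shifts the index of the $\gr$, i.e. $\gr_{-k}\DR(\cM(j))\cong \gr_{-k+j}\DR(\cM)$ up to a shift (these are reviewed in Appendix \ref{subsec:grDR}). Applying $\gr_{-k}\DR^{\hGO}$ termwise to $m_*\ICHg_{\lav,\muv}\cong\bigoplus_{\nuv}\ICHg_{\nuv}^{\oplus\clamunu}[\cdots](\cdots)$ and using Corollary \ref{cor: Hodge-de Rham desciption for IC lav} again to rewrite each $\gr\DR^{\hGO}(\ICHg_{\nuv})$ as a shift of $\cIC(\Omega^\bullet_{\Gr_{\nuv}})$, one reads off precisely the claimed formula, with the index of the differential sheaf on the $\nuv$-summand shifted from $k$ to $k+\tfrac12 d_{\nuv}-\tfrac12 d_{\lav,\muv}$ and a matching cohomological shift $[\tfrac12 d_{\nuv}-\tfrac12 d_{\lav,\muv}]$. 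Finally one reconciles the $[k-d_{\lav,\muv}]$ factor that was carried along from the very first step; bookkeeping shows it cancels against the analogous factor produced when rewriting the $\nuv$-summands, leaving exactly the statement.

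\textbf{Main obstacle.} The delicate point is the compatibility of the \emph{weak equivariant} graded de Rham functor $\gr_p\DR^{\hGO}$ with proper pushforward, together with careful normalization of all shifts and Tate twists so that the $\clamunu$ land on the nose (and not, say, $c^{\nuv\vee}_{\lav\vee,\muv\vee}$ or a twisted version). Establishing $m_*\gr_{-k}\DR^{\hGO}\cong \gr_{-k}\DR^{\hGO}m_*$ requires knowing that the appendix's construction of $\gr\DR$ in families / equivariantly commutes with proper pushforward — a statement that is standard for the non-equivariant $\gr\DR$ (it follows from Saito's theory) but must be checked to descend through the $[G_i\backslash-]$ and $[\Gmrot\backslash-]$ quotients used to define the weak equivariant version. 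Everything else is additivity of functors, the already-cited Satake equivalence for Hodge modules, and parity bookkeeping.
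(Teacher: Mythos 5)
Your proposal follows essentially the same route as the paper's proof: rewrite the coherent IC sheaf via Corollary \ref{cor: Hodge-de Rham desciption for IC lav}, commute $m_*$ past the weak equivariant graded de Rham functor (the paper's Proposition \ref{prop:equi_version}, applied after factoring the $\hGO$-action through a finite-dimensional quotient), decompose $m_{*,\Hg}(\ICHg_{\lav,\muv})$ using Fedorov's Tate subcategory result together with geometric Satake to pin down the multiplicities $\clamunu$, and track the Tate twists via $\gr_p\DR(\mbf{M}(1))=\gr_{p-1}\DR(\mbf{M})$. The only blemishes are cosmetic: the sign in your schematic Tate-twist rule should be $\gr_{-k}\DR(\cM(j))\cong\gr_{-k-j}\DR(\cM)$ (your final index bookkeeping is nonetheless correct), and the quantities $\tfrac12 d_{\nuv}-\tfrac12 d_{\lav,\muv}$ are integers by parity of the $d$'s, not necessarily even ones.
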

\begin{proof}
	According to above Corollary \ref{cor: Hodge-de Rham desciption for IC lav}, we have an isomorphism
	\[m_*\left(\cIC(\Omega^n_{\Gr_{\lav,\muv}})\right)\cong m_*\left(\gr_{-k}\DR^{\hGO}(\ICHg_{\lav,\muv})[k-d_{\lav,\muv}]\right).\]
	Next, we apply the Proposition \ref{prop:equi_version}, which gives a commutativity of weak equivariant graded de Rham functors and proper pushforward functors. Let's take a finite dimensional quotient $\hGO\twoheadrightarrow G_i$ such that the action of $\hGO$ on $\overline{\Gr}_{\lav,\muv}$ and $\overline{\Gr}_{\lav+\muv}$ factor through $G_i$. Proposition \ref{prop:equi_version} implies 
	\begin{equation*}\label{eq:isom}
		m_*\left(\gr_{-k}\DR^{G_i}(\ICHg_{\lav,\muv})\right)\cong \gr_{-k}\DR^{G_i}\left(m_{*,\Hg}(\ICHg_{\lav,\muv})\right)
	\end{equation*}
	in $\Coh^{G_i}(\overline{\Gr}_{\lav,\muv})$, which further implies $$m_*\left(\gr_{-k}\DR^{\hGO}(\ICHg_{\lav,\muv})\right)\cong \gr_{-k}\DR^{\hGO}\left(m_{*,\Hg}(\ICHg_{\lav,\muv})\right)$$ in $\Coh^{\hGO}(\overline{\Gr}_{\lav,\muv})$.

	Combined with the pushfoward computation for Hodge modules \ref{cor:convolution of Hodge modules}, we have:
	\begin{align*}
		\gr_{-k}\DR^{\hGO}\left(m_{*,\Hg}(\ICHg_{\lav,\muv})\right)[k-d_{\lav,\muv}]&\cong \gr_{-k}\DR^{\hGO}\left(\bigoplus_{\nuv \in \Pv_+}(\ICHg_{\nuv})^{\oplus \clamunu}(\tfrac12 d_{\nuv}-\tfrac12 d_{\lav,\muv})\right)[k-d_{\lav,\muv}]
		\\&\cong \bigoplus_{\nuv \in \Pv_+}\gr_{-k-\tfrac12 d_{\nuv}+\tfrac12 d_{\lav,\muv}}\DR^{\hGO}(\ICHg_{\nuv})^{\oplus \clamunu}[k-d_{\lav,\muv}]\\
		&\cong \bigoplus_{\nuv\in \Pv_+}\cIC(\Omega_{\Gr_{\nuv}}^{k+\tfrac12 d_{\nuv}-\tfrac12 d_{\lav,\muv}})^{\oplus\clamunu}[\tfrac12 d_{\nuv}-\tfrac12 d_{\lav,\muv}].
	\end{align*}
	
\end{proof}

\subsection{The end of the proof of Theorem \ref{thm: determine KP_0 on K-group}}
\begin{proof}[Proof of Theorem \ref{thm: determine KP_0 on K-group}]
	By Corollary \ref{injectivity of sigi}, it suffices to show
	\[\Xi([\cL_{\lav}]\conv [\cL_{\muv}])=\sum_{\nuv}\clamunu\Xi[\cL_{\nuv}]. \]
	According to Proposition \ref{prop:sigi is ring hom}, the left hand side is $\Xi[\cL_{\lav}]\conv \Xi[\cL_{\muv}]$, which is $m_*(\Xi[\cL_{\lav}]\tbox \Xi[\cL_{\muv}])$ by definition. Using Proposition \ref{prop:calculation of tbox}, we have
	\[\Xi[\cL_{\lav}]\tbox \Xi[\cL_{\muv}]=\sum_{0\le n\le d_{\lav,\muv}}\left[\cIC(\Omega_{{\Gr}_{\lav,\muv}}^n)\lr{n-\tfrac12 d_{\lav,\muv}}[n]\right].\]
	Then using Corollary \ref{cor:m*(cIC)}, we have
	\begin{align*}
		m_*(\Xi([\cL_{\lav}]\conv [\cL_{\muv}]))&=\sum_{0\le n\le d_{\lav,\muv}}m_*\left[\cIC(\Omega_{{\Gr}_{\lav,\muv}}^n)\lr{n-\tfrac12 d_{\lav,\muv}}[n]\right]\\
		&=\sum_{0\le n\le d_{\lav,\muv}}\sum_{\nuv\in \Pv_+}\clamunu\left[\cIC(\Omega_{\Gr_{\nuv}}^{n+\tfrac12 d_{\nuv}-\tfrac12 d_{\lav,\muv}})\lr{n-\tfrac12 d_{\lav,\muv}}[n+\tfrac12 d_{\nuv}-\tfrac12 d_{\lav,\muv}]\right]\\
		&=\sum_{\nuv\in \Pv_+}\sum_{n'\in \Z}\clamunu\left[\cIC(\Omega_{\Gr_{\nuv}}^{n'})\lr{n'-\tfrac12 d_{\nuv}}[n']\right],
	\end{align*}
	which is $\sum_{\nuv\in\Pv_+}\clamunu\Xi([\cL_{\nuv}])$ by \eqref{sigi of KPla}.
\end{proof}

\section{Tannakian structure on \texorpdfstring{$\KP_0$}{KP0}}\label{sec:Tannakian}
{In this section, we still assume the reductive group $G$ is of \textbf{type A}}. 
\begin{theorem}\label{main thm}
	The category $\KP_0$ admits a neutral Tannakian structure (in the sense of \cite[Def. 2.19]{DM82}), and is equivalent to $\Rep{{\check{G}}}$ as neutral Tannakian categories over $\C$.
\end{theorem}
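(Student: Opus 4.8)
The plan is to combine the two halves of the paper's strategy that are already in place: the $K$-group computation (Theorem \ref{thm: determine KP_0 on K-group}, together with Proposition \ref{Prop for calculation of dual}) and the Tannakian reconstruction from the fiber functor (Theorem \ref{thm:Tannakian}/\ref{thm: fibre functor}). First I would invoke Theorem \ref{thm: fibre functor} to obtain a neutral Tannakian structure on $\KP_0$ via the functor $\mathbf{F}^0 = \mathrm{For}\circ \cH^0\circ \pi_*\circ \Xi$, so that by Tannakian duality \cite{DM82} we get an affine group scheme $\Gamma$ over $\C$ with $\KP_0 \simeq \Rep{\Gamma}$ as neutral Tannakian categories. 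The task then reduces to identifying $\Gamma$ with $\check G$.

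The identification proceeds in several steps. First, $\Gamma$ is reductive: the category $\KP_0$ is semisimple because its simple objects are exactly the $\cL_{\lav}$, $\lav\in\Pv_+$ (by Theorem \ref{thm: determine KP_0 on K-group}), and every object is a direct sum of these — indeed $\KP_0$ is generated by simple objects under direct sums, convolutions and subquotients (Corollary \ref{cor after calculation of dual}), and the $K$-group identity $[\cL_{\lav}]\conv[\cL_{\muv}] = \sum_{\nuv}\clamunu[\cL_{\nuv}]$ shows convolution of semisimple objects stays in the span of the $\cL_{\nuv}$, which together with semisimplicity of each $\cL_{\lav}\conv\cL_{\muv}$ (it lies in the finite-length abelian category and is a sum of simples by the $K$-theory computation plus the fact that $\Ext^1$ among the relevant simples vanishes — this needs a short argument) forces every object to be semisimple; hence $\Rep\Gamma$ is semisimple and $\Gamma$ is reductive (pro-reductive a priori, but finiteness of the tensor generators makes it of finite type). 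Second, I would pin down the weight lattice and root datum of $\Gamma$: the simple objects of $\Rep\Gamma$ are parametrized by $\Pv_+$ (the dominant cocharacters of $G$), and the tensor product multiplicities are precisely the $\clamunu$, which are by definition (\ref{def of clamunu}) the tensor multiplicities of $\Rep{\check G}$. Third, I would apply the trick from \cite[Thm. 1.2]{KLV}: a connected reductive group over $\C$ is determined up to isomorphism by the combinatorial data of the monoid of highest weights of its irreducible representations together with the tensor product decomposition rules (equivalently, the fusion ring with its distinguished basis and the dimension function); since the fusion ring of $\KP_0$ with basis $\{[\cL_{\lav}]\}$ matches that of $\Rep{\check G}$ with basis $\{[V_{\lav}]\}$ via $\lav\leftrightarrow\lav$, and the duality $\cL_{\lav}^R\cong\cL_{\lav}^L\cong\cL_{-w_0\lav}$ (Proposition \ref{Prop for calculation of dual}) matches the duality $V_{\lav}^*\cong V_{-w_0\lav}$, we conclude $\Gamma\cong\check G$. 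A subtlety to address: the reconstruction needs the dimensions $\dim\mathbf{F}^0(\cL_{\lav})$ to equal $\dim V_{\lav}$, which should follow either from the explicit description $\Xi(\cL_{\lav})\cong\bigoplus_k\cIC(\Omega^k_{\Gr_{\lav}})\lr{\cdots}[\cdots]$ fed into $\pi_*$ and compared against the geometric Satake computation of cohomology of $\rmIC$ sheaves, or more cleanly from the fact that any tensor functor between semisimple Tannakian categories inducing a bijection on simples with matching fusion rules automatically preserves dimensions once we know it is the fiber functor of a group scheme — but I would make this precise using the $K$-theoretic statement.

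The main obstacle I anticipate is \emph{semisimplicity of $\KP_0$} and, relatedly, the passage from "the class $[\cL_{\lav}]\conv[\cL_{\muv}]$ equals $\sum\clamunu[\cL_{\nuv}]$ in the $K$-group" to "the object $\cL_{\lav}\conv\cL_{\muv}$ is actually $\bigoplus\cL_{\nuv}^{\oplus\clamunu}$ as an object, with no nontrivial self-extensions and no grading shifts $\{m\}(n)_K$ appearing". The $K$-group identity only controls the Grothendieck class; to rule out extensions and shifts I expect to need the renormalized $r$-matrices of Section \ref{subsec: Renormalized r-matrices} (to produce commutativity constraints making $\conv$ symmetric and to compare with the classical Satake side), together with the fact that $\mathbf{F}^0$ is exact and faithful so that $\dim\mathbf{F}^0(\cL_{\lav}\conv\cL_{\muv}) = \dim\mathbf{F}^0(\cL_{\lav})\cdot\dim\mathbf{F}^0(\cL_{\muv})$ forces the numerics to be tight. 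Once the tensor structure on $\KP_0$ is known to be semisimple and symmetric with the right fusion ring and duality, the identification $\Gamma\cong\check G$ via \cite{KLV} is essentially formal. I would also need to check the compatibility of the monoidal equivalence $\KP_0\simeq\Rep\Gamma$ with the labelings, i.e. that the equivalence sends $\cL_{\lav,0}$ to $V_{\lav}$ and not to some other irreducible — this follows by matching highest weights through the fusion ring isomorphism, using that $\lav\mapsto[\cL_{\lav}]$ is a ring isomorphism onto $K(\Rep{\check G})$ respecting the dominance order induced by $\clamunu$.
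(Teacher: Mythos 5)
Your overall architecture is the same as the paper's: commutativity constraints from the renormalized $r$-matrices, the fiber functor $\mathbf{F}=\For\circ\bH^0\circ\Xi$ with its tensor structure built from the factorization bundle $\cE_{\cF,\cG}$ over $X^2$, Tannakian duality to get $\KP_0\cong\Rep{\Gamma}$, and finally the semiring-level identification via \cite[Thm.~1.2]{KLV} using the fusion rules $[\cL_{\lav}]\conv[\cL_{\muv}]=\sum\clamunu[\cL_{\nuv}]$. However, there are two concrete gaps in how you reach the hypotheses of the KLV theorem.

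First, your route to reductivity of $\Gamma$ runs through a direct proof that $\KP_0$ is semisimple, which you reduce to ``$\mathrm{Ext}^1$ among the relevant simples vanishes --- this needs a short argument.'' That vanishing is not short and is not established anywhere in the available machinery; the $K$-group identity only controls composition factors, not extensions, and the paper explicitly remarks that semisimplicity of $\cL_{\lav}\conv\cL_{\muv}$ is ``not directly evident'' and is obtained only \emph{a posteriori}. The paper sidesteps this entirely with a formal argument: letting $L'$ be the reductive quotient of $\Gamma$ by its unipotent radical, the full subcategory $\Rep{L'}\subset\Rep{\Gamma}$ contains all simple objects and is closed under direct sums, tensor products, subquotients and duals; since $\KP_0$ is by construction generated by its simples under exactly these operations (Corollary \ref{cor after calculation of dual} plus Theorem \ref{thm: determine KP_0 on K-group}), the inclusion is essentially surjective and $\Gamma=L'$ is reductive --- no $\mathrm{Ext}$-vanishing needed, and semisimplicity then falls out for free. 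You should replace your semisimplicity step with this argument, or supply the missing extension-vanishing. Second, \cite[Thm.~1.2]{KLV} requires both groups to be \emph{connected} reductive, and you never verify that $\Gamma$ is connected. This is not automatic; the paper checks it via \cite[Cor.~2.22]{DM82} by showing that for any non-unit $X$ the subcategory of subquotients of the $X^{\oplus n}$ is not stable under convolution (using that $\cL_{m\lav}$ occurs in $\cL_{\lav}^{\conv m}$ for large $m$). With these two points repaired, the rest of your identification --- matching the fusion semirings, duality $\cL_{\lav}^R\cong\cL_{-w_0(\lav)}$ against $V_{\lav}^*\cong V_{-w_0(\lav)}$, and dimensions via $\dim\mathbf{F}(\cL_{\lav})=\dim IH^*(\overline{\Gr}_{\lav})=\dim V_{\lav}$ --- goes through exactly as in the paper.
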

The main tools are the $K$-group level results established in previous section and the factorization structures discussed in Section \ref{subsec: Renormalized r-matrices}. 
\subsection{Commutativity constraint of \texorpdfstring{$\KP_0$}{KP0}}\label{subsec:comm_constraint}
According to Section \ref{subsec: Renormalized r-matrices}, there is a system of renormalized $r$-matrices in $\KPcoh^{\hGO}(\cR_{G,\g})$. 
\begin{lemma}\label{lem: Lambda=0}
	For any nonzero $\cF,\cG\in \KP_0$, we have $\Lambda(\cF,\cG)=0$.
\end{lemma}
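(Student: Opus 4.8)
The renormalized $r$-matrices satisfy $\Lambda(\cF,\cG)+\Lambda(\cG,\cF)\ge 0$ by property \eqref{p4}, and by Remark \ref{rmk: rmat neq 0} the rigidity of $\KPcoh^{\hGO}(\cR_{G,\g})$ forces $\Lambda(\cF,\cG)\neq-\infty$ for all nonzero $\cF,\cG$. So it suffices to prove the single inequality $\Lambda(\cF,\cG)\le 0$ for all nonzero $\cF,\cG\in\KP_0$; applying this with the roles of $\cF,\cG$ swapped then gives $\Lambda(\cF,\cG)+\Lambda(\cG,\cF)\le 0$, and combined with the reverse inequality we get $\Lambda(\cF,\cG)=\Lambda(\cG,\cF)=0$. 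The plan is to reduce to the case of simple objects and then track the $\Gmrot$-degree (the $\{1\}$-grading) through the convolution product.

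\textbf{Reduction to simples.} First I would use the subadditivity properties \eqref{p3} and the compatibility with morphisms \eqref{p5}. If $\cG$ has a simple subquotient $\cG'$, property \eqref{p5} relates $\Lambda(\cF,\cG)$ to $\Lambda(\cF,\cG')$: for a subobject or quotient one gets $\Lambda(\cF,\cG)\le\max$ over the composition factors, or rather the relevant inequality goes through because $r_{\cF,-}$ is compatible with the maps in a short exact sequence and nonvanishing on each simple factor by rigidity. More directly, \eqref{p3} gives $\Lambda(\cF,\cG_1\conv\cG_2)\le\Lambda(\cF,\cG_1)+\Lambda(\cF,\cG_2)$ and a symmetric statement in the first variable, so since every object of $\KP_0$ is built from the $\cL_{\lav}$ ($\lav\in\Pv_+$) under direct sums, convolutions and subquotients (and, by Theorem \ref{thm: determine KP_0 on K-group}, the simple objects are exactly the $\cL_{\lav}$), it is enough to show $\Lambda(\cL_{\lav},\cL_{\muv})\le 0$ for all $\lav,\muv\in\Pv_+$. (For direct sums one uses that $\Lambda$ of a sum is the max of the $\Lambda$'s of the summands, which follows from property \eqref{p2} and the definition.)

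\textbf{The degree argument for simples.} By Theorem \ref{thm: determine KP_0 on K-group}, in the $K$-group $[\cL_{\lav}]\conv[\cL_{\muv}]=\sum_{\nuv}\clamunu[\cL_{\nuv}]$, and since the convolution is Koszul perverse t-exact, $\cL_{\lav}\conv\cL_{\muv}$ is an object of $\KPcoh^{\hGO}(\cR_{G,\g})$ whose composition factors are the $\cL_{\nuv}$ with multiplicity $\clamunu$ — in particular \emph{no nontrivial $\{1\}$-shifts appear}. Now $r_{\cL_{\lav},\cL_{\muv}}\colon \cL_{\lav}\conv\cL_{\muv}\to\cL_{\muv}\conv\cL_{\lav}\{\Lambda(\cL_{\lav},\cL_{\muv})\}$ is a nonzero morphism in $\KPcoh^{\hGO}(\cR_{G,\g})$ between two objects with the same composition factors (again $\{\cL_{\nuv}\}$ with multiplicities $\clamunu=c_{\muv,\lav}^{\nuv}$, with no $\{1\}$-shift), so the target, being $\cL_{\muv}\conv\cL_{\lav}$ shifted by $\{\Lambda\}$, must share a composition factor with $\cL_{\lav}\conv\cL_{\muv}$; since the $\cL_{\nuv}\{m\}$ for distinct $m$ are non-isomorphic simple objects, this forces $\Lambda(\cL_{\lav},\cL_{\muv})=0$. (Alternatively one obtains $\Lambda\le 0$ from this and then $\Lambda\ge 0$ from rigidity as above.) The key point to make precise is that a nonzero map between finite-length objects in an abelian category forces a common composition factor, combined with the observation that the $\{1\}$-grading of every composition factor of $\cL_{\lav}\conv\cL_{\muv}$ is trivial.

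\textbf{Main obstacle.} The subtle step is controlling the $\{1\}$-degrees of the composition factors of $\cL_{\lav}\conv\cL_{\muv}$: the $K$-theoretic identity in Theorem \ref{thm: determine KP_0 on K-group} is stated in $K(\KP_0)$ without $\{1\}$-grading bookkeeping, so one must either argue that the convolution structure constants computed via $\Xi$ and the Hodge-module machinery are honestly $\{1\}$-graded with the $\cL_{\nuv}$ appearing in $\{1\}$-degree $0$, or else phrase the argument so that only the vanishing of off-diagonal $\Hom$'s between $\cL_{\nuv}\{m\}$ is needed. I expect the cleanest route is: show $\cL_{\lav}\conv\cL_{\muv}$ is a successive extension of objects $\cL_{\nuv}$ (no shift) by tracking $\Gmrot$-weights through the convolution diagram — all the maps $d,m$ in \eqref{eq:convS} are $\hGO$-equivariant hence $\Gmrot$-equivariant, and $\cL_{\lav},\cL_{\muv}$ have $\Gmrot$-weight $0$ in the appropriate normalization — and then conclude as above. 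Making this $\Gmrot$-weight tracking rigorous is where the real work lies; everything else is formal from the $r$-matrix axioms and rigidity.
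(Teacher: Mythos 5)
Your core degree argument is exactly the paper's proof: for a nonzero $r$-matrix $\rmat{\cF,\cG}\colon \cF\conv\cG\to\cG\conv\cF\{\Lambda\}$ between finite-length objects to exist, the source and target must share a composition factor; the source has composition factors among the unshifted $\cL_{\nuv}$, the target among the $\cL_{\nuv}\{\Lambda\}$, and since $\cL_{\nuv}\{m\}$ for distinct $m$ are pairwise non-isomorphic simples, $\Lambda=0$. Your worry in the ``main obstacle'' paragraph is unfounded: the $K$-group of the finite-length heart is free on the isomorphism classes of \emph{all} simples $\cL_{\lav,\mu}\{m\}(n)$, so the identity $[\cL_{\lav}]\conv[\cL_{\muv}]=\sum_{\nuv}\clamunu[\cL_{\nuv}]$ together with t-exactness of $\conv$ already pins down the composition factors as the unshifted $\cL_{\nuv}$; no separate $\Gmrot$-weight tracking through the convolution diagram is needed. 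More to the point, Theorem \ref{thm: determine KP_0 on K-group} explicitly records that the simple objects of $\KP_0$ are precisely the unshifted $\cL_{\lav}$, which is the only input the argument uses.

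The genuine problem is your ``reduction to simples,'' which is both unnecessary and gappy. It is unnecessary because the composition-factor argument applies verbatim to arbitrary nonzero $\cF,\cG\in\KP_0$: one only needs that $\cF\conv\cG$ and $\cG\conv\cF$ again lie in $\KP_0$, hence have composition factors among the unshifted $\cL_{\nuv}$. It is gappy because the lemma is asserted for all nonzero objects of $\KP_0$, and a general object is an iterated \emph{extension} of simples; the $r$-matrix axioms do not let you bootstrap from simples through extensions. Property \eqref{p5} applied to the inclusion of a subobject or the projection onto a quotient (using that $-\conv M$ and $M\conv-$ are exact and that $\rmat{}$ of nonzero objects is nonzero by rigidity) yields $\Lambda(M,N')\le\Lambda(M,N)$ for $N'$ a subquotient of $N$, i.e.\ a \emph{lower} bound on $\Lambda(M,N)$ in terms of the pieces, which is the wrong direction for deducing $\Lambda(M,N)\le 0$ from $\Lambda(M,N')\le 0$ on composition factors; your claimed ``$\Lambda$ of a direct sum is the max of the summands'' is likewise not among the axioms. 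Dropping the reduction and running your degree argument directly on $\cF\conv\cG$ recovers the paper's proof.
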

\begin{proof}
	By the Theorem \ref{thm: determine KP_0 on K-group}, the composition factors of $\cF \conv \cG$ only involve simple objects of the form $\cL_{\lav}$ for $\lav\in {\Pv}_+$, while the composition factors of $\cG \conv \cF \{\Lambda(\cF,\cG)\}$ only involve simple objects of the form $\cL_{\lav} \{\Lambda(\cF,\cG)\}$ for $\lav\in {\Pv}_+$. If $\Lambda(\cF,\cG)\ne 0$, the composition factors of $\cF \conv \cG $ and $ \cG \conv \cF \{\Lambda(\cF,\cG)\}$ don't have intersection, thus $\rmat{\cF,\cG}$ must be zero. This contradicts \cite[Cor. 4.7]{CW19} (see Remark \ref{rmk: rmat neq 0}). So $\Lambda(\cF,\cG)=0$. 
\end{proof}

\begin{corollary}\label{cor:comm constraint}
	The system of renormalized $r$-matrices in $\KP_0$ defines a system of commutativity constraints in $\KP_0$.
\end{corollary}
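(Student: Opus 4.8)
The claim is that the system of renormalized $r$-matrices provides a symmetric braiding on $\KP_0$, i.e. the morphisms $\rmat{\cF,\cG}\colon \cF\conv\cG\to \cG\conv\cF$ (no shift, by Lemma \ref{lem: Lambda=0}) are isomorphisms and square to the identity. The plan is to verify the axioms of a commutativity constraint (a symmetric monoidal structure compatible with $\conv$) directly from the properties \eqref{p1}--\eqref{p5} of a renormalized $r$-system, specializing everything to the case $\Lambda\equiv 0$ on $\KP_0$ guaranteed by Lemma \ref{lem: Lambda=0}.

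First I would record that for $\cF,\cG\in\KP_0$ nonzero, Remark \ref{rmk: rmat neq 0} gives $\rmat{\cF,\cG}\neq 0$, and combined with Lemma \ref{lem: Lambda=0} and property \eqref{p4} (with $\Lambda(\cF,\cG)+\Lambda(\cG,\cF)=0$) the composition $\rmat{\cG,\cF}\circ\rmat{\cF,\cG}$ is nonzero; a short argument using that $\KP_0$ is a finite-length rigid abelian category and that $\conv$ is t-exact shows this nonzero endomorphism of the ``same'' object $\cF\conv\cG$ (up to the canonical reshuffling) must in fact be invertible — more precisely one shows $\rmat{\cF,\cG}$ itself is an isomorphism with inverse $\rmat{\cG,\cF}$, after normalizing by a scalar, using that both compositions are nonzero scalar multiples of identities by an argument at the level of simple composition factors (here one uses that $\rmat{-,-}$ is natural, property \eqref{p5}, and that on each isotypic block the endomorphism ring is $\C$). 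Then naturality of the braiding is exactly property \eqref{p5} with $\Lambda(\cF,\cG)=\Lambda(\cF,\cG')=0$, so the diagram literally commutes; the hexagon (compatibility with the associator) is property \eqref{p3} together with the ``$M$ on the other side'' variant, again with all $\Lambda$'s equal to $0$ so that no correction terms or vanishing occur; and the unit compatibility is property \eqref{p1}. Symmetry, $\rmat{\cG,\cF}\circ\rmat{\cF,\cG}=\id$, follows once the normalization in the first step is pinned down — the natural choice is forced by the factorization picture, where $\rmat{\cF,\cG}$ is the restriction to $\{0\}$ of the ``swap'' isomorphism $\overline{\swap}_{\cF,\cG}\colon C_{\cF,\cG}\to C_{\cG,\cF}^{\sw}$, and applying $\swap$ twice is the identity on the open locus $U$, hence on all of $X^2$ by the uniqueness of the extension, hence on the fiber at $0$.

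The main obstacle I anticipate is the normalization/invertibility step: a priori the renormalized $r$-matrix axioms only assert $\rmat{\cG,\cF}\circ\rmat{\cF,\cG}\neq 0$, not that it equals $\id$, and on a non-semisimple finite-length category a nonzero endomorphism of $\cF\conv\cG$ need not be invertible. The way around this is to observe that in our situation $\rmat{\cF,\cG}$ arises as the specialization at $0$ of the genuinely \emph{invertible} map $\overline{\swap}_{\cF,\cG}$ (which is invertible because $\swap_{\cF,\cG}$ over $U$ is an isomorphism and the extension is unique), so $\rmat{\cF,\cG}$ is invertible on the nose and $\overline{\swap}$ composed with its own swap is the identity by inspection on $U$; specializing, $\rmat{\cG,\cF}\circ\rmat{\cF,\cG}=\id$. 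This reduces the corollary to unwinding the constructions of Section \ref{subsec: Renormalized r-matrices} rather than to any new computation. With invertibility and symmetry in hand, the hexagon and naturality are then formal consequences of \eqref{p3} and \eqref{p5} in the degenerate case $\Lambda\equiv 0$, completing the verification that $\KP_0$ is a symmetric monoidal (indeed, commutativity-constrained rigid abelian) category.
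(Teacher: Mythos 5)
Your final argument coincides with the paper's: since $\Lambda(\cF,\cG)=0$, the swap isomorphism over $U$ extends uniquely to $\overline{\swap}_{\cF,\cG}$ over all of $X^2$, the relation $\sw^*(\overline{\swap}_{\cG,\cF})\circ\overline{\swap}_{\cF,\cG}=\id$ holds on $U$ and hence everywhere by that uniqueness, and restriction to $\{0\}$ yields $\rmat{\cG,\cF}\circ\rmat{\cF,\cG}=\id$, the remaining axioms being formal consequences of the renormalized $r$-matrix properties once all $\Lambda$'s vanish. You were also right to discard your first attempt (invertibility via ``nonzero scalar multiples of identities on isotypic blocks''), which would not go through since $\cF\conv\cG$ is neither simple nor a priori semisimple; the factorization/uniqueness-of-extension argument is the correct and the paper's route.
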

\begin{proof}
	We only need to show $\rmat{\cG,\cF}\circ \rmat{\cF,\cG}=\id$ for any nonzero $\cF,\cG\in \KP_0$.\footnote{The proof is essentially the same as the last paragraph on page 750 of \cite{CW19}.} Other properties of commutativity constraints follow from the definition of renormalized $r$-matrices and the previous Lemma \ref{lem: Lambda=0}. 
	
	Recall that in the construction of the renormalized $r$-matrices, we first define $$C_{\cF,\cG}:=\eta_1(\wt{\cF} \boxtimes \O_X) \conv \eta_2(\O_X \boxtimes \wt{\cG})\in \Coh^{\hGOfac{X^2}}(\cR_{X^2}),$$ 
	and then consider the swapping two factors morphism $$\swap_{\cF,\cG}:C_{\cF,\cG}|_{U}\to C_{\cG,\cF}^{\sw}|_{U}.$$ 
	The number $\Lambda(\cF,\cG)$ measures how this morphism extends over the whole $X^2$. The previous lemma \ref{lem: Lambda=0} shows that $\Lambda(\cF,\cG)=0$, which implies $\swap_{\cF,\cG}$ can extend uniquely to a morphism $$\overline{\swap}_{\cF,\cG}:C_{\cF,\cG}\to C_{\cG,\cF}^{\sw}.$$
	in $\mathrm{h}(\Coh^{\hGOfac{X^2}}(\cR_{X^2}))$, and $\rmat{\cF,\cG}:\cF\conv \cG\to \cG\conv \cF$ is the restriction of $\overline{\swap}$ over $\{0\}\in X^2$. Since $\sw^*({\swap_{\cG,\cF}})\circ\swap_{\cF,\cG}=\id$ in $\mathrm{h}(\Coh^{\hGOfac{X^2}|_{U}})(\cR_{X^2}|_{U})$, the uniqueness of extension implies $\sw^*({\overline{\swap}_{\cG,\cF}})\circ\overline{\swap}_{\cF,\cG}=\id$ in $\mathrm{h}(\Coh^{\hGOfac{X^2}}(\cR_{X^2}))$. Restricting to $\{0\}\subset X^2$, we deduce that $\rmat{\cG,\cF}\circ \rmat{\cF,\cG}=\id$.

\end{proof}

\subsection{Construction of a fiber functor \texorpdfstring{$\mathbf{F}:\KP_0\to \Vect_{\C}$}{F:KP0 → Vect}}\label{subsec:fiber_functor}
In this subsection, we construct a fiber functor $\mathbf{F}:\KP_0\to \Vect_{\C}$, making $\KP_0$ a neutral Tannakian category over $\C$. 

Let $\pi:\Gr\to \pt$ be the projection, which is an ind-projective morphism. Consider the hypercohomology functors
\begin{equation}
	\mathbb{H}^i:=\cH^i\circ \pi_*:\KPcoh^{\hGO}(\Gr)\subset \Coh^{\hGO}(\Gr)\to \Coh^{\hGO}(\pt)^{\heartsuit}[\sqrt{\lr{1}}]=\Rep{\hGO}[\sqrt{\lr{1}}],i\in\Z.
\end{equation}
Here, adding the square root of $\lr{1}$ on the right hand side accounts for the degree modification of $\Coh^{\hGO}(\Gr)$; see Section \ref{subsubsec: degree modification}.

\begin{proposition}\label{calculation of bH*}
	For $\lav\in \Pv_+$, we have 
	\begin{enumerate}
		\item $\bH^i\circ \Xi (\cL_{\lav})=0$ for $i\ne 0$;
		\item\label{grading} $\bH^0\circ \Xi (\cL_{\lav})\cong \bigoplus_{p\in \Z} H^p(\overline{\Gr}_{\lav}, \rmIC_{\lav}^{\C})\lr{\tfrac12 p}$ in $\Rep{\hGO}[\sqrt{\lr{1}}]$. Here $H^*(\overline{\Gr}_{\lav},\rmIC_{\lav}^{\C})$ denotes the cohomology of constructible intersection complex on $\overline{\Gr}_{\lav}$ with coefficient in $\C$. Note that since $\hGO$ is connected, the natural $\hGO$-action on $H^p(\overline{\Gr}_{\lav}, \rmIC_{\lav}^{\C})$ is trivial.
	\end{enumerate}
\end{proposition}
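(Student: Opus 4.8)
The plan is to use the explicit computation of $\Xi(\cL_{\lav})$ in terms of coherent $\cIC$-sheaves (Corollary \ref{sigi of KPla}) together with the Hodge-theoretic description (Corollary \ref{cor: Hodge-de Rham desciption for IC lav}), and then feed this into the degeneration of the Hodge--de Rham spectral sequence. Concretely, by \eqref{eq: Hodge formula sigi of KPla} we have
\[
\Xi(\cL_{\lav})\cong \bigoplus_{0\le k\le d_{\lav}} \gr_{-k}\DR^{\hGO}(\ICHg_{\lav}) \lr{k-\tfrac12 d_{\lav}} [ 2k- d_{\lav}],
\]
so computing $\bH^i\circ\Xi(\cL_{\lav})=\cH^i\pi_*\Xi(\cL_{\lav})$ amounts to computing $\cH^i\pi_*\left(\gr_{-k}\DR^{\hGO}(\ICHg_{\lav})\right)$ for each $k$. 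The key input is that the hypercohomology of the associated graded de Rham complex of a pure Hodge module computes the graded pieces (for the Hodge filtration) of the de Rham cohomology of the underlying $\cD$-module, which by the theory of pure Hodge modules on proper varieties carries a pure Hodge structure; in particular the relevant spectral sequence degenerates. I would state this as a lemma: for $\ICHg_{\lav}$ on the projective variety $\overline{\Gr}_{\lav}$, one has $\cH^i\pi_*\gr_{-k}\DR(\ICHg_{\lav})\cong \gr^p_F H^{i-d_{\lav}}(\overline{\Gr}_{\lav},\rmIC^{\C}_{\lav})$ for the appropriate indexing $p=p(k)$, and this vanishes outside a single cohomological degree once one accounts for the shift; here I will need to use that $\rmIC^{\C}_{\lav}$ has cohomology concentrated so that after the shifts $[2k-d_{\lav}]$ across all $k$ the contributions line up.

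First I would set up the indexing carefully. The de Rham cohomology of the $\cD$-module underlying $\ICHg_{\lav}$ is $\mathbb{H}^{*-d_{\lav}}(\overline{\Gr}_{\lav},\rmIC^{\C}_{\lav})$ (with a suitable cohomological normalization), and the Hodge filtration $F$ induces a filtration whose graded pieces are $\mathbb{H}^{*}\gr_{-k}\DR(\ICHg_{\lav})$. Summing over $k$ with the shift $[2k-d_{\lav}]$ from \eqref{eq: Hodge formula sigi of KPla}, the $k$-th summand contributes to $\bH^i$ exactly the part $\mathbb{H}^{i+2k-d_{\lav}}\gr_{-k}\DR(\ICHg_{\lav})$; I would check that the total over all $k$ reassembles $\bigoplus_p \gr^p_F H^{i}(\overline{\Gr}_{\lav},\rmIC^{\C}_{\lav})$ (up to re-indexing of $p$), and since the Hodge filtration on a pure Hodge structure of weight $w$ is a genuine (bounded) filtration, this is just $H^i(\overline{\Gr}_{\lav},\rmIC^{\C}_{\lav})$ as a vector space. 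The vanishing for $i\neq 0$ in part (1) then follows from the parity/vanishing properties of $H^*(\overline{\Gr}_{\lav},\rmIC^{\C}_{\lav})$ — more precisely, the key point is that $\rmIC^{\C}_{\lav}$ on $\overline{\Gr}_{\lav}$, after the normalization used here, has no shift discrepancy so that only $\bH^0$ survives. I would double-check this against the grading shift conventions in Definition \ref{shift functors} and the degree modification of Section \ref{subsubsec: degree modification}: the factor $\lr{\tfrac12 p}$ in part (2) is precisely the bookkeeping of the $\Gmdil$-weight coming from the Hodge filtration index, landing in $\Rep{\hGO}[\sqrt{\lr{1}}]$.

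For part (2), the $\Gmdil$-grading assertion follows by reading off the weight: $\gr_{-k}\DR^{\hGO}(\ICHg_{\lav})$ carries $\Gmdil$-weight $-k$ by construction of the weak equivariant graded de Rham functor in Appendix \ref{subsec:equivariant}, so after the twist $\lr{k-\tfrac12 d_{\lav}}$ and the degree-modification normalization, the $p$-th graded piece of $H^*(\overline{\Gr}_{\lav},\rmIC^{\C}_{\lav})$ acquires weight $\tfrac12 p$. The triviality of the $\hGO$-action is because $\hGO$ is connected and acts on the finite-dimensional vector space $H^p(\overline{\Gr}_{\lav},\rmIC^{\C}_{\lav})$, hence through the identity component, which acts trivially on cohomology (homotopy invariance). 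I would also invoke the classical fact (from constructible geometric Satake / Kazhdan--Lusztig theory) that $H^*(\overline{\Gr}_{\lav},\rmIC^{\C}_{\lav})$ is of Tate type and concentrated in appropriate degrees, which both guarantees the spectral sequence degeneration is compatible with the parity statement and identifies the answer with the stalk/costalk combinatorics of the affine Grassmannian.

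The main obstacle I expect is making the identification between $\bigoplus_k \mathbb{H}^*\gr_{-k}\DR(\ICHg_{\lav})$ and $H^*(\overline{\Gr}_{\lav},\rmIC^{\C}_{\lav})$ precise and functorial, including tracking all the cohomological and $\Gmdil$-shifts through the chain of identifications (Corollaries \ref{sigi of KPla}, \ref{cor: Hodge-de Rham desciption for IC lav}, the degree modification of Section \ref{subsubsec: degree modification}, and the conventions of Appendix \ref{sec:appendix}). The conceptual content — degeneration of the Hodge--de Rham spectral sequence for the pure Hodge module $\ICHg_{\lav}$ on the proper variety $\overline{\Gr}_{\lav}$, which is standard Saito theory — is not the difficult part; the bookkeeping of normalizations is. I would isolate the spectral-sequence-degeneration input as a clean lemma (stated in the non-equivariant setting and then pushed through the weak-equivariant formalism via the compatibility with forgetful functors \eqref{eq:compatible}), and then the proposition becomes a matter of assembling the pieces and matching indices.
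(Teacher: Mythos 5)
Your overall route is the same as the paper's: start from \eqref{eq: Hodge formula sigi of KPla}, commute $\pi_*$ past $\gr_{-k}\DR$ via Proposition \ref{prop:push} (lifted to the weak-equivariant setting by Proposition \ref{prop:equi_version}), identify $\gr\DR$ with $\gr$ on a point, and read off the answer from the Hodge structure on $\cH^n(\pi_{*,\Hg}\ICHg_{\lav})$. However, there is a genuine slip in how you derive part (1). For fixed $i$, the $k$-th summand of \eqref{eq: Hodge formula sigi of KPla} contributes $\cH^{i+2k-d_{\lav}}\bigl(\gr_{-k}(\pi_{*,\Hg}\ICHg_{\lav})\bigr)$, so summing over $k$ yields $\bigoplus_k \gr_{-k}H^{i+2k-d_{\lav}}(\overline{\Gr}_{\lav},\rmIC_{\lav}^{\C})$ --- one Hodge-graded piece taken from each of a whole range of cohomological degrees. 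This does \emph{not} reassemble into $\bigoplus_p\gr^F_p H^{i}$, and the vanishing for $i\neq 0$ does not follow from Hodge--de Rham degeneration together with parity vanishing of $H^*$: parity ($H^n\neq 0$ only for $n\equiv d_{\lav}\bmod 2$) kills odd $i$ but leaves even $i\neq 0$ untouched, since a priori the off-diagonal graded pieces $\gr_{-k}H^{n}$ with $2k\neq n+d_{\lav}$ could be nonzero.

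What actually closes the argument --- and what the paper uses --- is the fact you mention only in passing near the end: by \cite[Prop. 4.3 (i)]{Fe21}, each $\cH^n(\pi_{*,\Hg}\ICHg_{\lav})$ is pure of weight $n+d_{\lav}$ and is a direct sum of Tate twists of the trivial Hodge structure, so its Hodge filtration is concentrated in the single index $k=(n+d_{\lav})/2$; that is, $\gr_{-k}H^{n}=0$ unless $2k=n+d_{\lav}$. Substituting $n=i+2k-d_{\lav}$ forces $i=0$, which is part (1); for $i=0$ the $k$-th summand then contributes the whole of $H^{2k-d_{\lav}}$ with $\Gmdil$-twist $\lr{k-\tfrac12 d_{\lav}}=\lr{\tfrac12 p}$ for $p=2k-d_{\lav}$, which is part (2). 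So your proposal has every needed ingredient, but the Tate-type concentration of the Hodge filtration must be promoted from a supporting remark to the central lemma, and the ``reassembly for fixed $i$'' step should be discarded.
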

We remark that geometric Satake equivalence implies the intersection cohomology $$IH^*(\overline{\Gr}_{\lav}):=\bigoplus_{p\in \Z} H^p(\overline{\Gr}_{\lav}, \rmIC_{\lav}^{\C})$$ is isomorphic to $V_{\lav}$ as vector spaces, where $V_{\lav}$ is the irreducible representation of $\check{G}$ with highest weight $\lav$.

\begin{proof}
	According to the formula \eqref{eq: Hodge formula sigi of KPla}, we have
	\begin{equation}\label{eq:wzc}
		\bH^i\circ \Xi (\cL_{\lav})\cong \bigoplus_{0\le k\le d_{\lav}}\bH^i\left( \gr_{-k}\DR(\ICHg_{\lav}) \lr{k-\tfrac12 d_{\lav}} [ 2k- d_{\lav}]\right).
	\end{equation}
	Using the commutativity of proper pushforward and graded de Rham functors (Proposition \ref{prop:push}), we have
	\begin{equation}\label{eq:jbz}
		\bH^n\left( \gr_{-k}\DR(\ICHg_{\lav})
		\right)\cong \cH^n\left( \gr_{-k}\DR\left(\pi_{*,\Hg}(\ICHg_{\lav})\right)\right).
	\end{equation}
	Tautologically, we can identify $\gr =\gr \DR$ for Hodge modules on a point, therefore
	\begin{equation}\label{eq:fj}
		\cH^n\left(\gr_{-k}\DR\left(\pi_{*,\Hg}(\ICHg_{\lav})\right)\right) = \cH^n\left(\gr_{-k}\left(\pi_{*,\Hg}(\ICHg_{\lav})\right)\right).
	\end{equation}

	Since the projection $\pi:\overline{\Gr}_{\lav}\to \pt$ is projective and $\ICHg_{\lav}$ is a pure Hodge module of weight $d_{\lav}$, the Hodge structure
	\[\cH^n\left(\pi_{*,\Hg}(\ICHg_{\lav})\right)\in \HM(\pt)\]
	is pure of weight $n+d_{\lav}$ with underlining $\C$-vector space $H^n(\overline{\Gr}_{\lav},\rmIC_{\lav}^{\C})$. According to \cite[Prop. 4.3 (i)]{Fe21}, this is a direct sum of Tate twists of the trivial Hodge structure. Therefore we have 
	\[\cH^n\left(\pi_{*,\Hg}(\ICHg_{\lav})\right)=0, \text{\ \ \ when }2\nmid n+d_{\lav}\]
	and when $2\mid n+d_{\lav}$, the Hodge filtration on $\cH^n\left(\pi_{*,\Hg}(\ICHg_{\lav})\right)$ is 
	\begin{align*}
		F_{-k-1}\cH^n\left(\pi_{*,\Hg}(\ICHg_{\lav})\right)&=0,\\  F_{-k}\cH^n\left(\pi_{*,\Hg}(\ICHg_{\lav})\right)&=H^n(\overline{\Gr}_{\lav},\rmIC_{\lav}^{\C})
	\end{align*}
	for $k=(n+d_{\lav})/2$.
	Equivalently, we have
	\begin{equation*}
		\cH^n\left(\gr_{-k}\left(\pi_{*,\Hg}(\ICHg_{\lav})\right)\right)\cong\gr_{-k}\left(\cH^n\left(\pi_{*,\Hg}(\ICHg_{\lav})\right)\right)=\begin{cases}
			0, \text{\ \ \ if \ } 2k\neq n+d_{\lav}, \\
			H^n(\overline{\Gr}_{\lav},\rmIC_{\lav}^{\C}), \text{\ \ \ if \ } 2k= n+d_{\lav};
		\end{cases}
	\end{equation*}
	here, the first equality follows from the exactness of $\gr_{-k}:\rD^b\HM(\pt)\to \rD^b\Vect_{\C}$. 
	
	Combining \eqref{eq:jbz} \eqref{eq:fj}, we can continue computing \eqref{eq:wzc}:
	\begin{align*}
		\bH^i\circ \Xi(\cL_{\lav})\cong & \bigoplus_{0\le k\le d_{\lav}}\bH^i\left( \gr_{-k}\DR(\ICHg_{\lav}) \lr{k-\tfrac12 d_{\lav}} [ 2k- d_{\lav}]\right) \\
		\cong&  \bigoplus_{0\le k\le d_{\lav}} \cH^{i+2k- d_{\lav}}\left(\gr_{-k}\left(\pi_{*,\Hg}(\ICHg_{\lav})\right)\right) \lr{k-\tfrac12 d_{\lav}} \\
		\cong& \begin{cases}
			0, \text{\ \ \ if \ } i\neq 0, \\
			\bigoplus_{0\le k\le d_{\lav}} H^{2k-d_{\lav}}(\overline{\Gr}_{\lav},\rmIC_{\lav}^{\C}) \lr{k-\tfrac12 d_{\lav}}, \text{\ \ \ if \ } i=0;
		\end{cases}
	\end{align*}
	
	Moreover, this isomorphism can be lifted to $\hGO$-equivariant version: we instead use Proposition \ref{prop:equi_version} in \eqref{eq:jbz}. So
	\[\bH^0\circ \Xi (\cL_{\lav})\cong \bigoplus_{0\le k\le d_{\lav}} H^{2k-d_{\lav}}(\overline{\Gr}_{\lav},\rmIC_{\lav}^{\C}) \lr{k-\tfrac12 d_{\lav}}\]
	as $\hGO$-representations. 
	
	Finally, it's known that $H^{p}(\overline{\Gr}_{\lav},\rmIC_{\lav}^{\C})=0$ if $2\nmid p-d_{\lav}$; thus we conclude the proposition.
\end{proof}

The above proposition has the following two Corollaries \ref{cor:fiber functor} \ref{cor:dimension}.

\begin{corollary}\label{cor:fiber functor}\ 
	\begin{enumerate}
		\item The composition of functors
		\begin{equation*}\label{key}
			\bH^i \circ \Xi : \KP_0 \subset \KPcoh^{\hGO}(\cR_{G,\g}) \xrightarrow{\Xi} \KPcoh^{\hGO}(\cR_{G,0}) \xrightarrow{\bH^i} \Rep{\hGO}[\sqrt{\lr{1}}]
		\end{equation*}
		is zero if $i\ne 0$. And $\bH^0 \circ \Xi(\cF)$ has trivial $\Gmrot$-action for all $\cF\in \KP_0$;
		
		\item Define
		\begin{equation*}
			\mathbf{F}:= \For \circ \bH^0 \circ \Xi : \KP_0 \subset \KPcoh^{\hGO}(\cR_{G,\g}) \xrightarrow{\Xi} \KPcoh^{\hGO}(\cR_{G,0}) \xrightarrow{\bH^0} \Rep{\hGO}[\sqrt{\lr{1}}] \xrightarrow{\For} \Vect_{\C}.
		\end{equation*}
		Then $\mathbf{F}$ is an exact faithful functor.
	\end{enumerate}
\end{corollary}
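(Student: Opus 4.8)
\textbf{Proof proposal for Corollary \ref{cor:fiber functor}.}

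The plan is to deduce part (1) directly from Proposition \ref{calculation of bH*} together with the structure of $\KP_0$, and then to establish part (2) by checking exactness and faithfulness separately. For part (1), Proposition \ref{calculation of bH*} gives the claim on the generating simple objects $\cL_{\lav}$, $\lav\in\Pv_+$. To extend this to all of $\KP_0$, I would first recall that $\KPcoh^{\hGO}(\cR_{G,0})=\KPcoh^{\hGO}(\Gr)$ with its Koszul perverse t-structure, that $\Xi$ is Koszul perverse t-exact (as stated in the introduction, using \cite[Thm. 4.1]{CW23}), and that $\pi_*$ is also appropriately t-exact so that $\bH^i=\cH^i\circ\pi_*$ kills objects whose image under $\pi_*$ is concentrated in degree $0$. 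Since $\bH^i\circ\Xi$ for $i\ne0$ is an additive functor into an abelian category, it suffices to see that every object of $\KP_0$ admits a finite filtration whose subquotients are among the $\cL_{\lav}$ (up to the shifts $\{m\}(n)$ which $\bH^i\circ\Xi$ handles compatibly). But Theorem \ref{thm: determine KP_0 on K-group} asserts precisely that the simple objects of $\KP_0$ are exactly the $\cL_{\lav}$, $\lav\in\Pv_+$ — there are no grading shifts and no other labels appearing — so every object of $\KP_0$, being of finite length in $\KPcoh^{\hGO}(\cR_{G,\g})$, has all its composition factors of this form. Applying $\bH^i\circ\Xi$ ($i\ne0$) to such a filtration and using Proposition \ref{calculation of bH*}(1) inductively forces $\bH^i\circ\Xi(\cF)=0$. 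The same filtration argument, now invoking Proposition \ref{calculation of bH*}(2) which says $\bH^0\circ\Xi(\cL_{\lav})$ carries trivial $\Gmrot$-action, shows $\bH^0\circ\Xi(\cF)$ has trivial $\Gmrot$-action for all $\cF\in\KP_0$ (triviality of the $\Gmrot$-action is preserved under extensions since it just says the object lies in the subcategory of $\Gmrot$-weight-zero representations, which is closed under extensions).

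For part (2), I would argue exactness as follows: given a short exact sequence $0\to\cF'\to\cF\to\cF''\to0$ in $\KP_0$, applying $\Xi$ (Koszul perverse t-exact) gives a short exact sequence in $\KPcoh^{\hGO}(\Gr)$, hence a long exact sequence in $\bH^\bullet$; by part (1), $\bH^{-1}\circ\Xi(\cF'')=0$ and $\bH^{1}\circ\Xi(\cF')=0$, so the long exact sequence collapses to a short exact sequence in $\bH^0\circ\Xi$, and $\For$ is exact, giving exactness of $\mathbf{F}$. For faithfulness, the key point is that $\mathbf{F}$ is exact between abelian categories, so it suffices to show $\mathbf{F}(\cF)\ne0$ for every nonzero $\cF\in\KP_0$; by the finite-length property and exactness it is enough to check this on each simple object $\cL_{\lav}$. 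Here I would use Proposition \ref{calculation of bH*}(2): $\mathbf{F}(\cL_{\lav})\cong\bigoplus_{p}H^p(\overline{\Gr}_{\lav},\rmIC^{\C}_{\lav})$, which is the intersection cohomology $IH^*(\overline{\Gr}_{\lav})$, and this is nonzero (it contains $H^0$ of the point-stratum class, or, as noted in the remark after the proposition, it is isomorphic as a vector space to $V_{\lav}\ne0$ by geometric Satake). Hence $\mathbf{F}$ is faithful.

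The main obstacle I anticipate is not in any single deduction but in being careful about the degree-modification bookkeeping — the functors land in $\Rep{\hGO}[\sqrt{\lr{1}}]$ rather than $\Rep{\hGO}$, and one must make sure that "trivial $\Gmrot$-action'' and the passage $\For\circ\bH^0$ interact correctly with the square-root-of-shift formalism of Section \ref{subsubsec: degree modification}, and with the parity conventions distinguishing $\Gr^0$ from $\Gr^1$. One should also confirm that the composite $\For\circ\bH^0\circ\Xi$ genuinely factors through $\Vect_{\C}$ as claimed, i.e. that the residual $\Gmdil$-grading recorded by $[\sqrt{\lr{1}}]$ is simply forgotten by $\For$ — this is essentially definitional but worth stating explicitly. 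Everything else is a formal consequence of t-exactness of $\Xi$, the long exact sequence, and the finite-length structure pinned down by Theorem \ref{thm: determine KP_0 on K-group}.
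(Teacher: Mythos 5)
Your proposal is correct and follows essentially the same route as the paper: reduce to the simple objects $\cL_{\lav}$ via Theorem \ref{thm: determine KP_0 on K-group} and the long exact sequence for $\bH^\bullet\circ\Xi$, conclude vanishing for $i\neq 0$ and exactness of $\mathbf{F}$ from Proposition \ref{calculation of bH*}, deduce triviality of the $\Gmrot$-action by exactness (your observation that weight-zero objects form a subcategory closed under extensions is the same point the paper makes by invoking reductivity of $\Gmrot$), and get faithfulness from $\dim\mathbf{F}(\cL_{\lav})=\dim IH^*(\overline{\Gr}_{\lav})=\dim V_{\lav}\neq 0$.
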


\begin{proof}
		For any short exact sequence $0\to \cF'\to \cF \to \cF'' \to 0$ in $\KP_0$, we have a long exact sequence 
		\begin{equation}\label{eq:hc}
			\dots \to \bH^i\circ \Xi (\cF') \to \bH^i\circ \Xi (\cF) \to \bH^i\circ \Xi (\cF'') \to \bH^{i+1}\circ \Xi (\cF') \dots.
		\end{equation}
		
		Theorem \ref{thm: determine KP_0 on K-group} shows that every object in $\KP_0$ is an iterated extension of simple objects of the form $\cL_{\lav},\lav\in \Pv_+$. Then Proposition \ref{calculation of bH*} implies
		\[\bH^i(\cF)=0,\forall \cF\in \KP_0, \text{\ \ when\ }i\neq 0.\]
		Therefore, the long exact sequence \eqref{eq:hc} becomes a short exact sequence
		\begin{equation}\label{eq:lyz}
			0\to \bH^0\circ \Xi (\cF') \to \bH^0\circ \Xi (\cF) \to \bH^0\circ \Xi (\cF'')\to 0
		\end{equation}
		for any short exact sequence $ 0\to \cF'\to \cF\to \cF''\to 0$ in $\KP_0$, so $\mathbf{F}$ is exact.

		According to Proposition \ref{calculation of bH*}, $\bH^0\circ \Xi(\cL_{\lav})$ has trivial $\Gmrot$-action for all $\lav\in \Pv_+$. Since $\Gmrot$ is reductive, we can deduce that $\bH^0\circ \Xi (\cF)$ has trivial $\Gmrot$-action for all $\cF\in\KP_0$ by the exactness of $\bH^0\circ \Xi$.
		
		Proposition \ref{calculation of bH*} also implies that $\dim \mathbf{F}(\cL_{\lav})=\dim IH^*(\overline{\Gr}_{\lav})=\dim V_{\lav}\neq 0$ for all $\lav \in {\Pv}_+$, then the faithfulness of $\mathbf{F}$ follows.
\end{proof}

\begin{remark}
	After we finish the construction of neutral Tannakian structure, we can deduce that $\KP_0$ is semisimple; see Lemma \ref{lem:G'} and Remark \ref{rmk:semisimple}. Then Proposition \ref{calculation of bH*} implies $\bH^0 \circ \Xi(\cF)$ has trivial $G_{\O}\rtimes \Gmrot$-action for all $\cF\in \KP_0$.
\end{remark}

The second corollary is a numerical evidence of a tensor structure on $\mathbf{F}$.
\begin{corollary}\label{cor:dimension}
	For any $\cF,\cG\in \KP_0$, we have
	\begin{equation}\label{eq:rjy}
		\dim(\mathbf{F}(\cF\conv \cG))=\dim(\mathbf{F}(\cF))\cdot\dim(\mathbf{F}(\cG)).
	\end{equation}
\end{corollary}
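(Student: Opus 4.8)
\textbf{Proof proposal for Corollary \ref{cor:dimension}.}
The plan is to reduce the identity \eqref{eq:rjy} to a computation in the $K$-group, where everything is already known. First I would note that both sides of \eqref{eq:rjy} are additive in short exact sequences of $\cF$ (and of $\cG$): for the right-hand side this is immediate from the exactness of $\mathbf{F}$ established in Corollary \ref{cor:fiber functor}, and for the left-hand side it follows from the combination of Koszul perverse t-exactness of the convolution product (so that $\cF\conv\cG\in\KPcoh^{\hGO}(\cR_{G,\g})$ again) together with the exactness of $\mathbf{F}$. Hence the function $\cF\mapsto \dim\mathbf{F}(\cF)$ descends to a homomorphism $\delta:K(\KP_0)\to\Z$, and by Theorem \ref{thm: determine KP_0 on K-group} the monoid structure on $K(\KP_0)$ under $\conv$ is determined by $[\cL_{\lav}]\conv[\cL_{\muv}]=\sum_{\nuv}\clamunu[\cL_{\nuv}]$. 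So it suffices to check the multiplicativity $\delta([\cL_{\lav}]\conv[\cL_{\muv}])=\delta([\cL_{\lav}])\cdot\delta([\cL_{\muv}])$, i.e. $\sum_{\nuv}\clamunu\dim\mathbf{F}(\cL_{\nuv})=\dim\mathbf{F}(\cL_{\lav})\cdot\dim\mathbf{F}(\cL_{\muv})$.

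For this last step I would invoke Proposition \ref{calculation of bH*}: $\dim\mathbf{F}(\cL_{\lav})=\dim IH^*(\overline{\Gr}_{\lav})=\dim V_{\lav}$, the dimension of the irreducible $\check G$-representation of highest weight $\lav$, and likewise for $\muv$ and $\nuv$. The required identity $\sum_{\nuv}\clamunu\dim V_{\nuv}=\dim V_{\lav}\cdot\dim V_{\muv}$ is then just the statement that $\dim$ is a ring homomorphism on $\Rep{\check G}$, applied to the decomposition $V_{\lav}\otimes V_{\muv}\cong\bigoplus_{\nuv}V_{\nuv}^{\oplus\clamunu}$ defining the $\clamunu$ in \eqref{def of clamunu}. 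This closes the argument.

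I do not expect a genuine obstacle here; the corollary is a bookkeeping consequence of results already in place. The only point requiring a little care is making precise that $\dim\mathbf{F}(-)$ factors through $K(\KP_0)$ and that the $K$-group identity of Theorem \ref{thm: determine KP_0 on K-group} — a priori an identity in $K^{\hGO}(\cR_{G,\g})$ — may be applied after pushing forward along $\delta$; this is fine because $\KP_0$ is a full abelian subcategory closed under $\conv$ and $K(\KP_0)$ maps to $K^{\hGO}(\cR_{G,\g})$, so the relation $[\cL_{\lav}]\conv[\cL_{\muv}]=\sum_{\nuv}\clamunu[\cL_{\nuv}]$ already holds in $K(\KP_0)$ and $\delta$ is defined directly on it. Alternatively, and perhaps more cleanly for the writeup, one can argue entirely at the level of objects: by Theorem \ref{thm: determine KP_0 on K-group} the object $\cF\conv\cG$ has a finite filtration in $\KP_0$ whose composition factors are copies of the $\cL_{\nuv}$, with the multiplicity of $\cL_{\nuv}$ in $\cF\conv\cG$ summing up (over a composition series) to the coefficient of $[\cL_{\nuv}]$ in $[\cF]\conv[\cG]$; applying the exact functor $\mathbf{F}$ and taking dimensions then gives \eqref{eq:rjy} directly from the $V_{\lav}$-interpretation of $\dim\mathbf{F}(\cL_{\nuv})$.
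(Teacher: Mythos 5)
Your proposal is correct and follows essentially the same route as the paper: the paper also verifies the identity on the simple objects $\cL_{\lav}$ via Theorem \ref{thm: determine KP_0 on K-group} and the identification $\dim\mathbf{F}(\cL_{\lav})=\dim IH^*(\overline{\Gr}_{\lav})=\dim V_{\lav}$ from Proposition \ref{calculation of bH*}, and then deduces the general case from the exactness of $\mathbf{F}$. Your write-up merely makes explicit the bookkeeping (additivity of both sides in short exact sequences, hence factorization through $K(\KP_0)$) that the paper leaves implicit.
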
 

\begin{proof}
	By Theorem \ref{thm: determine KP_0 on K-group} and Proposition \ref{calculation of bH*}, we have
	\begin{equation*}
		\begin{aligned}
			\dim \mathbf{F}(\cL_{\lav} \conv \cL_{\muv})=&\sum_{\nuv\in {\Pv}_+}\clamunu  \dim \mathbf{F}(\cL_{\nuv})\\ =&\sum_{\nuv\in {\Pv}_+} \clamunu \dim IH^*(\overline{\Gr}_{\nuv})\\ =&\sum_{\nuv\in {\Pv}_+} \clamunu \dim V_{\nuv}\\ = & \dim V_{\lav} \cdot \dim V_{\muv}\\ =& \dim \mathbf{F}(\cL_{\lav}) \cdot \dim \mathbf{F}(\cL_{\muv}).
		\end{aligned}
	\end{equation*}
	Then by the exactness of $\mathbf{F}$, \eqref{eq:rjy} holds for all $\cF,\cG\in \KP_0$.
\end{proof}

\begin{remark}
	Another feature of the above Corollary \ref{cor:dimension} is the fact that $\bH^*\circ \Xi(\cF)$ all have trivial $G_{\O}$-action. It's not hard to show that, for $\overline{\cF},\overline{\cG}\in \Coh^{\hGO}(\Gr)$, if $m_*(\overline{\cG})$ lies in the essential image of the forgetful functor $\Coh^{\Gmrot\times \Gmdil}(\pt)\to \Coh^{\hGO}(\pt)$ along the quotient $\hGO\to \Gmrot\times \Gmdil$, then we have a (noncanonical) isomorphism
	\[m_*(\overline{\cF}\conv\overline{\cG})\cong m_*(\overline{\cF})\otimes m_*(\overline{\cG}).\]
\end{remark}

The remaining part of this subsection is devoted to lifting the numerical result above to a tensor structure on $\mathbf{F}$, which consists of functorial isomorphisms 
\begin{equation*}\label{eq: isom}
	\mathbf{F}(\cF\conv \cG)\cong \mathbf{F}(\cF)\otimes \mathbf{F}(\cG)
\end{equation*}
satisfying certain compatibilities. The precise definition refers to \cite[Def. 1.8]{DM82}.

\begin{theorem}\label{thm: fibre functor}
		The exact faithful $\C$-linear functor $\mathbf{F}$ admits a tensor structure. Thus, we construct a fibre functor on $\KP_0$ and make $\KP_0$ a neutral Tannakian category.
\end{theorem}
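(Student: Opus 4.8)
The plan is to build the tensor structure on $\mathbf{F}$ by transporting the natural commutativity and associativity data on $\Gr$ through the functor $\bH^0\circ\Xi$, using the renormalized $r$-matrices of Section \ref{subsec: Renormalized r-matrices} to control the symmetry. Since $\mathbf{F} = \For\circ\bH^0\circ\Xi$ and $\Xi$ lands in $\KPcoh^{\hGO}(\cR_{G,0})=\KPcoh^{\hGO}(\Gr)$, the first step is to produce, for $\cF,\cG\in\KP_0$, a functorial map
\[
\mathbf{F}(\cF)\otimes\mathbf{F}(\cG)=\bH^0(\Xi\cF)\otimes\bH^0(\Xi\cG)\longrightarrow \bH^0\bigl(\Xi(\cF\conv\cG)\bigr)=\mathbf{F}(\cF\conv\cG).
\]
Here the key geometric input is the twisted-product / convolution package: $\Xi(\cF)\conv\Xi(\cG)$ is computed as $m_*d^*(\Xi\cF\tbox\Xi\cG)$ on $\Gr$, the pushforward $\pi_*$ to a point factors through the convolution map $m:\Gr\ttimes\Gr\to\Gr$, and the twisted external product on $\Gr\ttimes\Gr$ satisfies a K\"unneth formula (used already in Proposition \ref{prop:calculation of tbox} Step 2). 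Combining these with the (lax) monoidality of $\sigi=\Xi$ at the level of objects — more precisely, the natural transformation $\Xi(\cF)\conv\Xi(\cG)\to\Xi(\cF\conv\cG)$ coming from the arguments in the proof of Proposition \ref{prop:sigi is ring hom} that $\Xi$ is a ring homomorphism on $K$-groups — gives a candidate morphism; by the vanishing $\bH^i\circ\Xi=0$ for $i\neq 0$ from Corollary \ref{cor:fiber functor}, applying $\bH^0$ turns these into honest maps of vector spaces. Corollary \ref{cor:dimension} then shows this candidate map is a dimension-matching map; to upgrade it to an isomorphism I would argue on composition factors: both sides, as functors in $\cF$ and $\cG$, are exact (Corollary \ref{cor:fiber functor}), so it suffices to check on the simple generators $\cL_{\lav}$, where Theorem \ref{thm: determine KP_0 on K-group} together with Proposition \ref{calculation of bH*} pins down both sides as $\bigoplus_{\nuv}IH^*(\overline{\Gr}_\nuv)^{\oplus\clamunu}$.

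\textbf{Key steps in order.} First, establish the structural isomorphism $\bH^0(\Xi\cF)\otimes\bH^0(\Xi\cG)\cong\bH^0(\Xi(\cF\conv\cG))$ as above, checking it is functorial in each variable. Second, verify the hexagon/associativity axioms: these follow from the associativity of convolution on $\cR_{G,\g}$ and on $\Gr$, together with the compatibility of $m$, $d$, $\tbox$ and $\pi_*$ with triple convolution, so the coherence reduces to coherence of the underlying geometric maps, which is part of the monoidal structure recorded in Section \ref{Subsec: convolution}. Third — the part requiring the $r$-matrices — verify the unit axiom and, crucially, that the commutativity constraint of Corollary \ref{cor:comm constraint} is compatible with $\mathbf{F}$, i.e. that $\mathbf{F}$ intertwines $\rmat{\cF,\cG}$ (with $\Lambda=0$, Lemma \ref{lem: Lambda=0}) with the usual flip on $\mathbf{F}(\cF)\otimes\mathbf{F}(\cG)$. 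This is where the factorization-space picture of Section \ref{subsubsec:gobal}--\ref{subsec: Renormalized r-matrices} does the work: $\overline{\swap}_{\cF,\cG}:C_{\cF,\cG}\to C_{\cG,\cF}^{\sw}$ restricts over the diagonal to $\rmat{\cF,\cG}$, and applying $\pi_*$ to the family over $X^2$ and restricting to $\{0\}$ identifies the induced map on hypercohomology with the geometric flip on $\bH^0(\Xi\cF)\otimes\bH^0(\Xi\cG)$ (over the open $U$ it is literally the swap of the two external factors, and cohomology is constant along $X^2$, so the identification over $U$ propagates to the diagonal). Finally, invoke the fact that $\mathbf{F}$ is exact, faithful and $\C$-linear (Corollary \ref{cor:fiber functor}) to conclude from \cite[Def. 2.19]{DM82} that $\KP_0$ with $\mathbf{F}$ is a neutral Tannakian category.

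\textbf{Main obstacle.} The technical heart is Step 3: making precise the claim that the renormalized $r$-matrix, which is defined only in the homotopy category $\mathrm{h}(\Coh^{\hGO}(\cR))$ via a somewhat indirect extension procedure over $X^2$, maps under $\mathbf{F}$ to the honest symmetry on $\Vect_\C$. The difficulty is bookkeeping: one must track the $\Gmrot$-equivariant structures, the trivializations $\cR_X\cong\cR\times X$, the functors $\eta_1,\eta_2$, and the isomorphisms \eqref{eq:Delta}, \eqref{eq:UU}, \eqref{eq: isom of CFG} through the composite $\pi_*\circ\sigma_1^*\circ i_{1*}$, and show everything is compatible with the global convolution diagram so that restriction to $0\in X^2$ commutes with all of it. A secondary subtlety is the square-root-of-$\lr{1}$ bookkeeping: $\bH^0$ a priori lands in $\Rep{\hGO}[\sqrt{\lr{1}}]$, and one must confirm (using Proposition \ref{calculation of bH*}, which shows the output has integral $\Gmdil$-weights and trivial $\Gmrot$-action) that after the forgetful functor the half-integer shifts disappear and the tensor structure descends cleanly to $\Vect_\C$. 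Once these compatibilities are in hand, the remaining axioms are formal consequences of the monoidal structures already established in \cite{CW23,CW19}, and the identification with $\Rep{\check G}$ will follow in the next subsection by the $K$-theoretic input of Theorem \ref{thm: determine KP_0 on K-group} and the argument of \cite[Thm. 1.2]{KLV}.
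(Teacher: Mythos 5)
Your Step 1 has a genuine gap: the map you want to start from does not exist. You propose to obtain the structural morphism $\mathbf{F}(\cF)\otimes\mathbf{F}(\cG)\to\mathbf{F}(\cF\conv\cG)$ from ``the natural transformation $\Xi(\cF)\conv\Xi(\cG)\to\Xi(\cF\conv\cG)$ coming from the arguments in the proof of Proposition \ref{prop:sigi is ring hom}.'' But that proof produces no natural transformation: it shows only that $\Xi(\cF\conv\cG)$ is an \emph{iterated extension} of the objects $\cF_i\conv\sigma_2^*i_{2*}(\cG)$, where the $\cF_i$ run over the composition factors of $i_{2*}(\cF)$. Passing to composition factors is exactly what destroys functoriality, and the paper is explicit that $\Xi$ is not monoidal --- it only induces a ring homomorphism on $K$-groups. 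Even if some comparison map existed, your fallback (``both sides have the same dimension, and on simples both are abstractly $\bigoplus_{\nuv}IH^*(\overline{\Gr}_{\nuv})^{\oplus\clamunu}$'') would not show that the \emph{given} map is an isomorphism; a map between abstractly isomorphic vector spaces can be zero.

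The paper's actual construction sidesteps this entirely, and you already have the relevant geometry in your Step 3 without realizing it is the main engine. One forms $\cE_{\cF,\cG}:=\pi_*\Xi(C_{\cF,\cG})\in\Coh^{\hGOfac{X^2}}(X^2)$; by \eqref{eq: isom of CFG} its fiber over a point of the diagonal is $\mathbf{F}(\cF\conv\cG)$ and its fiber over a point of $U$ is $\mathbf{F}(\cF)\otimes\mathbf{F}(\cG)$. Corollaries \ref{cor:fiber functor} and \ref{cor:dimension} say all fibers are concentrated in degree $0$ of constant dimension, so $\cE_{\cF,\cG}$ is a vector bundle (Lemma \ref{lem:1}); then the $\Gmrot$-equivariance, together with the fact that the fiber at $0$ carries the trivial $\Gmrot$-action, forces a \emph{canonical} trivialization $\cE_{\cF,\cG}\cong\O_{X^2}\otimes_{\C}(\cE_{\cF,\cG}|_0)$ up to the constant group $\mathrm{GL}(\cE_{\cF,\cG}|_0)$ (Lemma \ref{lem:2} and the computation $\mathrm{Aut}_{R\rtimes\Gmrot}(R\otimes\overline{M})=\mathrm{GL}(\overline{M})$). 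The isomorphism $\alpha_{\cF,\cG}$ is the resulting identification of the generic fiber with the special fiber; associativity is checked with the analogous bundle over $X^3$, and compatibility with the commutativity constraint with $\pi_*\Xi(\overline{\swap})$, as you sketch. So your Steps 2 and 3 are in the right spirit, but the tensor isomorphism itself must come from this degeneration-and-rigidity argument over $X^2$, not from a lax monoidal structure on $\Xi$.
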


\begin{proof}
	We will use the machinary coming from the factorization structure on $\cR_{G,\g}$ as in Section \ref{subsec: Renormalized r-matrices}. 
	Let $\cF,\cG\in \KP_0$. 
	
	Recall in \eqref{eq:rmat1} we have defined an object
	\[C_{\cF,\cG}:=\eta_1(\wt{\cF} \boxtimes \O_X) \conv \eta_2(\O_X \boxtimes \wt{\cG})\in \Coh^{\hGOfac{X^2}}(\cR_{X^2})\]
	with isomorphisms \eqref{eq: isom of CFG}.
	
	Now we apply the global version of the functor $\Xi=\sigi:\Coh^{\hGO}(\cR)\to \Coh^{\hGO}(\Gr)$. By abuse of notation, we still write $i_1:\cR_{X^I}\to \Gr_{X^I}\times_{X^I} N_{\O,X^I}$, $\sigma_1:\Gr_{X^I}\to \Gr_{X^I}\times_{X^I} N_{\O,X^I},\pi:\Gr_{X^I}\to X^I$ for the global version morphisms, and write
	$$ \Xi=\sigi:\Coh^{\hGOfac{X^I}}(\cR_{X^I})\to \Coh^{\hGOfac{X^I}}(\Gr_{X^I}).$$
	
	Then we consider
	\[\cE_{\cF,\cG}:=\pi_*\Xi (C_{\cF,\cG})\in \Coh^{\hGOfac{X^2}}(X^2).\]
	The isomorphisms \eqref{eq: isom of CFG} implies
	\begin{equation}\label{eq:restriction}
		\begin{aligned}
		\cE_{\cF,\cG}|_{\Delta}&\cong \pi_*\Xi(\wt{\cF\conv \cG}),\ \ \ 
		\cE_{\cF,\cG}|_{U}&\cong \left(\pi_*\Xi(\wt{\cF})\boxtimes \pi_*\Xi(\wt{\cG})\right)|_{U}.
	\end{aligned}
	\end{equation}
	If we forget the $\hGOfac{X^2}$-equivariance of $\cE_{\cF,\cG}$ and view it as an object in $\Coh(X^2)$, then \eqref{eq:restriction} implies its restriction to a closed point $x\in \Delta$ is $\pi_*\Xi(\cF\conv \cG)$, while its restriction to a closed point $x\in U$ is $\pi_*\Xi(\cF)\boxtimes \pi_*\Xi(\cG)$. According to Corollary \ref{cor:fiber functor}, $\pi_*\Xi(\cF\conv \cG)$ and $\pi_*\Xi(\cF)\boxtimes \pi_*\Xi(\cG)$ both concentrate at cohomology degree $0$, and by definition they are vector spaces $\mathbf{F}(\cF\conv\cG)$ and $\mathbf{F}(\cF)\otimes\mathbf{F}(\cG)$ respectively. Write 
	\[r:=\dim(\mathbf{F}(\cF\conv\cG))=\dim(\mathbf{F}(\cF)\otimes\mathbf{F}(\cG)).\]
	We claim that the above information implies $\cE_{\cF,\cG}$ is a locally free ordinary coherent sheaf of rank $r$ concentrated at cohomology degree $0$; in other words, $\cE_{\cF,\cG}$ is a vector bundle of rank $r$. We summarize this claim as Lemma \ref{lem:1} below.
	
	Now we add back the $\Gmrot$-equivariance, thus view $\cE_{\cF,\cG}$ as a $\Gmrot$-equivariant vector bundle over $X^2$. The next claim is that the $\Gmrot$-equivariance gives canonical identifications between each fiber of $\cE_{\cF,\cG}$ as vector spaces. We write $R:=\C[X^2]$, $\mathfrak{m}:=$ the maximal ideal of $0\in X^2$, $M:=$ the global section of $\cE_{\cF,\cG}$; we view $R=\bigoplus_{d\in \mathbb{N}}R_d$ as a graded $\C$-algebra and $M$ as a finitely generated graded projective $R$-module. Let $\overline{M}:=M/\mathfrak{m}M=$ the fiber of $\cE_{\cF,\cG}$ at $0\in X^2$; it can be identified with $\bH^0\circ\Xi(\cF\conv \cG)$, thus has trivial $\Gmrot$-action by Corollary \ref{cor:dimension}. The Lemma \ref{lem:2} below implies $M\cong R\otimes_{\C}\overline{M}$ as graded $R$-modules. Let
	\[\mathrm{Isom}_{R\rtimes\Gmrot}(M, R\otimes_{\C}\overline{M})\]
	denote the set of isomorphisms between $M$ and $R\otimes_\C \overline{M}$ as graded $R$-modules. It has a transitively free action of the automorphism group
	\[\mathrm{Aut}_{R\rtimes\Gmrot}(R\otimes_{\C}\overline{M}).\]
	Note that
	\[\mathrm{Aut}_{R\rtimes\Gmrot}(R\otimes_{\C}\overline{M})\subset\mathrm{Hom}_{R\rtimes\Gmrot} (R\otimes_{\C}\overline{M},R\otimes_{\C}\overline{M})= \mathrm{Hom}_{\Gmrot}(\overline{M},R\otimes_{\C}\overline{M}),\]
	and $\mathrm{Hom}_{\Gmrot}(\overline{M},R\otimes_{\C}\overline{M})=\mathrm{Hom}(\overline{M},R_0\otimes_{\C}\overline{M})=\mathrm{Hom}(\overline{M},\overline{M})$ because $\overline{M}$ has trivial $\Gmrot$-action. Thus
	\[\mathrm{Aut}_{R\rtimes\Gmrot}(R\otimes_{\C}\overline{M})=\mathrm{GL}(\overline{M}),\]
	i.e. the graded $R$-module automorphisms of $R\otimes_{\C}\overline{M}$ can only be constant automorphisms. 
	
	In summary, we have a transitively free action of $\mathrm{GL}(\overline{M})$ on $$\mathrm{Isom}_{R\rtimes\Gmrot}(M, R\otimes_{\C}\overline{M})=\mathrm{Isom}_{X^2\rtimes \Gmrot}\left(\cE_{\cF,\cG},\O_{X^2}\otimes_{\C} (\cE_{\cF,\cG}|_{0})\right),$$ which really means we can canonically identify each fiber of $\cE_{\cF,\cG}$ as vector spaces. This procedure gives us a canonical isomorphism
	\[\alpha_{\cF,\cG}:\mathbf{F}(\cF)\otimes F(\cG)\cong \mathbf{F}(\cF \conv \cG).\]
	
	\ 
	
	Next we verify that this class of isomorphisms gives a tensor structure on $\mathbf{F}$.
	
	\begin{enumerate}
		\item For $\cF_1,\cF_2,\cF_3\in \KP_0$, consider the following product
		\[C_{\cF_1,\cF_2,\cF_3}:=\eta_1^{123}(\wt{\cF}_1\boxtimes \O_X \boxtimes \O_X)\conv \eta_2^{123}(\O_X\boxtimes \wt{\cF}_2\boxtimes \O_X)\conv \eta_3^{123}(\O_X\boxtimes \O_X \boxtimes \wt{\cF}_3)\in \Coh^{\hGOfac{X^3}}(\cR_{X^3}),\]
		Write 
		\begin{gather*}
			\Delta_{123}:=\left\{(x_1,x_2,x_3)\in X^3 | x_1=x_2=x_3\right\},\\ \Delta_{ij}:=\left\{(x_1,x_2,x_3)\in X^3 | x_i=x_j\right\}\setminus \Delta_{123}, \text{ for } \{i,j\}\subset \{1,2,3\},\\
			U_{123}:=X^3\setminus \Delta_{12}\cup \Delta_{13} \cup \Delta_{23}.
		\end{gather*}
		Similar to the arguments before, we can consider
		\[\cE_{\cF_1,\cF_2,\cF_3}:=\pi_*\Xi(C_{\cF_1,\cF_2,\cF_3}),\]
		as a $\Gmrot$-equivariant vector bundle over $X^3$ of rank $\dim \mathbf{F}(\cF_1) \cdot \dim \mathbf{F}(\cF_2) \cdot \dim \mathbf{F}(\cF_3)$. Let $x\in X^3$ be a closed point of $X^3$; the fiber of $\cE_{\cF_1,\cF_2,\cF_3}$ at $x$ is
		\[\begin{cases}
			\mathbf{F}(\cF_1\conv\cF_2\conv \cF_3), \text{\ \ if\ }x\in \Delta_{123};\\
			(\mathbf{F}(\cF_i\conv\cF_j)\otimes \cF_k)_{i,j,k}, \text{\ \ if\ }x\in \Delta_{ij};\\
			\mathbf{F}(\cF_1)\otimes \mathbf{F}(\cF_2)\otimes \mathbf{F}(\cF_3),\text{\ \ if\ }x\in U_{123};
		\end{cases}\]
		here the notation in the second line means we put $\cF_i,\cF_j,\cF_k$ at $i,j,k$-th positions. The $\Gmrot$-equivariant vector bundle $\cE_{\cF_1,\cF_2,\cF_3}$ produces natural identifications among the fibers; by the factorization equivalences, these identifications can also be expressed in terms of $\alpha_{-,-}$ in natual ways. This means the first diagram in \cite[Def. 1.8]{DM82} commutes.
		
		\item Recall in Corollary \ref{cor:comm constraint}, we have an isomorphism $$\overline{
		\swap}:C_{\cF,\cG}\to C_{\cG,\cF}^{\sw},$$ such that its restriction to $U\subset X^2$ is the swapping two factors isomorphism $\swap:(\cF\boxtimes \cG)|_U\to \sw^*(\cG\boxtimes \cF)|_{U}$, while its restriction to $\{0\}\subset X^2$ is $\rmat{\cF,\cG}:\cF\conv\cG\to \cG\conv\cF$. Applying $\pi_*\Xi$, we have an isomorphism of two $\Gmrot$-equivariant vector bundles
		\[\pi_* \Xi(\overline{\swap}):\cE_{\cF,\cG}\to \sw^*(\cE_{\cG,\cF}),\]
		which gives the commutativity of the diagram:
		\[\begin{tikzcd}
			{\cE_{\cF,\cG}|_{x_U}} & {\sw^*(\cE_{\cG,\cF})|_{x_{U}}} \\
			{\cE_{\cF,\cG}|_{x_{\Delta}}} & {\sw^*(\cE_{\cG,\cF})|_{x_{\Delta}}}
			\arrow[from=1-1, to=1-2]
			\arrow["{\alpha_{\cF,\cG}}"', from=1-1, to=2-1]
			\arrow["{\alpha_{\cG,\cF}}", from=1-2, to=2-2]
			\arrow[from=2-1, to=2-2]
		\end{tikzcd}\]
		for any closed points $x_U\in U$, $x_{\Delta}\in \Delta$. This means the diagram 
		\[\begin{tikzcd}
			{\mathbf{F}(\cF)\otimes \mathbf{F}(\cG)} & {\mathbf{F}(\cG)\otimes \mathbf{F}(\cF)} \\
			{\mathbf{F}(\cF\conv\cG)} & {\mathbf{F}(\cG\conv\cF)}
			\arrow[from=1-1, to=1-2]
			\arrow["{\alpha_{\cF,\cG}}"', from=1-1, to=2-1]
			\arrow["{\alpha_{\cG,\cF}}", from=1-2, to=2-2]
			\arrow[from=2-1, to=2-2]
		\end{tikzcd}\]
		commutes.
	\end{enumerate}
	
\end{proof}

\begin{lemma}\label{lem:1}
	Let $Y$ be a smooth variety over $\C$, and $\cC\in\Coh(Y)$. Assume for any $k\in \Z$ and any closed point $y\in Y$, the dimension of the non-derived fiber $ \cH^k(i_y^*\cC)$ is equal to some $r_k\in \mathbb{N}$ which only depends on $k$. Then for any $k\in \Z$, $\cH^k(\cC)$ is a locally free sheaf of rank $r_k$.
\end{lemma}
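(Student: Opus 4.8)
\textbf{Proof plan for Lemma \ref{lem:1}.}

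The plan is to reduce the statement to the standard fact that a coherent sheaf on a reduced Noetherian scheme with locally constant fiber dimension is locally free, by first controlling the amplitude of $\cC$ and then stripping off its cohomology sheaves one at a time from the top. First I would work locally: the claim is Zariski-local on $Y$, so I may assume $Y = \Spec A$ is a smooth affine $\C$-variety (in particular $A$ is a regular Noetherian domain), and replace $\cC$ by a bounded complex $M^{\bullet}$ of finitely generated $A$-modules, which I may even take to be a bounded complex of finite free modules since $A$ has finite global dimension. For a closed point $y$ with maximal ideal $\mathfrak{m}_y$, the non-derived fiber $\cH^k(i_y^*\cC)$ is $\cH^k(M^{\bullet}) \otimes_A A/\mathfrak{m}_y$, i.e. $\cH^k(\cC)\otimes_A \kappa(y)$; so the hypothesis is precisely that $y \mapsto \dim_{\kappa(y)}\big(\cH^k(\cC)\otimes \kappa(y)\big)$ is the constant function $r_k$ on closed points, hence (by Nakayama and constructibility, or because the closed points are dense in the Jacobson scheme $Y$) on all of $Y$.

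The heart of the argument is then the classical criterion: a finitely generated module over a reduced Noetherian ring whose fiber dimension $\dim_{\kappa(\mathfrak p)} \cF_{\mathfrak p}\otimes\kappa(\mathfrak p)$ is locally constant is locally free of that rank — this is \cite[\href{https://stacks.math.columbia.edu/tag/0FWG}{Tag 0FWG}]{stacks-project} (or Exercise II.5.8 in Hartshorne), using that $Y$ is smooth hence reduced. Applying this to each $\cH^k(\cC)$ individually gives that $\cH^k(\cC)$ is locally free of rank $r_k$, which is exactly the conclusion. The one subtlety is that, a priori, knowing the fiber dimensions of the cohomology sheaves $\cH^k(\cC)$ is not the same as knowing the fiber dimensions of the complex $\cC$; but here the hypothesis is stated directly in terms of $\cH^k(i_y^*\cC) = \cH^k(\cC)\otimes\kappa(y)$ (the non-derived, i.e. underived-pullback-then-take-$\cH^k$, or equivalently $\cH^0$ of the pullback of the sheaf $\cH^k(\cC)$), so no spectral sequence comparison is needed — one simply applies the local-freeness criterion sheaf by sheaf. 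If instead the intended reading involves the derived fibers $\cH^k(Li_y^*\cC)$, the extra step is a descending induction on the top nonzero cohomology degree $N$: local freeness of $\cH^N(\cC)$ makes $\tau^{\le N-1}\cC \to \cC \to \cH^N(\cC)[-N]$ split fiberwise, so the derived and non-derived fibers of $\cH^N(\cC)$ agree and one can replace $\cC$ by $\tau^{\le N-1}\cC$, whose fiber dimensions remain locally constant, and iterate.

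The main obstacle — really the only place any care is required — is the passage from "constant on closed points" to "locally constant on all of $Y$" and the correct bookkeeping of which notion of fiber (derived vs. non-derived) appears; once the statement is pinned down to the cohomology sheaves, the result is an immediate citation. I would therefore spend the write-up making the reduction to affine regular $Y$ explicit, noting that $\cH^k(i_y^*\cC) \cong \cH^k(\cC)\otimes_{\O_Y}\kappa(y)$ by right-exactness of pullback composed with taking the top cohomology, and then invoking \cite[\href{https://stacks.math.columbia.edu/tag/0FWG}{Tag 0FWG}]{stacks-project} for each $k$ with $Y$ reduced. Since $\cC \in \Coh(Y)$ is bounded, only finitely many $k$ occur, so the resulting statement globalizes with no further argument.
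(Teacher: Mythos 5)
The reading of the hypothesis on which your main argument rests is not the one the lemma needs. You take $\cH^k(i_y^*\cC)$ to mean $\cH^k(\cC)\otimes_{\O_Y}\kappa(y)$, the classical fiber of the $k$-th cohomology sheaf; but in this paper $i_y^*$ is the derived restriction, and the quantity in the hypothesis is $\cH^k(Li_y^*\cC)$, which differs from $\cH^k(\cC)\otimes\kappa(y)$ by Tor terms coming from the other cohomology sheaves. Two things pin this down: the paper's own proof computes $\cH^k(i_y^*\cC)$ as the $k$-th cohomology of the term-wise restriction of a finite locally free resolution (i.e.\ of the derived fiber), and in the application (to $\cE_{\cF,\cG}=\pi_*\Xi(C_{\cF,\cG})$ in the proof of Theorem \ref{thm: fibre functor}) the data one actually has is the derived restriction of the complex to closed points, computed via the factorization isomorphisms --- one does not know the cohomology sheaves in advance, so a hypothesis phrased in terms of $\cH^k(\cC)\otimes\kappa(y)$ could not be verified there. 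Under your primary reading the lemma would also be a triviality needing only that $Y$ is reduced, whereas the smoothness hypothesis is there precisely so that $\cC$ admits a finite locally free resolution. So the first two-thirds of your write-up proves a different (and, for the paper's purposes, useless) statement.

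That said, your hedged final branch is a correct proof of the actual statement, and I would promote it to the main argument. The top cohomology of the derived fiber satisfies $\cH^N(Li_y^*\cC)\cong\cH^N(\cC)\otimes\kappa(y)$ by right-exactness, so constancy of $r_N$ gives local freeness of $\cH^N(\cC)$ by the standard criterion (your passage from closed points to all points via upper semicontinuity and the Jacobson property is fine); then $Li_y^*$ of the triangle $\tau^{\le N-1}\cC\to\cC\to\cH^N(\cC)[-N]$ has third term concentrated in degree $N$, so the long exact sequence identifies $\cH^k(Li_y^*\tau^{\le N-1}\cC)$ with $\cH^k(Li_y^*\cC)$ for $k\le N-1$, and one descends. (The phrase ``split fiberwise'' is unnecessary; the degeneration of the long exact sequence is all you use.) This is a genuinely different route from the paper's, which fixes a finite complex of locally free sheaves representing $\cC$ and runs a descending induction on its kernels and images, proving each $\mathcal{Q}^k$, $\mathcal{P}^k$, $\cH^k(\cC)$ locally free in turn. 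Your truncation-triangle induction is cleaner and avoids choosing a resolution beyond the initial reduction; the paper's is more elementary in that it never leaves the category of ordinary sheaves. Either works, but only under the derived reading of the fibers.
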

\begin{proof}
	We will use a standard sublemma. Let $\cM$ be an ordinary coherent sheaf on $Y$, such that for any closed point $y\in Y$, the dimension of the non-derived fiber $\cH^0(i_y^*\cM)$ is equal to some fixed $n\in \mathbb{N}$. Then $M$ is a locally free sheaf of rank $n$ on $Y$.\footnote{For the reader's convenience, we present the proof here. For a closed point $y\in Y$, let $\O_{Y,y}$ be the stalk of $\O_{Y}$ at $y$, and $\mathfrak{m}_y$ be its maximal ideal. Let $\bar{s}_1,\dots,\bar{s}_n$ be a basis of $\cM/\mathfrak{m}_y \cM$; by Nakayama's lemma, we can lift them to a set of generators $s_1,\dots , s_n$ of the stalk $\cM_y$ as an $\O_{Y,y}$-module. Let $V$ be an affine open neighborhood of $y$ such that $s_1,\dots, s_n$ lives in $\O_{Y}(V)$. Let $t_1,\dots,t_l$ be a set of generators of the $\O_{Y}(V)$-module $\cM(V)$. At the stalk $\cM_x$, $t_i=\sum_{j}a_{ij}s_j$ for some $a_{ij}\in \O_{Y,y}$. Let $V'\subset V$ be an affine open neighborhood of $y$ such that $a_{ij}$ all lives in $\O_Y(V')$. Then the morphism $\phi: \O_{V'}^{\oplus n}\to \cM|_{V'}$ defined by the sections $s_1,\dots,s_n$ is surjective. Since $\cM/\mathfrak{m}_{y'}\cM$ has dimension $n$ for all closed point $y'\in V'$, the morphism $\phi$ on each non-derived fiber $\phi|_{y'}:(\O_{Y}/\mathfrak{m}_{y'}\O_{Y})^{\oplus n}\to \cM/\mathfrak{m}_{y'}\cM$ is isomorphism. So for any $(b_1,\dots,b_n)\in \O_{Y}^{\oplus n}(V')$ such that $\phi(b_1,\dots,b_n)=0$, we know $$b_1,\dots ,b_n \in \bigcap_{\text{closed pt }y'\in V'}\mathfrak{m}_{y'} .$$
	By Hilbert's Nullstellensatz, $\bigcap_{\text{closed pt }y'\in V'}\mathfrak{m}_{y'}$ is the nilpotent radical of $\O_{Y}(V')$, which is $0$. Therefore $\phi$ is isomorphim. We conclude that $\cM$ is locally free of rank $n$.}

	This sublemma implies another standard sublemma. Let $0\to \cM''\to \cM\to \cM'\to 0$ be a short exact sequence of ordinary coherent sheaves. Suppose $\cM$ and $\cM'$ are locally free sheaf of finite rank, then $\cM''$ is also a locally free sheaf of finite rank. Indeed, for any closed point $y$ in $Y$, we have an exact sequence
	\[\cH^{-1}(i_y^*\cM')=0\to \cH^0(i_y^*\cM'')\to \cH^0(i_y^*\cM) \to \cH^0(i_y^*\cM')\to 0.\]
	Thus $\dim \cH^0(i_y^*\cM'')$ is the constant $\rank \cM-\rank \cM'$. Then by the sublemma above, we can deduce that $\cM''$ is a locally free sheaf of finite rank.
	
	\ 
	
	Now we come back to the original problem. Since $Y$ is smooth, $\cC$ can be represented by a perfect complex $0\to \cM^{m}\to \cM^{m+1}\to \dots \to \cM^{n}\to 0$  (i.e. each $\cM^{i}$ is a locally free sheaf of finite rank). Let $\mathcal{P}^{k}:=\operatorname{image} \left(\cM^{k}\to \cM^{k+1}\right)$, and $\mathcal{Q}^k:=\operatorname{ker} \left(\cM^{k}\to \mathcal{M}^{k+1}\right)$. We have two short exact sequences
	\begin{equation}\label{eq:yyc}
		0\to \mathcal{Q}^k\to \cM^k\to \mathcal{P}^k\to 0,
	\end{equation}
	\begin{equation}\label{eq:hyh}
	0\to \mathcal{P}^k\to \mathcal{Q}^{k+1} \to \cH^{k+1}(\cC)\to 0.
	\end{equation}

	We will show by induction on $k$ that $\mathcal{P}^k,\mathcal{Q}^k,\cH^k(\cC)$ are all locally free sheaves of finite rank, and $\rank \cH^k(\cC)=r_k$. When $k=n+1$, there is nothing to prove. Suppose for the $k+1$ case this is true. The second short exact sequence \eqref{eq:hyh} implies $\mathcal{P}^k$ is a locally free sheaf of finite rank by the second sublemma; then the first short exact sequence \eqref{eq:yyc} implies $\mathcal{Q}^k$ is a locally free sheaf of finite rank by the second sublemma. 
	
	For any closed point $y\in Y$, applying $i_y^*$ to \eqref{eq:hyh}, we know $i_y^*\mathcal{P}^k\to i_y^*\mathcal{Q}^{k+1}$ is injective; applying $i_y^*$ to the $k+1$ case of \eqref{eq:yyc}, we know $i_y^*\mathcal{Q}^{k+1}\to i_y^*\mathcal{M}^{k+1}$ is injective. Therefore we have
	\[ i_y^*\mathcal{Q}^k\overset{\eqref{eq:yyc}}{\cong} \operatorname{ker}( i_y^*\cM^k\to i_y^*\mathcal{P}^k) \cong \operatorname{ker}( i_y^*\cM^k\to i_y^*\cM^{k+1}),\]
	thus 
	\[ \cH^k(i_y^*\cC)\cong \cH^k(\dots \to i_y^*\cM^{k-1}\to i_y^*\cM^{k} \to i_y^*\cM^{k+1} \to \dots)\cong  \operatorname{coker}(i_y^*\mathcal{M}^{k-1}\to i_y^*\mathcal{Q}^k),\]
	and the right hand side is isomorphic to the non-derived fiber $\cH^0(i_y^*\cH^{k}(\cC))$ by the right exactness of $i_y^*$ and the exact sequence
	\[\cM^{k-1}\to \mathcal{Q}^k\to \cH^{k}(\cC)\to 0.\]
	Therfore $\cH^0(i_y^*\cH^{k}(\cC))$ is constant $r_k$ for any closed point $y$ in $Y$, and by the first sublemma we can deduce $\cH^{k}(\cC)$ is a locally free sheaf of rank $r_k$. Then we finish the proof for the $k$ case.

\end{proof}

\begin{lemma}\label{lem:2}
	Let $R=\bigoplus_{d\in \mathbb{N}}R_d$ be a graded $\C$-algebra such that $R_0=\C$. Let $M$ be a finitely generated graded projective $R$-module. Write the maximal ideal $\mathfrak{m}:=\bigoplus_{d>0}R_d$, and $\overline{M}:=M/\mathfrak{m}M$. Then $M$ is a graded free $R$-module, and isomorphic to $R\otimes_{\C}\overline{M}$ as graded $R$-modules;
\end{lemma}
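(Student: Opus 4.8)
The plan is to run the graded Nakayama lemma argument twice, bracketing a splitting step that uses projectivity. First I would note that $\overline{M}=M/\mathfrak{m}M$ is a finitely generated graded module over $R/\mathfrak{m}=R_0=\C$, hence a finite-dimensional graded $\C$-vector space; choose a homogeneous $\C$-basis $\bar{e}_1,\dots,\bar{e}_r$ of $\overline{M}$ with $\deg\bar{e}_i=d_i$, and lift each $\bar{e}_i$ to a homogeneous element $e_i\in M$ of degree $d_i$. Since $R=\bigoplus_{d\ge 0}R_d$ with $R_0=\C$ and $\mathfrak{m}=\bigoplus_{d>0}R_d$, the graded Nakayama lemma applies to the finitely generated graded module $M$: if $N\subseteq M$ is the graded submodule generated by $e_1,\dots,e_r$, then $M=N+\mathfrak{m}M$ forces $M=N$ (concretely, $(M/N)_k=0$ by induction on the smallest degree in which $M/N$ is nonzero, using $(\mathfrak{m}(M/N))_k=\sum_{e>0}R_e(M/N)_{k-e}$). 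Thus the $e_i$ generate $M$ and define a surjection of graded $R$-modules $\phi\colon F\to M$, where $F:=\bigoplus_{i=1}^{r}R(-d_i)$ is a graded free module; note $F\cong R\otimes_{\C}\overline{M}$ as graded $R$-modules via $1\otimes\bar{e}_i\mapsto$ the $i$-th standard generator.

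Next I would use projectivity to split $\phi$. Since $M$ is projective, the exact sequence $0\to K\to F\xrightarrow{\phi} M\to 0$ with $K:=\ker\phi$ splits, so $F\cong M\oplus K$ as graded $R$-modules; in particular $K$ is a homomorphic image of $F$, hence a finitely generated graded $R$-module. Reducing modulo $\mathfrak{m}$ gives $\overline{F}\cong\overline{M}\oplus\overline{K}$, and by construction the induced map $\bar{\phi}\colon\overline{F}\to\overline{M}$ sends the $\C$-basis $\{1\otimes\bar{e}_i\}$ of $\overline{F}$ to the basis $\{\bar{e}_i\}$ of $\overline{M}$, so $\bar{\phi}$ is an isomorphism and therefore $\overline{K}=K/\mathfrak{m}K=0$. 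Applying the graded Nakayama lemma once more to the finitely generated graded module $K$ yields $K=0$, so $\phi$ is an isomorphism of graded $R$-modules. This exhibits $M\cong F\cong R\otimes_{\C}\overline{M}$, proving $M$ is graded free of the asserted form.

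There is essentially no hard step here; the only point requiring care is that $R$ is not assumed Noetherian, so I would be careful to invoke graded Nakayama only for modules already known to be finitely generated (namely $M$ and $K$, the latter being a quotient of the finitely generated free module $F$), and to drive the downward induction on degrees by the grading rather than by any finiteness hypothesis on $R$ itself.
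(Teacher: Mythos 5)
Your proof is correct and follows essentially the same strategy as the paper: build a graded map from the free module $R\otimes_{\C}\overline{M}$ to $M$ lifting the identity on $\overline{M}$, get surjectivity from graded Nakayama, and use the splitting supplied by projectivity together with a second application of graded Nakayama to kill the complement. The only cosmetic difference is that you apply Nakayama to the kernel $K$ while the paper applies it to the section $\psi$ to show $\psi$ is surjective; these are equivalent.
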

\begin{proof}
	We choose a grading preserving section $s:\overline{M}\hookrightarrow M$ of the projection $p:M\twoheadrightarrow \overline{M}$ (i.e. $p\circ s=\id$). Then $s$ produces a graded $R$-module homomorphism
	\[\phi:R\otimes_{\C}\overline{M}\to M.\]
	This map is an isomorphism after base change to $R/\mathfrak{m}$, thus $\phi$ is surjective by Nakayam's lemma for graded $R$-modules. Since $M$ is projective, the map $\phi$ has a grading-preserving section $\psi$. Then $\psi$ is also an isomorphism after base change to $R/\mathfrak{m}$, thus $\psi$ is also surjective. Therefore $\psi$ and $\phi$ produce an isomorphism between $R\otimes_{\C}\overline{M}$ and $M$.
\end{proof}

\subsection{Proof of Theorem \ref{main thm}}
After giving the category $\KP_0$ a neutral Tannakian structure, we can apply \cite[Thm. 2.11]{DM82}, and obtain an equivalence of neutral Tannakian categories
\[\KP_0\cong \Rep{G'}\]
for some (classical) affine group sheme $G'$ over $\C$. To finish the proof of the main Theorem \ref{main thm}, we need to identify $G'$ with ${\check{G}}$.

\begin{lemma}\label{lem:G'}
	The affine group scheme $G'$ is algebraic, i.e. finite type over $\C$. Moreover, $G'$ is connected and reductive. 
\end{lemma}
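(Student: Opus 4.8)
\textbf{Proof plan for Lemma \ref{lem:G'}.}
The plan is to extract the three assertions — algebraicity, connectedness, reductivity — from the $K$-theoretic description of $\KP_0$ obtained in Theorem \ref{thm: determine KP_0 on K-group} together with general Tannakian formalism. For algebraicity, by \cite[Prop. 2.20(b)]{DM82} it suffices to exhibit a single object of $\KP_0$ that is a tensor generator. Here I would take $\cL_{\varpi}$ ranging over the fundamental coweights $\varpi$ (in type A the fundamental coweights generate $\Pv_+$ as a monoid up to the dominance order), or more economically a single $\cL_{\lav}$ with $\lav$ a sum of all fundamental coweights; the structure-constant identity $[\cL_{\lav}]\conv[\cL_{\muv}]=\sum_{\nuv}\clamunu[\cL_{\nuv}]$ and the fact (Proposition \ref{Prop for calculation of dual}) that duals of the $\cL_{\lav}$ are again of the form $\cL_{\lav^\dagger}$ show that every $\cL_{\nuv}$ occurs as a subquotient of a tensor power of this generator, exactly as the $V_{\nuv}$ occur inside tensor powers of $V_{\varpi_1}\otimes\cdots\otimes V_{\varpi_{n}}$ in $\Rep{\check G}$. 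Hence $\KP_0$ is generated as a rigid tensor category by one object, so $G'$ is algebraic over $\C$.

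For connectedness, I would use the criterion (\cite[Cor. 2.22]{DM82}) that an affine group scheme of finite type is connected if and only if its category of representations has no nontrivial full Tannakian subcategory with finitely many isomorphism classes of simple objects (equivalently, no nonzero object generates a finite tensor subcategory unless it is a sum of copies of the unit). Concretely: suppose $\cC'\subset\KP_0$ were such a subcategory; its simple objects form a finite $W$-invariant saturated subset $S\subset\Pv_+$ closed under the tensor rule $\clamunu$ and under $\lav\mapsto\lav^\dagger$. But the tensor product $V_{\lav}\otimes V_{\lav^*}$ contains $V_{\lav+\lav^*}$ (highest weight), so $S$ would have to contain arbitrarily large coweights, contradicting finiteness — unless $S=\{0\}$. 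This is essentially the same argument that shows $\check G$ is connected from geometric Satake, and it transports verbatim through the equivalence $\KP_0\cong\Rep{G'}$ and the combinatorial identity of Theorem \ref{thm: determine KP_0 on K-group}.

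For reductivity, the cleanest route is: $G'$ is reductive if and only if $\Rep{G'}$ is semisimple (in characteristic zero, \cite[Prop. 2.23]{DM82}). I would deduce semisimplicity of $\KP_0$ from the $K$-theoretic rigidity plus the fibre functor. The $K$-group identity shows the composition factors of $\cL_{\lav}\conv\cL_{\muv}$ are exactly the $\cL_{\nuv}$ with multiplicity $\clamunu$; combined with the exact faithful tensor functor $\mathbf F$ of Theorem \ref{thm: fibre functor} and the dimension count $\dim\mathbf F(\cL_{\lav})=\dim V_{\lav}$ from Proposition \ref{calculation of bH*}, one sees that $\KP_0$ and $\Rep{\check G}$ have "the same" Grothendieck ring and dimension function. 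To upgrade this to semisimplicity I would argue that a nonsplit extension of $\cL_{\muv}$ by $\cL_{\lav}$ in $\KP_0$ would, after applying $\mathbf F$ and using that $\mathbf F$ detects and is compatible with the commutativity constraint of Corollary \ref{cor:comm constraint}, produce a nonsplit self-extension in a semisimple category $\Rep{\check G}$ — a contradiction; alternatively, invoke that $\mathbf F$ lands in $\Vect$ with a symmetric monoidal structure and that $G'$ must therefore be linearly reductive because all its representations are quotients/subobjects of tensor powers of the semisimple generator whose $\mathbf F$-image carries a $\check G$-semisimple structure. I expect the reductivity step to be the main obstacle: algebraicity and connectedness are nearly formal consequences of the tensor-generation and the tensor rule, whereas ruling out nonsplit extensions genuinely requires feeding in either the semisimplicity of $\Rep{\check G}$ through the (not yet established) identification, or an independent semisimplicity input — so I would be careful to phrase the argument so that it only uses the $K$-group comparison and the fibre functor, not the final identification $G'\cong\check G$, to avoid circularity.
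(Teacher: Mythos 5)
Your arguments for algebraicity and connectedness are essentially the paper's: both invoke \cite[Prop.\ 2.20]{DM82} with a tensor generator built from a monoid generating set of $\Pv_+$ (the paper takes $X=\bigoplus_i\cL_{\lav_i}$ for generators $\lav_1,\dots,\lav_n$ of $\Pv_+$, rather than fundamental coweights, which is the safer formulation for a general type~A group), and both invoke \cite[Cor.\ 2.22]{DM82} together with the observation that $\cL_{m\lav}$ occurs in $\cL_{\lav}^{\conv m}$, so no finite set of simples can be stable under convolution. These two parts are fine.

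The reductivity step is where your proposal genuinely diverges from the paper and where it has a gap. Your plan is to prove semisimplicity of $\KP_0$ first and then cite the equivalence of semisimplicity with reductivity. But the mechanism you sketch does not work: the fibre functor $\mathbf F$ lands in $\Vect_{\C}$, where \emph{every} short exact sequence splits, so applying $\mathbf F$ to a hypothetical nonsplit extension of $\cL_{\muv}$ by $\cL_{\lav}$ yields no contradiction; and the alternative phrasing ("its $\mathbf F$-image carries a $\check G$-semisimple structure") presupposes exactly the identification $G'\cong\check G$ that you are rightly trying to avoid. The paper circumvents semisimplicity entirely. It lets $R_u(G')$ be the unipotent radical and $L'=G'/R_u(G')$ the reductive quotient. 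Every simple object of $\Rep{G'}$ has trivial $R_u(G')$-action, so lies in the full tensor subcategory $\Rep{L'}\subset\Rep{G'}$, which is closed under direct sums, tensor products, subquotients and duals. Since $\KP_0\cong\Rep{G'}$ is \emph{by definition} generated by its simple objects under exactly these operations, the inclusion $\Rep{L'}\hookrightarrow\Rep{G'}$ is essentially surjective, forcing $R_u(G')=1$. Note that in the paper semisimplicity of $\KP_0$ is recorded afterwards as a \emph{consequence} of this (Remark \ref{rmk:semisimple}), not as an input — so you should replace your reductivity argument with the unipotent-radical one, which uses only the defining generation property of $\KP_0$ and no extension-theoretic input.
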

\begin{proof}
	To prove $G'$ is algebraic, we use \cite[Prop. 2.20]{DM82}: we need to find a tensor generator $X\in \Rep{G'}\cong \KP_0$, which means $X$ generates the whole category $\KP_0$ under taking direct sums, convolutions, subquotients and duals. We choose a set of generators $\lav_1,\dots,\lav_n$ of $\Pv_+$ as monoid, and take $X= \bigoplus_{1\le i\le n}\cL_{\lav_i}$, then this assertion follows from the definition of $\KP_0$ and the formula in Theorem \ref{thm: determine KP_0 on K-group}.
	
	To prove $G'$ is connected, we use \cite[Cor. 2.22]{DM82}: we need to show that for any non-unit object $X\in \Rep{G'}\cong \KP_0$, the subcategory $\lr{X}$ consisting of subquotients of $X^{\oplus n},n\in \mathbb{N}$ is not stable under convolutions. By Theorem \ref{thm: determine KP_0 on K-group}, the composition factors of $X$ have the form $\left\{\cL_{\lav_1},\dots,\cL_{\lav_n}\right\}$. Then any object in $\lr{X}$ is also an iterated extension of simples in $\left\{\cL_{\lav_1},\dots,\cL_{\lav_n}\right\}$. Note that $\cL_{m\lav_i}$ appears as a subquotient of $\cL_{\lav_i}^{\conv m}$ by the formula in Theorem \ref{thm: determine KP_0 on K-group}, and $\cL_{m\lav_i}$ doesn't lie in $\left\{\cL_{\lav_1},\dots,\cL_{\lav_n}\right\}$ for $\lav_i\neq 0$ and $m$ large enough, thus $\lr{X}$ is not stable under convolution.
	
	Let $R_u(G')$ be the unipotent radical of this connected algebraic group $G'$, and $L'$ be the quotient of $R_u(G')\hookrightarrow G'$, which is a connected reductive group. Then $\Rep{L'}$ is generated by simple objects under taking direct sums, tensors, subquotients and duals. The quotient $G'\twoheadrightarrow L'$ induces a fully faithful embedding $\Rep{L'}\hookrightarrow \Rep{G'}$, and all the simple objects in $\Rep{G'}$ come from $\Rep{L'}$. By the definition of $\KP_0$, $\Rep{G'}\cong \KP_0$ is generated by simple objects under taking direct sums, tensors, subquotients and duals. So the embedding $\Rep{L'}\hookrightarrow \Rep{G'}$ is essentially surjective, which implies $G'\xrightarrow{\cong} L'$ and $G'$ is reductive.
\end{proof}

\begin{remark}\label{rmk:semisimple}
	From the argument in the Lemma \ref{lem:G'} above, we see that the neutral Tannakian structure on $\KP_0$ immediately implies $\cL_{\lav}\conv \cL_{\muv}$ is semisimple for simple objects $\cL_{\lav}, \cL_{\muv}$ in $\KP_0$, which is not directly evident.
\end{remark}

For a monoidal abelian category $\cA$, let $K^+(\cA)$ denote its the Grothendieck semiring. In other word, $K^+(\cA)$ is the set of equivalence classes of objects in $\cA$, equipped with $\oplus$ as addition and $\otimes$ as multiplication. If $\cA$ is semisimple and of finite length, we can view $K^+(\cA)$ as a sub-semiring of $K(\cA)$. For a reductive group $H$, let $K^+[H]$ denote $K^+(\Rep{H})$.

We will use the following theorem in \cite{KLV}.

\begin{theorem}[{\cite[Thm. 1.2]{KLV}}]
	Let $k$ be an algebraically closed field of characteristic zero. For two connected reductive groups $H_1$ and $H_2$ over $k$, let $\phi:K^+[H_1]\xrightarrow{\cong} K^+[H_2]$ be an isomorphism of semirings.
	
	Then there exists an isomorphism $\rho: H_2\xrightarrow{\cong} H_1$, such that $\rho^*=\phi$. (Moreover, such isomorphism is unique up to conjugation.)
\end{theorem}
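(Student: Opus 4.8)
Since the statement is exactly the semiring rigidity theorem of \cite{KLV}, let me sketch how one proves it. The plan is to reconstruct the root datum of $H_1$ purely from the based semiring $K^+[H_1]$, verify that the reconstruction is carried to the root datum of $H_2$ by $\phi$, and then invoke the isogeny/classification theorem for connected reductive groups (which also yields the uniqueness clause).

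First I would observe that any semiring isomorphism preserves the canonical additive basis: an element of $K^+[H]$ decomposes uniquely as a finite sum of classes of irreducibles, and the irreducible classes are exactly the nonzero elements that cannot be written as a sum of two nonzero elements. Hence $\phi$ restricts to a bijection $\psi\colon\mathrm{Irr}(H_1)\to\mathrm{Irr}(H_2)$ sending the unit $[\mathbf 1]$ (the multiplicative identity) to the unit. Duality is also transported, since $[V^\vee]$ is characterized intrinsically as the unique irreducible class $W$ with $[\mathbf 1]$ a summand of $[V]\cdot W$ (and it occurs there with multiplicity one); thus $\psi([V^\vee])=\psi([V])^\vee$. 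Writing $\mathrm{Irr}(H_i)$ as the monoid $X_i^{*,+}$ of dominant weights, we obtain a bijection $\psi$ of dominant-weight sets compatible with the fusion structure constants $c^{\nu}_{\lambda\mu}=\dim\mathrm{Hom}(V_\nu,V_\lambda\otimes V_\mu)$, with the unit and with $(-)^\vee$.

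Next one reconstructs the root datum from these combinatorial data, and this is where the main difficulty lies. The crux is to recover the highest-weight (dominance) partial order on $X^{*,+}$, equivalently the Cartan product $V_\lambda\boxtimes V_\mu:=V_{\lambda+\mu}$, from the $c^\nu_{\lambda\mu}$ alone. One cannot just take the largest-dimensional constituent of $V_\lambda\otimes V_\mu$ — the Cartan component need not be the largest, e.g. $\mathrm{Sym}^4\mathbb C^3\otimes\mathbb C^3\cong V_{(5,0)}\oplus V_{(3,1)}$ for $\mathrm{SL}_3$, where $\dim V_{(3,1)}=24>21=\dim V_{(5,0)}$ although $(3,1)<(5,0)$ — and $\dim$ is not the unique semiring homomorphism to $\mathbb Z_{\ge0}$ either. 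Instead one studies the finitely generated \emph{saturated tensor semigroup} $\{(\lambda,\mu,\nu):c^\nu_{\lambda\mu}>0\}$ and singles out its ``Cartan face'' $\{(\lambda,\mu,\lambda+\mu)\}$ as the distinguished maximal face along which the structure constants are identically $1$ and on which the projection to the first two coordinates is a lattice isomorphism; isolating this face requires an induction (on rank), and this is the real content of the theorem. Once $(X^{*,+},+)$ is recovered, its group completion is $X^*$ (the lattice points of a full‑dimensional cone generate the lattice), and the dominant chamber $C$ is the real span of $X^{*,+}$. Then, tensoring $V_\lambda$ with $V_\sigma$ for $\sigma$ deep in $C$ — detectable as those $\sigma$ for which the number of distinct constituents of $V_\lambda\otimes V_\sigma$ attains its stable maximum for every $\lambda$ — gives $V_\lambda\otimes V_\sigma\cong\bigoplus_{\beta\in\mathrm{wt}(V_\lambda)}V_{\sigma+\beta}^{\oplus\dim V_\lambda[\beta]}$, so the weight diagram $\mathrm{wt}(V_\lambda)\subset X^*$ with its multiplicities is recovered; the vertices of its convex hull form the orbit $W\lambda$, recovering the Weyl group $W$ with its $X^*$-action; the walls of $C$ give the simple roots $\alpha_i$; and the lengths of the $\alpha_i$-strings through weights, being $\langle-,\alpha_i^\vee\rangle+1$, recover the simple coroots $\alpha_i^\vee\in X_*=\mathrm{Hom}(X^*,\mathbb Z)$. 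Since every step was performed intrinsically in the semiring, $\phi$ carries the reconstructed root datum $(X_1^*,\{\alpha_i\},X_{1*},\{\alpha_i^\vee\})$ isomorphically onto that of $H_2$; by the classification theorem there is an isomorphism $\rho\colon H_2\xrightarrow{\sim}H_1$ inducing this isomorphism of root data, hence inducing $\rho^*=\phi$, and $\rho$ is unique up to conjugation because an automorphism of $H$ trivial on the based root datum is inner. To reiterate, the hard part is the recovery of the Cartan product / dominance order from the fusion rules; the weight-diagram and Weyl-group reconstruction that follows is comparatively formal.
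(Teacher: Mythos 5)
First, a point of order: the paper does not prove this statement. It is quoted verbatim from \cite{KLV} and invoked as a black box at the very end of the proof of Theorem \ref{main thm}, to upgrade the semiring isomorphism $K^+[G']\cong K^+[{\check{G}}]$ coming from Theorem \ref{thm: determine KP_0 on K-group} to an isomorphism of groups. There is therefore no internal proof to compare your write-up against; what you have produced is a sketch of the external Kazhdan--Larsen--Varshavsky theorem.

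Judged on its own terms, your outline follows the correct (and, as far as the strategy goes, the actual) route: recover the irreducibles as the additively indecomposable elements, recover duality from the occurrence of the unit with multiplicity one, recover the monoid of dominant weights with its Cartan product, then rebuild the based root datum and finish with the isomorphism theorem plus the fact that an automorphism acting trivially on the based root datum is inner. The peripheral steps are all sound: the group completion of a full-dimensional cone of lattice points is the whole lattice; the stable Clebsch--Gordan formula $V_\lambda\otimes V_\sigma\cong\bigoplus_{\beta}V_{\sigma+\beta}^{\oplus m_\lambda(\beta)}$ for $\sigma$ deep in the chamber does recover the weight diagrams; and your $\mathrm{SL}_3$ example is accurate ($\dim V_{(3,1)}=24>21=\dim V_{(5,0)}$), so dimension-maximality genuinely fails to detect the Cartan component. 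The gap is exactly where you flag it: the recovery of $(\lambda,\mu)\mapsto\lambda+\mu$ from the structure constants $c^{\nu}_{\lambda\mu}$ alone is asserted (``singling out the Cartan face \dots requires an induction on rank'') but not argued, and every subsequent step --- the lattice $X^*$, the dominant cone, the Weyl group, the coroots --- is built on it. So the proposal is an accurate roadmap of \cite{KLV} rather than a proof; to make it self-contained you would have to actually carry out that characterization of the Cartan component, which is the bulk of the technical work in the cited paper.
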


\begin{proof}[Proof of Theorem \ref{main thm}]
	According to Theorem \ref{thm: determine KP_0 on K-group}, we have an isomorphism of semirings:
	\begin{equation*}
		\begin{aligned}
			\phi:K^+[G']&\to K^+[{\check{G}}]\\
			[\cL_{\lav}]&\mapsto [V_{\lav}].
		\end{aligned}
	\end{equation*}
	Then \cite[Thm. 1.2]{KLV} gives an isomorphism ${\check{G}}\cong G'$.
\end{proof}

\appendix

\section{Hodge modules and associated graded constructions}\label{sec:appendix}
In this appendix, we review some basic facts on Hodge modules and associated graded constructions used in the main context. In this appendix, we rarely use derived algebraic geometry and we work with ordinary categories. Thus we will use some traditional notations, for example, the derived category of quasi-coherent sheaves on a scheme $X$ will be denoted as $\rD_{\mathrm{qcoh}}(\O_{X})$, and its full subcategory consisting of complexes with coherent cohomologies will be denoted by $\rD_{\mathrm{coh}}(\O_{X})$; they correspond to the homotopy categories of $\QCoh(X)$ and $\Coh(X)$ in the main context respectively. We will also write "sheaves" for ordinary sheaves. We will only consider those $\cD$-modules (together with their filtration pieces) quasi-coherent as $\O$-modules. All perverse sheaves refer to perverse constructible sheaves in this appendix.

\subsection{Categories of Hodge modules}\label{subsec:Hodge}
Let $Y$ be a smooth variety over $\C$. The basic object in the Hodge module theory is filtered regular holonomic $\cD_Y$-modules with $\Q$-structure, given by a quadruples $\mbf{M}=(\cM,F_{\bullet}\cM,K,\alpha)$, consisting of
\begin{itemize}\label{quadruple}
	\item a regular holonomic right $\cD_Y$-module $\cM$,
	\item a good filtration $F_{\bullet}\cM$ on $\cM$ (i.e. $(\cM,F_{\bullet}\cM)$ is a filtered $(\cD_Y,F_{\bullet}\cD_Y)$-module for the PBW-filtration on $\cD_Y$, and $\gr ^F\cM$ is a coherent $\gr ^F\cD_{Y}$-module),
	\item a $\Q$-coefficient perverse sheaf $K$ on the complex manifold $Y^{\mathrm{an}}$, with an isomorphism of sheaves $\alpha: \DR^{\mathrm{an}}(\cM^{\mathrm{an}})\cong \C\otimes_{\Q}K$. 
\end{itemize}
Here, $\DR^{\mathrm{an}}$ is the de Rham functor in Riemann-Hilbert correspondence
\[\DR^{\mathrm{an}}:\cD_{Y}\text{-}\mod_{rh}\xrightarrow{\cong} \Perv(Y,\C),\cM\mapsto \O_{Y^{\mathrm{an}}}\otimes_{\cD_{Y^{\mathrm{an}}}}\cM^{\mathrm{an}}.\]
We write $\FM_{\mathrm{rs}}^{\Q}(\cD_{Y})$ for the category of such quadruples. Define the $\Rat$ functor 
\begin{equation}\label{eq:Rat}
	\Rat:\FM_{\mathrm{rs}}^{\Q}(\cD_{Y})\to \Perv(Y,\Q)
\end{equation}
by $\Rat(\cM,F_{\bullet}\cM,K,\alpha)=K$. This functor is faithful.

In \cite{Sai88}, the author defines the category $\HM(Y,w)$ of (polarizable)\footnote{We will only consider polarizable (mixed) Hodge modules, so we omit the word by default.} pure Hodge modules of weight $w$ on $Y$, which is a full subcategory of $ \FM_{\mathrm{rs}}^{\Q}(\cD_Y)$. The category $\HM(Y,w)$ is an abelian category. \cite{Sai88} also proved the category of pure Hodge modules on smooth varieties satisfy Kashiwara's equivalence. Then the definition of $\HM(Y,w)$ can be extended to general varieties over $\C$. Namely, for a possibly singular variety $Y$, we choose a closed embedding of $Y$ into some smooth variety $Y'$, and then define $\HM(Y,w)$ to be the full subcategory of $\HM(Y',w)$ consisting of Hodge modules set-theoretically supported on $Y$. The $\Rat$ functor \eqref{eq:Rat} can also be defined for this possibly singular variety $Y$. It can be checked that the definition of $\HM(Y,w)$ is independent of the choice of embeddings; see for example \cite[Sec. B]{Sch16}.

Let $X$ be an integral subscheme of $Y$. A Hodge module $\mbf{M}\in \HM(Y,w)$ is said to have strict support $X$ if $\Rat(\mbf{M})$ is an IC-perverse sheaf with support $X$. Then $\mbf{M}$ is a variation of pure Hodge structure generically on $X$. Conversely, a variation of pure Hodge structure generically on $X$ can be uniquely IC extended to a pure Hodge module on $X$. In particular, there is a pure Hodge module of weight $\dim X$ on $X$, called the IC-Hodge module on $X$:
\[\ICHg_X\]
such that $\Rat(\ICHg_X)=\rmIC_X^{\Q}$\label{ICHg}. Moreover, each Hodge module in $\HM(Y,w)$ has a unique decomposition by strict support, and $\HM(Y,w)$ is a semisimple category.

We also review Saito's mixed Hodge module theory. In addition to the data in \ref{quadruple}, the definition in mixed Hodge module consists of a data called weight filtration. For a variety $Y$ over $\C$, Saito \cite{Sai90} defines the abelian category of mixed (polarizable) Hodge modules $\mathrm{MHM}(Y)$, and its bounded derived category is denoted by $\rD^b\mathrm{MHM}(Y)$. The $\Rat$ functor \ref{eq:Rat} can also be extended to a t-exact functor from $\rD^b\mathrm{MHM}(Y)$ to the bounded derived category of perverse sheaves:
\[\Rat:\rD^b\mathrm{MHM}(Y)\to \rD^b\Perv(Y,\Q).\]

Define $\rD^b\HM(Y,w)$ to be the full subcategory consisting of complexes $\mbf{M}^{\bullet}$ such that $\cH^i(\mbf{M}^{\bullet})\in \HM(Y,w+i)$. (\textbf{Warning}: this is not the derived category of $\HM(Y,w)$.) There is a grading shift functor on $\rD^b\mathrm{MHM}(Y)$ called the Tate twist, which is denoted by $(1)$. This functor send $\rD^b\HM(Y,w)$ to $\rD^b\HM(Y,w-2)$. For a quadruple $(\cM,F_{\bullet}\cM,K,\alpha)\in \FM_{\mathrm{rs}}^{\Q}(\cD_{Y})$ underlines a Hodge module $\mbf{M}$, the Tate twist $(1)$ acts as 
\begin{equation}\label{eq:Tate twist}
	(\cM,F_{\bullet}\cM,K,\alpha)(1)=(\cM,F_{\bullet-1}\cM,K(1),\alpha)
\end{equation}
where $K(1):=K\otimes_{\Q}(2\pi i) \Q\subset K\otimes_{\Q}\C$.

Six functors formalism is established in mixed Hodge module theory \cite{Sai90}). We review a few of the functors relevant to us. Let $Y$ be a variety over $\C$. There is a t-exact duality functor 
\begin{equation}\label{eq:wjy}
	\bD_Y: \rD^b\mathrm{MHM}(Y)\to \rD^b\mathrm{MHM}(Y),
\end{equation}
compatible with the Verdier dual for $\rD^b\Perv(Y,\Q)$ under $\Rat$. This duality functor also restricts to $\rD^b\HM(Y,w)$ as:
\[\bD_Y: \rD^b\HM(Y,w)\to \rD^b\HM(Y,-w).\]

Let $f:X\to Y$ be a proper morphism between varieties over $\C$. There is a pushforward functor:
\[f_{*,\Hg}:\rD^b\mathrm{MHM}(X)\to \rD^b\mathrm{MHM}(Y)\]
compatible with the pushforward functor on $\rD^b\Perv(-,\Q)$ under $\Rat$. This pushforward functor also restricts to $\rD^b\HM(Y,w)$ as:
\[f_{*,\Hg}: \rD^b\HM(X,w)\to \rD^b\HM(Y,w).\footnotemark\]
{\footnotetext{In fact, $f_{*,\Hg},f_{!,\Hg}$ are defined on $\rD^b\mathrm{MHM}(-)$ for general morphism $f$, and $f_{*,\Hg}$ doesn't decrease weights while $f_{!,\Hg}$ doesn't increase weights.} }

When $X$ and $Y$ are smooth, the pushforward functor $f_{*,\Hg}$ is compatible with the pushforward functor for their underlying filtered $\cD$-modules. More precisely, define 
\[\cD_{\hbar}:=\mathrm{Rees}(\cD):=\bigoplus_{n\ge 0}\hbar^n F_{n}\cD\subset \cD\otimes_{\C}\C[\hbar].\]
Using Rees construction, we can view a filtered $\cD$-module as a graded $\cD_{\hbar}$-module. This grading can also be viewed as a $\Gm$-equivariance; we also denote this $\Gm$ by $\Gmdil$ since it is compatible with the $\Gmdil$-equivariance under the associated graded construction \eqref{eq:lxy}. We write
\begin{equation}\label{eq:U}
	\mathrm{U}_X:\mathrm{MHM}(X)\to \cD_{X,\hbar}{\text{-}}\mathrm{mod}^{\mathrm{gr}}
\end{equation}
for the functor sending a mixed Hodge module to its underlying graded $\cD_{X,\hbar}$-module; this is an exact functor. We denote its derived functor by the same symbol. Define the transfer $\cD_{\hbar}$-module
\[\cD_{X\to Y,\hbar}:=f^*\cD_{Y,\hbar},\]
and pushforward functor
\begin{equation}\label{eq:int-push}
	\int_{f}(-):=f_*\left((-)\otimes_{\cD_{X,\hbar}}\cD_{X\to Y,\hbar} \right):\rD^b(\cD_{X,\hbar}\text{-}\mathrm{mod}^{\gr})\to \rD^b(\cD_{Y,\hbar}\text{-}\mathrm{mod}^{\gr})
\end{equation}
Then by the definition of pushforward functors for mixed Hodge modules, there is a natural isomorphism
\begin{equation}\label{eq:U&push}
	\mathrm{U}_Y\circ f_{*,\Hg}\cong \int_f \circ \mathrm{U}_X.
\end{equation}

Let $g:Y\to Z$ be a smooth morphism of relative dimension $d$ between varieties over $\C$. There are pullback functors:
\[g^{!,\Hg},g^{*,\Hg}:\rD^b\mathrm{MHM}(Z)\to \rD^b\mathrm{MHM}(Y)\]
compatible with the corresponding pullback functors for $\rD^b\Perv(-,\Q)$ under $\Rat$. Since the morphism $g$ is smooth, we have an isomorphism of functors $g^{!,\Hg}\cong g^{*,\Hg}[2d](d)$. They also restricts to pure Hodge modules as:
\[g^{!,\Hg}\cong g^{*,\Hg}[2d](d): \rD^b\HM(Z,w)\to \rD^b\HM(Y,w).\]
Define $g^{\dagger,\Hg}:=g^{*,\Hg}[d]\cong g^{!,\Hg}[-d](-d)$. This functor is t-exact, thus we can restrict it to the heart:
\[g^{\dagger,\Hg}:\HM(Z,w)\to \HM(Y,w+d).\]

When $Y$ and $Z$ are smooth, the pullback functor $g^{\dagger}$ is compatible with the pullback functor for their underlying filtered $\cD$-modules. More precisely, define the pullback functor 
\[g^{\dagger}:=\Omega^{\mathrm{top}}_{Y/Z}\otimes g^*(-)\lr{-d}:\rD^b(\cD_{Z,\hbar}\text{-}\mathrm{mod}^{\gr})\to \rD^b(\cD_{Y,\hbar}\text{-}\mathrm{mod}^{\gr}).\]
Here $\Omega^{\mathrm{top}}_{Y/Z}$ is the top relative differential sheaf, and $\lr{-d}$ is the grading shift functor with respect to the $\Gmdil$-action. Then by the definition of pullback functors for Hodge modules, there is a natural isomorphism
\begin{equation}\label{eq:U&pull}
	\mathrm{U}_Y\circ g^{\dagger,\Hg}\cong g^{\dagger} \circ \mathrm{U}_Z.
\end{equation}

\subsection{Graded de Rham functor}\label{subsec:grDR}
Let $Y$ be a variety over $\C$, and we choose a closed embedding $i:Y\hookrightarrow Y'$ into some smooth variety $Y'$\footnote{In the remaining sections, a variety labeled with a superscript prime (such as $Y'$) is implicitly assumed to be smooth.} (such smooth ambient space always exists since we assume $Y$ is quasi-projective over $\C$). 

Let $\cM'$ be a $\cD_{Y'}$-module. Define the de Rham complex of $\cM'$ to be the complex of $\C$-vector spaces
\[\DR_{Y'}(\cM'):=\left[0\to \cM'\otimes_{\O_{Y'}} \bigwedge^{d_{Y'}}\cT_{Y'} \to \dots \to \cM'\otimes_{\O_{Y'}} \bigwedge^{1}\cT_{Y'}\to \cM'\to 0\right],\]
which lives in cohomological degrees $-d,\dots,0$. 

Let $(\cM',F_{\bullet}\cM')$ be a filtered $\cD_{Y'}$-module. Then the de Rham complex inherits a filtration:
\[ F_{p}\DR_{Y'}(\cM')=\left[0\to F_{p-d_{Y'}}\cM'\otimes_{\O_{Y'}} \bigwedge^{d_{Y'}}\cT_{Y'} \to \dots \to F_{p-1}\cM'\otimes_{\O_{Y'}} \bigwedge^1\cT_{Y'}\to F_p\cM'\to 0\right].\]
The associated graded is
\begin{equation}\label{grDR}
	\gr^F_{p}\DR_{Y'}(\cM')=\left[0\to \gr^F_{p-d_{Y'}}\cM'\otimes_{\O_{Y'}} \bigwedge^{d_{Y'}}\cT_{Y'} \to \dots \to \gr^F_{p-1}\cM'\otimes_{\O_{Y'}} \bigwedge^1\cT_{Y'}\to \gr^F_p\cM'\to 0\right],
\end{equation}
which is a complex of $\O_{Y'}$-modules. This complex is called the \textit{graded de Rham complex}.

This complex can also be interpreted in the following way. Consider the graded $\mathrm{Sym}_{\O_{Y'}}\cT_{Y'}$-module 
\begin{equation}\label{eq:lxy}
	\gr_{Y'}\cM':=\bigoplus_{p\in \Z}\gr_{p}^F\cM'.
\end{equation}
It can also be constructed by
\[\cM'\otimes_{\C[\hbar]}\C_0, \ \ \C_0:=\C[\hbar]/(\hbar)\]
when viewing $\cM'$ as a $\cD_{\hbar}$-module. Then using the Koszul resolution, the derived tensor product
\[\gr_{Y'}\cM'\underset{\mathrm{Sym}_{\O_{Y'}}\cT_{Y'}}{\bigotimes}\O_{Y'}\]
is computed by the graded de Rham complex $$\gr\DR_{Y'}(\cM'):=\bigoplus_{p\in \Z}\gr^F_{p}\DR_{Y'}(\cM')$$
in the derived category of graded $\mathrm{Sym}_{\O_{Y'}}\cT_{Y'}$-modules. Let $p_{Y'}:\rT^*Y'\to Y'$ be the natural projection of the cotangent bundle $\rT^*Y'$, and $\sigma_{Y'}:Y'\to \rT^*Y'$ be the zero section embedding. We have $\mathrm{Sym}_{\O_{Y'}}\cT_{Y'}={p_{Y'}}_*\O_{\mathrm{T}^*Y'}$ and $\gr\cM'$ can be equivalently viewed as a $\Gmdil$-equivariant quasi-coherent sheaf on $\rT^*Y'$. Then the above constructions can be reinterpreted as 
\[\gr\DR_{Y'}(\cM')\cong \sigma_{Y'}^*(\gr_{Y'}\cM')\in \rD^{b,\Gmdil}_{\mathrm{coh}}(\O_{Y'}).\]

Now we restrict to $Y$. We first fix some notations of morphisms
\[\begin{tikzcd}
	{\rT^*Y'|_{Y}} & {\rT^*Y'} \\
	Y & {Y';}
	\arrow["i", from=1-1, to=1-2]
	\arrow["{p_{Y}}", from=1-1, to=2-1]
	\arrow["{p_{Y'}}"', from=1-2, to=2-2]
	\arrow["{\sigma_{Y}}", curve={height=-6pt}, from=2-1, to=1-1]
	\arrow["i", from=2-1, to=2-2]
	\arrow["{\sigma_{Y'}}"', curve={height=6pt}, from=2-2, to=1-2]
\end{tikzcd}\]
here $\rT^*Y'|_Y:=\rT^*Y'\times_{Y'}Y$. 

Assume $(\cM',F_{\bullet}\cM')$ underlines a mixed Hodge module $\mbf{M}'$ set theoretically supported on $Y\subset Y'$. Then the conditions for mixed Hodge modules force the $\O_{Y'}$-modules $\gr\cM'$ scheme-theoretically supported on $Y$, more precisely, the ideal $\mathcal{I}\subset \mathcal{O}_{Y'}$ defining $Y$ annihilates $\gr\cM'$ (see \cite[Lem. 3.2.6]{Sai88}). This indicates $i^{-1}\gr_{Y'}\cM'\in \O_{\rT^*Y'|_{Y}}\text{-}\mathrm{mod}^{\Gmdil}_{\mathrm{coh}}$ and $\gr_{Y'}\cM'=i_*i^{-1}\gr_{Y'}\cM'$, where $i^{-1}$ is the usual inverse image functor for abelian sheaves. Thus we get a functor
\[i^{-1}\gr_{Y'}(-):\mathrm{MHM}(Y)\to \O_{\rT^*Y'|_{Y}}\text{-}\mathrm{mod}^{\Gmdil}_{\mathrm{coh}}\]
which is exact by definitions. We also write
\[i^{-1}\gr_{Y'}(-):\rD^b\mathrm{MHM}(Y)\to \rD^{b,\Gmdil}_{\mathrm{coh}}(\O_{\rT^*Y'|_{Y}})\]
for its derived functor. 

We further apply $\sigma_{Y'}^*$. Notice that
\begin{equation}\label{eq:grDR}
	\gr\DR_{Y'}(\cM')\cong \sigma_{Y'}^*(i_*i^{-1}\gr_{Y'}\cM')\cong i_*\sigma_{Y}^*(i^{-1}\gr_{Y'}\cM').
\end{equation}
Thus we define the \textit{graded de Rham functor} $\gr\DR_{Y,i}$ to be the composition of functors
\begin{equation}\label{eq:lyr}
	\rD^b\mathrm{MHM}(Y)\xrightarrow{i^{-1}\gr_{Y'}(-)} \rD^{b,\Gmdil}_{\mathrm{coh}}(\O_{\rT^*Y'|_{Y}})\xrightarrow{\sigma_Y^*}\rD^{b,\Gmdil}_{\mathrm{coh}}(\O_{Y}),
\end{equation}
which fits into the following commutative diagram:
\[\begin{tikzcd}
	{\rD^b\mathrm{MHM}(Y)} & {\rD^{b,\Gmdil}_{\mathrm{coh}}(\O_{Y})} \\
	{\rD^b\mathrm{MHM}(Y')} & {\rD^{b,\Gmdil}_{\mathrm{coh}}(\O_{Y'}).}
	\arrow["{\gr\DR_{Y,i}}", from=1-1, to=1-2]
	\arrow["{i_{*,\Hg}}"', from=1-1, to=2-1]
	\arrow["{i_*}", from=1-2, to=2-2]
	\arrow["{\gr\DR_{Y'}}", from=2-1, to=2-2]
\end{tikzcd}\]
The $p$-th grading piece of this functor is denoted by
\[\gr_{p}\DR_{Y,i}(-): \rD^b\mathrm{MHM}(Y)\to \rD^{b}_{\mathrm{coh}}(\O_{Y}).\]
It is shown in \cite[Prop. 2.33]{Sai90}  (or see \cite[Lem. 7.3]{Sch16} for more details) that for another embedding $i':Y\hookrightarrow Y''$, there is a natural isomorphism $\gr_{p}\DR_{Y,i}(\mbf{M})\cong \gr_{p}\DR_{Y,i'}(\mbf{M})$. In summary, for any $p\in \Z$, we have a well-defined functor called the {graded de Rham functor}, denoted by
\[\gr_p\DR_Y:\rD^b(\mathrm{MHM}(Y))\to \rD^b_{\mathrm{coh}}(\O_Y).\]
We will omit the subscript $Y$ if there is no confusions.

We also remark that, according to \eqref{eq:Tate twist},  
\begin{equation*}
	\gr_p \DR(\mbf{M}(1))=\gr_{p-1}\DR(\mbf{M}).
\end{equation*}

The following properties about graded de Rham functors are used in this paper.
\begin{proposition}[{\cite[Sec. 2.4]{Sai88}}]
	Let $\omega_{Y}\in \rD^b_{\mathrm{coh}}(\O_Y)$ be the dualizing complex $a^!\O_{\pt}$, and $\D_Y(-):=\cHom(-,\omega_Y): \rD^b_{\mathrm{coh}}(\O_{Y})^{\mathrm{op}}\to \rD^b_{\mathrm{coh}}(\O_{Y})$ be the Grothendieck-Serre dual functor. Then there is an isomorphism
	\[\D_Y\circ \gr_p\DR \cong \gr_{-p}\DR\circ \bD_Y: \rD^b\mathrm{MHM}(Y)^{\mathrm{op}}\to \rD^b_{\mathrm{coh}}(\O_{Y}). \]
	In particular, we apply this isomorphism to the IC-Hodge module $\ICHg_{Y}\in \HM(Y,d_{Y})$. A choice of polarization on $\ICHg_Y$ induces an isomorphism $\bD(\ICHg_Y)\cong \ICHg_Y(d_Y)$, thus
	\begin{equation}\label{eq:D(grDR)}
		\D_Y\left(\gr_p\DR(\ICHg_Y)\right)\cong \gr_{-p}\DR\circ\bD_Y(\ICHg_Y)\cong \gr_{-p}\DR(\ICHg_Y(d_Y))=\gr_{-p-d_Y}\DR(\ICHg_Y).
	\end{equation}
\end{proposition}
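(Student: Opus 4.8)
The statement to prove is the duality formula $\D_Y \circ \gr_p\DR \cong \gr_{-p}\DR \circ \bD_Y$ on $\rD^b\mathrm{MHM}(Y)$, together with its specialization to $\ICHg_Y$ via \eqref{eq:D(grDR)}. The plan is to reduce to the case where $Y = Y'$ is smooth, where the statement becomes a concrete computation with the de Rham complex \eqref{grDR} of a filtered $\cD_{Y'}$-module, and then descend along a closed embedding $i : Y \hookrightarrow Y'$ using the compatibility of both sides with $i_{*,\Hg}$ and $i_*$.

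First I would set up the smooth case. For $Y'$ smooth and $(\cM', F_\bullet\cM')$ a filtered $\cD_{Y'}$-module underlying a mixed Hodge module, recall Saito's description of the dual filtered $\cD$-module: the underlying $\cD_{Y'}$-module of $\bD_{Y'}\mbf{M}'$ is the holonomic dual $\D^{\cD}\cM' := \mathscr{H}^0 R\mathscr{H}om_{\cD_{Y'}}(\cM', \cD_{Y'} \otimes_{\O_{Y'}} \omega_{Y'}^{\otimes -1})[d_{Y'}]$, with the naturally induced filtration (strictness is part of the Hodge-theoretic input, so no derived correction is needed). The key computation is then that the graded de Rham complex $\gr^F_\bullet \DR_{Y'}$ intertwines the holonomic $\cD$-dual with the Grothendieck--Serre dual $\D_{Y'}$ on $\rD^b_{\mathrm{coh}}(\O_{Y'})$, up to the sign flip $p \leftrightarrow -p$. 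Concretely: $\gr\DR_{Y'}(\cM') \cong \sigma_{Y'}^*(\gr_{Y'}\cM')$ as a $\Gmdil$-equivariant object on $Y'$, where $\gr_{Y'}\cM'$ is a graded $\mathrm{Sym}_{\O_{Y'}}\cT_{Y'} = p_{Y'*}\O_{\rT^*Y'}$-module; the holonomic $\cD$-dual, after passing to $\gr$, becomes the Grothendieck--Serre dual of $\gr_{Y'}\cM'$ on $\rT^*Y'$ (this is the classical statement that $\gr$ of holonomic duality is duality of the characteristic-variety sheaf, with a twist by $\omega_{\rT^*Y'/Y'}^{\otimes -1} \cong p_{Y'}^*\omega_{Y'}^{\otimes -1}$ accounting for the relative dimension), and pulling back along the zero section $\sigma_{Y'}$ converts this into $\D_{Y'}$ on $Y'$ by base change for the self-intersection $\sigma_{Y'}^* \sigma_{Y'*}$. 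The grading reversal $p \mapsto -p$ comes from the fact that holonomic duality reverses the Rees grading (it sends $\hbar \mapsto \hbar$ but the $\cD_\hbar$-dual inverts the internal degree). I would carry out this computation by Koszul-resolving $\O_{Y'}$ over $\mathrm{Sym}_{\O_{Y'}}\cT_{Y'}$ and checking the duality termwise on the resolution, which is where the shifts $[d_{Y'}]$ and the $\omega_{Y'}$-twist get bookkept; it is routine but is the technical heart.

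Next I would descend to $Y$. Choose $i : Y \hookrightarrow Y'$ with $Y'$ smooth. Both $\gr_p\DR$ functors fit into the commutative square displayed before \eqref{eq:lyr}, i.e. $i_* \circ \gr_p\DR_Y \cong \gr_p\DR_{Y'} \circ i_{*,\Hg}$, and $\bD$ is compatible with $i_{*,\Hg}$ (for a closed embedding, $i_{*,\Hg} \bD_Y \cong \bD_{Y'} i_{*,\Hg}$) while $\D$ is compatible with $i_*$ (for a closed embedding, $i_* \D_Y \cong \D_{Y'} i_*$, by Grothendieck duality since $i$ is proper, using that $i^! \omega_{Y'} \cong \omega_Y$). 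Chaining these: $i_* \D_Y \gr_p\DR_Y \cong \D_{Y'} i_* \gr_p\DR_Y \cong \D_{Y'}\gr_p\DR_{Y'} i_{*,\Hg} \cong \gr_{-p}\DR_{Y'}\bD_{Y'} i_{*,\Hg} \cong \gr_{-p}\DR_{Y'} i_{*,\Hg}\bD_Y \cong i_* \gr_{-p}\DR_Y \bD_Y$, and since $i_*$ is fully faithful (Kashiwara) the isomorphism descends to $Y$. Independence of $i$ follows from the already-cited independence of $\gr_p\DR_Y$ from the embedding. Finally, the $\ICHg_Y$ statement is immediate: a polarization gives $\bD_Y(\ICHg_Y) \cong \ICHg_Y(d_Y)$, and combining with $\gr_p\DR(\mbf{M}(1)) = \gr_{p-1}\DR(\mbf{M})$ yields \eqref{eq:D(grDR)}.

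The main obstacle I anticipate is the termwise verification in the smooth case that the graded de Rham complex converts holonomic $\cD$-duality into Grothendieck--Serre duality with exactly the right shift and grading reversal --- in particular tracking the $[d_{Y'}]$, the $\omega_{Y'}$-twist, and the sign on $p$ simultaneously, and confirming that no strictness/derived-tensor subtlety intervenes (this is guaranteed by the Hodge-module hypothesis, so the point is to invoke Saito's strictness results at the right moment rather than to prove them). A secondary point requiring care is making the compatibility isomorphisms for $i_*$, $i_{*,\Hg}$, $\D$, $\bD$ genuinely compatible with one another (coherence of the chain of natural isomorphisms), but since we only need an abstract isomorphism of functors — not a specified one — this can be handled without heavy $2$-categorical bookkeeping.
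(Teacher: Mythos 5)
First, a remark on what you are being compared against: the paper does not prove this proposition at all --- it is stated with a citation to \cite[Sec.~2.4]{Sai88} (and only the ``in particular'' line, combining $\bD(\ICHg_Y)\cong\ICHg_Y(d_Y)$ with $\gr_p\DR(\mbf{M}(1))=\gr_{p-1}\DR(\mbf{M})$, is a computation belonging to the paper). So you are reconstructing Saito's argument. Your outline is the right one and essentially Saito's: in the smooth case the dual of a filtered $\cD$-module underlying a Hodge module is strict, so $\gr$ commutes with holonomic duality, and the Koszul/Spencer resolution converts this into Grothendieck--Serre duality of $\gr\DR$ with the grading reversed; the singular case is handled through a closed embedding. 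Your bookkeeping of the shift, the $\omega_{Y'}$-twist, and the sign on $p$, and your identification of strictness as the place where the Hodge hypothesis enters, are all correct.

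There is, however, one concrete gap in the descent step. You conclude with ``since $i_*$ is fully faithful (Kashiwara) the isomorphism descends to $Y$.'' Kashiwara's equivalence is a statement about $\cD$-modules (and Hodge modules) supported on a closed subvariety; the analogous statement for coherent sheaves fails at the derived level. For $i:\pt\hookrightarrow\A^1$ one has $\Hom^1_{\rD^b_{\mathrm{coh}}(\O_{\A^1})}(i_*\C,i_*\C)\cong\C\neq 0=\Hom^1_{\rD^b_{\mathrm{coh}}(\O_{\pt})}(\C,\C)$, so $i_*:\rD^b_{\mathrm{coh}}(\O_Y)\to\rD^b_{\mathrm{coh}}(\O_{Y'})$ is not full, and an isomorphism $i_*A\cong i_*B$ need not be of the form $i_*(\varphi)$, hence does not formally descend. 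The fix is to avoid descending altogether and instead produce the isomorphism directly at the level of $Y$, which is how the paper's definition \eqref{eq:lyr} is set up: by \cite[Lem.~3.2.6]{Sai88} the modules $\gr_{Y'}\cM'$ and $\gr_{Y'}(\mathrm{U}_{Y'}\bD_{Y'}\mbf{M}')$ are annihilated by the ideal $\mathcal{I}$ of $Y$, so every term of the Koszul-resolution computation is an $\O_{Y'}/\mathcal{I}$-module, i.e.\ is $i_*$ of a sheaf on $Y$; combined with the termwise form of Grothendieck duality for the finite morphism $i$ (namely $R\cHom_{\O_{Y'}}(i_*(-),\omega_{Y'})\cong i_*R\cHom_{\O_Y}(-,i^!\omega_{Y'})=i_*\D_Y(-)$), the smooth-case isomorphism is exhibited as $i_*$ of an explicit isomorphism on $Y$, which is what you need. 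With that repair the argument goes through; independence of the embedding then follows as you say from the already-established independence of $\gr_p\DR_Y$.
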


\begin{proposition}[{\cite[Sec. 2.3.7]{Sai88}}]\label{prop:push}
	Let $f:X\to Y$ be an embeddable proper morphism between varieties $X,Y$ (i.e. we can choose closed embeddings $i:X\hookrightarrow X',j:Y\hookrightarrow Y'$ for smooth $X',Y'$, and proper morphism $f':X'\to Y'$ compatible with $i,j,f$). Then there is an isomorphism
	\[f_*\circ \gr_p\DR \cong \gr_p\DR\circ f_{*,\Hg}:\rD^b\mathrm{MHM}(X)\to \rD^b_{\mathrm{coh}}(\O_Y).\]
\end{proposition}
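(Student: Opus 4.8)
\textbf{Proof proposal for Proposition \ref{prop:push}.}

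The plan is to reduce the statement to a purely $\cD$-module-theoretic compatibility via the functor $\mathrm{U}$ of \eqref{eq:U}, and then to the Grauert--Riemenschneider-type computation that identifies the graded pieces of the pushed-forward filtration with the classical derived pushforward of coherent sheaves. First I would choose compatible embeddings $i\colon X\hookrightarrow X'$, $j\colon Y\hookrightarrow Y'$ into smooth varieties and a proper morphism $f'\colon X'\to Y'$ with $f'\circ i = j\circ f$; such data exists by the embeddability hypothesis. Since $\gr_p\DR_X$ is by construction (see \eqref{eq:lyr} and the commutative square following it) compatible with $i_{*,\Hg}$ and $i_*$, and likewise for $j$, and since $f'_*\circ i_* \cong j_*\circ f_*$, it suffices to prove the statement upstairs, i.e.\ for the proper morphism $f'$ between smooth varieties. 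This reduces us to showing $f'_*\circ\gr_p\DR_{X'}\cong \gr_p\DR_{Y'}\circ f'_{*,\Hg}$ on $\rD^b\mathrm{MHM}(X')$ (with Hodge modules supported on the images of $X$, $Y$, though this is now irrelevant).

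Next I would pass through the Rees/graded-$\cD_\hbar$-module picture. Recall $\gr\DR_{X'}(\cM')\cong \sigma_{X'}^*(\gr_{X'}\cM')$, where $\gr_{X'}\cM' = \cM'\otimes_{\C[\hbar]}\C_0$ is the specialization at $\hbar=0$ of the underlying graded $\cD_{X',\hbar}$-module $\mathrm{U}_{X'}(\mbf{M}')$, viewed as a $\Gmdil$-equivariant sheaf on $\mathrm{T}^*X'$. The key input is the isomorphism \eqref{eq:U&push}, $\mathrm{U}_{Y'}\circ f'_{*,\Hg}\cong \int_{f'}\circ\,\mathrm{U}_{X'}$, which expresses Saito's Hodge-module pushforward in terms of the filtered $\cD$-module pushforward $\int_{f'}(-) = f'_*\big((-)\otimes_{\cD_{X',\hbar}}\cD_{X'\to Y',\hbar}\big)$. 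Saito's strictness theorem (the pushforward of a pure Hodge module is strict, \cite[Sec. 2.3.7]{Sai88}) guarantees that forming $\int_{f'}$ commutes with $\otimes_{\C[\hbar]}^{\mathrm L}\C_0$, i.e.\ the graded pieces of the filtration on $f'_{*,\Hg}\mbf{M}'$ are computed by the derived pushforward of the associated graded. Concretely this means $\gr_{Y'}\big(\int_{f'}\mathrm{U}_{X'}\mbf{M}'\big)\cong R\tilde f'_*\big(\gr_{X'}\mathrm{U}_{X'}\mbf{M}'\otimes^{\mathrm L}_{\O_{\mathrm{T}^*X'}}\O_{\mathrm{T}^*Y'|_{X'}}\big)$ where $\tilde f'\colon \mathrm{T}^*Y'\times_{Y'}X'\to \mathrm{T}^*Y'$ is the induced map; this is the graded version of the projection formula plus base change along the cotangent correspondence, and it is exactly the content extracted in \cite[Sec. 2.3.7]{Sai88}. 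Applying $\sigma_{Y'}^*$ and using the compatibility of the Koszul resolution of the diagonal with pushforward (flat base change along $\sigma_{Y'}$ and the projection formula for $f'_*$) then yields $\sigma_{Y'}^*\gr_{Y'}(f'_{*,\Hg}\mbf{M}')\cong f'_*\sigma_{X'}^*\gr_{X'}(\mbf{M}')$, which after taking the degree-$p$ piece is the desired identity.

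The step I expect to be the main obstacle is the interchange of $\int_{f'}$ with specialization at $\hbar = 0$, i.e.\ making precise that $\gr^F$ of the pushed-forward $\cD$-module agrees with the derived pushforward of $\gr^F$ of the original — this is where Saito's strictness/degeneration results are genuinely used, and one must be careful that the relevant spectral sequence degenerates and that no higher $\mathrm{Tor}$ against $\C_0$ appears. Everything else (the reduction to the smooth case, the $\sigma^*$-base-change, the Koszul computation, assembling the grading pieces) is formal manipulation with the definitions given in Appendix \ref{subsec:grDR}. Since the statement is cited from \cite[Sec. 2.3.7]{Sai88}, in the write-up I would present the reduction steps in detail and then invoke Saito's theorem for the key degeneration, with a pointer to \cite[Lem. 7.3, Prop. 7.2]{Sch16} for the translation into the coherent-sheaf language used here.
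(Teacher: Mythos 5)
Your core computation is the same as the paper's: the key input is the compatibility \eqref{eq:U&push} together with the strictness statement that specialization at $\hbar=0$ commutes with $\int_{f'}$, which the paper takes from \cite[Cor.~3.9]{CDK} in the form $(\int_{f'}\cM')\otimes_{\C[\hbar]}\C_0\cong p_{f'*}\pi_{f'}^*(\cM'\otimes_{\C[\hbar]}\C_0)$ for the cotangent correspondence $\rT^*X'\leftarrow \rT^*Y'\times_{Y'}X'\to\rT^*Y'$, followed by base change against the zero sections. You have correctly located where strictness is genuinely used, and your displayed formula for $\gr_{Y'}(\int_{f'}\mathrm{U}_{X'}\mbf{M}')$ is exactly this. (Minor imprecision: the base change against $\sigma_{Y'}$ is not ``flat base change'' --- the zero section is a closed immersion --- but Tor-independent base change for the zero section of a vector bundle against the bundle's base change along $f'$, which does hold.)

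The genuine gap is your opening reduction ``it suffices to prove the statement upstairs.'' Knowing that $j_*$ applied to both sides of the desired isomorphism agree does not let you cancel $j_*$: for a closed immersion $j$, the functor $j_*$ on $\rD^b_{\mathrm{coh}}$ is not fully faithful (by adjunction $\Hom_{Y'}(j_*A,j_*B)\cong\Hom_Y(Lj^*j_*A,B)$, and $Lj^*j_*A$ has extra Koszul terms), and in any case you need to \emph{construct} a natural isomorphism of functors valued in $\rD^b_{\mathrm{coh}}(\O_Y)$, not merely verify that the two composites agree after $j_*$. Your parenthetical that the support condition ``is now irrelevant'' is precisely backwards: the support is what makes the descent work. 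The paper's route is to never leave the subvarieties: since $\gr_{Y'}\cM'$ is scheme-theoretically annihilated by the ideal of $Y$ (\cite[Lem.~3.2.6]{Sai88}), one may write $\gr_{Y'}\cM'=i_*i^{-1}\gr_{Y'}\cM'$ with $i^{-1}$ the underived sheaf restriction, and then push the whole cotangent-correspondence computation down to $X$ and $Y$ via two further Tor-independent base changes (the paper's diagrams \eqref{eq:zqx} and \eqref{eq:jbm}), using $p_{f'}\circ\wt{i}=j\circ p_f$ and $\sigma_X=\pi_f\circ\wt{\sigma_Y}$. Your smooth-case computation is correct as far as it goes, but to finish you must replace the ``cancel $j_*$'' step by this restriction argument.
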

\begin{remark}\label{rmk:embed}
	According to our assumption that $X,Y$ are quasi-projective over $\C$, this embeddable condition always holds. The reason is as follows. We choose a locally closed embedding $X\to \mathbb{P}^n$ for some $n$. Then consider the composition $X\hookrightarrow X\times Y \to \mathbb{P}^n\times Y$, where the first morphism is the graph of $f$. Since the composition $X\to \mathbb{P}^n\times Y \to Y$ is the proper morphism $f$, we know the locally closed embedding $X\to \mathbb{P}^n\times Y$ is closed. Now we choose a closed embedding $j:Y\hookrightarrow Y'$ into some smooth variety $Y'$; define $X':=\mathbb{P}^n\times Y'$, $i:X\hookrightarrow X'$ to be the composition $X\hookrightarrow \mathbb{P}^n\times Y \hookrightarrow \mathbb{P}^n\times Y'$, and $f'$ to be the projection $\mathbb{P}^n\times Y'\to Y'$.

\end{remark}

We will write down a proof of \ref{prop:push}; later we will generalize it to a weak equivariant version.
\begin{proof}
	Let the correspondence of cotangent bundles be as follows:
	\[\begin{tikzcd}
		{\rT^*X'} & {\rT^*Y'\times_{Y'}X'} & {\rT^*Y'.}
		\arrow["{\pi_{f'}}"', from=1-2, to=1-1]
		\arrow["p_{f'}", from=1-2, to=1-3]
	\end{tikzcd}\]
	Then according to \cite[Cor. 3.9]{CDK}, for $\cM'\in \rD^b(\cD_{\hbar,X'}{\text{-}}\mathrm{mod}^{\gr})$, there is a natural isomorphism
	\[(\int_{f'}\cM')\otimes_{\C[\hbar]}\C_0\cong p_{f'*}\pi_{f'}^*(\cM'\otimes_{\C[\hbar]}\C_0).\footnotemark\]
	\footnotetext{In their formula, there is another term $\omega_{X/Y}$, since they use left $\cD$-modules}
	Moreover, we restrict the above diagram to $X,Y$:
\[\begin{tikzcd}
	{\rT^*X'|_{X}} & { \rT^*Y'|_{Y}\times_{Y}X} & {\rT^*Y'|_{Y}.} \\
	& {\rT^*Y'\times_{Y'}X}
	\arrow["{\pi_f}"', from=1-2, to=1-1]
	\arrow["p_f", from=1-2, to=1-3]
	\arrow[shift left, between={0.2}{0.8}, no head, from=1-2, to=2-2]
	\arrow[shift right, between={0.2}{0.8}, no head, from=1-2, to=2-2]
\end{tikzcd}\]
	Let $\mbf{M}'=i_{*,\Hg}\mbf{M}$ for some $\mbf{M}\in \rD^b\mathrm{MHM}(X)$ and $\cM'=\mathrm{U}_{X'}(\mbf{M}')\in \rD^b(\cD_{\hbar,X'}{\text{-}}\mathrm{mod}^{\gr}) $ . Combined with the compatibility \eqref{eq:U&push}, we obtain
	\[\gr \left(f'_{*,\Hg}(\mbf{M}')\right)\cong p_{f'*}\pi_{f'}^* \gr(\mbf{M}').\]
	Then according to the interpretation of $\gr\DR$ (\ref{eq:grDR}), we can compute that
	\[\gr\DR f_{*,\Hg}(\mbf{M})=\sigma_Y^*j^{-1}\gr f'_{*,\Hg}(\mbf{M}')\cong \sigma_{Y}^*j^{-1} p_{f'*}\pi_{f'}^* \gr(\mbf{M}')=\sigma_{Y}^*j^{-1} p_{f'*}\pi_{f'}^* i_*i^{-1}\gr(\mbf{M}').\]
	Since $\rT^*X'$, $\rT^*Y'\times_{Y'}X'$ are flat over $X'$, we can do base change along $i:X\hookrightarrow X'$, and get the Cartesian diagram in the derived sense (i.e. in the $\infty$-category of derived schemes):
	\begin{equation}\label{eq:zqx}
		\begin{tikzcd}
			{\rT^*X'} & {\rT^*Y'\times_{Y'}X'} \\
			{\rT^*X'|_{X}} & {\rT^*Y'\times_{Y'}X.}
			\arrow["{\pi_{f'}}"', from=1-2, to=1-1]
			\arrow["i", from=2-1, to=1-1]
			\arrow["{\wt{i}}"', from=2-2, to=1-2]
			\arrow["{\pi_{f}}"', from=2-2, to=2-1]
		\end{tikzcd}
	\end{equation}
	Then we can continue the computation:
	\begin{align*}
		\sigma_{Y}^*j^{-1} p_{f'*}\pi_{f'}^* i_*i^{-1}\gr(\mbf{M}')\cong \sigma_{Y}^*j^{-1} p_{f'*}\wt{i}_*\pi_{f}^* i^{-1}\gr(\mbf{M}')\cong \sigma_{Y}^*j^{-1} j_*p_{f*}\pi_{f}^* i^{-1}\gr(\mbf{M}')\cong \sigma_{Y}^*p_{f*}\pi_{f}^* i^{-1}\gr(\mbf{M}');
	\end{align*}
	here, the first isomorphism is the base change isomorphism applied to \eqref{eq:zqx}, and the second isomorphism follows from the equality $p_{f'}\circ \wt{i}=j\circ p_f$.
	
	Using similar reasons as before, we have the following Cartesian diagram in the derived sense
	\begin{equation}\label{eq:jbm}
		\begin{tikzcd}
			{\rT^*Y'|_{Y}\times_{Y}X} & {\rT^*Y'|_{Y}} \\
			X & {Y.}
			\arrow["{p_f}", from=1-1, to=1-2]
			\arrow["{\wt{\sigma_{Y}}}", from=2-1, to=1-1]
			\arrow["f", from=2-1, to=2-2]
			\arrow["{\sigma_Y}"', from=2-2, to=1-2]
		\end{tikzcd}
	\end{equation}
	Then we can finish the computation:
	\[\sigma_{Y}^*p_{f*}\pi_{f}^* i^{-1}\gr(\mbf{M}')\cong f_*\wt{\sigma_Y}^*\pi_{f}^*i^{-1}\gr(\mbf{M}')\cong f_*\sigma_X^*i^{-1}\gr(\mbf{M}')=f_*\gr\DR(\mbf{M});\]
	here, the first isomorphism is the base change isomorphism applied to \eqref{eq:jbm}, and the second isomorphism follows from the equality $\sigma_X=\pi_f\circ \wt{\sigma_Y}$.
\end{proof}

\subsection{Equivariant Hodge modules and weak equivariant graded de Rham functors}\label{subsec:equivariant}
We fix a connected algebraic group $H$, i.e. a connected affine group scheme of finite type over $\C$. 
\subsubsection{Equivariant Hodge modules and $\cD_{\hbar}$-modules}

Let $H$ act on a variety $Y$ over $\C$. Write $a:H\times Y\to Y$ for the action map and $p:H\times Y\to Y$ for the projection map. An \textit{$H$-equivariant mixed Hodge module} on $Y$ is a pair $(\mbf{M},\beta)$ where $\mbf{M}\in \mathrm{MHM}(Y)$, and $\beta:a^{\dagger}\mbf{M}\xrightarrow{\cong} p^{\dagger}\mbf{M}$ is an isomorphism satisfying the usual cocycle conditions. Let $$\mathrm{MHM}^H(Y)$$ 
denote the (abelian) category of $H$-equivariant mixed Hodge modules on $Y$. 

The equivariant structure on a mixed Hodge module equips its underlying graded $\cD_{\hbar}$-module an equivariant structure. But it should be noted that there are two kinds of definitions of equivariant $\cD_{\hbar}$-module: a strong version and a weak version. For a smooth variety $Y'$ with an $H$-action, a \textit{strong $H$-equivariant} graded $\cD_{\hbar,Y'}$-module is a pair $(\cM',\beta')$ where $\cM'\in \cD_{\hbar,Y'}\text{-}\mathrm{mod}^{\mathrm{gr}}$, and $\beta':a^{\dagger}\cM'\xrightarrow{\cong}p^{\dagger}\cM'$ is an isomorphism {as graded $\cD_{\hbar,H\times Y'}=\cD_{\hbar,H}\boxtimes_{\C[\hbar]}\cD_{\hbar,Y'}$-modules}, satisfying the usual cocycle conditions. A \textit{weak $H$-equivariant} graded $\cD_{\hbar,Y'}$-module is a pair $(\cM',\beta')$ where $\cM'\in \cD_{\hbar,Y'}\text{-}\mathrm{mod}^{\mathrm{gr}}$, and $\beta':a^{\dagger}\cM'\xrightarrow{\cong}p^{\dagger}\cM'$ is an isomorphism {as graded $\O_{\hbar,H}\boxtimes_{\C[\hbar]}\cD_{\hbar,Y'}=\O_{H}\boxtimes \cD_{\hbar,Y'}$-modules}, satisfying the usual cocycle conditions. 
Let
\[\cD_{\hbar,Y'}\text{-}\mathrm{mod}^{\mathrm{gr},H\text{-strong}} \text{ \ \ \ and \ \ \ }\cD_{\hbar,Y'}\text{-}\mathrm{mod}^{\mathrm{gr},H\text{-weak}}\]
denote the (abelian) category of strong and weak $H$-equivariant graded $\cD_{\hbar,Y'}$-modules respectively. There is a natural exact forgetful functor
\[\mathrm{For}^{\text{s}\to \text{w}}:\cD_{\hbar,Y'}\text{-}\mathrm{mod}^{\mathrm{gr},H\text{-strong}} \to \cD_{\hbar,Y'}\text{-}\mathrm{mod}^{\mathrm{gr},H\text{-weak}}.\]
For an $H$-equivariant mixed Hodge module $\mbf{M}'\in \MHM^H(Y')$, its underlying graded $\cD_{\hbar}$-module has a natural strong $H$-equivariant structure. Thus there is a natural functor, which is the (strong) equivariant version of \eqref{eq:U}:
\begin{equation}\label{eq:yyy}
	\mathrm{U}_{Y'}^{H\text{-strong}}:\mathrm{MHM}^H(Y')\to \cD_{\hbar,Y'}\text{-}\mathrm{mod}^{\mathrm{gr},H\text{-strong}} .
\end{equation}
We also write 
\[\mathrm{U}_{Y'}^{H\text{-weak}}:=\mathrm{For}^{\text{s}\to \text{w}} \circ\mathrm{U}_{Y'}^{H\text{-strong}}:\mathrm{MHM}^H(Y')\to \cD_{\hbar,Y'}\text{-}\mathrm{mod}^{\mathrm{gr},H\text{-weak}}.\]

These equivariant constructions all have triangulated categories versions. The equivariant derived category of mixed Hodge modules 
$$\rD^b\MHM^H(Y)$$ 
and the strong equivariant derived category of graded $\cD_{\hbar}$-modules $$\rD^{b,\gr,H\strong}(\cD_{\hbar,Y'})$$
can be constructed following the idea of \cite{BL94}; a detailed treatment can be found in \cite{Ach} and \cite[Sec. 4]{Kas08}\footnote{More precisely, \cite{Kas08} considers $\cD$-modules, and the constructions apply to $\cD_{\hbar}$-modules.} respectively\footnote{We only use (equivariant) $\cD_{\hbar}$-modules for smooth varieties. For mixed Hodge modules, although \cite{Ach} only considers the case when $Y$ is smooth, one can use the derived version of Kashiwara's lemma to extend this theory to singular cases. In fact, $\rD^b\MHM^H(Y)$ should be viewed as the derived category of mixed Hodge modules on the quotient stack $[H\backslash Y]$, and this theory can be extended to general algebraic stacks by \cite{Tub25}.}. It should be \textbf{warned} that these equivariant derived categories are not the derived categories of equivariant abelian categories unless $H$ is unitary. There is also a natural functor
\begin{equation}\label{eq:stq}
	\mathrm{U}_{Y'}^{H\text{-strong}}:\rD^b\mathrm{MHM}^H(Y')\to \rD^{b,\gr,H\strong}(\cD_{\hbar,Y'}),
\end{equation}
which is t-exact and recover \eqref{eq:yyy} after restricting to the hearts.

In the weak equivariant case, the derived category of $\cD_{\hbar,Y'}\text{-mod}^{\gr,H\weak}$ is the "correct" category according to \cite[Sec. 3.4-3.8]{Kas08}\footnote{The name used there is quasi-equivariant $\cD$-modules.}, thus let's define
\[\rD^{b,\gr,H\weak}(\cD_{\hbar,Y'}):=\rD^b(\cD_{\hbar,Y'}\text{-mod}^{\gr,H\weak}).\]
There is also a natural forgetful functor
\begin{equation}\label{eq:lpc}
	\mathrm{For}^{\text{s}\to \text{w}}: \rD^{b,\gr,H\strong}(\cD_{\hbar,Y'}) \to \rD^{b,\gr,H\weak}(\cD_{\hbar,Y'}),
\end{equation}
which is t-exact and recover \eqref{eq:yyy} after restricting to the hearts. Again
\[\mathrm{U}_{Y'}^{H\text{-weak}}:=\mathrm{For}^{\text{s}\to \text{w}} \circ\mathrm{U}_{Y'}^{H\text{-strong}}:\rD^b\mathrm{MHM}^H(Y')\to \rD^{b,\gr,H\weak}(\cD_{\hbar,Y'}).\]

We will use the pushforward functors for these equivariant derived categories defined in above references. They are compatible with the functors \eqref{eq:stq} \eqref{eq:lpc}, and compatible with the pushforward functors for non-equivariant derived categories after forgetting the equivariances. We also remark that the pushforward functor for weak equivariant derived categories has the same formula as in \eqref{eq:int-push}.

\subsubsection{Weak equivariant graded de Rham functor}\label{subsubsec:weakequiv_grDR}
Now we want to generalize the constructions in subsection \ref{subsec:grDR} to weak equivariant versions. Here the word "weak" indicates weak equivariant $\cD_{\hbar}$-modules. 

For a smooth $H$-variety $Y'$, define the \textit{weak equivariant graded deRham functor} $\gr\DR^H_{Y'}$ to be the composition
\[\gr\DR_{Y'}^H:\rD^b\MHM^H(Y')\xrightarrow{U_{Y'}^{H\weak}} \rD^{b,\gr,H\weak}_{\mathrm{coh}}(\cD_{\hbar, Y'})\xrightarrow{\otimes_{\C[\hbar]}\C_0} \rD^{b,H\times\Gmdil}_{\mathrm{coh}}(\O_{\rT^*Y'})\xrightarrow{\sigma_{Y'}^*}\rD^{b,H\times\Gmdil}_{\mathrm{coh}}(\O_{Y'}).\]

For a possibly singular $H$-variety $Y$, we assume there is an $H$-equivariant closed embedding $i:Y\hookrightarrow Y'$ into some smooth $H$-variety $Y'$ (when $Y$ is normal, this can always be done using \cite[Thm. 5.1.25]{CG}). Given $\mbf{M}\in \MHM^H(Y)$, let $\mbf{M}':=i_{*,\Hg}\mbf{M}\in \MHM^H(Y')$. As in the previous subsection \ref{subsec:grDR}, $$\gr_{Y'}^{H}(\mbf{M}'):=\mathrm{U}_{Y'}^{H\weak}(\mbf{M}')\otimes_{\C[\hbar]}\C_0\in \O_{\rT^*Y'}\text{-mod}^{H\times\Gmdil}_{\mathrm{coh}}$$
supports on $Y$ by \cite[Lem. 3.2.6]{Sai88}, which indicates $i^{-1}\gr_{Y'}^{H}(\mbf{M}')\in \O_{\rT^*Y'|_{Y}}\text{-mod}_{\mathrm{coh}}^{H\times \Gmdil}$ and $\gr_{Y'}^{H}(\mbf{M}')=i_*i^{-1}\gr_{Y'}^{H}(\mbf{M}')$. We further define the \textit{weak equivariant graded de Rham functor} in this \textit{possibly singular case}, by the composition
\begin{equation}\label{eq:lfy}
	\gr\DR_Y^H:\MHM^H(Y)\xrightarrow{i^{-1}\gr_{Y'}^{H}}\O_{\rT^*Y'|_{Y}}\text{-mod}_{\mathrm{coh}}^{H\times\Gmdil}\xrightarrow{\sigma_Y^*} \rD^{b,H\times\Gmdil}_{\mathrm{coh}}(\O_{Y}),
\end{equation}
which fits into the following commutative diagram:
\[\begin{tikzcd}
	{\mathrm{MHM}^H(Y)} & {\rD^{b,H\times\Gmdil}_{\mathrm{coh}}(\O_{Y})} \\
	{\mathrm{MHM}^H(Y')} & {\rD^{b,H\times\Gmdil}_{\mathrm{coh}}(\O_{Y'}).}
	\arrow["{\gr\DR^H_{Y}}", from=1-1, to=1-2]
	\arrow["{i_{*,\Hg}}"', from=1-1, to=2-1]
	\arrow["{i_*}", from=1-2, to=2-2]
	\arrow["{\gr\DR^H_{Y'}}", from=2-1, to=2-2]
\end{tikzcd}\]
This functor doesn't depend on the choice of smooth ambient space once we prove the Proposition \eqref{prop:equi_version} below. As before, we write $\gr_p\DR_{Y}^H$ for the $p$-th grading piece of $\gr\DR_{Y}^H$ for any $p\in \mathbb{Z}$, and omit the subscript $Y$ if there is no confusion.

\begin{remark}
	Since $\rD^b\MHM^H(Y)$ is not the derived category of $\MHM^H(Y)$, it's more tricky to define $\gr\DR^H_Y$ for the whole equivariant derived category when $Y$ is possibly singular. Given an $H$-equivariant smooth ambient space $Y\hookrightarrow Y'$, we have the composition functor 
	\begin{equation}\label{eq:xr}
		\rD^b\MHM^H(Y)\xrightarrow{i_{*,\Hg}} \rD^b\MHM^H(Y') \xrightarrow{\gr\DR_{Y'}^H} \rD^{b,H\times\Gmdil}_{\mathrm{coh}}(\O_{Y'});
	\end{equation}
	we hope this functor factors through $\rD^{b,H\times \Gmdil}_{\mathrm{coh}}(\O_{Y})\xrightarrow{i_*} \rD^{b,H\times \Gmdil}_{\mathrm{coh}}(\O_{Y'})$. Note that this was done after forgetting the $H$-equivariance by \eqref{eq:lyr}. 
		
	The version \eqref{eq:lfy} is enough for our purpose.
\end{remark}

Now we discuss the weak equivariant version of the Proposition \ref{prop:push}.
\begin{proposition}\label{prop:equi_version}
\footnote{Once the whole derived version of weak equivariant graded de Rham functor in possibly singular case \eqref{eq:lfy} is defined, we should also have a more uniform statement of this proposition.}\ 
	\begin{enumerate}
		\item Let $f':X'\to Y'$ be a proper $H$-equivariant morphism between smooth $H$-varieties $X',Y'$. Then there is an isomorphism
		\[f'_*\circ \gr_p\DR^H \cong \gr_p\DR^H\circ f'_{*,\Hg}:\rD^b\MHM^H(X')\to \rD^{b,H}_{\mathrm{coh}}(\O_{Y'}).\]
		
		\item Let $f:X\to Y$ be an embeddable $H$-equivariant proper morphism between $H$-varieties $X,Y$ (i.e. we can choose $H$-equivariant closed embeddings $i:X\hookrightarrow X',j:Y\hookrightarrow Y'$ for smooth $H$-varieties $X',Y'$, and $H$-equivariant proper morphism $f':X'\to Y'$ compatible with $i,j,f$). Let $\mbf{M}\in \MHM^H(X)$. Assume $f_{*,\Hg}(\mbf{M})\in \MHM^H(Y)$. Then there is an isomorphism
		\[f_*\circ \gr_p\DR^H(\mbf{M}) \cong \gr_p\DR^H\circ f_{*,\Hg}(\mbf{M})\]
		in $\rD^{b,H}_{\mathrm{coh}}(\O_{Y})$.
	\end{enumerate}
\end{proposition}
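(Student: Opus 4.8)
The statement to prove is Proposition \ref{prop:equi_version}, the weak equivariant version of the compatibility between proper pushforward and the graded de Rham functor. The plan is to first establish part (1) for smooth $H$-varieties by carefully tracking the equivariant structures through the argument already given for Proposition \ref{prop:push}, and then deduce part (2) in the possibly singular case by reducing to part (1) via the chosen smooth ambient spaces. The key observation making (1) work is that all the ingredients of the non-equivariant proof are themselves equivariant: the correspondence of cotangent bundles $\rT^*X' \xleftarrow{\pi_{f'}} \rT^*Y'\times_{Y'}X' \xrightarrow{p_{f'}} \rT^*Y'$ is a diagram of $H$-varieties (since $f'$ is $H$-equivariant), the comparison isomorphism $(\int_{f'}\cM')\otimes_{\C[\hbar]}\C_0 \cong p_{f'*}\pi_{f'}^*(\cM'\otimes_{\C[\hbar]}\C_0)$ of \cite[Cor. 3.9]{CDK} is natural hence promotes to the weak equivariant derived category $\rD^{b,\gr,H\weak}(\cD_{\hbar,-})$, and the compatibility $\mathrm{U}^{H\weak}_{Y'}\circ f'_{*,\Hg}\cong \int_{f'}\circ\, \mathrm{U}^{H\weak}_{X'}$ holds by construction of the pushforward on equivariant derived categories (this is where I invoke the cited treatments \cite{Ach,Kas08} that these pushforwards are compatible with \eqref{eq:stq}, \eqref{eq:lpc} and with the non-equivariant pushforward after forgetting equivariance).

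For part (1), I would run exactly the chain of isomorphisms from the proof of Proposition \ref{prop:push}, but now in $\rD^{b,H\times\Gmdil}_{\mathrm{coh}}(\O_{-})$: starting from $\gr\DR^H_{Y'}\circ f'_{*,\Hg}(\mbf{M}') = \sigma_{Y'}^*\big(\mathrm{U}^{H\weak}_{Y'}(f'_{*,\Hg}\mbf{M}')\otimes_{\C[\hbar]}\C_0\big)$, apply the equivariant version of \cite[Cor. 3.9]{CDK} to rewrite $\mathrm{U}^{H\weak}_{Y'}(f'_{*,\Hg}\mbf{M}')\otimes_{\C[\hbar]}\C_0 \cong p_{f'*}\pi_{f'}^*\big(\mathrm{U}^{H\weak}_{X'}(\mbf{M}')\otimes_{\C[\hbar]}\C_0\big)$, and then use base change for the Cartesian square
\[
\begin{tikzcd}
	{\rT^*Y'\times_{Y'}X'} & {\rT^*Y'} \\
	{X'} & {Y'}
	\arrow["{p_{f'}}", from=1-1, to=1-2]
	\arrow["{\sigma_{X'}\text{-related}}"', from=2-1, to=1-1]
	\arrow["{f'}", from=2-1, to=2-2]
	\arrow["{\sigma_{Y'}}"', from=2-2, to=1-2]
\end{tikzcd}
\]
(the flatness of $\rT^*Y'\times_{Y'}X'$ and $\rT^*X'$ over $X'$ is what licenses the relevant base-change isomorphisms) together with the identity $\sigma_{X'} = \pi_{f'}\circ(\text{the lift of }\sigma_{Y'})$, to land on $f'_*\sigma_{X'}^*\big(\mathrm{U}^{H\weak}_{X'}(\mbf{M}')\otimes_{\C[\hbar]}\C_0\big) = f'_*\gr\DR^H_{X'}(\mbf{M}')$. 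Every base change here is the $H\times\Gmdil$-equivariant one, which exists because all morphisms in sight are equivariant; the $\Gmdil$-equivariance is carried along passively as it was in Section \ref{subsec:grDR}. Taking the $p$-th graded piece gives the claimed isomorphism.

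For part (2), I would use the chosen $H$-equivariant factorization $X\xhookrightarrow{i}X'$, $Y\xhookrightarrow{j}Y'$, $f':X'\to Y'$ with $j\circ f = f'\circ i$. By the definition \eqref{eq:lfy} of $\gr\DR^H_X$ and $\gr\DR^H_Y$ in the singular case, and the commuting squares recorded there, we have $j_*\circ\gr\DR^H_Y(f_{*,\Hg}\mbf{M}) \cong \gr\DR^H_{Y'}\big(j_{*,\Hg}f_{*,\Hg}\mbf{M}\big) = \gr\DR^H_{Y'}\big(f'_{*,\Hg}i_{*,\Hg}\mbf{M}\big)$, which by part (1) applied to $f'$ and $i_{*,\Hg}\mbf{M}\in\MHM^H(X')$ is isomorphic to $f'_*\gr\DR^H_{X'}(i_{*,\Hg}\mbf{M}) \cong f'_*i_*\gr\DR^H_X(\mbf{M}) = j_*f_*\gr\DR^H_X(\mbf{M})$. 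Since $j_*$ is fully faithful (as $j$ is a closed immersion, $j_*$ admits a left inverse $j^*$ on the subcategory of complexes supported on $Y$, and one checks both sides are supported there), one descends this to an isomorphism $\gr\DR^H_Y(f_{*,\Hg}\mbf{M})\cong f_*\gr\DR^H_X(\mbf{M})$ in $\rD^{b,H}_{\mathrm{coh}}(\O_Y)$; taking the $p$-th graded piece finishes the proof. The main obstacle I anticipate is not any single computation but the bookkeeping of the equivariant enhancement: one must be sure that \cite[Cor. 3.9]{CDK} and the compatibility $\mathrm{U}^{H\weak}\circ f_{*,\Hg}\cong \int_f\circ\,\mathrm{U}^{H\weak}$ really do hold in the weak equivariant derived category and not merely after forgetting to the non-equivariant one — i.e. that the isomorphisms are natural enough to be $H$-equivariant — and that the various base-change isomorphisms used are the equivariant ones; this is where I would be most careful, invoking the naturality statements and descent along $[H\backslash Y]\to Y$ rather than re-deriving anything.
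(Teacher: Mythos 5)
Your proposal is correct and matches the paper's approach: the paper's own proof simply declares the argument "completely parallel to the non-equivariant case" and isolates exactly the point you isolate, namely that \cite[Cor. 3.9]{CDK} and the compatibility $\mathrm{U}\circ f_{*,\Hg}\cong \int_f\circ\,\mathrm{U}$ must be upgraded to the weak equivariant derived category, after which the chain of base-change isomorphisms from Proposition \ref{prop:push} goes through verbatim. Your handling of part (2) by first proving the smooth case and then descending along the fully faithful $j_*$ is a harmless repackaging of the paper's direct computation on the singular spaces.
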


\begin{remark}
	When $X$ and $Y$ are normal, they can $H$-equivariantly embedd to some projective space $\mathbb{P}^n$ using \cite[Thm. 5.1.25]{CG}. Then we can use the method in Remark \ref{rmk:embed} to construct some embedding of $f$.
\end{remark}

\begin{proof}[Proof of Prop. \ref{prop:equi_version}]
	The proof is completely parallel to the non-equivariant case. We only need to imitate the proof of \cite[Cor. 3.9]{CDK}, and get the following natural isomorphism
	\[(\int_{f'}\cM')\otimes_{\C[\hbar]}\C_0\cong p_{f'*}\pi_{f'}^*(\cM'\otimes_{\C[\hbar]}\C_0), \ \ \forall \cM'\in \rD^{b,\gr,H\weak}_{\mathrm{coh}}(\cD_{\hbar,Y'})\]
	in $\rD^{b,\gr,H\weak}_{\mathrm{coh}}(\O_{\rT^*Y'})$
\end{proof}

We finally remark on the compatibility under a change of group action, which directly follows from the constructions. Let $H_1\to H_2$ be a homomorphism of algebraic groups over $\C$. Let $H_2$ acts on a variety $Y$, so $Y$ inherites an action of $H_1$. There are natural forgetful functors
\[\rD^b\MHM^{H_2}(Y)\to\rD^b\MHM^{H_1}(Y), \ \ \ \rD^{b,H_2}_{\mathrm{coh}}(\O_{Y})\to \rD^{b,H_1}_{\mathrm{coh}}(\O_{Y}),\]
which are compatible with the weak equivariant graded de Rham functor, i.e. the following diagram commutes:
\begin{equation}\label{eq:compatible}
	\begin{tikzcd}
		{\MHM^{H_2}(Y)} & {\rD^{b,H_2}_{\mathrm{coh}}(\O_{Y})} \\
		{\MHM^{H_1}(Y)} & {\rD^{b,H_1}_{\mathrm{coh}}(\O_{Y}).}
		\arrow["{\gr_p\DR^{H_2}}", from=1-1, to=1-2]
		\arrow[from=1-1, to=2-1]
		\arrow[from=1-2, to=2-2]
		\arrow["{\gr_p\DR^{H_1}}", from=2-1, to=2-2]
	\end{tikzcd}
\end{equation}

\subsection{Hodge module Satake categories}\label{subsec: Satake equivalence for Hodge modules}
In this subsection, we review some results on Hodge module Satake categories \cite{Fe21}  required for the Section \ref{subsec: Tools coming from Hodge module}, in particular the proof of Corollary \ref{cor:m*(cIC)}. 

In \cite{Fe21}, the author extends the equivariant Hodge module theory $\HM^H(Y,w)$ to the case when $H$ is some pro-algebraic group (i.e. $H$ is a limit $\lim\limits_{+\infty\leftarrow m}H^{(m)}$ along surjective morphisms of algebraic groups; they also assume each $H^{(m)}$ and kernel of transition map is connected). In fact, if $H_1\to H_2$ is a surjective morphism of algebraic groups with connected kernel, and $H_2$ acts on $Y$, then the forgetful functor $\HM^{H_2}(Y,w)\to \HM^{H_1}(Y,w)$ is an equivalence (\cite[Lem. 2.15]{Fe21}). Therefore one can define $\HM^H(Y,w):=\colim\limits_{m>>0}\HM^{H^{(m)}}(Y,w)$, which is in fact equivalent to $\HM^{H^{(m)}}(Y,w)$ for any $m>>0$. Similar construction holds for $\MHM^H(Y)$.

\cite{Fe21} also extends the theory $\HM^H(Y,w)$ to the case when $Y$ is an ind-variety. In particular, the author defined and studied $\HM^{G_{\O}}(\Gr_G,w)$ together with
\[\HM^{G_{\O}}(\Gr_G):=\bigoplus_{w\in \Z}\HM^{G_{\O}}(\Gr_G,w).\]
This category admits a monoidal structure given by the convolution product.

Define $\Tate^{G_{\O}}(\Gr_G)$ to be the full subcategory of $\HM^{G_{\O}}(\Gr_G)$ whose objects are isomorphic to direct sums of Tate twists of $$\ICHg_{\lav}:=\ICHg_{\overline{\Gr}_{\lav}}\in \HM^{G_{\O}}(\overline{\Gr}_{\lav})\subset \HM^{G_{\O}}(\Gr_G)$$
for all $\lav\in \Pv_+$.
\begin{proposition}[{\cite[Prop. 4.3]{Fe21}}]\label{prop:Tate_subcat}
	The full subcategory $\Tate^{G_{\O}}(\Gr_G)\subset \HM^{G_{\O}}(\Gr_G)$ is closed under the convolution product.
\end{proposition}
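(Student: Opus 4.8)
\textbf{Proof plan for Proposition \ref{prop:Tate_subcat}.} The statement asserts that $\Tate^{G_{\O}}(\Gr_G)$ is closed under convolution; equivalently, for $\lav,\muv\in\Pv_+$, the convolution $\ICHg_{\lav}\conv\ICHg_{\muv}$ (and all its Tate twists) lies in $\Tate^{G_{\O}}(\Gr_G)$. The convolution is computed by $m_{*,\Hg}$ of an external twisted product $\ICHg_{\lav}\,\wt{\boxtimes}\,\ICHg_{\muv}$ on $\overline{\Gr}_{\lav}\ttimes\overline{\Gr}_{\muv}$, where $m:\overline{\Gr}_{\lav}\ttimes\overline{\Gr}_{\muv}\to\overline{\Gr}_{\lav+\muv}$ is the convolution morphism, which is proper. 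The plan is therefore in two steps: first, identify the twisted external product with the IC Hodge module $\ICHg_{\lav,\muv}$ on $\overline{\Gr}_{\lav,\muv}$ up to a Tate twist; second, apply the decomposition theorem for pure Hodge modules under the proper map $m$ together with the classical computation of intersection cohomology of affine Grassmannian convolutions (geometric Satake) to conclude that $m_{*,\Hg}(\ICHg_{\lav,\muv})$ decomposes into Tate twists of $\ICHg_{\nuv}$'s.

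\textbf{Step 1 (twisted external product of IC Hodge modules).} Since $\hGK\to\Gr_G$ is Zariski locally trivial (Lemma \ref{lem:Zar_triv}), the twisted product $\overline{\Gr}_{\lav}\ttimes\overline{\Gr}_{\muv}$ is Zariski locally a product $\overline{\Gr}_{\lav}\times\overline{\Gr}_{\muv}$, and the projection $\rq:\overline{\Gr}_{\lav}\ttimes\overline{\Gr}_{\muv}\to[G_{\O}\backslash\overline{\Gr}_{\muv}]$ is smooth. First I would check, using the compatibility of $\ICHg$ with smooth pullback (the functor $g^{\dagger,\Hg}$ sends IC Hodge modules to IC Hodge modules up to a shift and Tate twist) together with fppf/Zariski descent for equivariant Hodge modules, that $\rp^{\dagger,\Hg}\ICHg_{\lav}$ twisted against $\rq^{\dagger,\Hg}\ICHg_{\muv}$ recovers $\ICHg_{\lav,\muv}$ up to an explicit shift and Tate twist --- the underlying perverse sheaf is $\rmIC^\Q$ of the (irreducible, of the correct dimension) variety $\overline{\Gr}_{\lav,\muv}$, and a pure Hodge module with that underlying IC perverse sheaf is unique. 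This is the same local-triviality argument already used in Step 2 of the proof of Proposition \ref{prop:calculation of tbox} and in Proposition \ref{2}, now carried out at the level of Hodge modules rather than coherent sheaves.

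\textbf{Step 2 (pushforward along $m$).} Since $m$ is proper and $\ICHg_{\lav,\muv}$ is a pure Hodge module of weight $d_{\lav,\muv}$, Saito's decomposition theorem gives that $m_{*,\Hg}(\ICHg_{\lav,\muv})$ is a direct sum of shifts of pure Hodge modules, each of which decomposes by strict support over the $\hGO$-orbit closures $\overline{\Gr}_{\nuv}$, $\nuv\le\lav+\muv$; hence each summand is a Tate twist of some $\ICHg_{\nuv}$ (there are no nonconstant variations of Hodge structure on a single orbit $\Gr_{\nuv}\cong\hGO/\hPla$ since $\hGO$ is connected, so the generic piece on each stratum is a direct sum of trivial Hodge structures, i.e.\ Tate twists). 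Applying $\Rat$ and invoking geometric Satake, the multiplicities and the cohomological shifts are exactly $\clamunu$ and the ones recorded in Corollary \ref{cor:convolution of Hodge modules}; even without pinning down the exact Tate twists, the mere fact that the summands are Tate twists of $\ICHg_{\nuv}$ suffices for the statement. I would cite \cite[Prop.\ 4.3]{Fe21} or reproduce this argument, noting that \cite{Fe21} establishes precisely this and the refined pushforward formula.

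\textbf{Main obstacle.} The routine analytic/Hodge-theoretic input (decomposition theorem, strict support decomposition, uniqueness of pure Hodge modules with given underlying perverse sheaf) is standard; the genuine content is the bookkeeping in Step 2 --- showing that the generic Hodge structures appearing on each stratum are Tate, and matching the multiplicities with $\clamunu$. The first point uses connectedness of $\hGO$ and semisimplicity of $\HM^{\hGO}(\Gr_G,w)$ plus \cite[Prop.\ 4.3(i)]{Fe21} (which already handles the point-pushforward case); the second is exactly the classical geometric Satake computation of $IH^*$ of convolution varieties. So the main obstacle is not a new difficulty but carefully assembling these ingredients in the equivariant Hodge-module setting, which is precisely what \cite[Prop.\ 4.3]{Fe21} does and which I would invoke directly.
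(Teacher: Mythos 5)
The paper offers no proof of this proposition --- it is quoted verbatim from \cite[Prop.~4.3]{Fe21} --- and your plan, which reconstructs the standard argument (identify the twisted external product with $\ICHg_{\lav,\muv}$ up to shift and Tate twist, then apply Saito's decomposition theorem, strict-support decomposition over the orbit closures $\overline{\Gr}_{\nuv}$, Tateness of the generic variations, and geometric Satake for the multiplicities) before deferring to that same citation, is correct and consistent with what the paper does. The only imprecision is that connectedness of $\hGO$ (more precisely, of the stabilizers $\hPla$) only forces the underlying local systems on each stratum to be trivial; the Tateness of the resulting constant polarizable Hodge structures is the substantive point, which you correctly flag as the content supplied by \cite{Fe21}.
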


\begin{corollary}\label{cor:convolution of Hodge modules}
	Let $m:\overline{\Gr}_{\lav,\muv}:=\overline{\Gr}_{\lav}\ttimes \overline{\Gr}_{\muv}\to \overline{\Gr}_{\lav+\muv}$ be the convolution map, and $\ICHg_{\lav,\muv}:=\ICHg_{\overline{\Gr}_{\lav,\muv}}\in \HM^{G_{\O}}(\overline{\Gr}_{\lav,\muv})\subset \HM^{G_{\O}}(\Gr\ttimes \Gr)$. Then
	\[m_{*,\Hg}(\ICHg_{\lav,\muv})\cong \bigoplus_{\nuv\in \Pv_+}(\ICHg_{\nuv})^{\oplus \clamunu}(\tfrac12 d_{\nuv}-\tfrac12 d_{\lav,\muv})\]
	in $\HM^{G_{\O}}(\overline{\Gr}_{\lav+\muv})\subset \HM^{G_{\O}}(\Gr_G)$.
\end{corollary}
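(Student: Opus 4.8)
\textbf{Proof proposal for Corollary \ref{cor:convolution of Hodge modules}.}

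The plan is to compare the Hodge-module convolution with the classical constructible one via the faithful functor $\Rat$, and then bootstrap the numerical/semisimple information back up to Hodge modules using Proposition \ref{prop:Tate_subcat}. First I would recall that $\ICHg_{\lav,\muv}$ is the IC-Hodge module on the twisted product $\overline{\Gr}_{\lav,\muv}=\overline{\Gr}_{\lav}\ttimes\overline{\Gr}_{\muv}$, and that (as in the standard geometric Satake story, see \cite[Lem. 5.3]{Xin25} or Step 2 of the proof of Proposition \ref{prop:calculation of tbox} for the relevant local computation) the twisted product of IC-Hodge modules is again an IC-Hodge module: $\ICHg_{\lav,\muv}\cong \ICHg_{\lav}\,\tbox\,\ICHg_{\muv}$ up to the appropriate Tate twist, since $\overline{\Gr}_{\lav,\muv}$ is Zariski locally a product of (an open subset of) $\overline{\Gr}_{\lav}$ with (an open subset of) $\overline{\Gr}_{\muv}$, with $\overline{\Gr}_{\muv}$ smooth along the relevant fibers. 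Consequently $m_{*,\Hg}(\ICHg_{\lav,\muv})$ is exactly the Hodge-module incarnation of the convolution $\ICHg_{\lav}\conv\ICHg_{\muv}$ in $\HM^{G_{\O}}(\Gr_G)$.

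Next I would apply Proposition \ref{prop:Tate_subcat}: the full subcategory $\Tate^{G_{\O}}(\Gr_G)$ is closed under convolution, so $m_{*,\Hg}(\ICHg_{\lav,\muv})$ is a direct sum of Tate twists of the $\ICHg_{\nuv}$'s. It remains to pin down the multiplicities and the twists. For the multiplicities, I would pass to $\Rat$: since $\Rat$ is faithful and monoidal (compatible with proper pushforward and with the convolution product), $\Rat\bigl(m_{*,\Hg}(\ICHg_{\lav,\muv})\bigr)\cong \rmIC_{\lav}^{\Q}\conv\rmIC_{\muv}^{\Q}$, which by the classical geometric Satake equivalence decomposes as $\bigoplus_{\nuv\in\Pv_+}(\rmIC_{\nuv}^{\Q})^{\oplus\clamunu}$ (here $\clamunu$ is literally defined, via \eqref{def of clamunu}, as the tensor-product multiplicity, which matches the constructible convolution multiplicity under Satake). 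Because $\Rat$ is faithful and each $\ICHg_{\nuv}$ maps to the simple perverse sheaf $\rmIC_{\nuv}^{\Q}$, the multiplicity of $\ICHg_{\nuv}$ (summed over all Tate twists) in $m_{*,\Hg}(\ICHg_{\lav,\muv})$ must be exactly $\clamunu$.

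Finally, to determine which Tate twist occurs for each $\nuv$, I would use weight considerations. The Hodge module $\ICHg_{\lav,\muv}$ is pure of weight $d_{\lav,\muv}=d_\lav+d_\muv$; since $m$ is proper, $m_{*,\Hg}$ preserves purity on the nose for the IC-summand (the decomposition theorem for Hodge modules holds strictly because $\ICHg_{\lav,\muv}$ is pure and $m$ is proper and — crucially in type A — $m_{*,\Hg}(\ICHg_{\lav,\muv})$ is itself a pure Hodge module, as already granted in the statement). Thus every summand $\ICHg_{\nuv}(j_{\nuv})$ appearing must have weight $d_{\lav,\muv}$; since $\ICHg_{\nuv}$ is pure of weight $d_{\nuv}$ and a Tate twist $(j)$ lowers weight by $2j$, we need $d_{\nuv}-2j_{\nuv}=d_{\lav,\muv}$, i.e. $j_{\nuv}=\tfrac12(d_{\nuv}-d_{\lav,\muv})=\tfrac12 d_{\nuv}-\tfrac12 d_{\lav,\muv}$. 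This is exactly the twist in the claimed formula, and (combined with the connectedness of $G_{\O}$, so that there is no ambiguity coming from automorphisms of the trivial local system) determines the decomposition uniquely.

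The main obstacle I anticipate is justifying the strict purity of $m_{*,\Hg}(\ICHg_{\lav,\muv})$, i.e. that the (a priori only bounded-below/bounded-above in weight) pushforward is concentrated in a single weight and splits as a direct sum of shifted pure pieces with no extension problems. In the constructible setting this is the decomposition theorem of \cite{BBD}; in the Hodge-module setting it is Saito's decomposition theorem for pure Hodge modules under projective pushforward, which gives a non-canonical splitting of $m_{*,\Hg}(\ICHg_{\lav,\muv})$ into a direct sum of (shifts of) pure Hodge modules. One then observes that geometric Satake forces the constructible side to be concentrated in a single perverse degree, hence the same holds Hodge-theoretically, and the weight of each piece is computed as above. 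I would cite \cite{Sai88, Sai90} for the decomposition theorem and \cite[Prop. 4.3]{Fe21} (Proposition \ref{prop:Tate_subcat}) together with the classical Satake multiplicities for the bookkeeping; the remaining verifications (compatibility of $\Rat$ with convolution, identification of twisted products of IC-Hodge modules) are routine and parallel to the constructible arguments already invoked elsewhere in the paper.
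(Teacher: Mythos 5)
Your proposal is correct and follows essentially the same route as the paper: identify $m_{*,\Hg}(\ICHg_{\lav,\muv})$ with the convolution $\ICHg_{\lav}\conv\ICHg_{\muv}$, invoke Proposition \ref{prop:Tate_subcat} to get a decomposition into Tate twists of the $\ICHg_{\nuv}$, pin down the twists by weight purity, and pin down the multiplicities by applying $\Rat$ and classical geometric Satake. Your extra worry about strict purity of the pushforward is already absorbed into Proposition \ref{prop:Tate_subcat} (Fedorov's result that $\Tate^{G_{\O}}(\Gr_G)$ is closed under convolution), so no additional argument is needed there.
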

\begin{proof}
	By the formula of convolution product, $m_{*,\Hg}(\ICHg_{\lav,\muv})$ is just the convolution product $\ICHg_{\lav}\conv \ICHg_{\muv}$. Then according to Proposition \ref{prop:Tate_subcat}, $m_{*,\Hg}(\ICHg_{\lav,\muv})$ has the form
	\[\bigoplus_{\nuv\in \Pv_+}(\ICHg_{\nuv})^{\oplus a_{\nuv}}(w_{\nuv})\]
	for some $a_{\nuv},w_{\nuv}\in \mathbb{N}$. By comparing the weights, we have $w_{\nuv}=\tfrac12 d_{\nuv}-\tfrac12 d_{\lav,\muv}$. By applying the functor $\Rat: \HM^{G_{\O}}(\Gr_G)\to \Perv^{G_{\O}}(\Gr_G,\Q)$, we obtain $a_{\nuv}=\clamunu$ from the geometric Satake equivalence \cite{MV07}.
\end{proof}

\begin{remark}
	As in the main context, let $\hGO=(G_{\O}\rtimes\Gmrot)\times \Gmdil$. We can replace the $G_{\O}$-equivariance in above Corollary \ref{cor:convolution of Hodge modules} by the $\hGO$-equivariance, since $\ICHg_{\lav,\muv},\ICHg_{\nuv}$ all have $\hGO$-equivariant structure, and it's known that the natural forgetful functor $\HM^{\hGO}(\Gr)\to \HM^{G_{\O}}(\Gr)$ is fully faithful (see e.g. \cite[Prop. 2.16]{Fe21}).
\end{remark}

\bibliographystyle{alphaurl}
\bibliography{ref}

\end{document}